\documentclass{amsart}

\usepackage{amssymb, amsmath,  amsfonts }
\usepackage{color }
\usepackage{slashbox, longtable}
\usepackage[all,cmtip]{xy}

\newtheorem{thm}{Theorem}[section]
\newtheorem{prop}[thm]{Proposition}
\newtheorem{lemma}[thm]{Lemma}
\newtheorem{cor}[thm]{Corollary}
\newtheorem{defn}[thm]{Definition}
\newtheorem{example}[thm]{Example}
\newtheorem{remark}[thm]{Remark}
\newtheorem{notation}[thm]{Notation}
\newtheorem{qus}[thm]{Question}

\newtheorem{conjecture}[thm]{Conjecture}

\newcommand{\circline}{\!\;\!\!\circ\!\!\;\!\!-\!\!\!-\!}
\newcommand{\bulletline}{\bullet\!\!\;\!\!-\!\!\!-\!}

\newcommand{\bulletbb}{  \!\;\!\!\circ\!\!\!\Rightarrow\!\!=\!\!\!\bullet}
\newcommand{\bulletcc}{ \!\;\!\!\circ\!\!\!=\!\!\Leftarrow\!\!\!\bullet}

\numberwithin{equation}{section}



\begin{document}{\allowdisplaybreaks[4]

\title[Functorial relationships   between $QH^*(G/B)$ and $QH^*(G/P)$]{Functorial relationships \\ between $QH^*(G/B)$ and $QH^*(G/P)$}

\author{Naichung Conan Leung}
\address{The Institute of Mathematical Sciences and Department of Mathematics,
           The Chinese University of Hong Kong, Shatin, Hong Kong}
\email{leung@math.cuhk.edu.hk}
\thanks{ The first author  is supported in part by a RGC research grant from
the Hong Kong Government. The second author  is  supported in part
by KRF-2007-341-C00006.
 }

\author{Changzheng Li}
\address{School of Mathematics, Korea Institute for Advanced Study, 87
Hoegiro, Dongdaemun-gu, Seoul, 130-722, Korea}
\email{czli@kias.re.kr}

\date{
      }




\begin{abstract}
We   give a natural  filtration  $\mathcal{F}$ on $QH^*(G/B)$, which respects the quantum product structure.
 Its associated  graded algebra  $Gr^{\mathcal{F}}(QH^*(G/B))$ is isomorphic to the tensor product of
       $QH^*(G/P)$ and  a corresponding graded algebra of  $QH^*(P/B)$ after localization. When
        the quantum parameter goes to zero,
         this specializes to the filtration on
              $H^*(G/B)$ from the Leray spectral sequence associated to the fibration $P/B \rightarrow G/B\longrightarrow G/P$.

\end{abstract}

\maketitle

\section{Introduction}

Let $G$ be a simply-connected  complex simple Lie group, $B$ be a Borel subgroup  and  $P\supset B$ be
   a  parabolic subgroup  of $G$.
The natural  fibration   $P/B{\rightarrow} G/B{\longrightarrow} G/P$    
    of homogeneous varieties  
  gives rise to a $\mathbb{Z}^2$-filtration $\mathcal{F}$ on $H^*(G/B)$  over $\mathbb{Q}$ (or $\mathbb{C}$) such that  
         $Gr^{\mathcal{F}}(H^*(G/B))\cong H^*(P/B)\otimes H^*(G/P)$ as graded algebras by the Leray-Serre spectral sequence.
      Given another parabolic subgroup $P'$ with $B\subset P'\subset P$, we obtain the corresponding  natural
       fibration $P'/B\rightarrow P/B\longrightarrow P/P'$. 
       Combining it with the former one, we obtain a $\mathbb{Z}^3$-filtration on $H^*(G/B)$. We can continue  this procedure to obtain a (maximal)  $\mathbb{Z}^{r+1}$-filtration.

      In the present paper, we study the  small  quantum cohomology rings $QH^*(G/P)$'s of  homogeneous varieties $G/P$'s,
          which are   deformations of the ring structures on  $H^*(G/P)$'s by incorporating genus zero 3-pointed Gromov-Witten invariants of $G/P$'s into the cup product.
       We show the ``\textit{functorial relationships}" between $QH^*(G/B)$ and $QH^*(G/P)$ in the sense that
            the $\mathbb{Z}^{r+1}$-filtration on $H^*(G/B)$ can be  generalized to give a $\mathbb{Z}^{r+1}$-filtration on $QH^*(G/B)$ and there exist
         canonical maps between quantum cohomologies, in analog with  the classical ones. 
         We begin with a toy example to illustrate our results.

\begin{example}\label{exampfiltrforA2}
 When $G=SL(3, \mathbb{C})$,
          $G/B=\{V_1\leqslant    V_2\leqslant \mathbb{C}^{3}~|~ \dim_\mathbb{C}V_i=1,$ $i=1, 2\}=:F\ell_3$ is a complete flag variety.
Given a maximal parabolic subgroup $P\supset B$,  we have $P/B=\mathbb{P}^1$ and  $ G/P=\mathbb{P}^2$ together with
   a natural fibration $\mathbb{P}^1\overset{i}{\hookrightarrow} F\ell_{3}\overset{\pi}{\longrightarrow} \mathbb{P}^2$.
The quantum cohomology ring $QH^*(G/B)$ has a basis consisting  of Schubert classes $\sigma^w$'s over $\mathbb{Q}[q_1, q_2]$, indexed by the Weyl group
   $W =S_3=\{1, s_1, s_2, s_1s_2, s_2s_1, s_1s_2s_1\}$.
  To obtain  the $\mathbb{Z}^2$-filtration $\mathcal{F}$ on $QH^*(G/B)$, we need a deformation $gr$ of the classical grading map
         which satisfies $gr(q_1^aq_2^b\sigma^w)=a gr(q_1)+b gr(q_2)+gr(\sigma^w)$.
      In this   example, $gr$ is given explicitly by
              Table \ref{tabgraexamp}. 

 \renewcommand{\arraystretch}{2.3}

\begin{table}[h]
\caption{\label{tabgraexamp} $
          gr(q_1^aq_2^b \sigma^w)=(i, j) \mbox{ with }  -2\leq i\leq 4, 0\leq j\leq 6 
          $}
  \begin{tabular}{|c||c| c|c |c |c|c|c|}
    \hline
  $4$ & $q_1^{\,2}$    & $ q_1^{\,2}\sigma^{s_2}$  & $  q_1^{\,2}\sigma^{s_1s_2}$  &$  q_1^{\,2}q_2\sigma^{s_1}$  & $ q_1^{\,2}q_2\sigma^{s_2s_1}$ & $ q_1^{\,2}q_2\sigma^{s_1s_2s_1}$& $q_1^{\,3}q_2^{\,2}$  \\\hline
 $3$ & $q_1\sigma^{s_1}$    & $ q_1\sigma^{s_2s_1}$  & $q_1\sigma^{s_1s_2s_1}$& $q_1^{\,2}q_2$ & $ q_1^{\,2}q_2\sigma^{s_2}$ & $q_1^{\,2}q_2\sigma^{s_1s_2}$ &$q_1^{\,2}q_2^{\,2}\sigma^{s_1}\!\!{}$\\\hline
$2$ & $q_1$   & $ q_1\sigma^{s_2}$  & $q_1\sigma^{s_1s_2}$  & $
q_1q_2\sigma^{s_1}$  & $q_1q_2\sigma^{s_2s_1}$ &
$q_1q_2\sigma^{s_1s_2s_1}$& $q_1^{\,2}q_2^{\,2}$   \\\hline
   $1$ & $\sigma^{s_1}$    & $  \sigma^{s_2s_1}$  & $\sigma^{s_1s_2s_1}$& $q_1q_2$ & $q_1q_2\sigma^{s_2}$ & $q_1q_2\sigma^{s_1s_2}$ &$q_1q_2^{\,2}\sigma^{s_1}\!\!{}$  \\\hline
  $0$ & $1$                       & $ {\sigma^{s_2}}$  & $\sigma^{s_1s_2}$  &$ q_2\sigma^{s_1}$  & $q_2\sigma^{s_2s_1}$ & $q_2\sigma^{s_1s_2s_1}$& $q_1q_2^2$   \\\hline
   $\!\!\!-1$ & 0   & 0 & 0 & $q_2$ & $q_2\sigma^{s_2}$ & $q_2\sigma^{s_1s_2}$ &$q_2^{\,2}\sigma^{s_1}$   \\\hline
 $\!\!\!-2$ &   0             &  0  & 0   & 0  & 0 & 0 & $q_2^{\,2}$  \\ \hline \hline
\multicolumn{1}{|@{}l||}{\slashbox[0.8cm][l]{$i$}{$j$}}     &    $  0$ &  $ 1$   &  $ 2$    & $ 3$ & $ 4$ &  $ 5$&  $ 6$         \\\hline 
  \end{tabular}
\end{table}

This determines a $\mathbb{Z}^2$-filtration $\mathcal{F}=\{F_{\mathbf{c}}\}_{\mathbf{c}\in\mathbb{Z}^2}$ on $QH^*(G/B)$.
The main point is this filtration respects the quantum multiplication, i.e.
   $F_{\mathbf{c}}F_{\mathbf{d}}\subset F_{\mathbf{c}+\mathbf{d}}$.
  Indeed, this can be easily  checked with the following
        well-known    quantum products for $F\ell_3$:
      \begin{eqnarray*}
         \sigma^{s_1}\star \sigma^{s_1}\!=\!\sigma^{s_2s_1}+q_1, & \sigma^{s_1}\star\sigma^{s_1s_2}= \sigma^{s_1s_2s_1},& \sigma^{s_1}\star \sigma^{s_2s_1}\!=\!q_1\sigma^{s_2},\\
         \sigma^{s_2}\star \sigma^{s_2}\!=\! \sigma^{s_1s_2}+q_2, & \sigma^{s_2}\star\sigma^{s_2s_1}= \sigma^{s_1s_2s_1},& \sigma^{s_2}\star \sigma^{s_1s_2}\!=\!q_2\sigma^{s_1},\\
          \sigma^{s_1s_2}\star \sigma^{s_1s_2s_1}\!=\! q_1q_2\sigma^{s_2},& \sigma^{s_1s_2}\star\sigma^{s_1s_2}= q_2\sigma^{s_2s_1},&\sigma^{s_1}\star \sigma^{s_1s_2s_1}\!=\! q_1\sigma^{s_1s_2}+ q_1q_2,\\
        \sigma^{s_2s_1}\star \sigma^{s_1s_2s_1}\!=\! q_1q_2\sigma^{s_1},& \sigma^{s_2s_1}\star\sigma^{s_2s_1}= q_1\sigma^{s_1s_2},&\sigma^{s_2}\star \sigma^{s_1s_2s_1}\!=\! q_2\sigma^{s_2s_1}+ q_1q_2,\\
          \sigma^{s_1}\star \sigma^{s_2}\!=\! \sigma^{s_1s_2}+ \sigma^{s_2s_1}, & \sigma^{s_1s_2}\star\sigma^{s_2s_1}= q_1q_2,& \sigma^{s_1s_2s_1}\star \sigma^{s_1s_2s_1}\!\!=\! q_1q_2\,\sigma^{s_1}\!\!\star \sigma^{s_2}\!.\!{}
     \end{eqnarray*}

This $\mathbb{Z}^2$-filtration of the algebra structure on $QH^*(G/B)$ is a $\mathbf{q}$-deformation of the classical one on $H^*(G/B)$,
 which comes from the Leray spectral sequence. Due to the existence of such a filtration,
  we can easily check
  that there are \textbf{algebra} isomorphisms
      $\bar \varphi: QH^*(G/B)/\mathcal{I}\longrightarrow QH^*(P/B)$
                         and  $\bar \psi: QH^*(G/P)\longrightarrow \mathcal{A}/\mathcal{J}$,
 where  $\mathcal{A}:=\bigcup_{j\geq 0}F_{(0, j)}$ is a subalgebra of $QH^*(G/B)$,
        $\mathcal{J}:=F_{(0, -1)}$ is an ideal of $\mathcal{A}$ and
         $\mathcal{I}$ is the ideal in  $QH^*(G/B)$
          spanned by those  $q_1^aq_2^b\sigma^w$'s with their gradings $(d_1, d_2)$ satisfying $d_2>0$.
 Here    $\bar\varphi$ sends   $q_1^aq_2^b\sigma^w+\mathcal{I}$  to $y^j$ where
                   $q_1^aq_2^b\sigma^w$ is the (unique) one among  such expressions    with its grading equal to   $(j, 0)$,  and
      $\bar \psi$ sends $x^j$ to the (unique) $q_1^aq_2^b\sigma^w\in QH^*(G/B)$ whose grading equals   $(0, j)$, in which we have taken the
         well-known isomorphisms   $QH^*(P/B)\cong\mathbb{Q}[y]$ and  $QH^*(G/P)\cong\mathbb{Q}[x]$.
   In particular, $QH^*(G/P)$ is the quotient of the subalgebra $\mathcal{A}$ generated by
     $\{q_2\sigma^{s_1}, \sigma^{s_2}, q_1q_2^2, q_2\}$ by the ideal $\mathcal{J}=q_2\mathcal{A}$. (We remark that
       in this case $QH^*(G/B)$ itself is generated by   $\{\sigma^{s_1}, \sigma^{s_2}, q_1, q_2\}$.)

These algebra isomorphisms generalize the classical ones in an obvious  way, namely $\mathcal{A}$,
 $\mathcal{J}$ and $\mathcal{I}$ are $\mathbf{q}$-deformations of  $A:=\pi^*(H^*(G/P))$,  $J=0$ and
     $I=\mathbb{Q}\{\sigma^{s_2}, \sigma^{s_1s_2}, \sigma^{s_2s_1}, \sigma^{s_1s_2s_1}\}$ respectively.

    \end{example}

All the above descriptions for $G=SL(3, \mathbb{C})$ will be generalized to arbitrary   complex semi-simple
  Lie groups. For simplicity, we  assume $P/B$ is irreducible.   (Note that any homogeneous variety splits into a direct product of
     irreducible ones.) All the results     can  be easily
      generalized for reducible $P/B$'s and we will    describe such generalizations  in section \ref{secconclusion}.
   Note that $P/B$ is again a complete flag variety,
          isomorphic to $G'/B'$ for some other complex simple Lie group $G'$.   Then we denote by $r$ the rank of $G'$, which depends only on $P/B$. 
        Since we exclude the trivial cases, namely $P$ equals $B$ or $G$,
         we always have $r=1$ for
                $G=SL(3, \mathbb{C})$.

 In general, we  consider a special iterated fibration
    $\{P_{j-1}/P_0\rightarrow P_j/P_0\longrightarrow P_j/P_{j-1}\}_{j=2}^{r+1}$ in which $P_j$'s are parabolic subgroups with
      $B=P_0\subsetneq P_1\subsetneq \cdots \subsetneq P_r=P\subsetneq G=P_{r+1}$.
 Consequently, we   obtain a canonical $\mathbb{Z}^{r+1}$-filtration on $H^*(G/B)$.
Note that $QH^*(G/B)$ also has a natural basis of Schubert classes $\sigma^w$'s over $\mathbb{Q}[\mathbf{q}]$.
 As we will see in section \ref{secdefgrading}, there exists a  grading map $gr$ giving
                 gradings  $gr(q_\lambda \sigma^w)\in \mathbb{Z}^{r+1}$ for the ($\mathbb{Q}$-)basis $q_\lambda \sigma^w$'s.
 The Peterson-Woodward comparison formula in \cite{wo} (Proposition \ref{comparison})
   plays a key role in defining $gr$. It is the only  known formula that characterizes  the relations of genus zero 3-pointed Gromov-Witten invariants  between
        $G/B$ and a general $G/P$ explicitly.
             $gr$ defines a $\mathbb{Z}^{r+1}$-filtration  $\mathcal{F}=\{F_{\mathbf{a}}\}_{\mathbf{a}\in \mathbb{Z}^{r+1}}$ of subspaces in $QH^*(G/B)$, generalizing
            Example \ref{exampfiltrforA2}. The next theorem says that $\mathcal{F}$ respects the quantum product structure.

\begin{thm}\label{mainthm} $QH^*(G/B)$ is a $\mathbb{Z}^{r+1}$-filtered algebra   with filtration $\mathcal{F}$. 
\end{thm}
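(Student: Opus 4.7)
The plan is to reduce the multiplicativity $F_{\mathbf{a}}\star F_{\mathbf{b}}\subseteq F_{\mathbf{a}+\mathbf{b}}$ to an inequality on gradings of Gromov--Witten contributions, and then invoke the Peterson--Woodward comparison formula. By $\mathbb{Q}$-bilinearity it is enough to show that for any two basis elements $q_\lambda\sigma^u$ and $q_\mu\sigma^v$, every monomial $q_{\lambda+\mu+\nu}\sigma^w$ appearing with nonzero coefficient in $q_\lambda\sigma^u\star q_\mu\sigma^v$ satisfies $gr(q_{\lambda+\mu+\nu}\sigma^w)\leq gr(q_\lambda\sigma^u)+gr(q_\mu\sigma^v)$ componentwise. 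Since $gr$ is built to be additive in the $q$-variables, this reduces to the purely Gromov--Witten statement: whenever the $3$-pointed genus-zero invariant $N_{u,v}^{w,\nu}$ is nonzero,
\[
gr(q_\nu\sigma^w)\leq gr(\sigma^u)+gr(\sigma^v)
\]
componentwise in $\mathbb{Z}^{r+1}$.

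Second, because the partial order on $\mathbb{Z}^{r+1}$ is componentwise, it suffices to verify each coordinate inequality independently. By construction the $j$-th coordinate of $gr$ is attached to a single flag step $P_{j-1}\subset P_j$, so one may treat each step separately and reduce to the case of a single parabolic $B\subset P\subset G$ with a $\mathbb{Z}^2$-valued grading; the general theorem then follows by iterating along the chain $B=P_0\subsetneq P_1\subsetneq\cdots\subsetneq P_{r+1}=G$. In this reduced setting, the first coordinate of $gr$ encodes the Leray-type filtration associated to $P/B\to G/B\to G/P$; its compatibility with the classical cup product is standard, and the only additional check is that the quantum corrections do not break it, which follows from the observation that a nonzero quantum term can only shift the first coordinate along the $G/P$-direction.

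The second coordinate is the nontrivial one. Here the Peterson--Woodward formula (Proposition \ref{comparison}) identifies each nonzero GW invariant $N_{u,v}^{w,\nu}$ on $G/B$ with a GW invariant on $G/P$, provided $\nu$ is the canonical $B$-lift of an effective degree on $G/P$ (and it forces the invariant to vanish otherwise). The grading map $gr$ defined in \S\ref{secdefgrading} is engineered precisely so that this identification translates into the required second-coordinate inequality. I expect the main obstacle to be verifying that the explicit weights assigned to the $q_i$'s interact correctly with the Peterson--Woodward lift $\lambda\mapsto\lambda_P^B$ and the accompanying $W_P$-coset adjustment on Weyl group elements: concretely, one must show that, for every effective $\nu$, the combinatorial shift predicted by the formula never exceeds the second-coordinate grading difference. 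This is a root-system computation using the action of $W_P$ on the coroot lattice, and it constitutes the technical heart of the proof.
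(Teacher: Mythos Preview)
Your proposal has two genuine gaps that would prevent the argument from going through.

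First, the order on $\mathbb{Z}^{r+1}$ used in the paper is \emph{lexicographical}, not componentwise (this is stated explicitly in \S\ref{subsecmainthm}). Hence your reduction ``verify each coordinate inequality independently'' is not available, and the further reduction to a single parabolic step $B\subset P\subset G$ with a $\mathbb{Z}^2$-grading does not follow. In fact the coordinates of $gr$ interact: the Key Lemma in the paper is a lex inequality, and its proof (e.g.\ Proposition~\ref{maingracompare}, the use of virtual coroots in \S\ref{subseckeylemma}) genuinely mixes coordinates.

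Second, you misapply Peterson--Woodward. The comparison formula takes a nonzero invariant on $G/P$ and exhibits it as a specific invariant on $G/B$; it does \emph{not} say that every nonzero $N_{u,v}^{w,\nu}$ on $G/B$ either arises this way or vanishes. Already in the $F\ell_3$ example, $\sigma^{s_1}\star\sigma^{s_1}=\sigma^{s_2s_1}+q_1$ contributes the term $q_1$ with $\nu=\alpha_1^\vee\in Q_P^\vee$, which is not the canonical $B$-lift of any nonzero $\lambda_P$, yet the invariant is nonzero. Moreover PW requires $u,v,w\in W^P$, whereas you need the inequality for arbitrary $u,v\in W$. So PW alone cannot control the second coordinate in the way you outline.

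The paper's route is quite different. It does not attempt to bound $gr(q_\nu\sigma^w)$ for a general product $\sigma^u\star\sigma^v$ directly. Instead it proceeds by induction on $\ell(w)$: the case $\ell(w)=1$ is multiplication by a Chevalley generator $\sigma^{s_i}$, and here the quantum Chevalley formula lists the terms explicitly, so one needs only the \emph{Key Lemma} (a lex inequality for each term $us_\gamma$ or $q_{\gamma^\vee}us_\gamma$ appearing in $\sigma^u\star\sigma^{s_i}$). For $\ell(w)>1$, Lemma~\ref{lemmaformainthm} produces $w'$ and $s_j$ with $gr(w)=gr(w')+gr(s_j)$ and $N_{s_j,w'}^{w,0}>0$; writing $\sigma^{w'}\star\sigma^{s_j}=c\,\sigma^w+(\text{other nonnegative terms})$ and using the induction hypothesis plus \emph{non-negativity of all quantum Littlewood--Richardson coefficients} (so no cancellation can occur) forces $\sigma^w\star q_\lambda\sigma^u\in F_{\mathbf{a}+\mathbf{b}}$. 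Peterson--Woodward enters only in the \emph{definition} of $gr$ (via the PW-liftings $\psi_{\Delta_{j+1},\Delta_j}$), not as the mechanism that proves multiplicativity; the technical heart is the root-system proof of the Key Lemma in \S3.
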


We can obtain several important consequences as below.
\begin{thm}\label{inclusisom}
 The   vector subspace   $\mathcal{I}$,
          spanned by those  $q_\lambda\sigma^w$'s with their gradings $(d_1,\cdots, d_{r+1})$ satisfying $d_{r+1}>0$, is    an ideal of
                $QH^*(G/B)$. 
  Furthermore, there is a canonical algebra isomorphism
      $$QH^*(G/B)/\mathcal{I}  \overset{\simeq}{\longrightarrow} QH^*(P/B) .$$
\end{thm}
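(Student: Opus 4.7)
My plan is to construct a surjective algebra homomorphism $\bar\varphi: QH^*(G/B) \to QH^*(P/B)$ with kernel $\mathcal{I}$; the theorem then follows from the first isomorphism theorem. Identifying $P/B$ with the complete flag variety of the Levi $L$ of $P$, its Schubert classes are indexed by the Weyl group $W_P$ of $L$ and its quantum parameters by the simple roots $\Delta_L$ of $L$. By the construction of $gr$ in Section~\ref{secdefgrading} (via Peterson--Woodward), a basis element $q_\lambda\sigma^w$ of $QH^*(G/B)$ has $d_{r+1}(q_\lambda\sigma^w)=0$ precisely when $w\in W_P$ and $\lambda\in\mathbb{Z}_{\geq 0}\Delta_L^\vee$. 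I then define $\bar\varphi$ on basis by sending each such element to its matching counterpart in $QH^*(P/B)$ and every other basis element to $0$. By design $\bar\varphi$ is $\mathbb{Q}$-linear, surjective, and has kernel $\mathcal{I}$ as a subspace, so only multiplicativity requires proof.

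The core input is a \emph{vertical-curve principle} for Gromov--Witten invariants: a genus-$0$ stable map to $G/B$ whose curve class $\nu$ lies in the coroot sublattice $Q^\vee_L$ of $L$ factors through the closed embedding $i: P/B\hookrightarrow G/B$. This yields the identity
\[\langle\sigma^u, \sigma^v, \sigma^{w^{-1}}\rangle^{G/B}_{\nu} \;=\; \langle i^*\sigma^u, i^*\sigma^v, i^*\sigma^{w^{-1}}\rangle^{P/B}_{\nu}\]
for all $u, v, w \in W$ and $\nu \in Q^\vee_L$, which agrees with the quantum product in $QH^*(P/B)$ when $u, v, w \in W_P$ and vanishes as soon as any of $u, v, w$ lies outside $W_P$, since $i^*\sigma^x = 0$ for $x\notin W_P$. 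Combining this with an obvious support argument for $q$-exponents (a simple coroot outside $\Delta_L^\vee$ present in $\lambda$ cannot be cancelled by adding elements of $Q^\vee_{\geq 0}$), one shows: (i) for fiber-type $x, y$, the $d_{r+1}=0$ portion of $x\star y$ in $QH^*(G/B)$ agrees under $\bar\varphi$ with the quantum product in $QH^*(P/B)$; (ii) if $x = q_\lambda\sigma^u\in\mathcal{I}$, then every basis component of $x\star y$ still has $d_{r+1}>0$, giving $\bar\varphi(x\star y)=0=\bar\varphi(x)\star\bar\varphi(y)$.

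The principal obstacle is the vanishing in~(ii): Theorem~\ref{mainthm} only provides an \emph{upper bound} on the grading of a quantum product under the order defining $\mathcal{F}$, and on its own does not exclude fiber-type components from products involving an $\mathcal{I}$-factor. To rule these out one must supplement the filtration bound with the vertical-curve vanishing above together with the positivity of $q$-exponents in the Gromov--Witten expansion. Once multiplicativity of $\bar\varphi$ is established, $\mathcal{I}=\ker\bar\varphi$ is automatically an ideal and $\bar\varphi$ descends to the canonical algebra isomorphism $QH^*(G/B)/\mathcal{I}\xrightarrow{\simeq} QH^*(P/B)$.
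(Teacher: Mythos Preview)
Your core geometric input is stated incorrectly and, as written, is false. A stable map $f:C\to G/B$ with $f_*[C]=\nu\in Q_L^\vee$ lands in \emph{some} fiber $gP/B$ of $\pi:G/B\to G/P$, not in the distinguished fiber $i(P/B)$. Thus $\overline{M}_{0,3}(G/B,\nu)$ is a bundle over $G/P$ with fiber $\overline{M}_{0,3}(P/B,\nu)$, and its expected dimension exceeds that of $\overline{M}_{0,3}(P/B,\nu)$ by $\dim G/P$; your displayed three-point identity therefore fails on dimension grounds alone. Concretely, for $G=SL_3$, $\Delta_P=\{\alpha_1\}$, $\nu=\alpha_1^\vee$ one has $I^{G/B}_\nu(\sigma^{s_1},\sigma^{s_1},\sigma^{s_1s_2s_1})=N_{s_1,s_1}^{1,\nu}=1$, whereas $i^*\sigma^{s_1s_2s_1}=0$ makes your right-hand side vanish. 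The equality of structure constants $N_{u,v}^{w,\nu}(G/B)=N_{u,v}^{w,\nu}(P/B)$ for $u,v,w\in W_P$, $\nu\in Q_P^\vee$ and the vanishing for $u\notin W_P$ that you actually need are true, but they do not follow from naive restriction: the Poincar\'e dual $(\sigma^w)^\sharp=\sigma^{\omega w}$ on $G/B$ never lies in $W_P$ when $w\in W_P$, so the third insertion always restricts to zero even when the structure constant is nonzero. A correct geometric proof must integrate over the base $G/P$ and track how translates of Schubert varieties meet fibers; this is substantially more than what you have sketched.

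The paper avoids all of this. It proves that $\mathcal{I}$ is an ideal (Lemma~\ref{lemforinclus}) by induction on $\ell(v)$ via the quantum Chevalley formula, using Theorem~\ref{mainthm} and nonnegativity of structure constants to propagate the property $\sigma^v\star\mathcal{I}\subset\mathcal{I}$. For the isomorphism it defines $\bar\varphi$ exactly as you do, notes that $(QH^*(G/B)/\mathcal{I})|_{\mathbf q=0}\cong H^*(P/B)$, checks $\bar\varphi(\overline{\sigma^{s_i}}\star\overline{\sigma^{s_j}})=\sigma^{s_i}\star_f\sigma^{s_j}$ for $\alpha_i,\alpha_j\in\Delta_P$ directly from Chevalley, and concludes by the standard fact that $QH^*(P/B)$ is generated over $\mathbb{Q}[q_1,\dots,q_r]$ by the divisor classes.
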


  Since $QH^*(G/B)$ has a $\mathbb{Z}^{r+1}$-filtration $\mathcal{F}$,
          we obtain 
             an associated $\mathbb{Z}^{r+1}$-graded algebra
       $Gr^{\mathcal{F}}(QH^*(G/B))=\bigoplus_{\mathbf{a}\in \mathbb{Z}^{r+1}} Gr_\mathbf{a}^{\mathcal{F}},\mbox{ where }
                Gr_{\mathbf{a}}^{\mathcal{F}}:=F_{\mathbf{a}}\big/\cup_{\mathbf{b}<\mathbf{a}}F_{\mathbf{b}}.$
For each $j$, 
we denote
   $Gr_{(j)}^{\mathcal{F}}(QH^*(G/B)):=\bigoplus_{i\in \mathbb{Z}}Gr_{i\mathbf{e}_j}^{\mathcal{F}}.$

\begin{thm}\label{isomfordirection}
   For each $1\leq j\leq r$,  there exists   a canonical
    algebra isomorphism,
     $$\quad\Psi_j: QH^*({P}_j/{P}_{j-1}) \overset{\simeq}{\longrightarrow}  Gr_{(j)}^{\mathcal{F}}(QH^*(G/B)).  $$
   Furthermore if $P/B\cong F\ell_{r+1}$, then
       there exists   a canonical    algebra isomorphism,
     $$\quad\Psi_{r+1}: QH^*(G/P)  \overset{\simeq}{\longrightarrow}  Gr_{(r+1)}^{\mathcal{F}}(QH^*(G/B)).  $$
 \end{thm}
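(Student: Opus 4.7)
The plan is to define each $\Psi_j$ explicitly on a Schubert basis and then verify three things: (i) well-definedness, (ii) the algebra-homomorphism property, and (iii) bijectivity. For $j \leq r$, let $v \in W^{P_j}_{P_{j-1}}$ index a Schubert class $\sigma^v \in QH^*(P_j/P_{j-1})$, and set $\Psi_j(\sigma^v) = [\sigma^v] \in Gr_{(j)}^{\mathcal{F}}$, where $\sigma^v$ is viewed inside $QH^*(G/B)$ via the embedding $W^{P_j}_{P_{j-1}} \hookrightarrow W$. The quantum parameters of $QH^*(P_j/P_{j-1})$ are sent to the coroot monomials $q_\lambda$ in $QH^*(G/B)$ dictated by the Peterson-Woodward comparison formula (Proposition \ref{comparison}) applied to $(P_j, P_{j-1})$; note from Example \ref{exampfiltrforA2} that this lift is generically twisted by a Schubert class (e.g.\ $q_2 \mapsto q_2 \sigma^{s_1}$ there, in the $j=r+1$ direction). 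The definition of $\Psi_{r+1}$ is analogous, using Schubert classes of $QH^*(G/P)$ indexed by $W^P$.

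Well-definedness (i) amounts to showing that the grading of each image lies purely on the $j$-th axis, and this should read off directly from the construction of $gr$ in Section \ref{secdefgrading}: that map is tailored so that Schubert classes from the fiber at step $j$, together with their Peterson-Woodward-lifted quantum parameters, all land on the $j$-th coordinate axis. Bijectivity (iii) is then a dimension count: the Schubert basis of $QH^*(P_j/P_{j-1})$ over its quantum parameters is in bijection with the set of $q_\lambda\sigma^w$'s whose $gr$ lies in $\mathbb{Z}\cdot \mathbf{e}_j$, and these span $Gr_{(j)}^{\mathcal{F}}$ by construction, so once (ii) is established, $\Psi_j$ sends a vector-space basis to a vector-space basis.

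The algebra-homomorphism property (ii) is the core of the proof. By Theorem \ref{mainthm}, the quantum product on $QH^*(G/B)$ descends to a product on $Gr^{\mathcal{F}}$, so for $u, v \in W^{P_j}_{P_{j-1}}$ (resp.\ $W^P$) one must compare the axis-$j$ part of $\sigma^u \star \sigma^v$ in $QH^*(G/B)$, modulo lower filtration, with the Peterson-Woodward image of $\sigma^u \star \sigma^v$ computed in $QH^*(P_j/P_{j-1})$ (resp.\ $QH^*(G/P)$). For $j \leq r$, this comparison can be transported via Theorem \ref{inclusisom} to $QH^*(P/B) \cong QH^*(G'/B')$ and handled inductively on the rank, since the first $r$ coordinates of $\mathcal{F}$ descend to the natural iterated-fibration filtration on $QH^*(P/B)$, and the inductive hypothesis supplies the required isomorphism at each level below. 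The remaining case $j = r+1$ is where the hypothesis $P/B \cong F\ell_{r+1}$ is decisive: the resulting type-$A_r$ structure of the Levi makes the Peterson-Woodward identifications rigid enough for the axis-$(r+1)$ comparison to go through directly.

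The main obstacle will be verifying (ii) in the $j = r+1$ case. The Peterson-Woodward formula identifies each quantum parameter of $G/P$ with a coroot monomial $q_\lambda$ in $G/B$, but with a twist involving the longest element of a parabolic subgroup of $W$; ensuring that this twist preserves axis-$(r+1)$-purity when combined with quantum products is delicate, and it is precisely what forces the representatives along the $(r+1)$-axis to be genuinely twisted objects such as $q_2\sigma^{s_1}$ rather than bare $q$-monomials. The type-$A_r$ assumption on the Levi supplies the explicit combinatorial description of the twist that one expects to make the comparison tractable; without this hypothesis, the twist behavior in general types appears to obstruct the clean lift of the $QH^*(G/P)$-multiplication to $Gr_{(r+1)}^{\mathcal{F}}$.
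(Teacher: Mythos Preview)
Your overall architecture---define $\Psi_j$ on the Schubert/quantum-parameter basis via the Peterson--Woodward lift, then verify well-definedness, multiplicativity, and bijectivity---matches the paper's. But two load-bearing steps are glossed over, and in one place your proposed route diverges from the paper's in a way that does not obviously work.

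\textbf{For $j \leq r$.} The paper does \emph{not} proceed by induction on rank via Theorem~\ref{inclusisom}. Instead it proves directly (Lemma~\ref{uniqAtypegrad}) that each graded piece $Gr^{\mathcal{F}}_{\mathbf{a}}$ for $\mathbf{a} \in \bigoplus_{i=1}^{\varsigma}\mathbb{Z}\mathbf{e}_i$ is one-dimensional; this uses the explicit lower-triangular form of the grading matrix $(m_{i,j})$ and is not a mere read-off from the definition of $gr$. Once one-dimensionality is known, the product $A_{\mathbf{a}} \star A_{\mathbf{e}_j}$ is automatically $c\,A_{\mathbf{a}+\mathbf{e}_j}$ for some scalar $c$, and the paper then invokes Lemma~\ref{achieve}, which constructs an explicit $\gamma \in R^+$ realizing the leading term in the quantum Chevalley formula, to pin down $c=1$. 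Your ``dimension count'' for bijectivity is exactly the content of Lemma~\ref{uniqAtypegrad}, and your inductive reduction to $QH^*(P/B)$ does not supply a replacement for Lemma~\ref{achieve}.

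\textbf{For $j = r+1$.} You are right that Peterson--Woodward handles products $\sigma^u \star_P \sigma^v$ of Schubert classes, but two further facts are needed and are not consequences of Proposition~\ref{comparison} alone. First, one must know $gr_r\bigl(\psi_{\Delta,\Delta_P}(1,q_{\lambda_P})\bigr) = \mathbf{0}$ for \emph{every} $q_{\lambda_P} \in QH^*(G/P)$, not only those appearing in a single product; this is Proposition~\ref{allgradcomp}, proved via the virtual-coroot machinery, and it is what makes $\Psi_{r+1}$ well-defined and bijective. Second, to check that the twisted lifts of quantum parameters multiply correctly, one needs $\overline{\sigma^u}\star\overline{\sigma^v} = \overline{\sigma^{vu}}$ in the graded algebra for $u \in W_P$ and $v \in W^P$; this is obtained from Proposition~\ref{gracomponemore}, which says that in $\sigma^v \star \sigma^{s_j}$ for $v \in W^P$ and $j \leq r$, every term other than $\sigma^{vs_j}$ lies in strictly lower filtration. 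That proposition in turn rests on a sharpening of the Key Lemma, and without it the phrase ``the axis-$(r+1)$ comparison goes through directly'' hides the actual work.
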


As a consequence, we have the following results for any $G$.
\begin{thm}\label{baseisom}
 Suppose $P/B\cong F\ell_{r+1}$. Then there exists a    subalgebra  $\mathcal{A}$   of  $QH^*(G/B)$ together with  an ideal $\mathcal{J}$  of $\mathcal{A}$, such that
    $QH^*(G/P)$ is canonically isomorphic to $\mathcal{A}/\mathcal{J}$ as algebras.
\end{thm}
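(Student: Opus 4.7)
My plan is to derive the statement from Theorem \ref{isomfordirection}, which under the hypothesis $P/B\cong F\ell_{r+1}$ provides the canonical algebra isomorphism
$\Psi_{r+1}: QH^*(G/P) \overset{\simeq}{\longrightarrow} Gr_{(r+1)}^{\mathcal{F}}(QH^*(G/B))$.
It suffices to realize the codomain as a quotient $\mathcal{A}/\mathcal{J}$ of a subalgebra of $QH^*(G/B)$. Following the blueprint of Example \ref{exampfiltrforA2}, I would set
$$\mathcal{A} := \bigcup_{i\geq 0} F_{i\mathbf{e}_{r+1}} \qquad \text{and} \qquad \mathcal{J} := F_{-\mathbf{e}_{r+1}},$$
where $\mathbf{e}_{r+1}$ is the last standard basis vector of $\mathbb{Z}^{r+1}$.

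That $\mathcal{A}$ is a subalgebra follows immediately from Theorem \ref{mainthm}: the filtration axiom gives $F_{i\mathbf{e}_{r+1}}\star F_{j\mathbf{e}_{r+1}} \subset F_{(i+j)\mathbf{e}_{r+1}} \subset \mathcal{A}$, while the inclusion $\mathcal{J}\subset \mathcal{A}$ is automatic. The more delicate point is that $\mathcal{J}$ is an ideal of $\mathcal{A}$: an abstract application of Theorem \ref{mainthm} only yields $a\star j \in F_{(i-1)\mathbf{e}_{r+1}}$ for $a\in F_{i\mathbf{e}_{r+1}}$ and $j\in\mathcal{J}$, a space strictly larger than $\mathcal{J}$ as soon as $i\geq 1$. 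To close this gap I would pass to the Schubert decomposition and use the additivity of $gr$ on the basis $\{q_\lambda\sigma^w\}$: each Schubert class in $\mathcal{J}$ has grading whose leading $r$ coordinates are strictly negative in the order used to define $\mathcal{F}$, and this negativity is preserved under quantum multiplication by any Schubert class whose leading $r$ coordinates are non-positive, so every term appearing in the Schubert expansion of $a\star j$ again lies in $\mathcal{J}$.

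With $\mathcal{A}$ and $\mathcal{J}$ in hand, I would construct a canonical algebra isomorphism
$\eta: \mathcal{A}/\mathcal{J} \overset{\simeq}{\longrightarrow} Gr_{(r+1)}^{\mathcal{F}}(QH^*(G/B))$.
Every Schubert basis element of $\mathcal{A}$ either has grading of the form $i\mathbf{e}_{r+1}$ for some $i\geq 0$, in which case it descends to a well-defined class in $Gr_{i\mathbf{e}_{r+1}}^{\mathcal{F}}$, or it belongs to $\mathcal{J}$ and therefore descends to zero. Assembling these projections produces a surjection $\mathcal{A}\twoheadrightarrow Gr_{(r+1)}^{\mathcal{F}}$ with kernel precisely $\mathcal{J}$. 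The algebra homomorphism property reduces to the standard compatibility of a filtered product with its associated graded, which is immediate from Theorem \ref{mainthm} once the ideal property of $\mathcal{J}$ has separated the ``on-axis'' contributions. Composing $\eta^{-1}$ with $\Psi_{r+1}$ then produces the desired isomorphism $QH^*(G/P) \overset{\simeq}{\longrightarrow} \mathcal{A}/\mathcal{J}$.

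The main obstacle will be verifying that $\mathcal{J}$ is an ideal of $\mathcal{A}$; this cannot be settled by the filtration axiom alone and instead requires a finer analysis unfolding the Peterson-Woodward-driven definition of the grading $gr$ on the Schubert basis, to confirm that the leading $r$ coordinates of gradings behave as claimed under $\star$.
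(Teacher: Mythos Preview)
Your proposal is correct and follows the paper's route almost verbatim: the paper defines $\mathcal{A}$ and $\mathcal{J}$ exactly as you do (the remark following the proof confirms $\mathcal{A}=\bigcup_{i\geq0}F_{i\mathbf{e}_{r+1}}$ and $\mathcal{J}=F_{-\mathbf{e}_{r+1}}$), identifies $\mathcal{A}/\mathcal{J}$ with $Gr_{(r+1)}^{\mathcal{F}}(QH^*(G/B))$, and then invokes Theorem~\ref{isomfordirection}.

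One correction: you overestimate the difficulty of the ideal check. The paper derives it directly from Theorem~\ref{mainthm} together with the elementary observation, immediate from Definition~\ref{defgrading}, that the $(r{+}1)$-th coordinate of $gr(q_\lambda w)$ is nonnegative for every $q_\lambda w\in QH^*(G/B)$. This gives the equivalence $gr(q_\lambda w)<\mathbf{0}\Longleftrightarrow gr_r(q_\lambda w)<\mathbf{0}$; hence for Schubert basis elements $a\in\mathcal{A}$ (so $gr_r(a)\leq\mathbf{0}$) and $j\in\mathcal{J}$ (so $gr_r(j)<\mathbf{0}$), the filtration axiom applied at the actual gradings $gr(a)$, $gr(j)$---rather than at the cruder level $i\mathbf{e}_{r+1}$---forces every term of $a\star j$ to have $gr_r$ bounded above by $gr_r(a)+gr_r(j)<\mathbf{0}$, and thus to lie in $\mathcal{J}$. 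No ``finer analysis unfolding the Peterson--Woodward-driven definition'' is needed beyond this one-line nonnegativity fact; your anticipated obstacle dissolves.
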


 \begin{thm}\label{localizisom}
Suppose $P/B\cong F\ell_{r+1}$.  Then as graded algebras $Gr^{\mathcal{F}}(QH^*(G/B))$ is isomorphic to
         $ QH^*(\mathbb{P}^1)\otimes \cdots\otimes QH^*(\mathbb{P}^r)\otimes QH^*(G/P)$ after localization.
\end{thm}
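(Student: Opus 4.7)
The plan is to assemble the direction-wise isomorphisms of Theorem~\ref{isomfordirection} into one global algebra map and show it becomes bijective after inverting the quantum parameters. Since $P/B\cong F\ell_{r+1}$, each $P_j/P_{j-1}$ for $1\le j\le r$ is $\mathbb{P}^j$, so Theorem~\ref{isomfordirection} supplies algebra isomorphisms $\Psi_j:QH^*(\mathbb{P}^j)\to Gr_{(j)}^{\mathcal{F}}(QH^*(G/B))$ for $1\le j\le r$ together with $\Psi_{r+1}:QH^*(G/P)\to Gr_{(r+1)}^{\mathcal{F}}(QH^*(G/B))$. Their images lie along pairwise distinct coordinate axes $\mathbb{Z}\mathbf{e}_j$ of the $\mathbb{Z}^{r+1}$-grading, and since $Gr^{\mathcal{F}}(QH^*(G/B))$ inherits commutativity from $QH^*(G/B)$, multiplication in the graded algebra assembles them into a single $\mathbb{Q}$-algebra homomorphism
\[
\Psi:QH^*(\mathbb{P}^1)\otimes\cdots\otimes QH^*(\mathbb{P}^r)\otimes QH^*(G/P)\longrightarrow Gr^{\mathcal{F}}(QH^*(G/B)),\quad a_1\otimes\cdots\otimes a_{r+1}\mapsto\prod_{j}\Psi_j(a_j).
\]

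I would then localize both sides at all quantum parameters. On the source, inverting the Novikov variables of every $QH^*(\mathbb{P}^j)$ and of $QH^*(G/P)$ yields a free module over the resulting Laurent Novikov ring whose rank equals $\prod_{j=1}^{r}(j+1)\cdot|W_P\backslash W|=|W_P|\cdot|W_P\backslash W|=|W|$. On the target, the Peterson--Woodward comparison (Proposition~\ref{comparison}) identifies each Novikov variable of $QH^*(G/P)$, viewed inside $Gr_{(r+1)}^{\mathcal{F}}$, with an explicit monomial in the $q_i$'s of $G/B$ multiplied by a Schubert class coming from the fiber factors, so after inverting the $q_i$'s of $G/B$ together with the hyperplane images $\Psi_j(y_j)$ the two Novikov rings are identified and the target becomes free of rank $|W|$ over the same localized base.

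Injectivity of the localized $\Psi$ follows from the direction-wise structure: basis monomials of the source land in distinct multi-degrees of $Gr^{\mathcal{F}}$, and each $\Psi_j$ is itself injective. The main obstacle is surjectivity, which amounts to showing that every Schubert class $\sigma^w$ of $G/B$, viewed in the associated graded, is expressible up to a unit as a product of elements drawn from the axial subalgebras $Gr_{(j)}^{\mathcal{F}}$. I would prove this by induction along the iterated fibration $B=P_0\subsetneq P_1\subsetneq\cdots\subsetneq P_r=P$, peeling off one fiber direction at a time and applying Theorem~\ref{inclusisom} at each stage to reduce to the next smaller flag variety. The delicate point is to track, via Proposition~\ref{comparison}, precisely which Novikov monomials appear when transporting classes across each $P_j/P_{j-1}$; once these are shown to become units after the chosen localization, the rank count forces $\Psi$ to be the desired isomorphism.
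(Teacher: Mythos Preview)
Your overall architecture---assemble the axial isomorphisms $\Psi_j$ of Theorem~\ref{isomfordirection} into a single multiplicative map $\Psi$ and then argue bijectivity after localization---matches the paper's. But the paper's execution is far shorter, and the reason is that you have bypassed the two structural facts that make bijectivity automatic.

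The paper observes (Lemma~\ref{uniqAtypegrad}) that for every $\mathbf{a}\in\bigoplus_{i=1}^{r}\mathbb{Z}\mathbf{e}_i$ there is a \emph{unique} $q_\lambda w$ with $gr(q_\lambda w)=\mathbf{a}$, so after inverting only $q_1,\dots,q_r$ each graded piece $Gr^{\mathcal{F}}_{\mathbf{a}}$ along the fiber lattice is one-dimensional, generated by some $A_{\mathbf{a}}$. Then Lemma~\ref{achieve} (used inside the proof of Theorem~\ref{isomfordirection}) gives the exact identity $A_{\mathbf{a}}\star A_{\mathbf{b}}=A_{\mathbf{a}+\mathbf{b}}$. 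Consequently each $A_{\mathbf{a}'}$ is a unit in the associated graded, and multiplication by it is an isomorphism $Gr^{\mathcal{F}}_{c\mathbf{e}_{r+1}}\to Gr^{\mathcal{F}}_{\mathbf{a}'+c\mathbf{e}_{r+1}}$. Combined with the already-established isomorphism $\Psi_{r+1}$ onto $Gr^{\mathcal{F}}_{(r+1)}$, this gives bijectivity of $\Psi$ degree by degree with no further work. No rank count, no induction along the tower, and in particular no need to localize the Novikov variables of $QH^*(G/P)$: only $t_1,\dots,t_r$ on the source and $q_1,\dots,q_r$ on the target are inverted.

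Your proposed route has two soft spots. First, your injectivity claim (``basis monomials land in distinct multi-degrees'') is not enough: within a fixed multi-degree $(a_1,\dots,a_{r+1})$ the source piece has dimension $\dim QH^*(G/P)_{a_{r+1}}$, which need not be $1$, so you still need that multiplying by the image of the $\mathbb{P}^j$-factors is injective---which is exactly the invertibility of $A_{\mathbf{a}'}$ you did not invoke. Second, your surjectivity plan (peel off one fiber direction at a time via Theorem~\ref{inclusisom}) is a genuine detour: Theorem~\ref{inclusisom} concerns a quotient $QH^*(G/B)/\mathcal{I}$, not the associated graded, and turning that into a statement about $Gr^{\mathcal{F}}$ would require precisely the degree-by-degree control that Lemma~\ref{uniqAtypegrad} and the identity $A_{\mathbf{a}}\star A_{\mathbf{b}}=A_{\mathbf{a}+\mathbf{b}}$ already hand you directly.
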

We should point out that the requirement  ``$P/B\cong F\ell_{r+1}$"    in Theorem \ref{baseisom}
   and Theorem \ref{localizisom} is   not a  strong assumption,  because  both of theorems  can be easily generalized to
     the case ``$P/B$ is isomorphic to a product of $F\ell_{k}$'s" (see section \ref{secconclusion}). As a consequence, all $G/P$'s for $G$ being of $A$-type or $G_2$-type satisfy this
      assumption. Furthermore for each   remaining type, more than half of the homogeneous varieties $G/P$'s also satisfy this.
      (We could also show Theorem \ref{baseisom} holds for any $G/P$ with $G=Sp(2n, \mathbb{C})$.)

 As we saw in Example \ref{exampfiltrforA2}, the gradings of elements in $QH^*(F\ell_3)$ only form a proper sub-semigroup  $S$ of $\mathbb{Z}^2$,
  which looks like stairs, so that the $\mathbb{Z}^{2}$-filtration    comes from an $S$-filtration. In general, the $\mathbb{Z}^{r+1}$-filtration    comes from a similar filtration.
  For this  reason, we need   localization  to obtain the analog of graded-algebra isomorphism (in Theorem \ref{localizisom}).
In section \ref{secproof22}, we will restate theorems \ref{isomfordirection},  \ref{baseisom} and   \ref{localizisom} more concretely.
   As we will see later, all the relevant maps generalize the
  classical ones  in an obvious way, as in Example \ref{exampfiltrforA2}.

  Our results relate the quantum cohomologies  of the total space and the base space of the fibration  $P/B\rightarrow G/B\longrightarrow G/P$.
  Similar structures occur when one studies
            the   relationships of $J$-functions  between an  abelian quotient  and a nonabelian quotient.
         Such relations were studied by   Bertram, Ciocan-Fontanine and Kim in \cite{bck22} and \cite{bck}. (See also \cite{wozi}.)
         There were also relevant studies by Liu-Liu-Yau \cite{chllyau} and Paksoy  \cite{paksoy} by using mirror principle 
                   \cite{llyau11} .

 Let us   mention two more important problems on the study of $QH^*(G/P)$, for which our theorems may also be helpful.
  One can see the excellent survey   \cite{fu11} and references therein
           for more details on the  developments.
  As mentioned before,  the (small) quantum cohomology ring $QH^*(G/P)$ has a basis of  Schubert classes  $\sigma^w$'s over
          $\mathbb{Q}[\mathbf{q}]$.
   In order to understand $QH^*(G/P)$, one would like to have (i) a (good) presentation of the ring structure on  $QH^*(G/P)$ and
       (ii) a (nice) formula (or algorithm) for the quantum Schubert structure constants $N_{u, v}^{w, \lambda_P}$'s
                in the quantum product $\sigma^u\star \sigma^v=\sum_{w, \lambda_P}N_{u, v}^{w, \lambda_P}q_{\lambda_P}\sigma^w$.
   For classical cohomology $H^*(G/P)$, these  natural and important problems have been solved in
           \cite{borel}  for  (i) and  in   \cite{koku} and  \cite{duan}  for  (ii). However, for quantum cohomology $QH^*(G/P)$,
  the answer to (i) is only known in certain cases, for instance when $G$ is of $A$-type (see \cite{kimpartflag}, \cite{assa}) or $P=B$ is a Borel subgroup   \cite{kim}.
For  problem (ii),   there were early studies for a few cases, including    complex Grassmannians (see the survey \cite{fu11})
       and complete flag varieties of $A$-type \cite{fominGP},
          besides the quantum Chevalley formula \cite{fw} which works for all cases. Recently, Mihalcea \cite{mih} has given an algorithm and the authors
      (\cite{leungli22}, \cite{czli}) have given a
      combinatorial formula for these structure constants.

 All these problems were discussed  in the unpublished work \cite{peterson} by  Dale Peterson.
 In \cite{wo},  Woodward proves a comparison formula of Peterson.
  The Peterson-Woodward comparison formula explicitly characterizes
     the relations of  the quantum Schubert structure constants between $QH^*(G/P)$ and $QH^*(G/B)$. However,  it does not tell us the relations of the algebra structures
       between them.  Along Peterson's approach,
     Lam and Shimozono \cite{lamshi}   show  that the torus-equivariant extension of $QH^*(G/P)$ is isomorphic to
       a quotient of the torus-equivariant homology of a based loop group after localization.  In \cite{rietsch}, K. Rietsch discusses the relationships between
     Peterson's work and mirror symmetry.
 In \cite{peterson},   Peterson had also claimed there was an analogous isomorphism for the (un-iterated) fibration
     $P/B\rightarrow G/B\longrightarrow G/P$  in terms of torus-equivariant homology of based loop groups after localization.
 We were motivated by his claim and the results by Woodward and Lam-Shimozono.
  We succeeded in obtaining  natural generalizations of the classical isomorphisms.  It is interesting to compare our results with Peterson's claim.
  It is also interesting to compare our Theorem \ref{baseisom} with Theorem 10.16 of \cite{lamshi} by Lam and Shimozono. As commented by Thomas Lam, our results should be
  related to the discussions in section 10.4 of \cite{lamshi}.

  We hope our results could be used to solve
                 problem (i) by combining with Kim's early work \cite{kim}, where  a nice presentation  of the ring structure on
                      the complexified quantum cohomology  $QH^*(G/B)$ was given.

  This paper is organized as follows. In section 2, we  define a grading map and prove our main result, Theorem \ref{mainthm}, assuming   the Key Lemma.
     Then we devote the whole   section 3 to  the proof of    the Key Lemma. In section 4, we
      prove the remaining theorems   discussed as above. In section 5, we show how to generalize our results to the general case when $P/B$ is reducible. Finally in section 6,
       we give an appendix which deals with exceptional cases in the proof of the Key Lemma.
Our proofs are   combinatorial in nature. We  hope to find nice  geometrical explanations of them  later.

\subsection*{Acknowledgements}
 The authors thank Baohua Fu,
Bumsig Kim, Thomas Lam, Augustin-Liviu Mare and Leonardo Constantin
Mihalcea for useful discussions. We also  thank the referee for valuable suggestions.

\section{A filtration on $QH^*(G/B)$}
\subsection{Preliminaries}\label{prelimiar}


 We recall some basic notions and fix the notations. 
  See for example \cite{hum}, \cite{humalg} for more details on Lie theory.

Let $G$ be a simply-connected complex simple Lie group of rank $n$,
   $B\subset G$ be a Borel subgroup and $P\supset B$ be a proper parabolic subgroup of $G$. Then   $P$ corresponds canonically to a proper subset $\Delta_P$ of
$\Delta$. (In particular,
            $B$ corresponds to the empty subset $\emptyset$.)
 Fix a basis of simple roots $\Delta=\{\alpha_1, \cdots, \alpha_n\}$
(with respect to $(G, B)$).
    Let $\mathfrak{h}$ denote the corresponding Cartan subalgebra, then $\mathfrak{h}^*= \bigoplus_{i=1}^n\mathbb{C}\alpha_i$.
 Let  $\{\alpha_1^\vee, \cdots, \alpha_n^\vee\}\subset\mathfrak{h}$ be the fundamental coroots and
      $\{\chi_1, \cdots, \chi_n\}\subset \mathfrak{h}^*$ be the fundamental weights. For any $1\leq i, j\leq n$, we have
        $\langle \chi_i, \alpha_j^\vee\rangle =\delta_{i, j}$ with respect to the natural pairing  $\langle\cdot, \cdot\rangle
                    :\mathfrak{h}^*\times\mathfrak{h}\rightarrow \mathbb{C}$. Furthermore, we have $\rho:={1\over 2}\sum_{\gamma\in R^+}\gamma=\sum_{i=1}^n\chi_i$.
   For each $1\leq i\leq n$, the simple reflection $s_i:=s_{\alpha_i}$ acts on $\mathfrak{h}$ and $\mathfrak{h}^*$ by
        $$s_i(\lambda)=\lambda-\langle \alpha_i, \lambda\rangle\alpha_i^\vee, \mbox{ for }\lambda\in\mathfrak{h};\quad
                s_i(\beta)=\beta-\langle \beta, \alpha_i^\vee\rangle\alpha_i, \mbox{ for }\beta\in\mathfrak{h}^*.$$
        The Weyl group $W$, which is generated by 
           $\{s_1, \cdots, s_n\}$, acts on $\mathfrak{h}$ and $\mathfrak{h}^*$  and preserves the natural pairing.
                      The root system is given by $R=W\cdot\Delta=R^+\sqcup (-R^+)$, where
                       $R^+=R\cap \bigoplus_{i=1}^n{\mathbb{Z}_{\geq 0}}\alpha_i$ is the set of positive roots.
      Thus each root $\gamma\in R$ is given by $\gamma=w(\alpha_i)$ for some $w\in W$ and $1\leq i \leq n $.   
     Then we     define $\gamma^\vee=w(\alpha_i^\vee)$ and $s_\gamma=ws_iw^{-1}\in W$, which is independent of the expressions of
           $\gamma$.

      The length $\ell(w)$ of $w\in W$ (with respect to  $\Delta$) is defined by
     $\ell(1)\triangleq 0$  and
        $\ell(w)\triangleq\min\{k~|~ w=s_{i_1}\cdots s_{i_k}\}$ for $w\neq 1$.
   An expression
  $w=s_{i_1}\cdots s_{i_\ell}$ is called \textbf{reduced} if $\ell=\ell(w)$.
    Let $\tilde P=P_{\tilde \Delta}$  denote the (standard) parabolic subgroup  corresponding to a subset  $\tilde \Delta\subset \Delta$,   $W_{\tilde P}$
      denote the subgroup generated by $\{s_j~|~ \alpha_j\in \tilde \Delta\}$ and $\omega_{\tilde P}$ denote the longest element in $W_{\tilde P}$.
    For $\bar \Delta\subset \tilde \Delta$ with $  {\bar P}:=P_{   {\bar \Delta}}$, we denote
     $W_{\tilde P}^{\bar P}:= \{w\in W_{\tilde P} |  \ell(w)\leq \ell(v),\, \forall v\in wW_{\bar P}\}$.
       Each coset in $W_{\tilde P}/W_{\bar P}$ has a unique (minimal length) representative in $W_{\tilde P}^{\bar P}\subset W_{\tilde P}\subset W$.
        In particular, 
         we have        $P_{\Delta}=G$ and    $W_{G}=W$, and simply denote $W^{\bar P}:=W_{G}^{\bar P}$ and $\omega:=\omega_{G}$.

   The (co)homology of a homogeneous variety $X=G/P$ has an additive basis of Schubert (co)homology classes indexed by $W^P$:
       $ H_*(X,\mathbb{Z})=\bigoplus_{v\in W^P}\mathbb{Z}\sigma_{v}$, \,\, $H^*(X,\mathbb{Z})=\bigoplus_{u\in W^P}\mathbb{Z}\sigma^{u}$ with
        $   \langle\sigma^{u}, \sigma_v\rangle =\delta_{u, v}$ for any $u, v\in W^P$ \cite{bgg}. In particular,
          $H_2(X,\mathbb{Z})=\bigoplus_{\alpha_i\in \Delta\setminus\Delta_P}
         \mathbb{Z}\sigma_{s_i}$.
     Set $Q^\vee=\bigoplus_{i=1}^n\mathbb{Z}\alpha_i^\vee$ and
          $Q^\vee_P=\bigoplus_{\alpha_i\in \Delta_P}\mathbb{Z}\alpha_i^\vee$. Then we can identify $H_2(X, \mathbb{Z})$ with $Q^\vee/Q^\vee_P$ canonically,
            by mapping $\sum_{\alpha_j\in \Delta\setminus\Delta_P}a_j\sigma_{s_j}$ to $\lambda_P=\sum_{\alpha_j\in \Delta\setminus\Delta_P}a_j\alpha_j^\vee+Q^\vee_P$.
        For each $\alpha_j\in \Delta\setminus\Delta_P$, we introduce a formal variable $q_{\alpha_j^\vee+Q^\vee_P}$.
          For such $\lambda_P$, we denote $q_{\lambda_P}=\prod_{\alpha_j\in \Delta\setminus\Delta_P}q_{\alpha_j^\vee+Q^\vee_P}^{a_j}$.

  Let $\overline{\mathcal{M}}_{0, m}(X, \lambda_P)$ be the moduli space of stable maps of degree $\lambda_P\in H_2(X, \mathbb{Z})$ of $m$-pointed genus zero curves into $X$  \cite{fupa},
 and $\mbox{ev}_i$ denote the $i$-th canonical evaluation map $\mbox{ev}_i: \overline{\mathcal{M}}_{0, m}(X, \lambda_P)\to X$ given by
    $\mbox{ev}_i([f: C\to X; p_1, \cdots, p_m])=f(p_i)$. The genus zero Gromov-Witten invariant for $\gamma_1,\cdots, \gamma_m\in H^*(X)=H^*(X, \mathbb{Q})$ is defined as
         $ I_{0, m, \lambda_P}(\gamma_1, \cdots, \gamma_m)=\int_{\overline{\mathcal{M}}_{0, m}(X, \lambda_P)}\mbox{ev}_1^* (\gamma_1)\cup\cdots
                            \cup \mbox{ev}_m^*(\gamma_m). $
 The
   (small) quantum product  for $a, b\in H^*(X)$ is a deformation of the cup product, defined by
  $a\star b$ $\triangleq \sum_{u\in W^P, \lambda_P\in H_2(X, \mathbb{Z})} I_{0, 3, \lambda_P}  (a, b, (\sigma^u)^\sharp) \sigma^u q_{\lambda_P}$, where
     $\{(\sigma^{u})^\sharp~|~ u\in W^P\}$ are the elements in $H^*(X)$ satisfying $\int_X(\sigma^u)^\sharp \cup\sigma^v=\delta_{u, v}$ for any $u, v\in W^P$.
  The quantum  product  $\star$ is associative, making
        $(H^*(X)\otimes\mathbb{Q}[\mathbf{q}],  \star)$  a commutative ring. This ring is denoted as  $QH^*(X)$ and called the
     (\textbf{small}) \textbf{quantum cohomology ring} of $X$.
      The same Schubert classes $\sigma^u=\sigma^u\otimes 1$ form
      a basis for $QH^*(X)$ over $\mathbb{Q}[\mathbf{q}]$ and we write
                $$\sigma^u\star \sigma^v = \sum_{w\in W^P, \lambda_P\in Q^\vee/Q^\vee_P}    N_{u,v}^{w, \lambda_P}q_{\lambda_P}\sigma^w.$$
  The coefficients $N_{u, v}^{w, \lambda_P}$'s are called the \textit{quantum Schubert structure constants.} They generalize the  well-known Littlewood-Richardson coefficients
    when $X=Gr(k, n+1)$ is a complex Grassmannian. It is also well-known that the quantum Schubert structure constants are non-negative.

    When $P=B$, we have $Q^\vee_P=0$, $W_P=\{1\}$ and $W^P=W$. In this case, we simply denote $\lambda=\lambda_P$ and $q_j=q_{\alpha_j^\vee}$. A combinatorial formula
     for $N_{u, v}^{w, \lambda}$'s has been given by the authors recently \cite{leungli22}.
     As a consequence, we can obtain the combinatorial formula for  $N_{u, v}^{w, \lambda_P}$'s for general $G/P$, due to the
       following comparison formula.
  \begin{prop}[Peterson-Woodward comparison formula  \cite{wo}; see also \cite{lamshi}]\label{comparison}
    ${}$
    \begin{enumerate}
      \item Let $\lambda_P\in Q^\vee/Q_P^\vee$. Then there is a unique $\lambda_B\in Q^\vee$ such that $\lambda_P=\lambda_B+Q_P^\vee$ and
                $\langle \alpha, \lambda_B\rangle  \in \{0, -1\}$ for all $\alpha\in R^+_P \,\,(=R^+\cap \bigoplus_{\alpha_j\in \Delta_P}\mathbb{Z}\alpha_j)$.
      \item              For every $u, v, w\in W^P$, we have  $$N_{u,v}^{w, \lambda_P }=N_{u, v}^{w\omega_P\omega',   \lambda_B},$$
               where  $\omega'=\omega_{P'}$ with $\Delta_{P'}=\{\alpha_i\in \Delta_P~|~\langle  \alpha_i, \lambda_B\rangle =0\}$.
      \end{enumerate}
    \end{prop}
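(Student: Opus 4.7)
The plan is to prove the two parts of the Peterson--Woodward comparison formula separately, since part (1) is a combinatorial statement about the root system and part (2) is a geometric comparison of Gromov--Witten invariants that uses (1).

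For part (1), I would first prove existence. Starting from \emph{any} lift $\tilde\lambda\in Q^\vee$ of $\lambda_P$, I would define a reduction procedure: while there exists some simple coroot $\alpha_i^\vee$ with $\alpha_i\in\Delta_P$ such that $\langle \alpha_i,\tilde\lambda\rangle\geq 1$ (respectively $\leq -2$), replace $\tilde\lambda$ by $\tilde\lambda-\alpha_i^\vee$ (respectively $\tilde\lambda+\alpha_i^\vee$). Using the positive-definiteness of the Cartan pairing on $\bigoplus_{\alpha_i\in\Delta_P}\mathbb{R}\alpha_i^\vee$, the quantity $\sum_{\alpha_i\in\Delta_P}\langle \alpha_i,\tilde\lambda\rangle\langle \chi_i,\tilde\lambda\rangle$ (or a similar quadratic energy) decreases strictly, so the process terminates. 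At termination, the simple-root conditions $\langle\alpha_i,\lambda_B\rangle\in\{0,-1\}$ hold; I would then upgrade this from simple roots to all of $R_P^+$ by induction on height inside the root subsystem, using the standard fact that a positive root $\alpha\in R_P^+$ differs from some simple $\alpha_i\in\Delta_P$ by a positive root $\alpha-\alpha_i\in R_P^+$, combined with the bound $\langle \alpha,\lambda_B\rangle\in\{-1,0\}$ forced on roots of height $2$, etc. For uniqueness, suppose $\lambda_B$ and $\lambda_B+\nu$ both satisfy the condition with $0\neq\nu\in Q_P^\vee$. Then $\langle\alpha,\nu\rangle\in\{-1,0,1\}$ for all $\alpha\in R_P^+$, but a nonzero element of the coroot lattice of an irreducible component of $R_P$ must achieve $|\langle\alpha,\nu\rangle|\geq 2$ on some positive root of that component (the ``no small pairing'' lemma), giving a contradiction.

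For part (2), I would use the geometry of the fibration $\pi: G/B \to G/P$ with fiber $P/B$. The key point is to compare the moduli spaces $\overline{\mathcal{M}}_{0,3}(G/B,\lambda_B)$ and $\overline{\mathcal{M}}_{0,3}(G/P,\lambda_P)$. I would first show that $\pi$ induces a morphism between them whose fibers are essentially the moduli of sections of a certain $P/B$-bundle pulled back along the ``universal curve''. The numerical condition $\langle\alpha,\lambda_B\rangle\in\{0,-1\}$ for $\alpha\in R_P^+$ characterizes precisely those $\lambda_B$ for which this relative moduli is either a point or is itself a homogeneous space $P/P'$ (with $P'$ defined from $\Delta_{P'}=\{\alpha_i\in\Delta_P : \langle\alpha_i,\lambda_B\rangle=0\}$). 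The integral over the fiber then identifies $[\sigma^{w\omega_P\omega'}] \in H^*(G/B)$ with $\pi^*[\sigma^w]\in H^*(G/B)$ after accounting for the longest-element twisting $\omega_P\omega'$, which encodes the Poincar\'e duality on $P/P'$. Putting these together via the projection formula for $\pi$, the Gromov--Witten integral defining $N_{u,v}^{w,\lambda_P}$ on $G/P$ equals the one defining $N_{u,v}^{w\omega_P\omega',\lambda_B}$ on $G/B$.

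The main obstacle will be the geometric step in part (2): namely, showing that the Peterson lift $\lambda_B$ is precisely the degree for which the comparison between moduli spaces is ``rigid enough'' to identify Gromov--Witten numbers, and identifying the correct Schubert class shift $w\mapsto w\omega_P\omega'$. The cleanest route is probably to avoid direct moduli analysis and instead exploit Peterson's isomorphism between $QH^*(G/P)$ (after inverting the quantum parameters) and a localization of the homology of the affine Grassmannian of $G$, under which the degree $\lambda_P$ corresponds to the affine translation $t_{\lambda_B}$, with the element $\omega_P\omega'\in W_P$ appearing as the unique element of minimal length in the relevant $W_P/W_{P'}$-coset. Part (1) then furnishes exactly the uniqueness needed to make this correspondence well-defined. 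Even with this approach, establishing that the two algebraic structures genuinely agree on structure constants (rather than on Schubert classes up to rescaling) would require the full strength of Lam--Shimozono's identification, which is where most of the technical work lies.
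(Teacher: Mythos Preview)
The paper does not prove this proposition: it is quoted as the Peterson--Woodward comparison formula with attribution to Woodward~\cite{wo} (see also~\cite{lamshi}), and is used throughout as a black box. So there is nothing to compare your argument against in the paper itself.

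That said, your sketch for part~(1) has a real gap. Your reduction procedure terminates with $\langle\alpha_i,\lambda_B\rangle\in\{0,-1\}$ for every \emph{simple} root $\alpha_i\in\Delta_P$, and you then propose to ``upgrade'' this to all of $R_P^+$ by induction on height. But this upgrade fails in general: if two adjacent simple roots $\alpha_i,\alpha_{i+1}$ both pair to $-1$, then $\alpha_i+\alpha_{i+1}\in R_P^+$ pairs to $-2$. Indeed the paper itself notes (in the proof of Proposition~\ref{gradingforconn}) that when $\Delta_P$ is of $A$-type the full condition is equivalent to \emph{at most one} simple root pairing to $-1$, which your algorithm does not enforce. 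The standard argument instead identifies the set $\{\lambda\in Q^\vee : \langle\alpha,\lambda\rangle\in\{0,-1\}\ \forall\alpha\in R_P^+\}$ with the coweight-lattice points in the closure of the negative fundamental alcove for the affine Weyl group of $R_P$, and existence/uniqueness then follow from the fact that this alcove is a fundamental domain for the translation action of $Q_P^\vee$. Your uniqueness argument (``no small pairing'') is essentially the right idea, but existence needs this alcove picture rather than simple-root reduction.

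Your plan for part~(2) is a fair description of the two known routes (direct moduli comparison as in Woodward, or affine Grassmannian as in Lam--Shimozono), and you correctly identify where the difficulty lies. Neither is short; in this paper the authors simply invoke the result.
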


Thanks to Proposition \ref{comparison}, we have canonical
representatives of $W/W_P\times Q^\vee/Q^\vee_P$ in $W\times Q^\vee$
with respect to the pair $(\Delta, \Delta_P)$,
  which is a generalization of the case $W/W_P\overset{\simeq}{\rightarrow }W^P\subset W$. We will discuss them in more details in the next subsection.

 When $v$ is a simple reflection $s_i$, we  have the following (Peterson's) quantum  Chevalley formula  for
         $\sigma^u\star \sigma^{s_i}$, which  has been   proved earlier in \cite{fw}.
 \begin{prop}[Quantum Chevalley Formula for $G/B$]\label{quanchevalley}
   For $u\in  W, 1\leq i\leq n,$
     $$ \sigma^u\star\sigma^{s_i} =\sum_\gamma \langle \chi_i, \gamma^\vee\rangle \sigma^{us_\gamma}+\sum_\gamma \langle \chi_i, \gamma^\vee\rangle q_{\gamma^\vee}\sigma^{us_\gamma},$$
   where the first sum is over  roots $\gamma$ in $R^+$ for which $\ell(us_\gamma)=\ell(u)+1$,  and the second sum is over roots
   $\gamma$ in $R^+$ for which $ \ell(us_\gamma)=\ell(u)+1-\langle 2\rho, \gamma^\vee\rangle$.
  \end{prop}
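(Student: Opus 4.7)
The plan is to follow the Fulton--Woodward strategy, separating the classical ($\lambda=0$) and purely quantum ($\lambda\neq 0$) contributions to $\sigma^u \star \sigma^{s_i}$. By definition of the quantum product,
$$\sigma^u \star \sigma^{s_i} = \sum_{w \in W,\, \lambda \in Q^\vee} I_{0,3,\lambda}(\sigma^u,\, \sigma^{s_i},\, (\sigma^w)^\sharp)\, q_\lambda \sigma^w.$$
For $\lambda = 0$ the 3-point invariant reduces to the classical triple intersection, and the classical Chevalley--Monk formula on $H^*(G/B)$ recovers exactly the first sum in the proposition, namely $\sum_\gamma \langle \chi_i, \gamma^\vee \rangle \sigma^{us_\gamma}$ over positive roots $\gamma$ with $\ell(us_\gamma) = \ell(u)+1$.

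The next step is to handle $\lambda \neq 0$ via the divisor axiom. Since $\sigma^{s_i}$ is a divisor class on $G/B$ and pairs with $\lambda \in H_2(G/B,\mathbb{Z}) \cong Q^\vee$ to give $\langle \chi_i, \lambda\rangle$, the axiom collapses each such 3-point invariant to
$$I_{0,3,\lambda}(\sigma^u,\, \sigma^{s_i},\, (\sigma^w)^\sharp) = \langle \chi_i, \lambda\rangle \cdot I_{0,2,\lambda}(\sigma^u,\, (\sigma^w)^\sharp).$$
A virtual dimension count on $\overline{\mathcal{M}}_{0,2}(G/B,\lambda)$ then forces $\ell(w) = \ell(u) + 1 - \langle 2\rho, \lambda\rangle$ whenever the 2-point invariant is nonzero, which already matches the length condition in the quantum sum.

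The core step is to show that $I_{0,2,\lambda}(\sigma^u,\, (\sigma^w)^\sharp)$ vanishes unless $\lambda = \gamma^\vee$ for some positive root $\gamma$ and $w = us_\gamma$, and that the invariant equals $1$ in that case. The idea is to analyze irreducible rational curves of degree $\lambda$ on $G/B$: the family of lines through a general point is controlled by the positive roots, and a transversality argument on the moduli of stable maps with two marked points lying in opposing Schubert varieties singles out precisely the coroots $\gamma^\vee$ and identifies the unique configuration counted by the invariant. Substituting back yields $\sum_\gamma \langle \chi_i, \gamma^\vee\rangle q_{\gamma^\vee}\sigma^{us_\gamma}$ over positive roots $\gamma$ with $\ell(us_\gamma) = \ell(u) + 1 - \langle 2\rho, \gamma^\vee\rangle$, producing the second sum in the proposition.

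The main obstacle is this geometric input: characterizing all nonzero 2-point invariants on $G/B$ and showing the transverse count is $1$. This can be carried out by explicit analysis of Bruhat cells and lines on $G/B$ as in \cite{fw}, or alternatively by invoking Peterson's identification of $QH^*(G/B)$ with a localized quotient of the affine Grassmannian homology, where the formula becomes a direct calculation in the nil-Hecke ring.
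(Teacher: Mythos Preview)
The paper does not give its own proof of this proposition: it is stated as a known result and attributed to Peterson, with the reference \cite{fw} (Fulton--Woodward) for a published proof. Your outline is essentially a sketch of the Fulton--Woodward argument---classical Chevalley for $\lambda=0$, divisor axiom plus dimension count for $\lambda\neq 0$, and the geometric identification of the nonzero two-point invariants---so you are following exactly the source the paper cites. Your honest acknowledgment that the ``core step'' (showing $I_{0,2,\lambda}(\sigma^u,(\sigma^w)^\sharp)$ is supported on coroots $\gamma^\vee$ with $w=us_\gamma$ and equals $1$ there) is the real content, and your deferral to \cite{fw} for it, matches the paper's own treatment.
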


Note that we have fixed a base $\Delta=\{\alpha_1, \cdots, \alpha_n\}$. As a subset of $\Delta$, we can write $\Delta_P=\{\alpha_{i_1}, \cdots, \alpha_{i_r}\}$.
  Then giving an order on $\Delta_P$ is equivalent to  giving a permutation of $\Delta_P$. Once such a  permutation $\Upsilon$ is given,
   we denote  $\alpha'_j =\Upsilon(\alpha_{i_j})$ for each $1\leq j\leq r$ and then  naturally rewrite the remaining simple roots so that
    $\Delta=\{\alpha'_1,\cdots, \alpha_n'\}$. In the present paper, we always keep the information on the order, whenever referring to $(\Delta_P, \Upsilon)$ or
     (an ordered set) $\Delta_P=(\alpha_1', \cdots, \alpha_r')$. Furthermore for convenience, we simply denote $\alpha_j'$'s by $\alpha_j$'s.
      (In other words, we take $\Upsilon=\mbox{id}_{\Delta_P}$ under the assumption in the beginning that  $\Delta_P=\{\alpha_1, \cdots, \alpha_r\}$ satisfies
         certain properties on its associated Dynkin diagram.)

\begin{notation}  \label{definofu}
  Let $(\Delta_P, \Upsilon)$ be given with $\Delta_P=(\alpha_1,   \cdots, \alpha_r)$.

    For any integers  $k, m$   with $1\leq k\leq m\leq r$, we denote  $u_{[k, m]}^{\Delta_P, (r)}
                  :=s_{\alpha_k}s_{\alpha_{k+1}}\cdots s_{\alpha_m}.$
    If $k>m$, then we just denote  $u_{[k, m]}^{\Delta_P, (r)}:=1$.
                        Furthermore, we define
       $u_{[k, m]}^{\Delta_P, (m)}=u_{[k, m]}^{\Delta_P, (r)}$ and denote  $u_i^{\Delta_P, (m)}:= u_{[m-i+1, m]}^{\Delta_P, (m)}$ for $i=0, 1, \cdots, m.$
   Whenever there is no confusion, we simply denote
      $$s_{j}:=s_{\alpha_j},\quad u_{[k, m]}^{(r)}:=u_{[k, m]}^{\Delta_P, (r)} \quad \mbox{and} \quad
        u_{i}^{(m)}:= u_{i}^{\Delta_P, (m)}.$$

  Let $\Delta_j:=\{\alpha_1, \cdots, \alpha_j\}$ and   $P_j:=P_{\Delta_j}$   for each $1\leq j\leq r$. Denote $P_0=B$ and $P_{r+1}=G$.
    A  \textbf{decomposition of $w\in W$ associated to $(\Delta_P, \Upsilon)$} is an expression
      $w=v_{r+1}\cdots v_1$ with  $v_i\in W_{P_{i}}^{P_{i-1}}$ for   each $1\leq i\leq r+1$, where $W_{P_1}^{P_0}=W_{P_1}$.
   By the  \textbf{iterated fibration    associated to   $(\Delta_P, \Upsilon)$}, we mean
 the family of fibrations of  homogeneous varieties, given by
         $\{P_{j-1}/P_0\rightarrow P_j/P_0\longrightarrow P_j/P_{j-1}\}_{j=2}^{r+1}$.
\end{notation}

 We denote by $Dyn(\Delta')$  the Dynkin diagram associated to a base $\Delta'$.

\begin{example}
  Suppose $Dyn(\Delta_P)$   is given by  \begin{tabular}{l} \raisebox{-0.4ex}[0pt]{$  \circline\!\;\!\!\circ\cdots\, \circline\!\!\!\;\circ $}\\
                 \raisebox{1.1ex}[0pt]{${\hspace{-0.2cm}\scriptstyle{\alpha_1}\hspace{0.3cm}\alpha_2\hspace{0.9cm}\alpha_{r} } $}
  \end{tabular}\!.
   Consider the  iterated fibration $\{P_{j-1}/P_0\rightarrow P_j/P_0\longrightarrow P_j/P_{j-1}\}_{j=2}^{r+1}$ associated to
   $\Delta_P=(\alpha_1, \cdots, \alpha_r)$. Then we have $P_{r+1}/P_r=G/P$ and $P_j/P_{j-1}=\mathbb{P}^j$ for each $1\leq j\leq r$.
  Furthermore, the natural
 inclusion $\{\alpha_1, \cdots, \alpha_{r-1}\}\hookrightarrow \Delta_P$ (or  $SL(r, \mathbb{C})$ $\hookrightarrow SL(r+1, \mathbb{C})$)
   induces a  canonical embedding
    $P_{r-1}/B=F\ell_{r-1}\hookrightarrow F\ell_{r}=P/B$ of   complete flag varieties, which  maps a flag $V_1\leqslant \cdots \leqslant V_{r-1}$ in $\mathbb{C}^{r}$ to
     the flag $V_1\leqslant \cdots \leqslant V_{r-1}\leqslant \mathbb{C}^{r}$ in $\mathbb{C}^{r+1}$.
\end{example}

  Due to the following well-known lemma (see e.g. \cite{humr}), we obtain Corollary \ref{uniqexprforWeyl}.

\begin{lemma}\label{strongexchange}
    Let  $\gamma\in R^+$ and  {\upshape $w=s_{i_1}\cdots s_{i_\ell}$} be a reduced expression of $w\in W$.
  \begin{enumerate}
     \item $w\in W^P$ if and only if $w(\alpha)\in R^+$ for   any $\alpha\in \Delta_P$. 
    \item    If  $\ell(ws_\gamma)<\ell(w)$,
  then $w(\gamma)\!\!\in\!\!-R^+$ and  there is a  unique $1\!\leq\! k \!\leq\! \ell$ such that
                    $$ s_{i_k} \cdots s_{i_\ell}s_\gamma = s_{i_{k+1}}\cdots s_{i_{\ell}}\quad\mbox{and}\quad \gamma=s_{i_\ell}s_{i_{\ell-1}}\cdots s_{i_{k+1}}(\alpha_{i_k}).$$
          Furthermore for $1\leq j\leq n$,   $\ell(ws_j)=\ell(w)-1$ if and only if $w(\alpha_j)\in -R^+$.
   \end{enumerate}
  \end{lemma}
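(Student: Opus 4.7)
I would prove part (2) first, since part (1) follows from its last sentence and the definition of $W^P$. The central tool is the \emph{inversion set} $N(w) := \{\beta \in R^+ : w(\beta) \in -R^+\}$. By induction on $\ell(w)$, I would establish the stronger claim that
$$N(w) = \{\beta_k : 1 \le k \le \ell\}, \qquad \beta_k := s_{i_\ell}s_{i_{\ell-1}}\cdots s_{i_{k+1}}(\alpha_{i_k}),$$
with the $\beta_k$ pairwise distinct; in particular $|N(w)| = \ell(w)$. The inductive step factors $w = w' s_{i_\ell}$, observes that $s_{i_\ell}$ permutes $R^+\setminus\{\alpha_{i_\ell}\}$, and reads off the bijection $\beta\mapsto s_{i_\ell}(\beta)$ from $N(w)\setminus\{\alpha_{i_\ell}\}$ onto $N(w')$. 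Reducedness of the chosen word prevents any collision among the $\beta_k$.

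As an immediate consequence, the final sentence of (2) holds: $\ell(ws_j) < \ell(w)$ iff $\alpha_j\in N(ws_j)$ iff $w(\alpha_j)\in -R^+$. Part (1) then drops out of the characterization of $W^P$ as minimal-length coset representatives: $w\in W^P$ iff $\ell(ws_\alpha)>\ell(w)$ for every $\alpha\in\Delta_P$, iff $w(\alpha)\in R^+$ for every such $\alpha$. For the existence and uniqueness of $k$ in (2), the hypothesis $\ell(ws_\gamma)<\ell(w)$ forces $\gamma\in N(w)$; since the $\beta_k$ are distinct and enumerate $N(w)$, we get $\gamma=\beta_k$ for a unique $k$. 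The displayed identity $s_{i_k}s_{i_{k+1}}\cdots s_{i_\ell}\,s_\gamma = s_{i_{k+1}}\cdots s_{i_\ell}$ is then formal: setting $\tau := s_{i_\ell}\cdots s_{i_{k+1}}$ so that $s_\gamma = \tau s_{i_k} \tau^{-1}$ and $\tau^{-1} = s_{i_{k+1}}\cdots s_{i_\ell}$, the left side collapses as $s_{i_k}\cdot\tau^{-1}\cdot\tau s_{i_k}\tau^{-1} = \tau^{-1}$.

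The main obstacle is the implication $\ell(ws_\gamma) < \ell(w) \Rightarrow \gamma\in N(w)$: since $s_\gamma$ is a non-simple reflection in general, the inductive length dictionary for simple reflections does not apply directly. The standard resolution is to observe that $s_\gamma$ permutes $R\setminus\{\pm\gamma\}$ in pairs and to compare $|N(w)|$ with $|N(ws_\gamma)|$, localizing the discrepancy to the single root $\gamma$. This is routine Coxeter-theoretic bookkeeping (as in standard references on reflection groups), which presumably explains why the paper merely cites the result rather than reproving it.
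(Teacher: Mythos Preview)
The paper does not prove this lemma; it introduces it as ``the following well-known lemma (see e.g.\ \cite{humr})'' and uses it as a black box. You correctly anticipated this in your final paragraph, and your sketch is indeed the standard inversion-set argument found in Humphreys.

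One slip to flag: in the sentence ``$\ell(ws_j) < \ell(w)$ iff $\alpha_j\in N(ws_j)$ iff $w(\alpha_j)\in -R^+$,'' the middle condition should read $\alpha_j\in N(w)$, not $N(ws_j)$. As written, $\alpha_j\in N(ws_j)$ means $ws_j(\alpha_j)=-w(\alpha_j)\in -R^+$, i.e.\ $w(\alpha_j)\in R^+$, which is the negation of what you want. The intended chain---and the one your inductive description of $N(w)$ actually delivers---is $\ell(ws_j)<\ell(w)\iff \alpha_j\in N(w)\iff w(\alpha_j)\in -R^+$.

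Your handling of the ``main obstacle'' is fine: the pairing argument you gesture at does show that if $\gamma\notin N(w)$ then $|N(ws_\gamma)|>|N(w)|$, giving the needed implication $\ell(ws_\gamma)<\ell(w)\Rightarrow w(\gamma)\in -R^+$. After that, existence and uniqueness of $k$ follow exactly as you say from the enumeration $N(w)=\{\beta_1,\dots,\beta_\ell\}$ with distinct $\beta_k$.
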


  \begin{cor}\label{uniqexprforWeyl}
For  each $w\in W$, there exists a unique decomposition
$w=v_{r+1}\cdots v_1$ associated to $\Delta_P=(\alpha_1,   \cdots, \alpha_r)$.
 Furthermore, we assume that $Dyn(\{\alpha_1, \cdots, \alpha_m\})$  is given by
  \begin{tabular}{l} \raisebox{-0.4ex}[0pt]{$  \circline\!\;\!\!\circ\cdots\, \circline\!\!\!\;\circ $}\\
                 \raisebox{1.1ex}[0pt]{${\hspace{-0.2cm}\scriptstyle{\alpha_1}\hspace{0.3cm}\alpha_2\hspace{0.9cm}\alpha_m } $}
  \end{tabular}\!, where $m\leq r$. Then for each $1\leq j\leq m$,  $\ell(v_j)=i_j$   if and only if
     $v_j= u_{i_j}^{(j)}$. (In particular, the expression $u_{i_j}^{(j)}$ itself is reduced.)
\end{cor}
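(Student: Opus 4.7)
The plan is to handle the two assertions—existence/uniqueness of the decomposition, and the identification of $v_j$ with $u_{i_j}^{(j)}$ in the type $A$ situation—separately.

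For the decomposition, I would argue by induction on $r$, invoking only the standard fact recalled in Section~2.1 that every coset in $W_{P_j}/W_{P_{j-1}}$ has a unique representative of minimum length, which by definition lies in $W_{P_j}^{P_{j-1}}$. Given $w\in W=W_{P_{r+1}}$, let $v_{r+1}\in W_{P_{r+1}}^{P_r}$ be the minimum-length representative of the coset $wW_{P_r}$; then $v_{r+1}^{-1}w\in W_{P_r}$, and applying the inductive hypothesis to the shorter chain $P_0\subset\cdots\subset P_r$ produces a unique expression $v_{r+1}^{-1}w=v_r\cdots v_1$ of the required shape. Uniqueness of the whole decomposition is handled by the same trick: if $w=v_{r+1}\cdots v_1=v'_{r+1}\cdots v'_1$, then $v_{r+1}$ and $v'_{r+1}$ are both the minimum-length representative of $wW_{P_r}$, hence equal, and one cancels and iterates.

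For the second assertion, fix $1\le j\le m$, so $Dyn(\{\alpha_1,\dots,\alpha_j\})$ is of type $A_j$. I would identify $W_{P_j}\cong S_{j+1}$ via $s_k\leftrightarrow(k,k+1)$; then $W_{P_{j-1}}\cong S_j$ is the stabilizer of $j+1$, and the $j+1$ cosets in $W_{P_j}/W_{P_{j-1}}$ are distinguished by the value $w(j+1)\in\{1,\dots,j+1\}$. A direct computation in cycle notation shows that $u_i^{(j)}=s_{j-i+1}\cdots s_j$ acts as the cycle $(j-i+1,j-i+2,\ldots,j+1)$, so the elements $u_0^{(j)},u_1^{(j)},\ldots,u_j^{(j)}$ send $j+1$ to the $j+1$ distinct values $j+1,j,\ldots,1$. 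Hence they are pairwise distinct minimum-length coset representatives and exhaust $W_{P_j}^{P_{j-1}}$.

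It remains to compute $\ell(u_i^{(j)})$. I would count inversions of the associated permutation: the only inversions are the $i$ pairs $(j-i+k,j+1)$ for $1\le k\le i$, so $\ell(u_i^{(j)})=i$. Equivalently, one uses Lemma~\ref{strongexchange}(1) to verify $u_i^{(j)}(\alpha_k)\in R^+$ for all $k<j$ (from the cycle formula $u_i^{(j)}(\alpha_k)$ is $\alpha_k$, $\alpha_k+\alpha_{k+1}$, or $\alpha_{k+1}$ according as $k<j-i$, $k=j-i$, or $j-i<k<j$), and combines this with the trivial upper bound $\ell(u_i^{(j)})\le i$. Either way the displayed word $s_{j-i+1}\cdots s_j$ for $u_i^{(j)}$ is reduced. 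Because the $j+1$ elements of $W_{P_j}^{P_{j-1}}$ now have distinct lengths $0,1,\ldots,j$, we conclude $v_j=u_{i_j}^{(j)}$ iff $\ell(v_j)=i_j$. There is no real obstacle here: the whole argument is routine bookkeeping inside $S_{j+1}$, with the length computation being the only delicate point, and it is trivialised by the permutation picture.
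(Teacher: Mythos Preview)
Your proof is correct and follows the same strategy as the paper, which is extremely terse: the paper cites the parabolic decomposition as well-known, and for the second part simply notes $|W_{P_j}^{P_{j-1}}|=j+1$ and appeals to Lemma~\ref{strongexchange}(1) to place the distinct $u_i^{(j)}$ inside $W_{P_j}^{P_{j-1}}$. Your permutation-model computation (especially the inversion count for $\ell(u_i^{(j)})=i$) makes explicit what the paper leaves to the reader.
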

\begin{proof}
  The former is also well-known (see e.g. \cite{humr}). The latter statement is a direct consequence of Lemma \ref{strongexchange},
   by noting $|W_{P_{j}}^{P_{j-1}}|=j+1$ and $u_0^{(j)}, \cdots, u_{j}^{(j)}$ are distinct elements of $W_{P_{j}}$ for which
    (1) of Lemma \ref{strongexchange}   can be applied.
\end{proof}

The following lemma should also be well-known.

\begin{lemma}\label{weylonelemma}
  Let $\bar \Delta\subset \tilde \Delta\subset \Delta$, $\tilde P=P_{\tilde \Delta}$ and $\bar P=P_{\bar \Delta}$.
 Let  $w=vu$ with $u\in W_{\bar P}$ and $v=s_{i_1}\cdots s_{i_m}$ being a reduced expression of $v\in W_{\tilde P}^{\bar P}$.
    For any $1\leq j\leq m$,
    we have $v':=s_{i_{j+1}}\cdots s_{i_m}\in W_{\tilde P}^{\bar P}$  and $(v'u)^{-1}(\alpha_{i_j})\in R_{\tilde P}^+\setminus R_{\bar P}$.
\end{lemma}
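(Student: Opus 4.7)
The plan is to exploit the well-known fact that any suffix of a reduced expression of a minimum length coset representative is again a minimum length coset representative. Once this is in hand, comparing the two suffixes $v'$ and $s_{i_j}v'$ will identify $\beta:=v'^{-1}(\alpha_{i_j})$ as a positive root in $R_{\tilde P}\setminus R_{\bar P}$, after which the statement about $(v'u)^{-1}(\alpha_{i_j})$ will reduce to the standard stability of $R^+\setminus R_{\bar P}$ under $W_{\bar P}$.

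First I would show that $v'=s_{i_{j+1}}\cdots s_{i_m}\in W_{\tilde P}^{\bar P}$. Suppose not; then there exists a simple root $\alpha\in\bar\Delta$ with $\ell(v's_\alpha)<\ell(v')$. By Lemma \ref{strongexchange}(2) one obtains a $k$ with $j+1\leq k\leq m$ satisfying $s_{i_k}\cdots s_{i_m}s_\alpha=s_{i_{k+1}}\cdots s_{i_m}$; multiplying on the left by $s_{i_1}\cdots s_{i_{k-1}}$ produces an expression of $vs_\alpha$ of length $m-1$. Hence $\ell(vs_\alpha)<\ell(v)$, forcing $v(\alpha)\in -R^+$ and contradicting $v\in W_{\tilde P}^{\bar P}$. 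The identical argument applied to the one-step-longer suffix $s_{i_j}v'=s_{i_j}\cdots s_{i_m}$ shows $s_{i_j}v'\in W_{\tilde P}^{\bar P}$ as well.

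Next, set $\beta:=v'^{-1}(\alpha_{i_j})$, so that $s_{i_j}v'=v's_\beta$. Because $\ell(s_{i_j}v')=\ell(v')+1$, the standard length criterion (derived from Lemma \ref{strongexchange}(2) by taking inverses) yields $\beta\in R^+$, and since $v'\in W_{\tilde P}$ preserves $R_{\tilde P}$, one has $\beta\in R_{\tilde P}^+$. Both $v'$ and $v's_\beta$ lie in $W_{\tilde P}^{\bar P}$ but have different lengths, so by uniqueness of minimum length coset representatives they lie in different cosets modulo $W_{\bar P}$; thus $s_\beta\notin W_{\bar P}$, whence $\beta\notin R_{\bar P}$. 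Therefore $\beta\in R_{\tilde P}^+\setminus R_{\bar P}$.

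To conclude, observe $(v'u)^{-1}(\alpha_{i_j})=u^{-1}(\beta)$. The element $u^{-1}\in W_{\bar P}\subset W_{\tilde P}$ preserves $R_{\tilde P}$; moreover $W_{\bar P}$ stabilizes $R^+\setminus R_{\bar P}$, because any $\gamma\in R^+\setminus R_{\bar P}$ has a strictly positive coefficient on some simple root outside $\bar\Delta$, and reflections along $\bar\Delta$ leave those coefficients intact. Combining these gives $u^{-1}(\beta)\in R_{\tilde P}^+\setminus R_{\bar P}$, as required. The main obstacle is the first step, verifying that suffixes of reduced expressions of elements of $W_{\tilde P}^{\bar P}$ remain in $W_{\tilde P}^{\bar P}$; once this structural fact is extracted from the strong exchange condition, the rest is straightforward bookkeeping about signs and supports of roots.
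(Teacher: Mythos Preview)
Your proof is correct and follows the same overall architecture as the paper's: first establish that every suffix of the reduced expression of $v$ lies in $W_{\tilde P}^{\bar P}$, then show $\beta:=v'^{-1}(\alpha_{i_j})\in R_{\tilde P}^+\setminus R_{\bar P}$, and finally transfer through $u^{-1}$.

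The implementations differ in two places. For the suffix claim, the paper argues by minimal counterexample: it takes the shortest bad suffix $w'$, observes that $s_{i_k}w'$ is good, and uses the fact that $s_{i_k}$ permutes $-R^+\setminus\{-\alpha_{i_k}\}$ to force $w'(\alpha)=-\alpha_{i_k}$, reaching a length contradiction. Your direct exchange argument---deleting a letter from $v'$ and propagating back to $v$---is more economical and avoids the minimality detour. For $\beta\notin R_{\bar P}$, the paper computes $v's_\beta(\beta)$ and obtains a sign contradiction from $s_{i_j}v'\in W^{\bar P}$, whereas you invoke uniqueness of minimum length coset representatives to conclude $s_\beta\notin W_{\bar P}$. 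Both are standard; yours is perhaps conceptually cleaner, while the paper's stays closer to explicit root computations. The final step (stability of $R^+\setminus R_{\bar P}$ under $W_{\bar P}$) is handled more explicitly in your write-up than in the paper, which simply asserts $(v'u)^{-1}(\alpha_{i_j})\in R_{\tilde P}^+$ at the outset.
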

\begin{proof}
     Assume that the set $\{a~|~ s_{i_{a+1}}s_{i_{a+2}}\cdots s_{i_m}\notin W^{\bar P}_{\tilde P}, 1\leq a\leq m\}$ is non-empty.
     Then we can take
      the minimum $k$ of this set. Consequently,  $w':= s_{i_{k+1}}\cdots s_{i_m}\notin W^{\bar P}_{\tilde P}$ and $s_{i_k}w'\in W^{\bar P}_{\tilde P}$.
     Hence, there exists $\alpha\in \bar\Delta$ such that  $w'(\alpha)\in-R^+$ and
           $s_{i_k} w'(\alpha)\in R^+$. Since $s_{i_k}$ preserves $-R^+\setminus \{-\alpha_{i_k}\}$, we have
               $w'(\alpha)=-\alpha_{i_k}$. Thus $w' s_\alpha w'^{-1}=s_{-\alpha_{i_k}}=s_{i_k}$ so that $\ell(w' s_\alpha)=\ell(s_{i_k}  w')=
                \ell(w')+1$.
                This    implies $w' (\alpha)\in R^+$ by Lemma \ref{strongexchange} and therefore deduce a contradiction.
      Hence, for any $1\leq j\leq m$, we have  $v':=s_{i_{j+1}}\cdots s_{i_m}\in W_{\tilde P}^{\bar P}$.

   Note that $(v'u)^{-1}(\alpha_{i_j})\in R_{\tilde P}^+$ and $\gamma:=v'^{-1}(\alpha_{i_j})\in R_{\tilde P}^+$.
    We claim  $\gamma\notin R_{\bar P}$; otherwise we would conclude
         $v's_\gamma(\gamma)=-v'(\gamma)\in -R_{\bar P}^+$, contrary to $v's_\gamma(\gamma)=s_{i_j}v'(\gamma)\in R^+$.
    Since $u\in W_{\bar P}$, we have  $(v'u)^{-1}(\alpha_{i_j})=u^{-1}(\gamma)\notin R_{\bar P}$.
\end{proof}

  \subsection{Definition of gradings}\label{secdefgrading}
   In this    subsection,   we define a grading map $gr$ with respect to an ordered set $(\Delta_P, \Upsilon)$,  which is used for        constructing a filtration  on               $QH^*(G/B)$.
    In order to obtain $gr$, we first define   ``PW-lifting" (Peterson-Woodward lifting) as follows.

\begin{defn}
\label{defgrading} Given $(\Delta_P, \Upsilon)$ with $\Delta_P=(\alpha_1, \cdots, \alpha_r)$, we denote $\Delta_j=\{\alpha_1, \cdots,$ $ \alpha_j\}$,
  $P_j=P_{\Delta_j}$ and $Q_j^\vee=\bigoplus_{i=1}^j\mathbb{Z}\alpha_i^\vee$ for each $j\leq r$.
   By the \textbf{PW-lifting} associated to $(\Delta_P, \Upsilon)$, we mean the family $\{\psi_{\Delta_{j+1}, \Delta_j}\}_{j=1}^r$ of injective maps
    defined as follows. (We denote $Q_{r+1}^\vee=Q^\vee, \Delta_{r+1}=\Delta$ and $P_{r+1}=G$.)
          For each  $1\leq j\leq r$, the map
  $$\psi_{\Delta_{j+1}, \Delta_j}: W_{P_{j+1}}^{P_j}\times Q_{j+1}^\vee/Q_j^\vee \longrightarrow  W\times Q^\vee
               $$
       is defined by sending $(v, \bar{\lambda})$ to its associated elements $(v\omega_{P_j}\omega_{P_j'}, \lambda')$ as described by
      the Peterson-Woodward comparison formula (see  Proposition \ref{comparison}) with respect to $(\Delta_{j+1}, \Delta_j)$.
    That is, $\lambda' $ is the
               unique  element in $Q_{j+1}^\vee\subset Q^\vee$ satisfying  $\bar\lambda=\lambda'+Q_j^\vee$ and
                $\langle \alpha, \lambda'\rangle  \in \{0, -1\}$ for all $\alpha\in  R^+\cap \bigoplus_{i=1}^j\mathbb{Z}\alpha_i$;
                                $\Delta_{P'_j}=\{  \alpha  \in \Delta_j~|~ \langle  \alpha, \lambda'\rangle =0\}$.
\end{defn}
\begin{remark}
   Each $\psi_{\Delta_{j+1}, \Delta_j}$ also defines an injective map in the canonical way:
      $$QH^*(P_{j+1}/P_j)\longrightarrow QH^*(P_{j+1}/B); q_{\bar \lambda}\sigma^v\mapsto q_{\lambda'}\sigma^{v\omega_{P_j}\omega_{P_j'}}.$$
 \end{remark}

 Recall that a natural basis of $QH^*(G/B)[q_1^{-1}, \cdots, q_n^{-1}]$ is given by  $q_\lambda\sigma^{w}$'s labelled by $(w, \lambda)\in W\times Q^\vee$. We simply denote
  both of them as $q_\lambda w$ (or $wq_\lambda$) by abuse of notations.   Note that
   $q_\lambda w\in QH^*(G/B)$ if and only if $q_\lambda\in \mathbb{Q}[\mathbf{q}]$ is a polynomial.

\vspace{0.2cm}

 \noindent\textbf{Definition \ref{defgrading}.\,\,(continued)} 
{\itshape  Let $\{\mathbf{e}_{1},
                                              \cdots, \mathbf{e}_{r+1}\}$ be the standard basis of $\mathbb{Z}^{r+1}$.
           We define a \textbf{grading map}     $gr: W\times Q^\vee  \longrightarrow \mathbb{Z}^{r+1}$ associated to
            $(\Delta_P, \Upsilon)$ as follows.

  \begin{enumerate}
     \item For $w\in W$, we take its (unique) decomposition $w=v_{r+1}\cdots v_1$ associated to  $(\Delta_P, \Upsilon)$. Then we define
         $gr(w):=gr(w, 0)= \sum_{j=1}^{r+1}\ell(v_j)\mathbf{e}_{j}$. 
    \item For all $\alpha\in \Delta$, we simply denote   $gr(q_{\alpha^\vee}):=gr(1, q_{\alpha^\vee})$.
    Using the PW-lifing associated to  $(\Delta_P, \Upsilon)$, we can define all $gr(q_j)$'s recursively  in the following way.
    Define $gr(q_1)=2\mathbf{e}_1$; for any $\alpha\in \Delta_{j+1}\setminus \Delta_j$,  we define
                $$gr(q_{\alpha^\vee})= \big( \ell(\omega_{P_j}\omega_{P_j'})+2+\sum\nolimits_{i=1}^j2a_i\big)\mathbf{e}_{j+1}-gr(\omega_{P_j}\omega_{P_j'})-
                \sum\nolimits_{i=1}^ja_igr(q_i),$$
     where  $\omega_{P_j}\omega_{P_j'}$ and  $a_i$'s satisfy
          $(\omega_{P_j}\omega_{P_j'}, \alpha^\vee\!\!+\! \sum\limits_{i=1}^j a_i\alpha_i^\vee)\!\!=\psi_{\Delta_{j+1}, \Delta_j}(1, \alpha^\vee+Q_j^\vee)$.
      \item In general, $x=w\prod_{k=1}^nq_k^{b_k}$, then we define  $gr(x)=gr(w)+\sum_{k=1}^nb_kgr(q_k)$.
  \end{enumerate}

Furthermore for $1\leq k\leq m\leq |\Delta_P|$, we define $$
 gr_m:=gr_{[1, m]}\quad \mbox{with}\quad gr_{[k, m]}: W\times Q^\vee \rightarrow \mathbb{Z}^{m-k+1}$$ being the composition of the natural projection map
 and the grading map $gr$. Precisely,
 write $gr(q_\lambda w)=\sum_{i=1}^{r+1}d_i\mathbf{e}_i$, then we define $gr_{[k, m]}(q_\lambda w)=\sum_{i=k}^{m}d_i\mathbf{e}_i$.
}

Recall that the \textit{inversion set} of $w\in W$ is defined to be
     $$\mbox{Inv}(w)=\{\gamma\in R^+~|~ w(\gamma)\in -R^+\}.$$
It is well-known that $\ell(w)=|\mbox{Inv}(w)|$ (see e.g. \cite{humr}). Take the decomposition $w=v_{r+1}\cdots v_1$ of $w$ associated to $(\Delta_P, \Upsilon)$. For each
 $k$, we note $v_{r+1}\cdots v_{k+1}\in W^{P_k}$ and $v_{k}\cdots v_1\in W_{P_k}$.
 Thus for $\gamma\in R_{P_k}$, $v_{k}\cdots v_1(\gamma)\in -R^+$ if and only if $w(\gamma)\in -R^+$. Consequently,
 $\ell(v_k\cdots v_1)=|\{\gamma\in R_{P_k}^+~|~ w(\gamma)\in -R^+\}|=|\mbox{Inv}(w)\cap R_{P_k}^+|$. Note that $\ell(v_k\cdots v_1)=\sum_{i=1}^k\ell(v_k)$.
 Hence, we have
       $$gr(w)=\sum_{k=1}^{r+1}|\mbox{Inv}(w)\cap (R_{P_k}^+\setminus R_{P_{k-1}}^+)|\mathbf{e}_k.$$

\begin{remark}
       We would like to thank the referee for reminding us of the above  expression of $gr(w)$.
      Following the suggestions of  the referee,  the  proof of Proposition \ref{maingracompare} has been simplified substantially  in the present version.
          In type $A$, the vector $gr(w)$ is essentially what is known as an ``inversion table"     (see e.g. \cite{stanley}).
      The referee has also made the following  conjecture:
        $$gr(q_{\gamma^\vee})=\sum_{k=1}^{r+1}\langle \sum_{\beta\in R_{P_k}^+\setminus R_{P_{k-1}}^+}\beta, \gamma^\vee\rangle \mathbf{e}_k. $$
    If it is true, the proofs of our main results   might also be simplified substantially.
\end{remark}

\bigskip

      In Proposition \ref{graquanvar}, Proposition \ref{gradingforconn}, Lemma \ref{gengralemm111} and (the proof of) Lemma \ref{gengralemm222},
       we will explicitly describe all the gradings $gr(q_j)$'s with respect to a fixed    $(\Delta_P, \Upsilon)$. In particular, we will see that
       $gr(q_j)=(1-j)\mathbf{e}_{j-1}+(1+j)\mathbf{e}_{j}$ for $2\leq j\leq r-1$ (which also holds for $j=r$ if $\Delta_P$ is of $A$-type).

\subsection{Proof of Theorem \ref{mainthm}}\label{subsecmainthm} Assuming $Dyn(\Delta_P)$ is connected, we always consider $(\Delta_P, \Upsilon)$
  with the fixed order $\Delta_P=(\alpha_1, \cdots, \alpha_r)$ in  a special way as it will be explained   in section \ref{arrangement}.
  In this subsection,   we  construct a filtration  on
               $QH^*(G/B)$ with respect to a totally-ordered sub-semigroup $S$ of $\mathbb{Z}^{r+1}$ and prove Theorem \ref{mainthm}, which is the most essential part of
               our main results.

Unless otherwise stated, we will always use the \textbf{lexicographical
order}, whenever referring to a partial order
   on  (a sub-semigroup of)  $\mathbb{Z}^m$ in the present paper.
 (Recall that
    $\mathbf{a}<\mathbf{b}$, where  $\mathbf{a}=(a_1, \cdots, a_m)$ and $\mathbf{b}=(b_1, \cdots, b_m)$,
   if and only if there is $1\leq j\leq m$ such that   $a_j<b_j$ and $a_{k}=b_{k}$ for each $1\leq k< j$.)

\begin{defn}
  We define a subset $S$ of $\mathbb{Z}^{r+1}$ and a family    $\mathcal{F}=\{F_{\mathbf{a}}\}_{\mathbf{a}\in S}$ of subspaces of $QH^*(G/B)$ as follows:
   $$ S \triangleq\{gr(q_{\lambda}w)~|~ q_{\lambda} {w}\in QH^*(G/B)\};\quad
           F_{\mathbf{a}}\triangleq \bigoplus_{gr(q_{\lambda}w)\leq \mathbf{a}}\mathbb{Q}q_{\lambda}w\subset QH^*(G/B).$$

\end{defn}

As will be shown in section \ref{secproof22}, we have
\begin{lemma}\label{lemmasemigp}
   $S$ is a totally-ordered sub-semigroup of $\mathbb{Z}^{r+1}$.
\end{lemma}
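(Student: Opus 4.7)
The total-ordering claim is immediate: the lexicographical order on $\mathbb{Z}^{r+1}$ is itself a total order, and $S$ inherits this. The substance of the lemma is thus that $S$ is closed under addition. Given $\mathbf{a}=gr(q_{\lambda_1}w_1)$ and $\mathbf{b}=gr(q_{\lambda_2}w_2)$ in $S$ with $\lambda_i\in Q^\vee_{\geq 0}$ and $w_i\in W$, part (3) of Definition \ref{defgrading} yields
\[
\mathbf{a}+\mathbf{b}=gr(w_1)+gr(w_2)+gr(q_{\lambda_1+\lambda_2}),
\]
so it will suffice to establish the following intermediate claim: for any $w_1,w_2\in W$ there exist $u\in W$ and $\mu\in Q^\vee_{\geq 0}$ with $gr(w_1)+gr(w_2)=gr(q_\mu u)$. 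Indeed, once this is known, $\mathbf{a}+\mathbf{b}=gr(q_{\mu+\lambda_1+\lambda_2}u)\in S$ since $\mu+\lambda_1+\lambda_2\in Q^\vee_{\geq 0}$.

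I would prove this intermediate claim by induction on $\ell(w_2)$. The base case $w_2=e$ is trivial. For the inductive step, factor $w_2=w_2's_i$ with $\ell(w_2')=\ell(w_2)-1$ and say $\alpha_i\in\Delta_k\setminus\Delta_{k-1}$, so that $s_i\in W_{P_k}^{P_{k-1}}$ and hence $gr(s_i)=\mathbf{e}_k$. The inductive hypothesis gives $gr(w_1)+gr(w_2')=gr(q_{\mu'}u')$ for some $u'\in W$ and $\mu'\in Q^\vee_{\geq 0}$, reducing the problem to showing that $gr(u')+\mathbf{e}_k$ can be written as $gr(q_\nu u)$ with $\nu\in Q^\vee_{\geq 0}$. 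Using the decomposition $u'=v_{r+1}'\cdots v_1'$ associated to $(\Delta_P,\Upsilon)$ together with Lemmas \ref{strongexchange} and \ref{weylonelemma}, one analyzes how the decomposition of $u's_i$ (or of $s_iu'$, whichever is more convenient) differs from that of $u'$. When $\ell(u's_i)=\ell(u')+1$ and the extra reflection is absorbed into $v_k'$ without perturbing the higher layers, we have $gr(u's_i)=gr(u')+\mathbf{e}_k$, so $u=u's_i$, $\nu=\mu'$ works directly. In all other cases, the discrepancy $gr(u')+\mathbf{e}_k-gr(u's_i)$ must be written as a non-negative integer combination of the vectors $gr(q_j)$, which are computed via the PW-lifting prescribed in Definition \ref{defgrading}.

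The main obstacle is precisely this non-negativity check. The vector $gr(q_j)$ carries a negative component in the $\mathbf{e}_{j-1}$ slot, visible already in the $SL(3,\mathbb{C})$ example where $gr(q_2)=-\mathbf{e}_1+3\mathbf{e}_2$, so absorbing the discrepancy into a non-negative $q$-exponent requires a delicate case analysis that tracks how the decomposition of $u'$ shifts across the filtration $P_0\subset P_1\subset\cdots\subset P_{r+1}$ as we right-multiply by $s_i$. I expect this to be the combinatorial heart of the argument, and its verification should parallel---and indeed feed directly into---the forthcoming Key Lemma of section 3.
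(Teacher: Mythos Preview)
Your reduction to the intermediate claim is correct, but the inductive step contains a genuine error and is in any case left incomplete.

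The error: you factor $w_2=w_2's_i$ with $\ell(w_2')=\ell(w_2)-1$ and $\alpha_i\in\Delta_k\setminus\Delta_{k-1}$, then implicitly use $gr(w_2)=gr(w_2')+\mathbf{e}_k$. This fails in general, because $gr$ is defined through the parabolic decomposition, not by counting simple reflections in an arbitrary reduced word. Already in the $F\ell_3$ example, $w_2=s_1s_2$ factors as $w_2's_i$ with $w_2'=s_1$, $s_i=s_2$, yet $gr(s_1s_2)=(0,2)$ while $gr(s_1)+\mathbf{e}_2=(1,1)$. There is a correct replacement---Lemma~\ref{lemmaformainthm} produces $w'$ with $\ell(w')=\ell(w)-1$ and $gr(w)=gr(w')+gr(s_j)$---but there $w=w's_\gamma$ for a positive root $\gamma$, not $w=w's_j$. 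Even granting this fix, you then declare the required non-negativity check to be ``the combinatorial heart'' and stop; that is not a proof, and the discrepancy you would need to absorb is not obviously a non-negative combination of the $gr(q_j)$'s.

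The paper bypasses all of this. After the same reduction it notes that each $gr(w_i)$ lies in $(\mathbb{Z}_{\geq 0})^{r+1}$, so it suffices to show $(\mathbb{Z}_{\geq 0})^{r+1}\subset S$, which is done constructively. Lemma~\ref{uniqAtypegrad} observes that the matrix with rows $gr(q_1),\ldots,gr(q_\varsigma)$ is lower-triangular with diagonal entries $2,3,\ldots,\varsigma+1$ and non-positive off-diagonal entries; hence every vector in $\bigoplus_{i=1}^\varsigma\mathbb{Z}_{\geq 0}\mathbf{e}_i$ is realized by a unique $q_\lambda w$ with $(w,\lambda)\in W_{P_\varsigma}\times Q^\vee_\varsigma$ and $q_\lambda$ an honest polynomial. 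For the top coordinate one picks $\alpha\in\Delta\setminus\Delta_P$ and uses that $gr(q_{\alpha^\vee})$ (from Table~\ref{tabgrading}, or Lemma~\ref{gengralemm222} when $\varsigma=r-1$) has $(r{+}1)$-st entry between $2$ and $\ell(\omega\omega_P)+1$ and non-positive earlier entries: write $x_{r+1}=a\cdot d_{r+1}+b$ with $0\le b\le\ell(\omega\omega_P)$, choose $v\in W^P$ of length $b$, and the residual vector in the first $r$ coordinates is again non-negative, so Lemma~\ref{uniqAtypegrad} finishes. No induction on $\ell(w_2)$, no analysis of how right-multiplication by $s_i$ perturbs the decomposition.
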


Now we can state Theorem \ref{mainthm} more explicitly as follows.
\vspace{0.2cm}

\noindent\textbf{Theorem \ref{mainthm}.}
  {\itshape $QH^*(G/B)$ is an $S$-filtered algebra with filtration $\mathcal{F}$. Furthermore, this  $S$-filtered algebra structure
   is naturally   extended to a $\mathbb{Z}^{r+1}$-filtered algebra structure on $QH^*(G/B)$.
   }
\vspace{0.2cm}

That is, we need to show $  F_{\mathbf{a}}  F_{\mathbf{b}}\subset  F_{\mathbf{a+b}}$ for any ${\mathbf{a}},  {\mathbf{b}}\in S$. In order to prove it, we need to
 assume the following Key Lemma first.
 \vspace{0.2cm}

\noindent\textbf{Key Lemma.}
   {\itshape   Let $u\in W$ and $\gamma\in R^+$.
   \begin{enumerate}
     \item[a)]   If $\ell(us_\gamma)=\ell(u)+1$, then we have  $gr(us_\gamma) \leq  gr(u)+ gr(s_i)$ whenever the fundamental weight $\chi_i$ satisfies
                  $\langle \chi_i, \gamma^\vee\rangle\neq0$.
     \item[b)]   If $\ell(us_\gamma)=\ell(u)+1-\langle 2\rho, \gamma^\vee\rangle$, then we have
      $gr(q_{\gamma^\vee}us_\gamma) \leq  gr(u)+ gr(s_i)$ whenever $\langle \chi_i, \gamma^\vee\rangle\neq0$.

   \end{enumerate}
 }

\begin{lemma}\label{lemmaformainthm}
  For any $1\neq w\in W$, there exist   $w'\in W$ and $1\leq j\leq n$ such that
             $gr(w)=gr(w')+gr(s_j)$ and the quantum structure constant $N_{s_j, w'}^{w, 0}$ is positive.
\end{lemma}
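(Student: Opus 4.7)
The plan is to prove the lemma by strong induction on $\ell(w)$, establishing the following slightly strengthened statement: for every $1 \neq w \in W$ with canonical decomposition $w = v_{r+1} \cdots v_1$ associated to $(\Delta_P, \Upsilon)$ and $K := \max\{k : v_k \neq 1\}$, there exist $w' \in W_{P_K}$ and a simple root $\alpha_j \in \Delta_K$ together with a positive root $\gamma$ such that $w = w' s_\gamma$, $\ell(w) = \ell(w') + 1$, $\langle \chi_j, \gamma^\vee \rangle > 0$, and $gr(w) = gr(w') + gr(s_j)$. By the quantum Chevalley formula (Proposition \ref{quanchevalley}), the first three conditions already force $N_{s_j, w'}^{w, 0} \geq \langle \chi_j, \gamma^\vee \rangle > 0$. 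The base case $\ell(w) = 1$ is immediate: write $w = s_\ell$ and take $w' = 1$, $\gamma = \alpha_\ell$, $j = \ell$.

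For the inductive step, set $u := v_{K-1} \cdots v_1 \in W_{P_{K-1}}$, so $w = v_K u$. First consider the case $u = 1$, so $w = v_K \in W_{P_K}^{P_{K-1}}$. Fix any reduced expression $v_K = s_{j_1} s_{j_2} \cdots s_{j_m}$ and apply Lemma \ref{weylonelemma} with $\tilde P = P_K$, $\bar P = P_{K-1}$, and index $1$: this yields $w' := s_{j_2} \cdots s_{j_m} \in W_{P_K}^{P_{K-1}}$ together with $\gamma := (w')^{-1}(\alpha_{j_1}) \in R_{P_K}^+ \setminus R_{P_{K-1}}$. Because $\gamma \notin R_{P_{K-1}}$, the support of the positive root $\gamma$, equivalently of its coroot $\gamma^\vee$, must contain some simple root $\alpha_j \in \Delta_K \setminus \Delta_{K-1}$; pick any such $j$. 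Then $w = s_{j_1} w' = w' s_\gamma$ with $\ell(w) = \ell(w') + 1 = m$ and $\langle \chi_j, \gamma^\vee \rangle > 0$. Computing gradings from the canonical decomposition, $gr(w) = m \mathbf{e}_K$, $gr(w') = (m-1)\mathbf{e}_K$, and $gr(s_j) = \mathbf{e}_K$, so the desired equality of gradings holds.

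Now consider $u \neq 1$. Since $\ell(u) < \ell(w)$, the inductive hypothesis applies to $u$; moreover, the canonical decomposition of $u \in W_{P_{K-1}}$ has its largest nontrivial factor at some index $K_u \leq K - 1$, so the strengthened conclusion produces $u' \in W_{P_{K_u}} \subseteq W_{P_{K-1}}$ with $\alpha_j \in \Delta_{K_u} \subseteq \Delta_{K-1}$, along with $\gamma$ satisfying $u = u' s_\gamma$, $\ell(u) = \ell(u') + 1$, $\langle \chi_j, \gamma^\vee \rangle > 0$, and $gr(u) = gr(u') + gr(s_j)$. Set $w' := v_K u' \in W_{P_K}$. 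Since $u' \in W_{P_{K-1}}$, the coset $w' W_{P_{K-1}}$ equals $v_K W_{P_{K-1}}$, whose minimum-length representative is $v_K$; hence the $K$-th factor in the canonical decomposition of $w'$ is again $v_K$, giving $gr(w') = \ell(v_K)\mathbf{e}_K + gr(u')$. Combined with $gr(w) = \ell(v_K)\mathbf{e}_K + gr(u)$, one obtains $gr(w) - gr(w') = gr(u) - gr(u') = gr(s_j)$. Finally, $w = v_K u = v_K u' s_\gamma = w' s_\gamma$ with $\ell(w) = \ell(v_K) + \ell(u) = \ell(w') + 1$, completing the induction.

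The main obstacle, to which the strengthening is specifically tailored, is ensuring that $w' \in W_{P_K}$ and $\alpha_j \in \Delta_K$ at each step so that the canonical decomposition of $w'$ behaves predictably when the construction is composed recursively; this is automatic in the first case via Lemma \ref{weylonelemma}, and is preserved from inductive hypothesis to conclusion in the second case because $u \in W_{P_{K-1}}$ forces $K_u \leq K - 1$.
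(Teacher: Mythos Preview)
Your proof is correct. The core idea coincides with the paper's: both arguments produce $w'$ by deleting one simple reflection from the smallest-index nontrivial factor in the decomposition $w = v_{r+1}\cdots v_1$, and both invoke Lemma~\ref{weylonelemma} to guarantee that the resulting $\gamma$ lies in $R_{P_k}^+\setminus R_{P_{k-1}}$, so that some $\alpha_j\in\Delta_k\setminus\Delta_{k-1}$ has $\langle\chi_j,\gamma^\vee\rangle>0$. The difference is only in packaging. The paper proceeds in one step: set $k=\min\{i:\ell(v_i)>0\}$, write $v_k=s_p\bar v$ with $\ell(v_k)=1+\ell(\bar v)$, put $w'=v_{r+1}\cdots v_{k+1}\bar v$ and $\gamma=\bar v^{-1}(\alpha_p)$, and read off $gr(w)=gr(w')+\mathbf{e}_k$ directly since $v_1=\cdots=v_{k-1}=1$. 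You instead induct from the top, peeling off the largest-index factor $v_K$ and recursing into $u=v_{K-1}\cdots v_1$; fully unwinding your recursion lands precisely on the paper's one-shot construction at the minimum index. Your strengthened hypothesis ($w'\in W_{P_K}$, $\alpha_j\in\Delta_K$) is exactly what is needed to make the recursion close, but it becomes superfluous once one targets the minimum-index factor directly. The paper's argument is shorter; yours is equally valid and arrives at the same $w'$, $\gamma$, and $j$.
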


\begin{proof}
 Take the decomposition $w=v_{r+1}\cdots v_1$ of $w$ associated to  $(\Delta_P, \Upsilon)$. Since $w\neq 1$, the
  set $\{i~|~ \ell(v_i)>0\}$   is non-empty, so that we can take the minimum $k$ of this set.  Thus we have $v_1=\cdots=v_{k-1}=1$ and
      $v_k=s_p\bar v$ with   $\ell(s_p\bar v)=1+\ell(\bar v)$. Note that $\gamma:=\bar v^{-1}(\alpha_p)\in R_{P_k}^+$,
        and   $\bar v s_\gamma=v_k$. Consequently for  $w':=v_{r+1}\cdots v_{k+1}\bar v$, we have $w=w's_\gamma$  and
       $\ell(w' s_\gamma)=\ell(w)+1$.  
 By Lemma \ref{weylonelemma}, we have    $\bar v\in W^{P_{k-1}}_{P_k}$ and  $\gamma \not\in R^+_{P_{k-1}}$. Hence, there exists $1\leq j\leq n$ with $\alpha_j\in \Delta_k\setminus\Delta_{k-1}$
    such that $\langle \chi_j, \gamma^\vee\rangle>0$.
 For any one such $j$,   by Proposition \ref{quanchevalley} we have
    $N_{s_j, w'}^{w, 0}=N_{s_j, w'}^{w's_\gamma, 0}=\langle \chi_j, \gamma^\vee\rangle>0$. Furthermore, we have
     $gr(w)=\sum_{i=k}^{r+1}\ell(v_i)\mathbf{e}_i=(\ell(\bar v)\mathbf{e}_k+\sum_{i=k+1}^{r+1}\ell(v_i)\mathbf{e}_i)+\mathbf{e}_k=gr(w')+gr(s_j)$.
\end{proof}

\bigskip

\begin{proof}[Proof of Theorem \ref{mainthm}]
  For the first half of the statements, it  suffices to show:
                    $\sigma^w\star q_\lambda \sigma^u\in F_{\mathbf{a}+\mathbf{b}}$,
                        for any $\sigma^w, q_\lambda \sigma^u\in QH^*(G/B)$ with $\mathbf{a}=gr(w)$ and $\mathbf{b}=gr(q_\lambda u)$.
           We use induction on $\ell(w)$.

   If $\ell(w)=0$, then $\sigma^w$ is the unit   and it is   done.
      If $\ell(w)=1$, then $w=s_j$ and consequently we have   $\sigma^{s_j}\star\sigma^u\in F_{gr(s_j)+gr(u)}=F_{\mathbf{a}+\mathbf{b}-gr(q_\lambda)}$,
          by using Proposition \ref{quanchevalley} and  the Key Lemma.
          Thus we have $\sigma^w\star q_\lambda \sigma^u\in  F_{\mathbf{a}+\mathbf{b}}$ in this case.
        Assume $\ell(w)>1$.
       By Lemma \ref{lemmaformainthm},    there exist $w'\in W$ and  $1\leq j\leq n$ such that
                     $gr(w)=gr(w')+gr(s_j)$ and $\sigma^{w'}\star \sigma^{s_j}=c \sigma^w+\sum_{ {v, \mu}}c_{v, \mu} q_{\mu}\sigma^{v}$,
         where $c=N_{w', s_j}^{w, 0}>0$ and the summation is  only over  finitely many non-zero terms  for which  $c_{v, \mu}>0$.
       In particular, we have $\ell(w')=\ell(w)-1$. Using the induction hypothesis, we have $\sigma^{w'}\star q_\lambda u \in F_{gr(w')+\mathbf{b}}$.
        Thus
   $(c\sigma^w +\sum_{ {v, \mu}}c_{v, \mu} q_{\mu}\sigma^{v})\star q_\lambda \sigma^u=\sigma^{s_j}\star(\sigma^{w'}\star q_\lambda \sigma^u) \in F_{gr(s_j)+gr(w')+\mathbf{b}}
       =  F_{\mathbf{a}+\mathbf{b}}$.
 Since   all the quantum Schubert structure constants are non-negative, there is no cancellation in the summation on the left hand side of the equality.
       Hence, we conclude $\sigma^w  \star q_\lambda \sigma^u\in F_{\mathbf{a}+\mathbf{b}}$, by noting $c>0$.

  The second half is a direct consequence of the first half. Indeed,  $QH^*(G/B)$ has  a  $\mathbb{Z}^{r+1}$-filtration
    $\{F_{\mathbf{a}}\}_{\mathbf{a}\in \mathbb{Z}^{r+1}}$, which is a natural extension of $\mathcal{F}$.
  Here we just need to  set $F_{\mathbf{a}}:=\bigcup_{\mathbf{b}\leq \mathbf{a}, \mathbf{b}\in S} F_\mathbf{a}$
    for any $\mathbf{a}\in \mathbb{Z}^{r+1}\setminus S$ (note $S$ is sub-semigroup of $\mathbb{Z}^{r+1}$).
\end{proof}

 The next proposition   follows directly from Definition \ref{defgrading}.

\begin{prop}\label{qredclas}
 The evaluation of    $\mathbf{q}$ at $\mathbf{0}$ reduces the $\mathbb{Z}^{r+1}$-filtration on $QH^*(G/B)$ to the classical
    $\mathbb{Z}^{r+1}$-filtration on $H^*(G/B)$, which comes from the iterated fibration
      $\{P_{j-1}/P_0\rightarrow P_j/P_0\longrightarrow P_j/P_{j-1}\}_{j=2}^{r+1}$. (Recall that $P_0=B$ and $P_{r+1}=G$.)
\end{prop}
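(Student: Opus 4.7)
The plan is to unpack both filtrations on $H^*(G/B)$ and check they agree class-by-class on the Schubert basis. First I would observe that evaluating $\mathbf{q}=\mathbf{0}$ in $QH^*(G/B)$ simply kills every basis element $q_\lambda \sigma^w$ with $\lambda \neq 0$, so the filtration $\mathcal{F}$ restricts to a filtration on $H^*(G/B) \subset QH^*(G/B)$ given by
\[
F_{\mathbf{a}} \cap H^*(G/B) = \bigoplus_{gr(w) \leq \mathbf{a}} \mathbb{Q}\sigma^w.
\]
By Definition \ref{defgrading}\,(1), the grading on a Schubert class is $gr(w) = \sum_{j=1}^{r+1} \ell(v_j)\mathbf{e}_j$, where $w = v_{r+1}\cdots v_1$ is the unique decomposition associated to $(\Delta_P, \Upsilon)$ furnished by Corollary \ref{uniqexprforWeyl}. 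No Peterson--Woodward data enters at this stage, since that information only affects $gr(q_{\alpha^\vee})$, not $gr(w)$.

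Next I would set up the classical $\mathbb{Z}^{r+1}$-filtration coming from the iterated fibration $\{P_{j-1}/P_0 \to P_j/P_0 \to P_j/P_{j-1}\}_{j=2}^{r+1}$. At each stage both the base $P_j/P_{j-1}$ and the fiber $P_{j-1}/P_0$ have only even, torsion-free cohomology with Schubert bases indexed by $W_{P_j}^{P_{j-1}}$ and $W_{P_{j-1}}$ respectively, so the Leray--Serre spectral sequence collapses at $E_2$, yielding a $\mathbb{Z}$-filtration on $H^*(P_j/P_0)$ with associated graded $H^*(P_j/P_{j-1}) \otimes H^*(P_{j-1}/P_0)$. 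Iterating and indexing the resulting $\mathbb{Z}^{r+1}$-filtration so that the $j$-th coordinate records the degree contributed by the base $P_j/P_{j-1}$ --- the first coordinate corresponding to the innermost fiber $P_1/B$ and the last to the outermost base $G/P$, matching Example \ref{exampfiltrforA2} --- produces the classical filtration in question.

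The identification is now direct: under this iterated Leray splitting, the standard fact for flag bundles is that the Schubert class $\sigma^w$ on $P_j/P_0$ with $w = v \cdot u$, $v \in W_{P_j}^{P_{j-1}}$, $u \in W_{P_{j-1}}$, is pure in the associated graded of a single stage, with image $\sigma^v \otimes \sigma^u$. Applying this at each stage to $w = v_{r+1}\cdots v_1$ shows that $\sigma^w$ has classical filtration multidegree $(\ell(v_1), \ldots, \ell(v_{r+1}))$, which is exactly $gr(w)$; since both filtrations are spanned by the same subset of Schubert classes for each $\mathbf{a}$, they coincide. The only real obstacle is bookkeeping: verifying that the lexicographic ordering convention on $\mathbb{Z}^{r+1}$ and the direction of the indexing ($j=1$ for the innermost fiber, $j=r+1$ for the outermost base) match on both sides. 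No genuine quantum input is needed, consistent with the statement that the proposition follows directly from Definition \ref{defgrading}.
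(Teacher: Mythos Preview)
Your proposal is correct and aligns with the paper's approach: the paper gives no proof beyond the single sentence ``The next proposition follows directly from Definition \ref{defgrading},'' and your argument is precisely the unpacking of that claim. The key observation you make---that only part (1) of Definition \ref{defgrading} (the grading $gr(w)=\sum_j\ell(v_j)\mathbf{e}_j$) survives the evaluation $\mathbf{q}=\mathbf{0}$, and that this is exactly the Leray--Serre multidegree of $\sigma^w$ under the iterated fibration---is the entire content of the proposition.
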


 \subsection{A canonical order   $(\Delta_P, \Upsilon)$}\label{arrangement}
  When referring to $(\Delta_P, \Upsilon)$, we have already
     given an order on  $\Delta_P$ via the permutation $\Upsilon$.
     It is done if $r=1$, since $\Upsilon=\mbox{id}_{\Delta_P}$ is the only permutation map.
     In this subsection, we introduce the special choice of the orders for $r\geq 2$ as mentioned at the beginning of section \ref{subsecmainthm}.
     We will  use this special order throughout the
      present paper, which is in fact obtained in a canonical way.
  We  introduce it first for a subbase of $A$-type  and then   for others    by reducing them to the case for $A$-type.

 Suppose  $\Delta_P$ is of $A_r$-type.  We   rewrite the simple roots so that $\Delta=\{\beta_1, \cdots, \beta_n\}$ and $Dyn(\Delta)$ is given by
        one of the cases in Table \ref{tabrelativeposi}.
       In terms of the order   $(\beta_1, \cdots, \beta_n)$, we  obtain a canonical  order $(\Delta_P, \Upsilon)$,
         in the sense that   $Dyn(\Delta_P)$ is inside $Dyn(\Delta\setminus \{\mbox{marked points}\})$ in a natural way. That is, we require the condition $(*)$ to be satisfied.
           $$(*):\qquad \mbox{there exists } o\geq 0 \mbox{ such that } \alpha_j=\beta_{o+j}   \mbox{ for each } 1\leq j\leq r. $$
 Furthermore, the additional conditions in Table \ref{tabrelativeposi} tell us the information on the starting point $\alpha_1 (=\beta_{o+1})$ and the ending point
   $\alpha_r (=\beta_{\kappa}=\beta_{o+r})$.  For instance, any one case of $\mbox{C}8), \mbox{C}9)$ and $\mbox{C}10)$ implies that $o=0$ and $\Delta_P=(\alpha_1, \alpha_2)=(\beta_1, \beta_2)$. That is,
    the order of $\Delta_P=\{\alpha_1, \alpha_2\}$
    is expressed in terms of the order of $\{\beta_1, \beta_2\}$ with respect to the corresponding case.

 \begin{table}[h]
 \caption{\label{tabrelativeposi}   $(\Delta_P, \Upsilon)$ when $r\geq 2$}
   \begin{tabular}{|c|c|c|}
     \hline
      & Dynkin diagram of $\Delta$ &$\mbox{}$\!\! Additional conditions ($\kappa:=o+r$)\!\!\!\! $\mbox{}$\\
     \hline\hline
     $\mbox{C} 1)$ &  \begin{tabular}{l} \raisebox{-0.4ex}[0pt]{$  \circline\!\;\!\!\circ\cdots\, \circline\!\!\!\;\circ $}\\
                 \raisebox{1.1ex}[0pt]{${\hspace{-0.2cm}\scriptstyle{\beta_1}\hspace{0.3cm}\beta_2\hspace{0.7cm}\beta_{n-1} } $}
                \end{tabular}\!\!,  $\beta_n$ is adjacent to $\beta_{n-1}$  & $\begin{array}{c}  \kappa\leq n-1;  \\ \Delta  \mbox{ is of type } A_n,  B_n
                            \mbox{ or } C_n\end{array}$  \\ \hline
          $\mbox{C} 2)$ &  \begin{tabular}{l}  \vspace{-0.15cm} $\hspace{1.649cm}\bullet \scriptstyle{\beta_{n}}$ \\
                     \vspace{-0.20cm}{ $\hspace{1.643cm}\!\!\;\!\big|$} \\
                   \raisebox{-0.4ex}[0pt]{$ \hspace{-0.02cm} \circline\!\;\!\!\circ\cdots\, \circline\circline\!\!\;\circ$}\\
                 \raisebox{1.1ex}[0pt]{${\hspace{-0.2cm}\scriptstyle{\beta_1}\hspace{0.3cm}\beta_2\hspace{0.7cm} \beta_{n-2}\hspace{0.1cm}\beta_{n-1}} $}
                \end{tabular}      &     $\kappa\leq n-2$ or $\Big\{\begin{array}{l} r\geq 3\\ \kappa=n-1\end{array}$ \\ \hline
        $\mbox{C} 3)$ &  \begin{tabular}{l}  \vspace{-0.15cm} $\hspace{1.649cm}\circ \scriptstyle{\beta_{3}}$ \\
                     \vspace{-0.20cm}{ $\hspace{1.643cm}\!\!\;\!\big|$} \\
                   \raisebox{-0.4ex}[0pt]{$\hspace{-0.17cm}  \bulletline\!\;\!\!\bullet\cdots\, \bulletline\circline\!\!\;\circ$}\\
                 \raisebox{1.1ex}[0pt]{${\hspace{-0.1cm}\scriptstyle{\beta_n}\hspace{0.1cm}\beta_{n-1}\hspace{0.2cm}\beta_4\hspace{0.2cm} \beta_{2}\hspace{0.2cm}\beta_{1}} $}
                \end{tabular}      &    $\kappa=r\leq 3$ \\ \hline
       $\mbox{C} 4)$ &  \begin{tabular}{l}  \vspace{-0.168cm} $\hspace{2.083cm}\!\bullet \scriptstyle{\beta_{8}}$ \\
                     \vspace{-0.20cm}{ $\hspace{2.038cm}\!\!\;\!\!\;\!\big|$} \\
                   \raisebox{-0.4ex}[0pt]{$ \, \circline\circline\circline\circline\circline\circline\!\!\;\!\,\!\circ$}\\
                 \raisebox{1.1ex}[0pt]{${\hspace{-0.15cm}\scriptstyle{\beta_1}\hspace{0.2cm}\beta_2 \hspace{0.2cm} \beta_3\hspace{0.25cm}\beta_{4} \hspace{0.2cm}\beta_{5} \hspace{0.2cm}\beta_{6}\hspace{0.2cm}\beta_{7}} $}
              \end{tabular}  & $\mbox{}$ \hspace{-0.4cm} $\kappa\leq 5$ or $\Big\{\begin{array}{l} r\geq 3\\ \kappa=6\end{array}$ or $\Big\{\begin{array}{l} r\geq 5\\ \kappa=7\end{array}$\hspace{-0.4cm}$\mbox{}$\\  \hline
    $\mbox{C}5)$ &  \begin{tabular}{l}  \vspace{-0.166cm} $\hspace{2.06cm}\!\circ \scriptstyle{\beta_{4}}$ \\
                     \vspace{-0.20cm}{ $\hspace{2.025cm}\!\!\;\!\!\;\!\big|$} \\
                   \raisebox{-0.4ex}[0pt]{$ \hspace{-0.035cm}\bulletline\!\;\!\!\bulletline\!\;\!\!\bulletline\!\;\!\!\bulletline\circline\circline\!\!\;\!\,\!\circ$}\\
                 \raisebox{1.1ex}[0pt]{${\hspace{-0.01cm}\scriptstyle{\beta_8}\hspace{0.2cm}\beta_7 \hspace{0.2cm} \beta_6\hspace{0.25cm}\beta_{5} \hspace{0.2cm}\beta_{3} \hspace{0.2cm}\beta_{2}\hspace{0.2cm}\beta_{1}} $}
              \end{tabular}  & $\mbox{}$ \hspace{-0.4cm} $\kappa\leq 3$ or $\Big\{\begin{array}{l} r\geq 3\\ \kappa=4\end{array}$  \hspace{-0.4cm}$\mbox{}$\\  \hline
 $\mbox{C} 6)$ &  \begin{tabular}{l}  \vspace{-0.165cm} $\hspace{2.078cm}\!\bullet \scriptstyle{\beta_{8}}$ \\
                     \vspace{-0.20cm}{ $\hspace{2.033cm}\!\!\;\!\!\;\!\big|$} \\
                   \raisebox{-0.4ex}[0pt]{$\hspace{-0.01cm}\bulletline\!\;\!\!\bulletline\!\;\!\!\bulletline\circline\circline\circline\!\!\;\!\,\!\circ$}\\
                 \raisebox{1.1ex}[0pt]{${\hspace{-0.01cm}\scriptstyle{\beta_7}\hspace{0.2cm}\beta_6 \hspace{0.2cm} \beta_5\hspace{0.22cm}\beta_{4} \hspace{0.2cm}\beta_{3} \hspace{0.2cm}\beta_{2}\hspace{0.2cm}\beta_{1}} $}
              \end{tabular}  & $\mbox{}$ \hspace{-0.4cm} $\kappa=r=4$     \hspace{-0.4cm}$\mbox{}$\\  \hline
 $\mbox{C} 7)$ &  \begin{tabular}{l}  \vspace{-0.165cm} $\hspace{2.073cm}\!\circ \scriptstyle{\beta_{6}}$ \\
                     \vspace{-0.20cm}{ $\hspace{2.033cm}\!\!\;\!\!\;\!\big|$} \\
                   \raisebox{-0.4ex}[0pt]{$ \, \circline\circline\circline\circline\circline\!\;\!\!\bulletline\!\!\;\!\,\!\bullet$}\\
                 \raisebox{1.1ex}[0pt]{${\hspace{-0.015cm}\scriptstyle{\beta_1}\hspace{0.2cm}\beta_2 \hspace{0.2cm} \beta_3\hspace{0.25cm}\beta_{4} \hspace{0.2cm}\beta_{5} \hspace{0.2cm}\beta_{7}\hspace{0.2cm}\beta_{8}} $}
              \end{tabular}  & $\mbox{}$ \hspace{-0.4cm} $\kappa=6, r\geq 3$   \hspace{-0.4cm}$\mbox{}$\\  \hline

 $\mbox{C} 8)$ &  \begin{tabular}{l}  \vspace{-0.165cm} $\hspace{2.073cm}\!\circ \scriptstyle{\beta_{1}}$ \\
                     \vspace{-0.20cm}{ $\hspace{2.033cm}\!\!\;\!\!\;\!\big|$} \\
                   \raisebox{-0.4ex}[0pt]{$ \hspace{-0.020cm}\bulletline\!\;\!\!\bulletline\!\;\!\!\bulletline\!\;\!\!\bulletline\circline\!\;\!\!\bulletline\!\!\;\!\,\!\bullet$}\\
                 \raisebox{1.1ex}[0pt]{${\hspace{-0.015cm}\scriptstyle{\beta_6}\hspace{0.2cm}\beta_5 \hspace{0.2cm} \beta_4\hspace{0.25cm}\beta_{3} \hspace{0.2cm}\beta_{2} \hspace{0.2cm}\beta_{7}\hspace{0.2cm}\beta_{8}} $}
              \end{tabular}  & $\mbox{}$ \hspace{-0.4cm} $\kappa=2$   \hspace{-0.4cm}$\mbox{}$\\  \hline
 $\mbox{C} 9)$ &  \begin{tabular}{l} \raisebox{-0.4ex}[0pt]{$\circline\,\bulletbb\!\!\;\!\!-\!\!\!-\!\bullet$}\\
                 \raisebox{1.1ex}[0pt]{${\hspace{-0.1cm}\scriptstyle{\beta_1}\hspace{0.2cm}\beta_2\hspace{0.25cm}\beta_3\hspace{0.2cm}\beta_{4}} $}
                 \end{tabular}  & $\kappa=2$ \\ \hline
       $\!\!\mbox{C}10)\!\!\!\mbox{}$ &  \begin{tabular}{l} \raisebox{-0.4ex}[0pt]{$\circline\,\bulletcc\!\!\;\!\!-\!\!\!-\!\bullet$}\\
                 \raisebox{1.1ex}[0pt]{${\hspace{-0.1cm}\scriptstyle{\beta_1}\hspace{0.2cm}\beta_2\hspace{0.25cm}\beta_3\hspace{0.2cm}\beta_{4}} $}
                 \end{tabular}  & $\kappa=2$ \\ \hline

   \end{tabular}
  \end{table}

 \begin{remark}
  In Table \ref{tabrelativeposi}, we have treated   bases of type $E_6$ and $E_7$ as  subsets of a base of type $E_8$ canonically.
   Because of our assumption $2\leq r<n=|\Delta|$, a base of $G_2$-type does not occur there.
 \end{remark}

\begin{remark}
  Intrinsically, we obtain the canonical order $(\Delta_P, \Upsilon)$ as follows.  $\Delta_P$ admits canonical orders in the sense that
    $Dyn(\Delta_P)$ is given by   \begin{tabular}{l} \raisebox{-0.4ex}[0pt]{$  \circline\!\;\!\!\circ\cdots\, \circline\!\!\!\;\circ $}\\

                 \raisebox{1.1ex}[0pt]{${\hspace{-0.2cm}\scriptstyle{\alpha_1}\hspace{0.3cm}\alpha_2\hspace{0.9cm}\alpha_r } $}
  \end{tabular}\!.
    There are   two ways to denote an ending point (by $\alpha_1$ or $\alpha_r$).
     We fix one  in the following way.
               There is at most one root in $\Delta_P$, saying $\alpha$, such that
      the Dynkin diagram of $\Delta_P\cup\{\alpha_k\in\Delta\setminus\Delta_P~|~ \langle  \alpha_k, \alpha^\vee\rangle \neq 0\}$ is not of $A$-type.
      We denote an ending point by $\alpha_1$ such that both the ending point and the connected component of $\Delta\setminus\Delta_P$ adjacent to it are as far away from
      $\alpha$ as possible.
\end{remark}

Comparing it with Table \ref{tabrelativeposi}, we can easily see
that $\Delta_P$ must occur  in at least one case of Table
\ref{tabrelativeposi} (together with
   condition $(*)$ being satisfied). If it occurs in more than one case, then we just choose any one of these cases.
    The choice does not affect all the results, since all the relevant statements hold with respect to all cases in Table \ref{tabrelativeposi} as we will see later.

\begin{remark}
  If $(\Delta, \Delta_P)$ occurs in more than one case in Table \ref{tabrelativeposi}, (for instance in case {\upshape $\mbox{C} 2)$} and {\upshape $\mbox{C}3)$ } with respect to the condition
    $\kappa=r=n-1=3$,) then the corresponding orders $(\Delta_P, \Upsilon)$ and $(\Delta_P, \Upsilon')$ are isomorphic. That is, there exists an isometry
     of $\phi: \Delta\rightarrow \Delta$ such that $\phi(\Delta_P)=\Delta_P$ and $\Upsilon\circ \phi=\Upsilon'$.
\end{remark}


Now we assume $\Delta_P$ is not of $A$-type and denote $\varsigma:=r-1$.
Note that there always exists $\alpha\in \Delta_P$ such that $Dyn(\Delta_P\setminus\{\alpha\})$ is of $A_\varsigma$-type. Thus when $r>2$,
we obtain a canonical order $(\Delta_P, \Upsilon)$ by requiring:
 \begin{enumerate}
   \item[a)] the restriction of $\Delta_P$ to  $\Delta_\varsigma=(\alpha_1, \cdots, \alpha_\varsigma)$ is the canonical order obtained by directly
              replacing $r$ with $\varsigma$ in Table \ref{tabrelativeposi};
   \item[b)] $\alpha_r=\beta_{o+r}$ (note that $\alpha_{r-1}=\beta_{o+r-1}$ once a) holds).
 \end{enumerate}
  Precisely,   $\Delta_P$  fulfills one and only one of the followings (note that $\kappa=o+\varsigma$ and condition $(*)$ is satisfied):
    \begin{enumerate}
      \item  $\Delta_P$ is not of $D$-type. It occurs in a unique case (among $\mbox{C} 1), \mbox{C}4) \mbox{ for } \kappa=7, \mbox{C}9) \mbox{ and } \mbox{C}10))$ in Table \ref{tabrelativeposi}.
      \item    $\Delta_P$ and $\Delta$ are both of $D$-type. It occurs in   case  $\mbox{C} 2)$.
       \item  $\Delta_P$ is of $D$-type and $\Delta$ is of $E$-type.   It occurs in  either of  cases  $\mbox{C} 5)$,  $\mbox{C}7)$.
    \end{enumerate}
  As a consequence, the canonical order $(\Delta_P, \Upsilon)$ is determined by the corresponding case in which $\Delta_P$ occurs. For convenience,  if $\Delta_P$ occurs in both
  $\mbox{C}5)$ and    $\mbox{C}7)$, then we always choose case $\mbox{C}7)$ for use.

  When $r=2$, we can still give an order on $\Delta_P$ so that it  is compatible with our arrangements for  $r>2$.
   Indeed, we do this as follows. Since $\Delta_P$ is a proper subset of $\Delta_P$, the case of $G_2$-type does not occur.
      Since $\Delta_P$ is not of $A$-type, $\Delta$ must be of type $B, C$ or $F$.  we   take $(\alpha_1, \alpha_2)$ to be
      $(\beta_{n-1}, \beta_{n})$ for the former two cases, or $(\beta_2, \beta_3)$ in $\mbox{C}10)$ for    the last case.

 \begin{remark}
   $\Delta_P$ occurs in case {\upshape $\mbox{C}5)$} other than in case {\upshape $\mbox{C}7)$} only if $r=5$ and $\Delta$ is of $E_7$-type or $E_8$-type.
    \end{remark}

\section{Proof of the Key Lemma}

 This whole section is devoted to the proof of the Key Lemma. The readers, who wish to see more concrete  statements of our theorems as well as their proofs,
    can skip this section by assuming  the Key Lemma and two consequences (Proposition \ref{gracomponemore}
  and Proposition \ref{allgradcomp}) of  a special case of  it first. For emphasis, we restate the Key Lemma as follows.

  \noindent\textbf{Key Lemma.}
   {\itshape   Let $u\in W$ and $\gamma\in R^+$.
   \begin{enumerate}
     \item[a)]   If $\ell(us_\gamma)=\ell(u)+1$, then we have  $gr(us_\gamma) \leq  gr(u)+ gr(s_i)$ whenever the fundamental weight $\chi_i$ satisfies
                  $\langle \chi_i, \gamma^\vee\rangle\neq0$.
     \item[b)]   If $\ell(us_\gamma)=\ell(u)+1-\langle 2\rho, \gamma^\vee\rangle$, then we have
      $gr(q_{\gamma^\vee}us_\gamma) \leq  gr(u)+ gr(s_i)$ whenever  the fundamental weight $\chi_i$ satisfies $\langle \chi_i, \gamma^\vee\rangle\neq0$.

   \end{enumerate}
 }

We first do some preparations in section \ref{subsecpropweyl} and section \ref{subsecgrading}. Then we prove the Key Lemma for the special case when $\Delta_P$ is of $A$-type
 in section \ref{subseckeylemma}, and obtain two consequences in section \ref{subsectwoconseq}. Finally in section \ref{sectiongenconn}, we prove the Key Lemma for general cases.
In addition, we also give the explicit descriptions of all $gr(q_j)$'s in section \ref{subsecgrading} and section \ref{sectiongenconn}.

We would like to remind our readers of the notation  $ gr(w)=\sum_{j=1}^{r+1}i_j\mathbf{e}_j=(i_1, \cdots, i_{r+1})$ for $w\in W$ and the notions ``$gr_m$",
 ``$gr_{[k, m]}$" in Definition \ref{defgrading}.
Furthermore,  we assume \textit{$\Delta_P$ to be of $A$-type   throughout this section except section \ref{sectiongenconn}}.
As a consequence, we have $w=u_{i_r}^{(r)}\cdots    u_{i_1}^{(1)}\in W_P$ by  Corollary \ref{uniqexprforWeyl}, once assuming  $i_{r+1}=0$.
  Unless otherwise stated,  by $w=vu_{i_r}^{(r)}\cdots    u_{i_1}^{(1)}$ we always mean the decomposition of $w$ associated  $(\Delta_P, \Upsilon)$ when $\Delta_P$
   is of $A$-type;
  equivalently, we have $v\in W^P$.

\subsection{Some properties on $W$}\label{subsecpropweyl}
The main results of this subsection   are Proposition
\ref{maingracompare} and Proposition \ref{gracompare000}, 
which compare  the gradings of certain elements in $W$.

  \begin{prop}\label{maingracompare}
   Let   $\gamma\in R^+$ satisfy
      $\langle \alpha, \gamma^\vee\rangle=0$ for all $\alpha\in \Delta_{\tilde P}=\Delta_P\setminus\{\alpha_a\}$,  
       where $1\leq a\leq r$.
   For any $w\in W$,  we have  $$gr_{a-1}(ws_\gamma)=gr_{a-1}(w) \mbox{ and } gr_{r}(ws_\gamma)\leq gr_{r}(w)+\sum\nolimits_{k=a}^r a\mathbf{e}_k.$$
       \end{prop}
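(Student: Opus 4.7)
The plan is to reduce everything to an inversion-count. The formula recorded just after Definition \ref{defgrading} gives, for every $v \in W$,
\[
gr(v) \;=\; \sum_{k=1}^{r+1} N_k(v)\, \mathbf{e}_k, \qquad N_k(v) := \lvert \mathrm{Inv}(v) \cap (R_{P_k}^+ \setminus R_{P_{k-1}}^+) \rvert,
\]
so both claims of the proposition concern only $N_1,\dots,N_r$ and can be handled stratum by stratum. The single observation to use repeatedly is that whenever $\beta \in R^+$ satisfies $\langle \beta, \gamma^\vee \rangle = 0$, one has $s_\gamma(\beta) = \beta$, hence $ws_\gamma(\beta) = w(\beta)$, so $\beta \in \mathrm{Inv}(ws_\gamma)$ if and only if $\beta \in \mathrm{Inv}(w)$.

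For the first assertion $gr_{a-1}(ws_\gamma) = gr_{a-1}(w)$, I will note that for $k < a$ every root of $R_{P_k}^+$ is a nonnegative integer combination of $\alpha_1, \dots, \alpha_k$, each of which is orthogonal to $\gamma^\vee$ by hypothesis. Thus $s_\gamma$ fixes such roots pointwise, giving $N_k(w) = N_k(ws_\gamma)$ for $k = 1, \dots, a-1$.

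For the inequality, I will in fact prove the stronger componentwise bound $N_k(ws_\gamma) \leq N_k(w) + a$ for each $a \leq k \leq r$, which at once implies the desired lex inequality. Using that $\Delta_P$ is of $A$-type (the standing assumption of \S3), the $k$-element stratum
\[
R_{P_k}^+ \setminus R_{P_{k-1}}^+ \;=\; \{\alpha_i + \alpha_{i+1} + \cdots + \alpha_k \mid 1 \leq i \leq k\}
\]
splits naturally according to whether $i > a$ or $i \leq a$. Roots in the first group do not involve $\alpha_a$ in their support and hence are orthogonal to $\gamma^\vee$, so their inversion status is preserved by the key observation. Roots in the second group form a subset of size exactly $a$, and therefore contribute at most $a$ to $N_k(ws_\gamma)$. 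Adding the two contributions yields the stratumwise bound.

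I do not anticipate a serious obstacle. The proposition reduces to a clean counting argument in which the bound $a$ emerges naturally as the number of positive roots in $R_{P_k}^+ \setminus R_{P_{k-1}}^+$ whose support contains $\alpha_a$; the only mildly technical point is the passage from the componentwise stratumwise bound to the lex inequality asserted in the statement, which follows from the elementary fact that componentwise $\leq$ implies lex $\leq$.
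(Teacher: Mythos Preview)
Your proof is correct and follows essentially the same approach as the paper's: both arguments rest on the inversion-count formula $gr(v)=\sum_k |\mathrm{Inv}(v)\cap(R_{P_k}^+\setminus R_{P_{k-1}}^+)|\,\mathbf{e}_k$, use that $s_\gamma$ fixes every root of $R_{P_k}^+\setminus R_{P_{k-1}}^+$ whose support avoids $\alpha_a$, and bound the remaining contribution by the count $a$ of roots $\alpha_i+\cdots+\alpha_k$ with $i\leq a$. The only cosmetic differences are that the paper invokes Lemma~\ref{gracompare11111} for the equality $gr_{a-1}(ws_\gamma)=gr_{a-1}(w)$ (whereas you argue it directly from the inversion formula) and handles $k=a$ by the trivial bound $\tilde i_a\leq a$ separately, while you treat all $a\leq k\leq r$ uniformly; neither changes the substance.
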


\begin{lemma}\label{gracompare11111}
Let $\gamma\in R, \Delta_{\tilde P}\subset \Delta$
      and
   $w=vu \mbox{ with } v\in  W^{\tilde P} \mbox{ and}$ $u\in W_{\tilde P}$.
 If  $\langle \alpha_j, \gamma^\vee\rangle =0$ for all   $\alpha_j\in \Delta_{\tilde P}$,
         then $ws_\gamma=\tilde v u$ with $\tilde v\in W^{\tilde P}$.
In particular if $\Delta_{\tilde P}=\{\alpha_1, \cdots, \alpha_a\}$
where $a\leq r$,
 then  $gr_a(ws_\gamma)=gr_a(w)$.
  \end{lemma}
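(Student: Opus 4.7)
The plan is to reduce everything to a short computation using the criterion in Lemma~\ref{strongexchange}(1). First I would observe that the orthogonality hypothesis $\langle \alpha_j,\gamma^\vee\rangle=0$ is equivalent to $s_\gamma(\alpha_j)=\alpha_j$, by the explicit action formula $s_\gamma(\alpha_j)=\alpha_j-\langle\alpha_j,\gamma^\vee\rangle\gamma$. Consequently, for every $\alpha_j\in\Delta_{\tilde P}$ one has
\[
  s_\gamma s_{\alpha_j}s_\gamma^{-1}=s_{s_\gamma(\alpha_j)}=s_{\alpha_j},
\]
so $s_\gamma$ centralizes the generators, hence all of $W_{\tilde P}$. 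Therefore $us_\gamma=s_\gamma u$, and rewriting gives
\[
  ws_\gamma = vus_\gamma = (vs_\gamma)\,u.
\]
Set $\tilde v:=vs_\gamma$; the task reduces to showing $\tilde v\in W^{\tilde P}$.

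By Lemma~\ref{strongexchange}(1), it suffices to verify that $\tilde v(\alpha_j)\in R^+$ for each $\alpha_j\in\Delta_{\tilde P}$. But $s_\gamma(\alpha_j)=\alpha_j$, so
\[
  \tilde v(\alpha_j)=v\bigl(s_\gamma(\alpha_j)\bigr)=v(\alpha_j),
\]
and the right-hand side lies in $R^+$ because $v\in W^{\tilde P}$. Hence $\tilde v\in W^{\tilde P}$, and $ws_\gamma=\tilde v u$ is the (unique) $W^{\tilde P}\times W_{\tilde P}$ factorization of $ws_\gamma$.

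For the ``in particular'' clause, take $\Delta_{\tilde P}=\{\alpha_1,\ldots,\alpha_a\}$, so $\tilde P=P_a$. In the decomposition $w=v_{r+1}\cdots v_1$ associated to $(\Delta_P,\Upsilon)$, the piece in $W_{P_a}$ is exactly $u=v_a\cdots v_1$, and by definition $gr_a(w)=\sum_{j=1}^{a}\ell(v_j)\mathbf{e}_j$ is determined purely by $u$. Since the $W_{\tilde P}$-factor of $ws_\gamma$ is again $u$, by uniqueness of the decomposition the first $a$ components of $gr(ws_\gamma)$ coincide with those of $gr(w)$, giving $gr_a(ws_\gamma)=gr_a(w)$.

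I do not anticipate any real obstacle here; the only delicate point is the initial observation that $\langle\alpha_j,\gamma^\vee\rangle=0$ forces $s_\gamma$ to commute with $s_{\alpha_j}$, which is immediate from the reflection formula. The rest is bookkeeping with the uniqueness of the parabolic factorization.
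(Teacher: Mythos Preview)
Your proof is correct and follows essentially the same approach as the paper: both use that $s_\gamma$ commutes with $W_{\tilde P}$ to write $ws_\gamma=(vs_\gamma)u$, and then check that $vs_\gamma\in W^{\tilde P}$. Your direct verification via Lemma~\ref{strongexchange}(1) is a slight streamlining of the paper's argument, which instead writes $ws_\gamma=\tilde v\tilde u$ and rules out $\tilde u\neq u$ by contradiction.
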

 \begin{proof}
       Let  $ws_\gamma=\tilde v\tilde u$ where $\tilde v\in  W^{\tilde P}$ and $\tilde u\in W_{\tilde P}$.
   By the assumption, we conclude $s_\gamma(\alpha_j)=\alpha_j$ and $s_js_\gamma=s_\gamma s_j$ for any $\alpha_j\in \Delta_{\tilde P}$. Hence,
    $us_\gamma=s_\gamma u$ and consequently we have $\tilde v \tilde u u^{-1}= ws_\gamma u^{-1}=wu^{-1}s_\gamma=vs_\gamma$.
   If $\tilde u\neq u$, then there exists $\beta\in R^+_{\tilde P}$ such that $\tilde \beta:=\tilde uu^{-1}(\beta)\in -R^+_{\tilde P}$.
   Hence, we conclude $vs_\gamma (\beta)=v(\beta)\in R^+$, contrary to
      $vs_\gamma(\beta)=\tilde v\tilde uu^{-1}(\beta)=\tilde v(\tilde \beta)\in-R^+$.
 The latter statement becomes a direct    consequence. 
  \end{proof}

 \bigskip

 \begin{proof}[Proof of Proposition \ref{maingracompare}]
 Write $gr(w)=\sum_{k=1}^{r+1}i_k\mathbf{e}_k$ and   $gr(ws_\gamma)=\sum_{k=1}^{r+1}\tilde i_k\mathbf{e}_k$.
      By Lemma \ref{gracompare11111}, we conclude
     $gr_{a-1}(ws_\gamma)=gr_{a-1}(w)$. That is, $\tilde i_k=i_k$ for $1\leq k\leq a-1$.

     Clearly, $\tilde i_a\leq a\leq i_a+a$. For $a+1\leq k\leq r$, we note that
     $R_{P_{k}}^+\setminus R_{P_{k-1}}^+=\{\sum_{t=j}^k\alpha_t~|~ 1\leq j\leq k\}$.
     In addition, we have $i_k=|\mbox{Inv}(w)\cap (R_{P_{k}}^+\setminus R_{P_{k-1}}^+)|$ and
             $\tilde i_k=|\mbox{Inv}(ws_\gamma)\cap (R_{P_{k}}^+\setminus R_{P_{k-1}}^+)|$.
        Since $\langle \alpha_t, \gamma^\vee\rangle=0$ for any $a+1\leq t\leq r$, we have $ws_\gamma(\sum_{t=j}^k\alpha_t)=w(\sum_{t=j}^k\alpha_t)$
         whenever $j\geq a+1$. Hence, $\tilde i_k-i_k\leq |\{\sum_{t=j}^k\alpha_t~|~ 1\leq j\leq a\}|=a$.

 Hence, we have $gr_r(ws_\gamma)\leq gr_r(w)+\sum_{k=a}^ra\mathbf{e}_k$.
    \end{proof}

\begin{lemma}\label{prodAtype}
  For any $1\leq i\leq j\leq m\leq r$ and $1\leq k\leq m$, we have
      $$u_{[i, j]}^{(m)}u_{[k, m]}^{(m)} = \left\{\begin{array}{ll} u_{[k, m]}^{(m)} u_{[i, j]}^{(m)}  ,&\mbox{if } k\geq j+2\\
                                  u_{[i, m]}^{(m)}, &\mbox{if } k=j+1\\
                                  u_{[k+1, m]}^{(m)} u_{[i, j-1]}^{(m)},&\mbox{if  }  i\leq k\leq j    \\
                              u_{[k, m]}^{(m)} u_{[i-1, j-1]}^{(m)},&\mbox{if }
                                  k<i\end{array}\right.\!\!.
            $$
\end{lemma}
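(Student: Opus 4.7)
My plan is to prove Lemma 3.2 as a direct computation in the type $A_r$ Weyl group, exploiting only the braid relation $s_a s_{a+1} s_a = s_{a+1} s_a s_{a+1}$ and the commutation relation $s_a s_b = s_b s_a$ for $|a - b| \geq 2$. Since $\Delta_P$ is of $A$-type, $W_P$ is the symmetric group and these relations suffice to handle every case.

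I would dispose of the two easy cases first. For $k \geq j + 2$, every $s_{\alpha_p}$ appearing in $u_{[i,j]}^{(m)}$ has index $p \leq j$, while every $s_{\alpha_q}$ in $u_{[k,m]}^{(m)}$ has index $q \geq k \geq j + 2$, so $|p - q| \geq 2$ and the two factors commute as whole words; this yields the first line. For $k = j + 1$, the concatenation $(s_{\alpha_i} \cdots s_{\alpha_j})(s_{\alpha_{j+1}} \cdots s_{\alpha_m})$ is literally $u_{[i,m]}^{(m)}$ by definition.

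The remaining two cases will be proved simultaneously by induction on the length $j - i + 1$ of the first factor. The base case $j = i$ is the key braid-relation computation: for $k \leq i$ one needs $s_{\alpha_i}\,u_{[k,m]}^{(m)} = u_{[k,m]}^{(m)}\,s_{\alpha_{i-1}}$ (when $k < i$) or $s_{\alpha_i}\,u_{[i,m]}^{(m)} = u_{[i+1,m]}^{(m)}$ (when $k = i$), and both reduce to sliding $s_{\alpha_i}$ through $u_{[k,m]}^{(m)}$: commute past $s_{\alpha_k}, \ldots, s_{\alpha_{i-2}}$, apply the braid relation $s_{\alpha_i} s_{\alpha_{i-1}} s_{\alpha_i} = s_{\alpha_{i-1}} s_{\alpha_i} s_{\alpha_{i-1}}$ (or cancel $s_{\alpha_i}^2$ when $k = i$), and then commute the trailing $s_{\alpha_{i-1}}$ past $s_{\alpha_{i+1}}, \ldots, s_{\alpha_m}$. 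For the inductive step I will write $u_{[i,j]}^{(m)} = s_{\alpha_i}\,u_{[i+1,j]}^{(m)}$ and apply the induction hypothesis to $u_{[i+1,j]}^{(m)}\,u_{[k,m]}^{(m)}$, which falls into one of the four cases with the first factor shortened; then the leading $s_{\alpha_i}$ is moved to the correct side by the same commutation/braid maneuver as in the base case. A small care is needed when $k = i$ in the middle case: here the inductive hypothesis is applied in the $k < i'$ form (with $i' = i + 1$) and the subsequent $s_{\alpha_i}^2$ cancellation produces exactly $u_{[i+1,m]}^{(m)}\,u_{[i,j-1]}^{(m)}$.

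The only real subtlety is the bookkeeping when the inductive step crosses between the two remaining cases (from $k < i$ at level $i$ to $i' \leq k$ at level $i + 1$, with $i' = i + 1, k = i$), so I expect that is where most of the care will go; beyond that, the argument is a routine type-$A$ braid manipulation, requiring nothing beyond the standard relations. No appeal to the earlier grading or Gromov--Witten structure is needed here; Lemma 3.2 is a purely combinatorial identity in $W_P$ that will be used later to analyze the grading map.
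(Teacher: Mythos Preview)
Your proposal is correct and follows essentially the same approach as the paper: both arguments reduce to the single-reflection identity $s_b\,u_{[k,m]}^{(m)} = u_{[k,m]}^{(m)}\,s_{b-1}$ for $k<b$ (together with the trivial cancellation $s_k\,u_{[k,m]}^{(m)}=u_{[k+1,m]}^{(m)}$), proved by the braid relation and commutation, and then iterate. The only organizational difference is that the paper first establishes case $k<i$ by iterating this identity and then uses that case to derive the case $i\le k\le j$ by splitting $u_{[i,j]}^{(m)}=(s_i\cdots s_k)(s_{k+1}\cdots s_j)$, whereas you handle both cases simultaneously by a formal induction on $j-i+1$; the computational content is the same.
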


For the above lemma, we recall that $u^{(m)}_{[i, j]}=s_is_{i+1}\cdots s_j$. As a direct  consequence, we obtain    the following grading comparisons.

\begin{prop}\label{gracompare000}
   Let  $w=u_{i_r}^{(r)}\cdots u_{i_1}^{(1)}$. Suppose  $j\leq m\leq r$. 
    \begin{enumerate}
      \item[a)] If $\ell(u_j^{\!(m)}\!w)\!=\!j+\ell(w)$, then $gr(u_{j}^{\!(m)}w)\!=\!gr(w)+j\mathbf{e}_k$ for a    unique $1\leq \!k\!\leq r$.
       \item[b)] If $\ell(s_jw)=\ell(w)-1$, then $gr(s_jw)=gr(w)-\mathbf{e}_k$ for a    unique $1\leq k\leq r$.
      \item[c)]  $\ell(ws_j)=\ell(w)-1$ if and only if  $i_j\geq i_{j-1}+1$ (where   $i_0:=0$). When this happens,
         we have $gr(ws_j)=\sum_{k=1}^{j-2} i_k\mathbf{e}_k+  (i_j-1)\mathbf{e}_{j-1}+ i_{j-1}\mathbf{e}_j+ \sum_{k=j+1}^{r} i_k\mathbf{e}_k$.
     \end{enumerate}
   Furthermore if    $w'\!\in\! W_P$ satisfies $\ell(w'w)\!=\!\ell(w)\pm\ell(w')$, then
          there exist non-negative integers $p_k$'s  such that $\sum_{k=1}^r p_k=\ell(w')$ and  $gr(w'w)=gr(w)\pm\sum_{k=1}^rp_k\mathbf{e}_k$.
\end{prop}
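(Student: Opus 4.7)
The plan is as follows. Since $w\in W_P$ forces $v_{r+1}=1$ in its decomposition, the factorization $w = u_{i_r}^{(r)}\cdots u_{i_1}^{(1)}$ is reduced and $gr(w) = \sum_{\ell=1}^r i_\ell\,\mathbf{e}_\ell$. I will prove (c) first by manipulating this decomposition via Lemma \ref{prodAtype}, then (a) by invoking the type-$A$ realisation of $R_P$, and finally (b) and the ``furthermore'' clause will follow from (a).

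For (c), since $s_j$ commutes with every $u_{i_\ell}^{(\ell)}$ for $\ell\le j-2$, it suffices to analyse the middle block $u_{i_j}^{(j)}\,u_{i_{j-1}}^{(j-1)}\,s_j$. Lemma \ref{prodAtype} applied to $u_{i_{j-1}}^{(j-1)}\cdot s_j$ (the ``$k=j'+1$'' case) merges them into $u_{i_{j-1}+1}^{(j)}$. Then Lemma \ref{prodAtype} applied to $u_{i_j}^{(j)}\cdot u_{i_{j-1}+1}^{(j)}$ (both with superscript $j$) splits into two subcases: when $i_j\ge i_{j-1}+1$ the ``$i\le k\le j'$'' case rewrites this as $u_{i_{j-1}}^{(j)}\cdot u_{i_j-1}^{(j-1)}$, shortening the total length by one; when $i_j\le i_{j-1}$ the ``$k<i$'' case rewrites it as $u_{i_{j-1}+1}^{(j)}\cdot u_{i_j}^{(j-1)}$, lengthening it by one. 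Checking that the new factors remain valid $W_{P_\ell}^{P_{\ell-1}}$-representatives (their exponents lie in the required ranges and each $u_i^{(\ell)}$ is itself reduced by Corollary \ref{uniqexprforWeyl}) yields both the ``iff'' statement and the explicit grading formula of (c).

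For (a), I will work in the type-$A$ realisation $\alpha_k = e_k - e_{k+1}$ of $R_P$, under which $W_P\cong S_{r+1}$ acts by permuting indices and $R_{P_k}^+\setminus R_{P_{k-1}}^+ = \{e_p - e_{k+1} : 1\le p\le k\}$, so the level of a positive root $e_a - e_b$ is simply $b-1$. Since $\mathrm{Inv}(u_j^{(m)}) = \{e_p - e_{m+1} : m-j+1\le p\le m\}$, the length-additive hypothesis combined with the identity $\mathrm{Inv}(u_j^{(m)}w) = \mathrm{Inv}(w)\sqcup w^{-1}\bigl(\mathrm{Inv}(u_j^{(m)})\bigr)$ forces every $w^{-1}(e_p - e_{m+1}) = e_{w^{-1}(p)} - e_{w^{-1}(m+1)}$ to be a positive root, i.e.\ $w^{-1}(p)<w^{-1}(m+1)$ for every $p\in[m-j+1,m]$. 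The level of each such root is $w^{-1}(m+1)-1$, independent of $p$. Thus all $j$ new inversions land in a single level, giving $gr(u_j^{(m)}w) = gr(w)+j\,\mathbf{e}_k$ with the unique $k = w^{-1}(m+1)-1$.

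Claim (b) follows at once by setting $w' = s_jw$: then $\ell(s_jw')=\ell(w')+1$, and (a) with $j=1$, $m=j$ gives $gr(w) = gr(w') + \mathbf{e}_k$, whence $gr(s_jw) = gr(w) - \mathbf{e}_k$. For the ``furthermore'' clause, fix a reduced expression $w' = s_{j_1}\cdots s_{j_{\ell(w')}}$ with each $\alpha_{j_i}\in\Delta_P$, and apply (a) or (b) to a single simple reflection at each step; setting $p_k = \#\{i : k_i = k\}$ then yields $gr(w'w) = gr(w)\pm\sum_k p_k\,\mathbf{e}_k$ with $\sum_k p_k = \ell(w')$. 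The main subtlety lies in (a): one has to recognise that the length-additivity hypothesis is exactly the assertion $w^{-1}(p)<w^{-1}(m+1)$ for all $p\in[m-j+1,m]$, which then produces the unique level $k = w^{-1}(m+1)-1$ cleanly and bypasses a painful case analysis through Lemma \ref{prodAtype}.
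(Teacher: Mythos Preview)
Your proof is correct, and your treatment of part (c) via Lemma~\ref{prodAtype} coincides with the paper's intended argument. Your approach to part (a), however, is genuinely different from the paper's. The paper argues (a) by an induction on $r$: it rewrites $u_j^{(m)}u_{i_r}^{(r)}$ using the four cases of Lemma~\ref{prodAtype}, rules out the length-decreasing case, and observes that in the remaining cases one either lands immediately at $u_{i_r+j}^{(r)}$ (giving $k=r$) or obtains $u_{i_r}^{(r)}u_j^{(m')}$ with $m'\le r-1$, whence the claim follows by pushing $u_j^{(m')}$ past the next factor. You instead invoke the inversion-set description $gr(v)_k=|\mathrm{Inv}(v)\cap(R_{P_k}^+\setminus R_{P_{k-1}}^+)|$ (stated after Definition~\ref{defgrading}) together with the permutation realisation of $R_P$, observe that $\mathrm{Inv}(u_j^{(m)})$ consists of $j$ roots all ending at $e_{m+1}$, and conclude that the $j$ new inversions $w^{-1}(e_p-e_{m+1})$ all lie at the single level $k=w^{-1}(m+1)-1$. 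This bypasses the case analysis entirely and moreover identifies $k$ explicitly, which the paper's inductive argument does not. Your reductions for (b) and the ``furthermore'' clause to (a) are standard and match what the paper leaves to the reader. Both approaches rely on the ambient assumption that $\Delta_P$ is of $A$-type, so neither is more general; yours is simply more direct for (a), at the cost of importing the $e_i$-coordinates rather than staying purely within the Coxeter presentation.
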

\begin{proof}
      Note that   $u_j^{(m)}u_{i_r}^{(r)}=u_{[m-j+1, m]}^{(m)}u_{[r-i_r+1, r]}^{(r)}=u_{[m-j+1, m]}^{(r)}u_{[r-i_r+1, r]}^{(r)}$.
    By Lemma \ref{prodAtype}, there are exactly four possibilities for this product. Since
     $\ell(u_j^{(m)}w)= j+\ell(w)$, the (third) case $m-j+1\leq r-i_r+1\leq m$ cannot occur. If $m=r-i_r$ (i.e. the second case occurs), then it is done by taking $\mathbf{e}_k=\mathbf{e}_r$.
     If $r-i_r+1\geq m+2$, we have   $m\leq r-1$ and $u_j^{(m)}u_{i_r}^{(r)}=u_{[r-i_r+1, r]}^{(r)}u_{[m-j+1, m]}^{(r)}=u_{i_r}^{(r)}u_{j}^{(m)}$;
     if $r-i_r+1<m-j+1$, we have $u_j^{(m)}u_{i_r}^{(r)}=u_{[r-i_r+1, r]}^{(r)}u_{[m-j, m-1]}^{(r)}=u_{[r-i_r+1, r]}^{(r)}u_{[m-j, m-1]}^{(m-1)}=u_{i_r}^{(r)}u_{j}^{(m-1)}$.
   That is, in either of the remaining two cases, we always have
        $u_j^{(m)}w=u_{i_r}^{(r)}u_j^{(m')}w'$ in which $\ell(u_j^{(m')}w')=j+\ell(w')$ with $m'\leq r-1$ and $w'=u_{i_{r-1}}^{(r-1)}\cdots u_{i_1}^{(1)}$.
      Hence, a) follows by induction.

      The arguments for the remaining parts of the statement are also easy and similar, which we leave to the readers.
\end{proof}
\bigskip

\begin{proof}[Proof of Lemma \ref{prodAtype}]
  Note that 
    $s_js_k=s_ks_j$ if $|j-k|\geq 2$, and $s_ks_js_k=s_js_ks_j$ if $|j-k|=1$.
   The first two cases are trivial.
   For   $1\leq k<b\leq m$, we have
    \begin{align*} s_b\cdot u_{[k, m]}^{(m)}=s_b\cdot(s_k \cdots s_m)
                     &= s_k\cdots s_{b-2}s_bs_{b-1}s_{b}s_{b+1}\cdots s_m\\
                     &= s_k \cdots s_{b-2}s_{b-1}s_bs_{b-1}s_{b+1}\cdots s_m\\
                     &=(s_k\cdots s_m)\cdot s_{b-1}=u_{[k, m]}^{(m)}s_{b-1}.
    \end{align*}
   Thus if $k<i$, then  $u_{[i, j]}^{(m)}u_{[k, m]}^{(m)} = u_{[k, m]}^{(m)} u_{[i-1, j-1]}^{(m)}$.
    If  $i\leq k\leq j$, then
     \begin{align*} u_{[i, j]}^{(m)}u_{[k, m]}^{(m)}&= (s_i \cdots s_k)(s_{k+1}\cdots s_j)\cdot (s_{k}\cdots s_m)\\
                     &=(s_i\cdots s_k)\cdot (s_k \cdots s_m)(s_k\cdots s_{j-1})\\
                     &=(s_i\cdots s_{k-1}) (s_{k+1}\cdots s_m)(s_k\cdots s_{j-1})\\
                     &=(s_{k+1}\cdots s_m)(s_i \cdots s_{j-1})= u_{[k+1, m]}^{(m)} u_{[i, j-1]}^{(m)}.
    \end{align*}

    \vspace{-0.5cm}

    $\mbox{}$
\end{proof}

Let us recall the following well-known fact, which holds in general.

\begin{lemma}\label{charalongest}
 Let $\bar P\subset \tilde P$ be parabolic subgroups of $G$.
        If $w\in W_{\tilde P}^{\bar P}$, then       $\ell(w)\leq \ell(\omega_{\tilde P}\omega_{\bar P})$. Furthermore, the equality holds if and only if
          $w=\omega_{\tilde P}\omega_{\bar P}$.
\end{lemma}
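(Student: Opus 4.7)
The plan is to reduce the claim to the standard length-additivity for parabolic decompositions, together with the uniqueness of the longest element of $W_{\tilde P}$. The key preliminary fact I would use is that for any $w \in W^{\bar P}$ and any $u \in W_{\bar P}$, the product is length-additive, $\ell(wu) = \ell(w) + \ell(u)$; this is the very definition of $W_{\tilde P}^{\bar P}$ as minimal-length coset representatives.

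First I would check that $\omega_{\tilde P}\omega_{\bar P} \in W_{\tilde P}^{\bar P}$. For any simple root $\alpha \in \bar\Delta$, $\omega_{\bar P}(\alpha) \in -R^+_{\bar P} \subset -R^+_{\tilde P}$, and then $\omega_{\tilde P}$ maps $-R^+_{\tilde P}$ into $R^+_{\tilde P} \subset R^+$. By Lemma~\ref{strongexchange}(1), this places $\omega_{\tilde P}\omega_{\bar P}$ in $W^{\bar P}$, and obviously it lies in $W_{\tilde P}$. Consequently the factorization $\omega_{\tilde P} = (\omega_{\tilde P}\omega_{\bar P})\cdot \omega_{\bar P}$ is length-additive, yielding
\[
\ell(\omega_{\tilde P}\omega_{\bar P}) = \ell(\omega_{\tilde P}) - \ell(\omega_{\bar P}).
\]

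Next, take an arbitrary $w \in W_{\tilde P}^{\bar P}$. Since $w \in W^{\bar P}$ and $\omega_{\bar P}\in W_{\bar P}$, length-additivity gives $\ell(w\omega_{\bar P}) = \ell(w)+\ell(\omega_{\bar P})$. Because $w\omega_{\bar P}\in W_{\tilde P}$ and $\omega_{\tilde P}$ is the longest element there, we have $\ell(w\omega_{\bar P})\leq \ell(\omega_{\tilde P})$. Combining with the previous display,
\[
\ell(w) \leq \ell(\omega_{\tilde P}) - \ell(\omega_{\bar P}) = \ell(\omega_{\tilde P}\omega_{\bar P}),
\]
which establishes the inequality.

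Finally, if equality holds then $\ell(w\omega_{\bar P}) = \ell(\omega_{\tilde P})$, so by uniqueness of the longest element of $W_{\tilde P}$ we must have $w\omega_{\bar P} = \omega_{\tilde P}$, i.e.\ $w = \omega_{\tilde P}\omega_{\bar P}$ (using $\omega_{\bar P}^2 = 1$). No part is really an obstacle; the proof is essentially a bookkeeping argument once the standard length-additivity for $W^{\bar P}$-representatives and the uniqueness of $\omega_{\tilde P}$ are invoked.
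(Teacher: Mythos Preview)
Your proof is correct and follows essentially the same approach as the paper's: both first verify $\omega_{\tilde P}\omega_{\bar P}\in W_{\tilde P}^{\bar P}$ via its action on $R^+_{\bar P}$, then use length-additivity for minimal coset representatives together with the uniqueness of the longest element $\omega_{\tilde P}$ to obtain the inequality and characterize the equality case. Your write-up simply spells out in more detail what the paper compresses into one sentence.
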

\begin{proof}
  Note that     $\omega_{\bar P}$ sends positive roots $R^+_{\bar P}$ to negative roots $-R^+_{\bar P}\subset -R^+_{\tilde P}$ and $\omega_{\tilde P}$ sends  $-R_{\tilde P}^+$
         to  $R_{\tilde P}^+$.
    Hence,   $\omega_{\tilde P}\omega_{\bar P}(R^+_{\bar P})\subset R^+$, implying $\omega_{\tilde P}\omega_{\bar P}\in W_{\tilde P}^{\bar P} $.
    Hence the statement follows, by noting $\omega_{\tilde P}$ is the unique longest element in $W_{\tilde P}$ and
       $\ell(\omega_{\tilde P}v) =\ell(\omega_{\tilde P})-\ell(v)$ for any $v\in W_{\bar P}$.
\end{proof}

\begin{lemma}\label{longestcomp000}
   Let $\Delta_{\tilde P}=\Delta_P\setminus\{\alpha_k\}$  where $1\leq k\leq r$. We have  {\upshape a)}  $gr(\omega_P\omega_{\tilde P})=\sum\nolimits_{p=k}^{r}k\mathbf{e}_p$ and
   {\upshape b)} for any  $v\in  W_P^{\tilde P}$,
   $gr(v)=\sum\nolimits_{p=k}^{r}j_p\mathbf{e}_p  \mbox{ with } j_r\leq  \cdots\leq j_k\leq k.$   

\end{lemma}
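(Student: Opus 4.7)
The plan is to derive both (a) and (b) from the inversion-set formula for $gr$ recorded after Definition \ref{defgrading}, namely
$gr(w)=\sum_{p=1}^{r+1}|\mbox{Inv}(w)\cap(R_{P_p}^+\setminus R_{P_{p-1}}^+)|\mathbf{e}_p$,
together with an analysis of which inversions are available to an element of $W_P^{\tilde P}$. Since $\Delta_P$ is of type $A_r$, every positive root of $R_P$ is an interval $\beta_{i,j}:=\sum_{t=i}^j\alpha_t$ with $1\leq i\leq j\leq r$; hence $R_P^+\setminus R_{\tilde P}^+=\{\beta_{i,j}\mid i\leq k\leq j\}$ and $R_{P_p}^+\setminus R_{P_{p-1}}^+=\{\beta_{i,p}\mid 1\leq i\leq p\}$ for $1\leq p\leq r$, while $R_{P_{r+1}}^+\setminus R_{P_r}^+=R^+\setminus R_P^+$.

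For part (b), I will first observe that any $v\in W_P^{\tilde P}$ has $\mbox{Inv}(v)\subset R_P^+$ (because $v\in W_P$ preserves signs on $R^+\setminus R_P^+$), and moreover $v(\alpha)\in R^+$ for every $\alpha\in R_{\tilde P}^+$ (because $v$ is the minimal length element of its coset $vW_{\tilde P}$, so $vs_\alpha$ would be shorter if $v(\alpha)\in -R^+$); hence $\mbox{Inv}(v)\subset R_P^+\setminus R_{\tilde P}^+$. This immediately gives $j_p=0$ for $p<k$ and for $p=r+1$, and $j_p=|\{1\leq i\leq k:\beta_{i,p}\in\mbox{Inv}(v)\}|\leq k$ for $k\leq p\leq r$. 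The decisive step is to prove the monotonicity $j_k\geq j_{k+1}\geq\cdots\geq j_r$, which I will reduce to the claim: for each $1\leq i\leq k$ and $k\leq p\leq r-1$, if $\beta_{i,p+1}\in\mbox{Inv}(v)$ then $\beta_{i,p}\in\mbox{Inv}(v)$. Writing $\beta_{i,p+1}=\beta_{i,p}+\alpha_{p+1}$ and using $\alpha_{p+1}\in\Delta_{\tilde P}$ (which holds because $p+1\neq k$) to conclude $v(\alpha_{p+1})\in R^+$, the claim will follow by contradiction: if $v(\beta_{i,p})$ were positive, then $v(\beta_{i,p+1})$ would be a sum of two positive roots which is still a root, hence positive, contradicting $\beta_{i,p+1}\in\mbox{Inv}(v)$.

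Part (a) then falls out with essentially no additional work. By Lemma \ref{charalongest}, $\omega_P\omega_{\tilde P}$ is the unique longest element of $W_P^{\tilde P}$, with length $|R_P^+|-|R_{\tilde P}^+|=|R_P^+\setminus R_{\tilde P}^+|$. Combined with $\mbox{Inv}(\omega_P\omega_{\tilde P})\subset R_P^+\setminus R_{\tilde P}^+$ from above, the two sets must coincide, so $j_p=k$ for every $k\leq p\leq r$. The only step requiring genuine thought is the monotonicity claim; everything else is bookkeeping with the explicit description of positive roots in type $A_r$.
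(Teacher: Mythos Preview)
Your proof is correct, and it takes a genuinely different route from the paper's.

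For part b), the paper argues that for each $\alpha_p\in\Delta_{\tilde P}$ the condition $\ell(vs_p)=\ell(v)+1$ forces $j_p\leq j_{p-1}$, quoting Proposition~\ref{gracompare000}~c) (whose proof in turn rests on the explicit type-$A$ product formulas of Lemma~\ref{prodAtype}). You instead work directly with the inversion-set formula for $gr$ and prove the inclusion $\{i\leq k:\beta_{i,p+1}\in\mathrm{Inv}(v)\}\subset\{i\leq k:\beta_{i,p}\in\mathrm{Inv}(v)\}$ via the elementary observation that a sum of two positive roots which is a root must be positive. This bypasses Proposition~\ref{gracompare000} entirely and is arguably more transparent: it makes visible exactly which root-system fact drives the monotonicity.

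For part a), the paper writes down the explicit candidate $w=u_k^{(r)}\cdots u_k^{(k)}$, verifies $w\in W_P^{\tilde P}$ and counts its length to conclude $w=\omega_P\omega_{\tilde P}$ via Lemma~\ref{charalongest}. Your argument is cleaner: once you know $\mathrm{Inv}(\omega_P\omega_{\tilde P})\subset R_P^+\setminus R_{\tilde P}^+$ from part b) and that the length equals $|R_P^+\setminus R_{\tilde P}^+|$, the inversion set is forced, and the grading drops out immediately. The paper's approach has the minor advantage of exhibiting a reduced expression for $\omega_P\omega_{\tilde P}$, which is occasionally useful elsewhere; yours is shorter and avoids any guesswork.
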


  \begin{proof}
    Write  $gr(v)=\sum\nolimits_{p=1}^{r}j_p\mathbf{e}_p$ and set $j_0=0$.
       For each $\alpha_p\in\Delta_{\tilde P}$, we have $\ell(vs_p)=\ell(v)+1$  by Lemma \ref{strongexchange}.
     This  implies
           $j_{p}\leq j_{p-1}$  by Proposition  \ref{gracompare000}. That is, $j_r\leq \cdots\leq j_{k+1}\leq  j_k\leq k$ and $0\leq j_{k-1}\leq\cdots \leq j_1\leq j_0=0$.
       Thus b) follows.

      Let $w=u_k^{(r)}\cdots u_k^{(k)}$.  Note that   $w\in W_P^{\tilde P}$  and
             $\ell(w)=k(r-k+1) =|R^+_{P}|-|R^+_{\tilde P}|=\ell(\omega_P\omega_{\tilde P})$.
    By Lemma \ref{charalongest}, we have $w=\omega_P\omega_{\tilde P}$. That is, a) follows.
   \end{proof}

 In addition, we introduce the next three useful lemmas.
\begin{lemma}[see e.g. \cite{mare}] \label{lengthofposiroot000}
     For any  $\gamma\in R^+$, we have
             $\ell(s_\gamma)\leq \langle 2\rho, \gamma^\vee\rangle-1$.
 \end{lemma}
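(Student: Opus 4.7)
The plan is to prove the inequality by directly comparing the two sides through the inversion-set characterization $\ell(w)=|\mathrm{Inv}(w)|$ with $\mathrm{Inv}(w)=\{\beta\in R^+\mid w(\beta)\in -R^+\}$, and by analyzing $\langle 2\rho,\gamma^\vee\rangle=\sum_{\beta\in R^+}\langle\beta,\gamma^\vee\rangle$. The strategy is to show that only the roots in $\mathrm{Inv}(s_\gamma)$ contribute to the sum, and that each of them contributes at least $1$ on average, with $\gamma$ itself contributing $2$.

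First I would handle the ``outside'' contribution. For $\beta\in R^+\setminus\mathrm{Inv}(s_\gamma)$, one has $s_\gamma(\beta)\in R^+$. Moreover $s_\gamma(\beta)\neq\gamma$ (otherwise $\beta=-\gamma$), so $s_\gamma$ restricts to an involution of $R^+\setminus\mathrm{Inv}(s_\gamma)$. Since $\beta+s_\gamma(\beta)=2\beta-\langle\beta,\gamma^\vee\rangle\gamma$ pairs to $0$ against $\gamma^\vee$, and fixed points ($\langle\beta,\gamma^\vee\rangle=0$) also contribute $0$, we get $\sum_{\beta\in R^+\setminus\mathrm{Inv}(s_\gamma)}\langle\beta,\gamma^\vee\rangle=0$. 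Hence
\[
\langle 2\rho,\gamma^\vee\rangle=\langle\gamma,\gamma^\vee\rangle+\sum_{\beta\in\mathrm{Inv}(s_\gamma)\setminus\{\gamma\}}\langle\beta,\gamma^\vee\rangle=2+\sum_{\beta\in\mathrm{Inv}(s_\gamma)\setminus\{\gamma\}}N_\beta,
\]
where $N_\beta:=\langle\beta,\gamma^\vee\rangle$.

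Next I would set up a fixed-point-free involution on $\mathrm{Inv}(s_\gamma)\setminus\{\gamma\}$ to pair these contributions. For such $\beta$, one has $\beta-N_\beta\gamma=s_\gamma(\beta)\in -R^+$ and $\beta\in R^+\setminus\{\gamma\}$, forcing $N_\beta>0$. Define $\beta':=N_\beta\gamma-\beta=-s_\gamma(\beta)\in R^+$. A short computation gives $s_\gamma(\beta')=-\beta\in -R^+$, so $\beta'\in\mathrm{Inv}(s_\gamma)$; also $\beta'\neq\gamma$ since otherwise $\beta=(N_\beta-1)\gamma$ would be a proper multiple of $\gamma$, contradicting reducedness of the finite root system. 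The map $\beta\mapsto\beta'$ is plainly an involution, and has no fixed point because $2\beta=N_\beta\gamma$ again forces $\beta$ to be a multiple of $\gamma$, hence $\beta=\gamma$. One checks $\langle\beta',\gamma^\vee\rangle=2N_\beta-N_\beta=N_\beta$, so each pair contributes $N_\beta+N_{\beta'}=2N_\beta\geq 2$.

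Combining, $\langle 2\rho,\gamma^\vee\rangle\geq 2+2\cdot\tfrac{1}{2}|\mathrm{Inv}(s_\gamma)\setminus\{\gamma\}|=|\mathrm{Inv}(s_\gamma)|+1=\ell(s_\gamma)+1$, which is the claim. The only mildly delicate point is verifying that the involution really lands in $\mathrm{Inv}(s_\gamma)\setminus\{\gamma\}$ and has no fixed point, both of which reduce to the fact that in a finite (reduced) root system the only roots proportional to $\gamma$ are $\pm\gamma$; this is where the ``reduced'' hypothesis on $R$ is used and is the main (but minor) obstacle.
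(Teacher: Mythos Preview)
Your proof is correct and takes a genuinely different route from the paper's. The paper proceeds by induction on $\ell(s_\gamma)$: for $\gamma\notin\Delta$ one finds a simple root $\alpha_j$ with $\langle\gamma,\alpha_j^\vee\rangle>0$, checks that $\ell(s_js_\gamma s_j)=\ell(s_\gamma)-2$, and applies the induction hypothesis to $s_j(\gamma)$, using $\langle\rho,s_j(\gamma)^\vee\rangle=\langle\rho,\gamma^\vee\rangle-\langle\alpha_j,\gamma^\vee\rangle$. Your argument instead goes directly through the inversion set, pairing $\beta\leftrightarrow -s_\gamma(\beta)$ on $\mathrm{Inv}(s_\gamma)\setminus\{\gamma\}$ and observing each pair contributes at least $2$ to $\langle 2\rho,\gamma^\vee\rangle$.

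What each approach buys: your counting argument is self-contained, avoids induction, and makes the equality case transparent (equality holds exactly when every $\beta\in\mathrm{Inv}(s_\gamma)\setminus\{\gamma\}$ has $\langle\beta,\gamma^\vee\rangle=1$). The paper's inductive argument, on the other hand, is tailored to prove simultaneously the two companion statements (Lemmas~\ref{lengthofposiroot} and~\ref{lengthofpos222}) which analyze the equality case and show $\ell(us_\gamma)=\ell(u)+1-\langle 2\rho,\gamma^\vee\rangle$ forces $\ell(s_\gamma)=\langle 2\rho,\gamma^\vee\rangle-1$; the inductive reduction $s_\gamma\rightsquigarrow s_js_\gamma s_j$ is exactly what those refinements need. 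So the paper's approach is chosen because it feeds directly into the subsequent lemmas, while yours is a cleaner standalone proof of the bare inequality.
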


\begin{lemma} \label{lengthofposiroot}
    Let $\gamma\in R^+\setminus \Delta$ satisfy $\ell(s_\gamma)= \langle 2\rho, \gamma^\vee\rangle-1$.  
                 For any $1\leq j\leq n$ with $\langle \alpha_j, \gamma^\vee\rangle>0$, we have $\langle \alpha_j, \gamma^\vee\rangle=1$.
                 Furthermore for  $\beta:=s_j(\gamma)$,  we have $
                          \beta^\vee=\gamma^\vee-\alpha_j^\vee$ and $
             \ell(s_\beta)=\ell(s_\gamma)-2= \langle 2\rho, \beta^\vee\rangle-1.$
     \end{lemma}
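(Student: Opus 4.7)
My plan is to exploit the sharpness of the upper bound in Lemma \ref{lengthofposiroot000} as a lever. The key observation is that if $\gamma \in R^+ \setminus \Delta$, then $\gamma \neq \alpha_j$, so $s_j$ permutes $R^+ \setminus \{\alpha_j\}$ and hence $\beta := s_j(\gamma) \in R^+$. Writing $k := \langle \alpha_j, \gamma^\vee\rangle > 0$, the coroot formula $(w\alpha)^\vee = w(\alpha^\vee)$ applied to $w = s_j$ gives
$$\beta^\vee = s_j(\gamma^\vee) = \gamma^\vee - k\alpha_j^\vee.$$
Since $\langle \rho, \alpha_j^\vee \rangle = 1$, this yields $\langle 2\rho, \beta^\vee\rangle = \langle 2\rho, \gamma^\vee\rangle - 2k$.

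Next I would combine two length estimates for $s_\beta = s_j s_\gamma s_j$. On one hand, the triangle inequality for the word length gives
$$\ell(s_\beta) = \ell(s_j s_\gamma s_j) \geq \ell(s_\gamma) - 2.$$
On the other hand, applying Lemma \ref{lengthofposiroot000} to $\beta$ and invoking the hypothesis $\ell(s_\gamma) = \langle 2\rho, \gamma^\vee\rangle - 1$ gives
$$\ell(s_\beta) \leq \langle 2\rho, \beta^\vee\rangle - 1 = \langle 2\rho, \gamma^\vee\rangle - 2k - 1 = \ell(s_\gamma) - 2k.$$
Comparing the two bounds forces $-2 \leq -2k$, i.e.\ $k \leq 1$; since $k > 0$ by assumption, we conclude $k = 1$. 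This is the first conclusion of the lemma.

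Once $k = 1$ is established, the coroot identity reduces to $\beta^\vee = \gamma^\vee - \alpha_j^\vee$, and both of the above length inequalities must actually be equalities, so
$$\ell(s_\beta) = \ell(s_\gamma) - 2 = \langle 2\rho, \beta^\vee\rangle - 1,$$
which is the remaining assertion. No step is really an obstacle — the only point one must be slightly careful about is verifying $s_\beta = s_j s_\gamma s_j$ (standard from $s_{w\alpha} = w s_\alpha w^{-1}$) and the coroot transformation rule $(s_j\gamma)^\vee = s_j(\gamma^\vee)$, both of which are routine from the $W$-invariance of the bilinear form used to identify roots with coroots.
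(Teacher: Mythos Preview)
Your proof is correct and follows essentially the same squeeze argument as the paper: bound $\ell(s_\beta)$ above via Lemma~\ref{lengthofposiroot000} applied to $\beta$, bound it below, and compare to force $k=1$. The only cosmetic difference is that the paper obtains the lower bound $\ell(s_js_\gamma s_j)=\ell(s_\gamma)-2$ as an exact equality by checking directly that both $s_\gamma(\alpha_j)$ and $s_js_\gamma(\alpha_j)$ lie in $-R^+$ (via Lemma~\ref{strongexchange}), whereas you use the triangle inequality $\ell(s_js_\gamma s_j)\geq \ell(s_\gamma)-2$; either suffices, and yours is arguably the cleaner route.
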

\begin{lemma} \label{lengthofpos222}
    Let $\gamma\in R^+\setminus \Delta$. If
                  $\ell(us_\gamma)=\ell(u)+1-\langle 2\rho, \gamma^\vee\rangle$ where   $u\in W$,   then
                   $\ell(s_\gamma)= \langle 2\rho, \gamma^\vee\rangle-1$. Furthermore, we take   any $1\leq j\leq n$ with $\langle \alpha_j, \gamma^\vee\rangle>0$,
               and set   $\beta:=s_j(\gamma)$. Then   all the followings hold:
      $$\ell(us_j)=\ell(u)-1,\quad \ell(us_js_\beta)=\ell(us_j)-\ell(s_\beta),\quad \ell(us_\gamma)=\ell(us_js_\beta s_j)=\ell(us_js_\beta)-1.$$
\end{lemma}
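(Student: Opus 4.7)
The plan is to use the general inequality $\ell(uv) \geq \ell(u) - \ell(v)$, together with Lemma \ref{lengthofposiroot000} and Lemma \ref{lengthofposiroot}, to convert the hypothesis into a reduced-factorization statement that forces all the claimed length identities.

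First I would establish $\ell(s_\gamma) = \langle 2\rho, \gamma^\vee\rangle - 1$. The universal estimate $\ell(us_\gamma) \geq \ell(u) - \ell(s_\gamma)$ combined with the hypothesis $\ell(us_\gamma) = \ell(u) + 1 - \langle 2\rho, \gamma^\vee\rangle$ yields $\ell(s_\gamma) \geq \langle 2\rho, \gamma^\vee\rangle - 1$, and the reverse inequality is exactly Lemma \ref{lengthofposiroot000}. This in particular shows that the hypotheses of Lemma \ref{lengthofposiroot} are met, so for any $1\leq j\leq n$ with $\langle \alpha_j, \gamma^\vee\rangle > 0$ we may invoke that lemma to conclude $\beta^\vee = \gamma^\vee - \alpha_j^\vee$ and $\ell(s_\beta) = \ell(s_\gamma) - 2 = \langle 2\rho,\beta^\vee\rangle-1$.

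Next I would exploit the conjugation identity $s_\gamma = s_{s_j(\beta)} = s_j s_\beta s_j$ (which is literally how reflections transform under $s_j$, since $s_j(\beta)=\gamma$). Because $\ell(s_j s_\beta s_j) \leq \ell(s_\beta)+2 = \ell(s_\gamma)$ and this must equal $\ell(s_\gamma)$, the expression $s_j \cdot s_\beta \cdot s_j$ realizes $s_\gamma$ with matching length, i.e.\ $\ell(s_j s_\beta) = \ell(s_\beta)+1$ and $\ell(s_j s_\beta s_j) = \ell(s_\beta)+2$. Thus the hypothesis rewrites as $\ell(us_j s_\beta s_j) = \ell(u) - \ell(s_\beta) - 2$.

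To obtain $\ell(us_j) = \ell(u)-1$, I would argue by contradiction. Multiplication on the right by a simple reflection changes length by exactly $\pm 1$, so the only alternative is $\ell(us_j) = \ell(u)+1$. In that case the chain of inequalities
\[
\ell(us_j s_\beta s_j) \geq \ell(us_j s_\beta) - 1 \geq \ell(us_j) - \ell(s_\beta) - 1 = \ell(u) - \ell(s_\beta),
\]
would contradict $\ell(us_js_\beta s_j) = \ell(u)-\ell(s_\beta)-2$. With $\ell(us_j)=\ell(u)-1$ in hand, the identity $\ell(us_js_\beta) = \ell(us_j) - \ell(s_\beta)$ follows by sandwiching: the upper bound $\ell(us_js_\beta)\leq \ell(us_js_\beta s_j)+1 = \ell(u)-\ell(s_\beta)-1 = \ell(us_j)-\ell(s_\beta)$ matches the universal lower bound. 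Finally $us_\gamma = us_js_\beta s_j$ is tautological from $s_\gamma = s_js_\beta s_j$, and the last identity $\ell(us_js_\beta s_j) = \ell(us_js_\beta)-1$ is immediate from the already-computed values $\ell(us_js_\beta) = \ell(u)-\ell(s_\beta)-1$ and $\ell(us_js_\beta s_j)=\ell(u)-\ell(s_\beta)-2$.

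No step presents a genuine obstacle; the whole argument is a careful triangle-inequality bookkeeping once Lemma \ref{lengthofposiroot} supplies the key relation $\ell(s_\beta) = \ell(s_\gamma)-2$. The only mildly delicate point is verifying that the factorization $s_\gamma = s_js_\beta s_j$ is reduced, which I would handle by the length-additivity check above rather than by invoking any deeper structural result.
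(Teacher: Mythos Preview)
Your proposal is correct and follows essentially the same approach as the paper: both arguments rest on the chain of inequalities $\ell(us_js_\beta s_j)\geq \ell(us_js_\beta)-1\geq \ell(us_j)-\ell(s_\beta)-1\geq \ell(u)-\ell(s_\beta)-2=\ell(u)-\ell(s_\gamma)$, which is forced to collapse to equalities once one knows $\ell(us_\gamma)=\ell(u)-\ell(s_\gamma)$. The paper writes this as a single sandwiched chain, whereas you unpack it into a contradiction step for $\ell(us_j)=\ell(u)-1$ followed by a separate sandwich for $\ell(us_js_\beta)$; your intermediate verification that $s_\gamma=s_js_\beta s_j$ is reduced is correct but not actually needed for the remainder of your argument.
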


\begin{proof}
    We prove all these three statements together, including the proof of Lemma \ref{lengthofposiroot000} from \cite{mare}  by induction on $\ell(s_\gamma)$.

     If   $\ell(s_\gamma)=1$, then  $\gamma\in \Delta$ and consequently $\ell(s_\gamma)=1=2\langle\rho, \gamma^\vee\rangle-1$. Now
    we assume  $\gamma\in R^+\setminus \Delta$. Take any $1\leq j\leq n$ such that   $\langle\gamma, \alpha_j^\vee\rangle>0$ (such $j$ does exist; otherwise,
         we would conclude $2=\langle \gamma, \gamma^\vee\rangle\leq 0$).
     Consequently,  $\langle \alpha_j, \gamma^\vee\rangle>0$. Thus
        $ s_\gamma(\alpha_j)=\alpha_j-\langle\alpha_j, \gamma^\vee\rangle\gamma\in-R^+$.
          Also
             $s_j s_\gamma(\alpha_j)=(\langle\gamma, \alpha_j^\vee\rangle \langle\alpha_j, \gamma^\vee\rangle-1)\alpha_j-\langle\alpha_j,\gamma^\vee\rangle\gamma$
           is a negative root. By Lemma \ref{strongexchange}, we have $\ell(s_j s_\gamma s_j)=\ell(s_\gamma)-2$.
        Because  $s_j(\gamma)^\vee=s_j(\gamma^\vee)=\gamma^\vee-\langle\alpha_j, \gamma^\vee\rangle\alpha_j^\vee,$
        we have $\langle \rho, s_j(\gamma)^\vee\rangle= $ $\langle\rho, \gamma^\vee\rangle-\langle\alpha_j, \gamma^\vee\rangle$.
         By the induction hypothesis, we conclude   the following:
           \begin{eqnarray} \ell(s_\gamma)=\ell(s_j s_\gamma s_j)+2&\leq& 2\langle \rho, s_j(\gamma)^\vee \rangle-1+2\\
                                                                   &=&2\langle \rho, \gamma^\vee\rangle-1+2(1-\langle\alpha_j, \gamma^\vee\rangle)\\
                                                                   &  \leq &\langle2\rho, \gamma^\vee\rangle-1.
           \end{eqnarray}

 If  $\ell(s_\gamma)=\langle 2\rho, \gamma^\vee\rangle-1$, then
              both (2.1) and (2.3) must be   equalities. In particular, we conclude $\langle \alpha_j, \gamma^\vee\rangle=1$,
                      $\beta^\vee=\gamma^\vee-\alpha_j^\vee$ and
     $\ell(s_\beta)=\ell(s_\gamma)-2= \langle 2\rho, \beta^\vee\rangle-1$.

      It remains to show   Lemma \ref{lengthofpos222}. Indeed, we have
      $$\ell(u)-\ell(s_\gamma)\leq \ell(us_\gamma)=\ell(u)-(\langle 2\rho, \gamma^\vee\rangle-1)\leq \ell(u)-\ell(s_\gamma).$$
  Hence, both inequalities become equalities. Thus $\ell(s_\gamma)=\langle 2\rho, \gamma^\vee\rangle-1$.

Furthermore, we note  $\ell(us_js_\beta s_j)=\ell(us_\gamma)=\ell(u)-\ell(s_\gamma)$ and
             $$\ell(us_js_\beta s_j)\geq \ell(us_js_\beta)-1\geq \ell(us_j)-\ell(s_\beta)-1\geq \ell(u)-1-\ell(s_\beta)-1 =\ell(u)- \ell(s_\gamma).$$
    Hence, the statements in   Lemma \ref{lengthofpos222} also follow.
              \end{proof}

\subsection{Explicit gradings of $q_j$'s}\label{subsecgrading}
The main results of this subsection are Proposition \ref{graquanvar} and Proposition \ref{gradingforconn},
 giving   explicit formulas for  gradings   $gr(q_j)$'s.

\begin{prop}\label{graquanvar} Let $2\leq j\leq r$.
   Following the notations in Definition \ref{defgrading}, we have
    $\psi_{\Delta_j, \Delta_{j-1}}(1, \alpha_j^\vee+Q^\vee_{j-1})=(u_{j-1}^{(j-1)}, \alpha_j^\vee)$ and
    $gr(q_j)=(1-j)\mathbf{e}_{j-1}+(1+j)\mathbf{e}_j$.
\end{prop}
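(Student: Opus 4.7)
The plan is to verify the two assertions in order, since the grading formula will follow immediately from the PW-lifting computation plus the recursive definition of $gr$.

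First, I would identify the element $\lambda'$ in the PW-lifting. We are looking for the unique $\lambda' \in Q^\vee_j$ with $\lambda' \equiv \alpha_j^\vee \pmod{Q^\vee_{j-1}}$ and $\langle \alpha, \lambda'\rangle \in \{0, -1\}$ for every $\alpha \in R^+ \cap \bigoplus_{i=1}^{j-1}\mathbb{Z}\alpha_i$. The natural candidate is $\lambda' = \alpha_j^\vee$ itself (so $a_1 = \cdots = a_{j-1} = 0$). Since $\Delta_P$ is assumed to be of $A$-type throughout this section, the positive roots of $R_{P_{j-1}}$ are exactly the sums $\alpha_k + \alpha_{k+1} + \cdots + \alpha_m$ with $1 \leq k \leq m \leq j-1$; pairing with $\alpha_j^\vee$ gives $-1$ when $m = j-1$ and $0$ otherwise, which lies in $\{0, -1\}$. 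By the uniqueness clause of Proposition \ref{comparison}(1), this identifies $\lambda'$.

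Next I would read off $\Delta_{P_{j-1}'} = \{\alpha \in \Delta_{j-1} \mid \langle \alpha, \alpha_j^\vee\rangle = 0\} = \{\alpha_1,\dots,\alpha_{j-2}\}$, so that $\omega_{P_{j-1}'} = \omega_{P_{j-2}}$. By Lemma \ref{charalongest}, $\omega_{P_{j-1}}\omega_{P_{j-2}}$ is the unique longest element of $W_{P_{j-1}}^{P_{j-2}}$, of length $|R^+_{P_{j-1}}| - |R^+_{P_{j-2}}| = j-1$. To finish the first assertion I would check that $u_{j-1}^{(j-1)} = s_1 s_2 \cdots s_{j-1}$ has the same two properties. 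Length $j-1$ is clear from Corollary \ref{uniqexprforWeyl}. Membership in $W_{P_{j-1}}^{P_{j-2}}$ follows from Lemma \ref{strongexchange}(1) by computing $u_{j-1}^{(j-1)}(\alpha_i)$ for $i \leq j-2$: using the $A$-type Cartan relations $s_k(\alpha_i) = \alpha_i$ for $|k - i|\geq 2$ and $s_{i+1}(\alpha_i) = \alpha_i + \alpha_{i+1}$, one sees this is always a positive root (in fact $\alpha_i + \alpha_{i+1} + \cdots$ telescopes to a positive root). By uniqueness, $\omega_{P_{j-1}}\omega_{P_{j-1}'} = u_{j-1}^{(j-1)}$, which establishes the first assertion.

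For the grading, I would plug into the recursive definition in Definition \ref{defgrading}. With all $a_i = 0$, it reduces to
\[
gr(q_j) = \bigl(\ell(u_{j-1}^{(j-1)}) + 2\bigr)\mathbf{e}_j - gr(u_{j-1}^{(j-1)}) = (j+1)\mathbf{e}_j - gr(u_{j-1}^{(j-1)}).
\]
Since $u_{j-1}^{(j-1)} \in W_{P_{j-1}}^{P_{j-2}}$, its decomposition associated to $(\Delta_P, \Upsilon)$ is $v_{j-1} = u_{j-1}^{(j-1)}$ with all other factors trivial, so $gr(u_{j-1}^{(j-1)}) = (j-1)\mathbf{e}_{j-1}$. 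Combining gives $gr(q_j) = (1-j)\mathbf{e}_{j-1} + (j+1)\mathbf{e}_j$.

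The only mild obstacle is verifying $u_{j-1}^{(j-1)} \in W_{P_{j-1}}^{P_{j-2}}$; everything else is a direct unraveling of definitions once the candidate $\lambda' = \alpha_j^\vee$ is guessed. The guess itself is the essential insight and is motivated by the fact that in type $A$ the pairing of $\alpha_j^\vee$ with any positive root of the $A_{j-1}$ sub-system is automatically in $\{0,-1\}$.
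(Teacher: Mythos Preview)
Your proof is correct and follows essentially the same approach as the paper. The only difference is cosmetic: the paper invokes Lemma \ref{longestcomp000} (applied to $\Delta_{j-1}$ with $k=j-1$) to obtain $gr(\omega_{P_{j-1}}\omega_{P_{j-1}'})=(j-1)\mathbf{e}_{j-1}$ in one step, whereas you unpack this by identifying $\omega_{P_{j-1}}\omega_{P_{j-2}}$ with $u_{j-1}^{(j-1)}$ via Lemma \ref{charalongest} and a direct check of minimality --- which is exactly the content of the proof of Lemma \ref{longestcomp000} a).
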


\begin{proof}
  Note that $\Delta_{j-1}\subset \Delta_j$ with $\Delta_j\setminus \Delta_{j-1}=\{\alpha_j\}$.
   Clearly,     $\langle \alpha, \alpha_j^\vee\rangle  \in \{0, -1\}$ for all $\alpha\in  R^+\cap \bigoplus_{i=1}^{j-1}\mathbb{Z}\alpha_i$.
    Hence, we have $\Delta_{P_{j-1}'}=\{\alpha  \in \Delta_{j-1}~|~ \langle  \alpha, \alpha_j^\vee\rangle =0\}$ $=\Delta_{j-1}\setminus\{\alpha_{j-1}\}$.
       Therefore we conclude $gr(\omega_{P_{j-1}}\omega_{P_{j-1}'})= (j-1)\mathbf{e}_{j-1}$ by using Lemma \ref{longestcomp000} (with respect to $\Delta_{j-1}$).
       Thus the former equality holds. Consequently, the latter equality follows   by Definition \ref{defgrading}.
\end{proof}

The next lemma    works in general, namely we do not need to assume  $\Delta_P$ to be of $A$-type.
\begin{lemma}\label{coroofsumgrad} Let $u\in W$ and   $\lambda \in Q^\vee$.
    \begin{enumerate}
      \item Write $gr(q_\lambda u)=(j_1, \cdots, j_{r+1})$.  Then $\sum_{k=1}^{r+1}j_k=\ell(u)+\langle 2\rho, \lambda\rangle$.
      \item  Let   $\gamma\in R^+$  satisfy $\ell(us_\gamma)=\ell(u)+1-\langle 2\rho, \gamma^\vee\rangle$.
    For any $1\leq p\leq n$,   $gr(q_{\gamma^\vee}us_\gamma)\leq gr(u)+gr(s_p)$ if and only if
                 $gr_r(q_{\gamma^\vee}us_\gamma)\leq gr_r(u)+gr_r(s_p)$.
    \end{enumerate}
\end{lemma}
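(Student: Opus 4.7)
The plan is to establish (1) first by induction on the recursive definition of $gr$, and then derive (2) as a quick formal corollary.

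For (1), using additivity $gr(q_\lambda u) = gr(u) + \sum_k b_k\,gr(q_k)$ (with $\lambda = \sum_k b_k\alpha_k^\vee$) together with $\mathbb{Z}$-linearity of the sum-of-components functional, I would reduce to two subclaims: (i) $\mathrm{sum}\,gr(u)=\ell(u)$, and (ii) $\mathrm{sum}\,gr(q_k)=\langle 2\rho,\alpha_k^\vee\rangle$ for each simple root $\alpha_k$. Claim (i) is immediate from $gr(u)=\sum_j\ell(v_j)\mathbf{e}_j$ applied to the canonical decomposition $u=v_{r+1}\cdots v_1$. For (ii) I would induct on the index $j$ such that $\alpha_k\in\Delta_{j+1}\setminus\Delta_j$: the base case $gr(q_1)=2\mathbf{e}_1$ is trivial since $\langle 2\rho,\alpha_1^\vee\rangle=2$. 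For the inductive step, I apply the defining recursion
\[ gr(q_k)=\bigl(\ell(\omega_{P_j}\omega_{P_j'})+2+2\textstyle\sum_i a_i\bigr)\mathbf{e}_{j+1}-gr(\omega_{P_j}\omega_{P_j'})-\sum_i a_i\,gr(q_i), \]
sum components, and invoke (i) on $\omega_{P_j}\omega_{P_j'}$ and the inductive hypothesis for (ii) on each $q_i$ with $\alpha_i\in\Delta_j$. The right-hand side collapses to $2+\sum_i a_i(2-\langle 2\rho,\alpha_i^\vee\rangle)$, which equals $\langle 2\rho,\alpha_k^\vee\rangle$ via the identity $\langle 2\rho,\lambda'\rangle=2(1+\sum_i a_i)$ applied to the PW-lift $\lambda'=\alpha_k^\vee+\sum_i a_i\alpha_i^\vee$; this last identity is a one-line consequence of $\rho=\sum_l\chi_l$ and the pairing $\langle\chi_l,\alpha_m^\vee\rangle=\delta_{lm}$.

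For (2), the forward direction is automatic: the lexicographic order on $\mathbb{Z}^{r+1}$ projects to the lexicographic order on $\mathbb{Z}^r$. For the converse, suppose $gr_r(q_{\gamma^\vee}us_\gamma)\leq gr_r(u)+gr_r(s_p)$. If the inequality is strict, then the first $r$ coordinates already witness $gr(q_{\gamma^\vee}us_\gamma)<gr(u)+gr(s_p)$ lexicographically. If equality holds there, then by (1) both sides have the same total component-sum
\[ \ell(us_\gamma)+\langle 2\rho,\gamma^\vee\rangle=\ell(u)+1=\ell(u)+\ell(s_p), \]
where the first equality uses the hypothesis $\ell(us_\gamma)=\ell(u)+1-\langle 2\rho,\gamma^\vee\rangle$. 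Since the first $r$ coordinates agree and the total sums agree, the $(r+1)$-th coordinates must also agree, forcing $gr(q_{\gamma^\vee}us_\gamma)=gr(u)+gr(s_p)$.

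The only substantive obstacle is the identity $\langle 2\rho,\lambda'\rangle=2(1+\sum_i a_i)$ that closes the induction in (1). Although formally a one-line consequence of $\rho=\sum_l\chi_l$, it is the unique point where the recursive definition of $gr(q_k)$ meshes non-trivially with the coefficients arising from the Peterson-Woodward lift; everything else is formal bookkeeping with lexicographic order and additivity of $gr$.
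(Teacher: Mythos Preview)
Your proof is correct and follows essentially the same route as the paper's: for (1) one shows $|gr(u)|=\ell(u)$ and, by induction via the defining recursion, $|gr(q_{\alpha^\vee})|=2$ for each simple root $\alpha$; for (2) one separates the strict and equality cases on the first $r$ coordinates and uses (1) to pin down the $(r{+}1)$-th coordinate. Your only overcomplication is invoking the identity $\langle 2\rho,\lambda'\rangle=2(1+\sum_i a_i)$, whereas it suffices (and is cleaner) to observe directly that $\langle 2\rho,\alpha^\vee\rangle=2$ for every simple root $\alpha$, which immediately collapses $2+\sum_i a_i(2-\langle 2\rho,\alpha_i^\vee\rangle)$ to $2=\langle 2\rho,\alpha_k^\vee\rangle$.
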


 \begin{proof}
   Denote $|(a_1, \cdots, a_{r+1})|=\sum_{k=1}^{r+1}a_k$.
   Note that  $\ell(u)=|gr(u)|$. Furthermore, we conclude $|gr(q_{\alpha^\vee})|=2$ for all $\alpha\in \Delta$  by induction. Thus   (1)
         follows.

   Write  $gr(u)+gr(s_p)=(i_1, \cdots, i_{r+1})$ and $gr(q_{\gamma^\vee}us_\gamma)=(\tilde i_1, \cdots, \tilde i_{r+1})$.
      Assume  $(\tilde i_1, \cdots, \tilde i_{r+1})\leq ( i_1, \cdots,  i_{r+1})$, then we have $(\tilde i_1, \cdots, \tilde i_{r})\leq ( i_1, \cdots,  i_{r})$ by definition.
    Assume    $(\tilde i_1, \cdots, \tilde i_{r})\leq (i_1, \cdots, i_{r})$. If ``$<$" holds,
      then  it is already done 
      by the definition
   of the lexicographical order. If ``$=$" holds, then we conclude $\tilde i_{r+1}=i_{r+1}$, by
     noting $\sum_{k=1}^{r+1}\tilde i_k=|gr(q_{\gamma^\vee}us_\gamma)|=\ell(us_\gamma)+\langle 2\rho, \gamma^\vee\rangle =\ell(u)+1=\sum_{k=1}^{r+1}i_k$.
     Thus (2)  follows.
\end{proof}

 \begin{prop}\label{gradingforconn}
      For any  $\alpha \in \Delta\setminus\Delta_P$,
   one and only one of the cases in Table \ref{tabgrading} occurs,  
       where    we     require  $r\geq 2$ (resp. $3$ and $5$)
    for case {\upshape b)} (resp. {\upshape e)}  and  {\upshape f)}).
  \begin{table}[h]
  \caption{\label{tabgrading} Explicit grading  $gr(q_{\alpha^\vee})$ for $\alpha\in \Delta\setminus \Delta_P$ } 
      \begin{tabular}{|c|c|c|c|}
         \hline
         &  $Dyn(\Delta_P\cup\{\alpha\})$  &$\psi_{\Delta, \Delta_P}(1, \alpha^\vee+Q^\vee_P)$&$gr(q_{\alpha^\vee})$ \\\hline\hline
        {\upshape a)} &   \begin{tabular}{l} \raisebox{-0.4ex}[0pt]{$  \circline\!\;\!\!\circ\cdots\, \circline\circline \bullet$}\\
                 \raisebox{1.1ex}[0pt]{${\hspace{-0.2cm}\scriptstyle{\alpha_1}\hspace{0.3cm}\alpha_2\hspace{0.9cm}\alpha_r\hspace{0.17cm}\alpha} $}
  \end{tabular}& $q_{\alpha^\vee}u_{r}^{(r)}$   & $ (r+2)\mathbf{e}_{r+1}-r\mathbf{e}_r$  \\\hline
       {\upshape b)} & \begin{tabular}{l} \raisebox{-0.4ex}[0pt]{$  \bulletline\circline\!\;\!\!\circ\cdots\, \circline\!\;\!\!\circ$}\\
                 \raisebox{1.1ex}[0pt]{${\hspace{-0.05cm}\scriptstyle{\alpha}\hspace{0.38cm}\alpha_1\hspace{0.2 cm}\alpha_2\hspace{0.85cm}\alpha_r} $}
  \end{tabular}  &$q_{\alpha^\vee}u_{1}^{(r)}\cdots u_{1}^{(1)}$   & $ (r+2)\mathbf{e}_{r+1}-\sum_{j=1}^r\mathbf{e}_j$ \\\hline
        {\upshape c)} &  \begin{tabular}{l} \raisebox{-0.4ex}[0pt]{$  \circline\!\;\!\!\circ\cdots\, \circline\bulletbb$}\\
                 \raisebox{1.1ex}[0pt]{${\hspace{-0.2cm}\scriptstyle{\alpha_1}\hspace{0.3cm}\alpha_2\hspace{0.9cm}\alpha_r\hspace{0.3cm}\alpha} $}
  \end{tabular}& $q_{\alpha^\vee}q_{r}u_{r-1}^{(r)}u_{r-1}^{(r-1)}$   & $ (2r+2)\mathbf{e}_{r+1}-2r\mathbf{e}_r$  \\\hline
        {\upshape d)}& \begin{tabular}{l} \raisebox{-0.4ex}[0pt]{$  \circline\!\;\!\!\circ\cdots\, \circline\bulletcc$}\\
                 \raisebox{1.1ex}[0pt]{${\hspace{-0.2cm}\scriptstyle{\alpha_1}\hspace{0.3cm}\alpha_2\hspace{0.9cm}\alpha_r\hspace{0.3cm}\alpha} $}
  \end{tabular}& $q_{\alpha^\vee}u_{r}^{(r)}$   & $ (r+2)\mathbf{e}_{r+1}-r\mathbf{e}_r$  \\\hline
       {\upshape e)}& \begin{tabular}{l}  \vspace{-0.20cm} $\hspace{1.67cm}\bullet \scriptstyle{\alpha}$ \\
                     \vspace{-0.20cm}{ $\hspace{1.645cm}\!\!\;\!\big|$} \\
                   \raisebox{-0.4ex}[0pt]{$  \circline\!\;\!\!\circ\cdots\, \circline\circline\!\!\;\circ$}\\
                 \raisebox{1.1ex}[0pt]{${\hspace{-0.2cm}\scriptstyle{\alpha_1}\hspace{0.3cm}\alpha_2\hspace{0.7cm} \hspace{0.7cm}\alpha_{r}} $}
  \end{tabular}& $q_{\alpha^\vee}u_{r-1}^{(r)}u_{r-1}^{(r-1)}$   & $ 2r\mathbf{e}_{r+1}+(1-r)(\mathbf{e}_r+\mathbf{e}_{r-1})$  \\\hline
       {\upshape f)}& \begin{tabular}{l}  \vspace{-0.20cm} $\hspace{1.21cm}\bullet \scriptstyle{\alpha}$ \\
                     \vspace{-0.20cm}{ $\hspace{1.205cm}\!\!\;\!\!\;\!\big|$} \\
                   \raisebox{-0.4ex}[0pt]{$  \circ\cdots\, \circline\circline\circline\!\!\;\!\circ$}\\
                 \raisebox{1.1ex}[0pt]{${\hspace{-0.15cm}\scriptstyle{\alpha_1}  \hspace{1.3cm}\alpha_{r-1} \hspace{0.1cm}\alpha_{r}} $}
  \end{tabular}          & $q_{\alpha^\vee}u_{r-2}^{(r)}u_{r-2}^{(r-1)}u_{r-2}^{(r-2)}$   &    {\upshape $(3r-4)\mathbf{e}_{r+1}+(2-r)\sum\limits_{j=r-2}^r\mathbf{e}_{j}$} 
                  \\\hline
       {\upshape g)} & \begin{tabular}{l}  $\bulletcc$ \vspace{-0.425cm}$\hspace{-0.75cm}\;\!-\!\!\;\!-$ \\
                 \raisebox{-1.1ex}[0pt]{${\hspace{-0.1cm}\scriptstyle{\alpha_1}  \hspace{0.25cm}\alpha  }$}
  \end{tabular}  & $q_{\alpha^\vee}s_1$   & $(-1, 3)$  \\\hline
      {\upshape h)} & \begin{tabular}{l}  $\bulletbb$ \vspace{-0.425cm}$\hspace{-0.75cm}\;\!-\!\!\;\!-$ \\
                 \raisebox{-1.1ex}[0pt]{${\hspace{-0.1cm}\scriptstyle{\alpha_1}  \hspace{0.25cm}\alpha  }$}
  \end{tabular}  & $q_{\alpha^\vee}q_1s_1$   & $(-3, 5)$  \\\hline
       {\upshape i)}& \begin{tabular}{l}  
                   \raisebox{-0.4ex}[0pt]{$  \circline\!\!\;\!\!\,\circ\cdots\, \!\!\;\circline\!\!\;\!\circ\;\;\;\bullet$}\\
                 \raisebox{1.1ex}[0pt]{${\hspace{-0.15cm}\scriptstyle{\alpha_1}  \hspace{1.4cm}\alpha_{r}\hspace{0.3cm}\alpha} $}
  \end{tabular}          & $q_{\alpha^\vee}$   &    $2\mathbf{e}_{r+1}$
                  \\\hline
 \end{tabular}
   \end{table}
\end{prop}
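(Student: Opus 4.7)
The plan is to verify each row of Table \ref{tabgrading} by a uniform three-step procedure, using Definition \ref{defgrading} together with Lemma \ref{longestcomp000}. First, compute $\psi_{\Delta,\Delta_P}(1,\alpha^\vee+Q_P^\vee) = (\omega_P\omega_{P'},\lambda')$ from the Peterson-Woodward formula (Proposition \ref{comparison}); second, express $\omega_P\omega_{P'}$ and its grading via Lemma \ref{longestcomp000}; third, substitute into the recursion in Definition \ref{defgrading}(2) using the known $gr(q_j) = (1-j)\mathbf{e}_{j-1}+(1+j)\mathbf{e}_j$ from Proposition \ref{graquanvar}.

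For the first step I would write $\lambda' = \alpha^\vee + \sum_{i=1}^r a_i\alpha_i^\vee$ and observe that since $\Delta_P$ is of $A$-type, the Peterson-Woodward constraint $\langle\beta,\lambda'\rangle\in\{0,-1\}$ for $\beta\in R_P^+$ reduces by linearity to a check on the simple roots in $\Delta_P$, yielding a tridiagonal linear system whose inhomogeneity $\langle\alpha_i,\alpha^\vee\rangle$ is read directly off the local Dynkin diagram $Dyn(\Delta_P\cup\{\alpha\})$. Case-by-case inspection yields $\lambda' = \alpha^\vee$ in cases (a), (b), (d), (e), (f), (g), (i); $\lambda' = \alpha^\vee+\alpha_r^\vee$ in (c), forced by $\langle\alpha_r,\alpha^\vee\rangle = -2$; and $\lambda' = \alpha^\vee+\alpha_1^\vee$ in (h), forced by the $G_2$ pairing $\langle\alpha_1,\alpha^\vee\rangle = -3$. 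Then $\Delta_{P'} = \{\alpha_i\in\Delta_P : \langle\alpha_i,\lambda'\rangle = 0\}$ is read off: in cases (a)--(f) this removes a single node $\alpha_k$ with $k = r,\,1,\,r-1,\,r,\,r-1,\,r-2$ respectively, and Lemma \ref{longestcomp000} gives $\omega_P\omega_{P'} = u_k^{(r)}u_k^{(r-1)}\cdots u_k^{(k)}$ with $gr(\omega_P\omega_{P'}) = k\sum_{p=k}^r\mathbf{e}_p$; in (g), (h) one has $\Delta_{P'}=\emptyset$ so $\omega_P\omega_{P'}=\omega_P=s_1$; in (i), $\alpha$ is disconnected from $\Delta_P$ so $\Delta_{P'}=\Delta_P$ and $\omega_P\omega_{P'}=1$. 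Plugging these data into the recursion in Definition \ref{defgrading}(2) produces each stated formula, and Lemma \ref{coroofsumgrad}(1) provides a cheap consistency check that the coordinates of $gr(q_{\alpha^\vee})$ sum to $2$.

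The main obstacle is the bookkeeping in the non-simply-laced cases (c), (g), (h) and the branching cases (e), (f). In (g)--(h) one must carefully distinguish whether $\alpha_1$ is short or long, and whether the external root $\alpha$ produces a $B/C$-type pairing $\langle\alpha_1,\alpha^\vee\rangle = -1$ or a $G_2$-type pairing $-3$, since these lead to different values of $(a_1,\Delta_{P'})$ and hence to different gradings; in particular the table entry $(-3,5)$ in row (h) is only compatible with the $G_2$ situation. In (e) and (f), the nontrivial node $\alpha_{r-1}$ or $\alpha_{r-2}$ is interior to the $A$-chain $\Delta_P$, so $\omega_P\omega_{P'}$ is a product of several $u_k^{(m)}$-factors rather than a single one, and the arithmetic telescoping between $gr(\omega_P\omega_{P'})$ and the $\sum a_i\,gr(q_i)$ terms in case (c) must be carried out carefully to produce the stated vanishing in the $\mathbf{e}_{r-1}$ coordinate. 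Once these observations are in place, the remaining cases reduce to routine arithmetic.
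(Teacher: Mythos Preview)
Your proposal is correct and follows essentially the same approach as the paper's own proof: both verify the Peterson--Woodward lift $\lambda_B$ by reducing the constraint $\langle\beta,\lambda_B\rangle\in\{0,-1\}$ for $\beta\in R_P^+$ to the simple roots (using that $\Delta_P$ is of $A$-type), read off $\Delta_{P'}$ and hence $\omega_P\omega_{P'}$ via Lemma~\ref{longestcomp000}, and then plug into the recursion of Definition~\ref{defgrading} with Proposition~\ref{graquanvar}. Your identification of rows (g) and (h) as the $G_2$ cases (triple edge) is correct and is indeed what makes the entry $(-3,5)$ come out right; the paper's proof is terser and simply asserts that the listed $\lambda_B$ satisfies the required property in each case, leaving these verifications to the reader.
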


\begin{proof}
  Clearly,   $Dyn(\Delta_P\cup\{\alpha\})$
   is  given by a unique   case  in Table \ref{tabgrading}.

 Let $\lambda_P=\alpha^\vee+Q^\vee_P$ and     $\psi_{\Delta, \Delta_P}(1, \lambda_P)=q_{\lambda_B}\omega_P\omega_{P'}$. Here
      $\lambda_B\in Q^\vee$ is the (unique) element satisfying $\langle \beta, \lambda_B\rangle\in \{0, -1\}$ for all $\beta\in R^+_P$.
      Since $\Delta_P$ is of $A$-type, this is equivalent to requiring $\langle \alpha_j, \lambda_B\rangle=0$ for all $\alpha_j\in \Delta_P$ but
       at most one and if such unique $\alpha_j$ exists then $\langle \alpha_j, \lambda_B\rangle=-1$.
        For each case in Table \ref{tabgrading},
     it is easy to see
       that the element $\lambda_B$ as provided does satisfy this property.
       Consequently,   $\Delta_{P'}=\{\alpha_i\in \Delta_P~|~   \langle \alpha_i, \lambda_B\rangle=0\}=\Delta_P\setminus\{\alpha_k\}$
        for a certain $1\leq k\leq r+1$.
       Hence, we can directly write down $\omega_P\omega_{P'}$ by using  Lemma \ref{longestcomp000}.
       Finally, we obtain  $gr(q_{\alpha^\vee})$ as is listed in Table \ref{tabgrading},  by  direct calculations (with
                Definition \ref{defgrading} and Proposition \ref{graquanvar}).
   \end{proof}

The next corollary follows directly from Table
\ref{tabgrading}.
\begin{cor}\label{gradforonenode}
  If $r=1$, then   $gr(q_j)=(a, -a+2)$ with $a=\langle \alpha_1, \alpha_j^\vee\rangle$ for each $j$.
  Consequently for any  $\lambda\in Q^\vee$, we have   $gr(q_{\lambda})=(\langle \alpha_1, \lambda\rangle, \langle 2\rho-\alpha_1, \lambda\rangle)$.
\end{cor}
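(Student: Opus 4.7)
The plan is to prove the first formula $gr(q_j) = (a, -a+2)$ by a direct case analysis, then deduce the general $\lambda$ statement by linearity in the quantum variables, which is built into Definition \ref{defgrading}(3).

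First I would handle $j = 1$: by the base case in Definition \ref{defgrading}(2), $gr(q_1) = 2\mathbf{e}_1 = (2,0)$, and indeed $a = \langle \alpha_1, \alpha_1^\vee\rangle = 2$ gives $(a, -a+2) = (2,0)$. For $j \neq 1$, we have $\alpha_j \in \Delta \setminus \Delta_P$, so Proposition \ref{gradingforconn} applies with $r = 1$ and $\alpha = \alpha_j$. The cases available for $r=1$ are exactly a), c), d), g), h), i), corresponding to the possible edges between $\alpha_1$ and $\alpha_j$ in $Dyn(\Delta)$. In each case one reads off both the number $a = \langle \alpha_1, \alpha_j^\vee\rangle$ from the type of edge and the grading $gr(q_{\alpha_j^\vee})$ from the last column of Table \ref{tabgrading}: case a) gives $a = -1$ and $gr = (-1,3)$; case c) gives $a = -2$ and $gr = (-2,4)$; case d) gives $a = -1$ and $gr = (-1,3)$; cases g), h), i) give $(a,-a+2) = (-1,3),\ (-3,5),\ (0,2)$ respectively. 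In every case the identity $gr(q_j) = (a, -a+2)$ holds by inspection.

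For the second assertion, write $\lambda = \sum_{k=1}^n b_k \alpha_k^\vee$. By Definition \ref{defgrading}(3), $gr(q_\lambda) = \sum_k b_k\, gr(q_k)$, so using the formula just proved for each $gr(q_k)$:
\begin{align*}
gr(q_\lambda) &= \sum_{k=1}^n b_k \bigl(\langle \alpha_1, \alpha_k^\vee\rangle,\ 2 - \langle \alpha_1, \alpha_k^\vee\rangle\bigr)\\
&= \bigl(\langle \alpha_1, \lambda\rangle,\ -\langle \alpha_1, \lambda\rangle + 2\textstyle\sum_k b_k\bigr).
\end{align*}
Since $\langle \rho, \alpha_k^\vee\rangle = 1$ for every simple coroot, $2\sum_k b_k = \langle 2\rho, \lambda\rangle$, and the second coordinate equals $\langle 2\rho - \alpha_1, \lambda\rangle$, as claimed.

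There is no real obstacle here: the proof is just a verification. The only mild point to watch is the bookkeeping of which case in Table \ref{tabgrading} corresponds to which value of $a$, and the trivial check that $j=1$ (not covered by Proposition \ref{gradingforconn}) falls under the initial definition $gr(q_1) = 2\mathbf{e}_1$.
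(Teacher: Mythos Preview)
Your proof is correct and follows exactly the approach the paper intends: the paper simply states that the corollary ``follows directly from Table \ref{tabgrading}'', and you have carried out that verification explicitly, case by case, together with the routine linearity argument for general $\lambda$. There is nothing to add.
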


As we will see later, we use induction on $\ell(s_\gamma)$ to prove the Key Lemma. The next proposition shows the special case of the Key Lemma when $\ell(s_\gamma)=1$.

\begin{prop}\label{simrefinparabolic}
 Let  $u\in W$ and $1\leq j\leq n$. If $\ell(us_j)=\ell(u)-1$, then $gr(q_j {us_j})\leq gr(u)+gr({s_j})$.
\end{prop}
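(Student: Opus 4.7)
The proof splits into two cases depending on whether $\alpha_j$ lies in $\Delta_P$ or in $\Delta \setminus \Delta_P$.

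First suppose $\alpha_j = \alpha_k \in \Delta_P$ for some $1 \leq k \leq r$, and write the $W^P \cdot W_P$ decomposition $u = vw$. Since $s_k \in W_P$, the product $us_k = v(ws_k)$ is again in that form, and the hypothesis $\ell(us_k) = \ell(u) - 1$ forces $\ell(ws_k) = \ell(w) - 1$; in particular the $W^P$-factor $v$ is unchanged, so $gr(us_k) - gr(u) = gr(ws_k) - gr(w)$. Writing $w = u_{i_r}^{(r)} \cdots u_{i_1}^{(1)}$ with $i_p \leq p$, Proposition \ref{gracompare000}(c) gives $gr(ws_k) - gr(w) = (i_k - 1 - i_{k-1})\mathbf{e}_{k-1} + (i_{k-1} - i_k)\mathbf{e}_k$ (with the convention $i_0 := 0$, and an obvious simplification when $k = 1$). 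Combining this with $gr(q_k) = (1-k)\mathbf{e}_{k-1} + (1+k)\mathbf{e}_k$ from Proposition \ref{graquanvar} and $gr(s_k) = \mathbf{e}_k$, the desired inequality reduces to $(i_k - i_{k-1} - k)\mathbf{e}_{k-1} + (k + i_{k-1} - i_k)\mathbf{e}_k \leq 0$ in lex order, which follows at once from the bound $i_k \leq k$, with equality occurring exactly when both coefficients vanish.

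Next suppose $\alpha_j \in \Delta \setminus \Delta_P$. Since $\alpha_j$ is simple and outside $\Delta_P$, one checks that $s_j \in W^P$, so $gr(s_j) = \mathbf{e}_{r+1}$ and $gr_r(s_j) = 0$. By Lemma \ref{coroofsumgrad}(2) the target inequality is equivalent to $gr_r(q_j \cdot us_j) \leq gr_r(u)$, which concerns only the $W_P$-part of the decompositions. Writing $u = vw$ and using the identity $ws_j = s_\beta w$ with $\beta = w(\alpha_j) \in R^+$ (as $w \in W_P$ preserves the $\alpha_j$-coefficient at $1$), the disconnected sub-case (case i) of Table \ref{tabgrading}) is immediate: when $\alpha_j$ is not adjacent to any element of $\Delta_P$, $s_j$ commutes with every element of $W_P$, so $us_j = (vs_j)w$ with $vs_j \in W^P$ and $\ell(vs_j) = \ell(v) - 1$; together with $gr(q_j) = 2\mathbf{e}_{r+1}$ from Table \ref{tabgrading}, this yields the equality $gr(q_j \cdot us_j) = gr(u) + gr(s_j)$.

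For the remaining sub-cases a)--h) of Table \ref{tabgrading}, $\alpha_j$ is adjacent to some $\alpha_p \in \Delta_P$, and one must track how the $W_P$-factor of the decomposition changes. Using the explicit form of the PW-lift $\omega_P \omega_{P'}$ listed in each row of Table \ref{tabgrading}, together with the product identities of Lemma \ref{prodAtype} and the bounds $i_p \leq p$, one can compute $gr_r(us_j) - gr_r(u)$ sub-case by sub-case and show it is dominated by the explicit negative quantity $-gr_r(q_j)$ recorded in Table \ref{tabgrading}. The main obstacle is exactly this sub-case book-keeping in Case 2: one must identify precisely which initial blocks $u_{i_p}^{(p)}$ of the reduced expression of $w$ are affected by right-multiplication by $s_\beta$, then verify in each of the sub-cases a)--h) that the negative ``PW-correction'' encoded in $gr_r(q_j)$ exactly absorbs any positive change in $gr_r$, as in Case 1.
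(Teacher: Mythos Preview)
Your Case~1 and the disconnected sub-case i) of Case~2 are correct and essentially identical to the paper's argument.

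The gap is in Case~2, sub-cases a)--h): you have written a plan (``one can compute $gr_r(us_j)-gr_r(u)$ sub-case by sub-case\ldots''), not a proof, and the tool you propose to use is not quite the right one. Lemma~\ref{prodAtype} governs products inside $W_P$, but here $s_j\notin W_P$; your rewriting $ws_j=s_\beta w$ shifts the problem to decomposing $vs_\beta$ into its $W^P\!\cdot W_P$ factors, and the $W_P$-part of that depends on $\beta=w(\alpha_j)$, hence on $w$ itself. Tracking this through the block structure $u_{i_r}^{(r)}\cdots u_{i_1}^{(1)}$ case-by-case is possible in principle but laborious, and you have not carried it out.

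The paper avoids this book-keeping entirely. The key observation is that in each of the cases a), b), d), e), f), g) one has $\lambda_B=\alpha_j^\vee$ and there is a \emph{unique} $\alpha_a\in\Delta_P$ with $\langle\alpha_a,\alpha_j^\vee\rangle\ne 0$. Proposition~\ref{maingracompare} (proved via the inversion-set description of $gr$, not via Lemma~\ref{prodAtype}) then gives the uniform bound
\[
gr_r(us_j)\leq gr_r(u)+\sum_{k=a}^{r}a\,\mathbf{e}_k,
\]
while Lemma~\ref{longestcomp000} gives $gr(\omega_P\omega')=\sum_{k=a}^{r}a\,\mathbf{e}_k$, so by definition $gr_r(q_j)=-\sum_{k=a}^{r}a\,\mathbf{e}_k$ and the two contributions cancel. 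The remaining cases c) and h) have $\lambda_B\ne\alpha_j^\vee$ and need a separate (but short) argument: in h) one has $r=1$ and the first coordinate already drops below zero; in c) one has $gr_r(q_j)=-2r\,\mathbf{e}_r$ and $gr_{r-1}(us_j)=gr_{r-1}(u)$ by Lemma~\ref{gracompare11111}, so only the $r$-th coordinate matters and $-2r+\tilde i_r-i_r\le -r<0$. You should replace your sub-case sketch with this uniform argument via Proposition~\ref{maingracompare}, and note explicitly that c) and h) fall outside it.
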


\begin{proof}
           Let  $gr(u)=(i_1,  \cdots, i_{r+1})$ and
            $gr(us_j)=(\tilde i_1, \cdots, \tilde i_{r+1})$. When $\alpha_j\in \Delta_P$, we have $1\leq j\leq r$.
     If $j=1$, then we have $i_1=1$ and $gr(us_1)=(0, i_2, \cdots, i_{r+1})$.
      Hence,  $gr(q_1us_1)=gr(q_1)+gr(us_1)=(2, 0, \cdots, 0)+(0, i_2, \cdots, i_{r+1})=gr(u)+gr(s_1).$
     If $2\leq j\leq r$, then
         by  Proposition \ref{gracompare000} and Proposition \ref{graquanvar}, we conclude
              $$gr(q_jus_j)-gr(u)-gr(s_j)=((1-j)+i_{j}-1-i_{j-1})\mathbf{e}_{j-1}+((1+j)+i_{j-1}-i_j-1)\mathbf{e}_{j}. $$
          Thus we have  $gr(q_jus_j)\leq gr(u)+gr(s_j)$, by noting $0\leq i_{j-1}<i_j\leq j$.

 When $\alpha_j\in \Delta\setminus \Delta_P$, we note that $gr(s_{j})=\mathbf{e}_{r+1}$.
  By Lemma \ref{coroofsumgrad}, it suffices to show $gr_r(q_jus_j)\leq gr_r(u)$. Write   $gr(q_jus_j)=(\hat i_1, \cdots, \hat i_{r+1})$.
       and  $\psi_{\Delta, \Delta_P}(1, \alpha_j^\vee+Q^\vee_P)=\lambda_B\omega_P\omega'$.
        We first assume $\lambda_B=\alpha_j^\vee$. Then $\Delta_{P'}=\{\alpha\in \Delta_P~|~
             \langle \alpha, \alpha_j^\vee\rangle =0\}$. If $\Delta_{P'}=\Delta_P$ (i.e. case i) of Table \ref{tabgrading} occurs),
            then we have $gr_r(q_j)=\mathbf{0}$ and $gr_r(us_j)=gr_r(u)$ (by Lemma \ref{gracompare11111}). Thus it is done in this case.
            Otherwise, we conclude  $\Delta_{P'}= \Delta_{P}\setminus\{\alpha_a\}$ for a unique $1\leq a\leq r$ (from Table \ref{tabgrading}).
  Consequently, we have $gr_r(q_j)=-gr_r(\omega_P\omega')$ by definition,
              $gr_r(us_j)\leq gr_r(u)+\sum_{k=a}^{r}a\mathbf{e}_k$ by Proposition \ref{maingracompare} and  $gr(\omega_P\omega')=
            \sum_{k=a}^{r}a\mathbf{e}_k$ by Lemma \ref{longestcomp000}. Hence, we do have $gr_r(q_jus_j)\leq gr_r(u)$  in this case.
     Now we assume   $\lambda_B\neq \alpha_j^\vee$. Due to Table \ref{tabgrading}, it remains to consider case c) and h).
    If case h)  occurs, then $n=2, r=1, gr(q_j)=(-3, 5)$ and we do have
     $\hat i_{1}=-3+\tilde i_1\leq -2<i_1$. If case c)  occurs,
  then $gr_r(q_{j})=-2r\mathbf{e}_r$ and
           we have   $gr_{r-1}(us_{j})=gr_{r-1}(u)$  by Lemma \ref{gracompare11111}.
      Hence, $gr_r(q_jus_j)-gr_r(u)=(-2r+\tilde i_r-i_r)\mathbf{e}_r\leq (-2r+r-0)\mathbf{e}_r<\mathbf{0}$.
      Hence, the statement follows.
  \end{proof}

\subsection{Proof of the Key Lemma when $\Delta_P$ is of $A$-type}\label{subseckeylemma}
 Recall that we have assumed $\Delta_P$ to be of $A$-type in this subsection.

\begin{prop}\label{mainsecondprop} Part {\upshape a)} of the Key Lemma holds.
\end{prop}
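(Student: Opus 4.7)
The plan is to prove Proposition \ref{mainsecondprop} by induction on $\ell(s_\gamma)$. Throughout I will freely use the inversion-set description $gr(w)=\sum_{k=1}^{r+1}|\mathrm{Inv}(w)\cap(R_{P_k}^+\setminus R_{P_{k-1}}^+)|\mathbf{e}_k$ recorded just after Definition \ref{defgrading}, and the length/sum identity $|gr(q_\lambda u)|=\ell(u)+\langle 2\rho,\lambda\rangle$ from Lemma \ref{coroofsumgrad}(1).

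For the base case, $\gamma=\alpha_i$, so $\langle\chi_k,\alpha_i^\vee\rangle=\delta_{ki}$ and I only need $gr(us_i)\le gr(u)+gr(s_i)$. If $\alpha_i=\alpha_k\in\Delta_P$, take the decomposition $us_k=v_{r+1}u_{i'_r}^{(r)}\!\cdots\! u_{i'_1}^{(1)}$ associated to $(\Delta_P,\Upsilon)$. Since $(us_k)s_k=u$ drops length by $1$, Proposition \ref{gracompare000}(c) with $j=k$ applies, yielding $i'_k\ge i'_{k-1}+1$ and an explicit formula for $gr(u)$. Setting $a:=i'_k-i'_{k-1}\ge 1$, direct subtraction gives $gr(us_k)-gr(u)-gr(s_k)=(1-a)\mathbf{e}_{k-1}+(a-1)\mathbf{e}_k$, which is $\le\mathbf{0}$ in lex order (zero if $a=1$; first nonzero entry $(1-a)<0$ at position $k-1$ if $a>1$). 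If $\alpha_i\in\Delta\setminus\Delta_P$ then $gr(s_i)=\mathbf{e}_{r+1}$ and $|gr(us_i)|=\ell(u)+1=|gr(u)+\mathbf{e}_{r+1}|$, so the lex inequality reduces to $gr_r(us_i)\le gr_r(u)$. Using $\mathrm{Inv}(us_i)=\{\alpha_i\}\cup s_i(\mathrm{Inv}(u))$ and the fact that $\alpha_i\notin R_{P_k}$ for each $k\le r$, the map $\eta\mapsto s_i(\eta)$ gives a coordinate-wise injection of inversions that forces $|\mathrm{Inv}(us_i)\cap R_{P_k}^+|\le|\mathrm{Inv}(u)\cap R_{P_k}^+|$ for every $k\le r$; summing in the obvious telescoping way gives $gr_r(us_i)\le gr_r(u)$.

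For the inductive step, $\gamma\in R^+\setminus\Delta$. Pick any simple root $\alpha_j$ with $\langle\alpha_j,\gamma^\vee\rangle>0$ (which exists since $\gamma\in R^+$) and set $\beta:=s_j(\gamma)$. Exactly as in the proof of Lemma \ref{lengthofposiroot}, $\beta\in R^+$, $\beta^\vee=\gamma^\vee-\langle\alpha_j,\gamma^\vee\rangle\alpha_j^\vee$, and $\ell(s_\beta)\le\ell(s_\gamma)-2$; moreover $s_\gamma=s_j s_\beta s_j$, so $us_\gamma=(us_j)s_\beta s_j$. The assumption $\ell(us_\gamma)=\ell(u)+1$ combined with $u(\gamma)\in R^+$ (Lemma \ref{strongexchange}) allows me to read off the sign of $\ell(us_j)-\ell(u)$ and the length behavior along the chain $u\rightsquigarrow us_j\rightsquigarrow us_js_\beta\rightsquigarrow us_\gamma$, and in each of the resulting two or three subcases the length change at each intermediate step is exactly $\pm 1$ (for the simple-reflection steps) or $\pm 1$ at the $s_\beta$ step. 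Fix $i$ with $\langle\chi_i,\gamma^\vee\rangle\neq 0$. When $i\neq j$, $\langle\chi_i,\beta^\vee\rangle=\langle\chi_i,\gamma^\vee\rangle\neq 0$, so the induction hypothesis applies at $(\cdot,s_\beta,i)$, and chaining it with the base case for $s_j$ at each end produces $gr(us_\gamma)\le gr(u)+gr(s_i)$ (using Proposition \ref{simrefinparabolic} when a step decreases length, and the base case just proved when a step increases length).

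The main obstacle is the case $i=j$ together with $\langle\chi_j,\beta^\vee\rangle=0$: here the induction hypothesis cannot be applied to $(s_\beta,j)$. The way around this is to use the sum identity $|gr(us_\gamma)|=\ell(u)+1=|gr(u)+gr(s_j)|$ to collapse the lex comparison to its projection $gr_r$, and to apply the induction hypothesis instead at some index $k\neq j$ with $\langle\chi_k,\beta^\vee\rangle\neq 0$ (which must exist since $\beta\neq 0$). Combining the resulting $gr_r$-inequality with the equality of total lengths restores the desired lex inequality $gr(us_\gamma)\le gr(u)+gr(s_j)$. In all cases, the chain closes and the induction is complete.
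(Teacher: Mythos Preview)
Your inductive approach has genuine gaps in both the base case and the inductive step, and the paper's proof takes a completely different (and much shorter) direct route.

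\textbf{Base case, $\alpha_i\notin\Delta_P$.} The injection $\eta\mapsto s_i(\eta)$ from $\mathrm{Inv}(us_i)\cap R_{P_k}^+$ into $\mathrm{Inv}(u)\cap R_{P_k}^+$ fails: for $\eta\in R_{P_k}^+$ with $\langle\eta,\alpha_i^\vee\rangle\neq 0$, the root $s_i(\eta)=\eta-\langle\eta,\alpha_i^\vee\rangle\alpha_i$ involves $\alpha_i\notin\Delta_P$ and hence does \emph{not} lie in $R_{P_k}$. So the image of your map is not contained in $R_{P_k}^+$, and the inequality $|\mathrm{Inv}(us_i)\cap R_{P_k}^+|\le|\mathrm{Inv}(u)\cap R_{P_k}^+|$ is left unproved.

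\textbf{Inductive step.} The chaining strategy breaks in two places. First, the claim that the $s_\beta$ step changes length by exactly $\pm 1$ is unjustified: from $\ell(us_\gamma)=\ell(u)+1$ and $|\ell(us_j)-\ell(u)|=|\ell(us_\gamma)-\ell(us_js_\beta)|=1$ you only get $\ell(us_js_\beta)-\ell(us_j)\in\{-1,1,3\}$, and nothing you cite rules out $+3$ (which can occur once $\ell(s_\beta)\ge 3$). Second, even when each step is $\pm 1$, the inequalities do not chain: when $\ell(ws_j)=\ell(w)-1$ the base case (applied to $w'=ws_j$) gives $gr(w)\le gr(ws_j)+gr(s_j)$, i.e.\ a \emph{lower} bound on $gr(ws_j)$, not the upper bound you need; and Proposition~\ref{simrefinparabolic} produces $gr(q_jus_j)\le gr(u)+gr(s_j)$, which contains $gr(q_j)$ and does not reduce to $gr(us_j)\le gr(u)-gr(s_j)$. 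So in cases such as $(\ell(us_j),\ell(us_js_\beta),\ell(us_\gamma))=(\ell(u)-1,\ell(u),\ell(u)+1)$ or $(\ell(u)+1,\ell(u)+2,\ell(u)+1)$ the chain does not close. The ``main obstacle'' paragraph inherits these problems and its use of $gr_r$ together with the IH at some other index $k$ does not produce the needed bound at index $j$.

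\textbf{What the paper does instead.} The paper's proof is not inductive at all. It writes the decomposition $us_\gamma=v_{r+1}\cdots v_1$ and uses the strong exchange condition (Lemma~\ref{strongexchange}) to realize $u$ by deleting a single simple reflection from some $v_m$; Lemma~\ref{weylonelemma} then forces $\gamma\in R_{P_m}^+\setminus R_{P_{m-1}}$, so $\min\{gr(s_i):\langle\chi_i,\gamma^\vee\rangle\neq 0\}=\mathbf{e}_m$. Writing the truncated piece $\bar v_m=v'w$ with $v'\in W_{P_m}^{P_{m-1}}$, $w\in W_{P_{m-1}}$ and applying Proposition~\ref{gracompare000} yields $gr(u)=(i_1+p_1,\dots,i_{m-1}+p_{m-1},\ell(v'),i_{m+1},\dots)$ with $p_k\ge 0$, from which $gr(us_\gamma)\le gr(u)+\mathbf{e}_m$ is immediate by the length identity. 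The point you are missing is that the exchange condition already localizes $\gamma$ to a single layer $m$ of the filtration; once you know that, no induction on $\ell(s_\gamma)$ is needed.
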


\begin{proof}
           Write $us_\gamma=v_{r+1}v_r\cdots v_1$, where $v_{r+1}\in W^P$ and $v_k=u_{i_k}^{(k)}$ for $1\leq k\leq r$. Thus
          $gr(us_\gamma)=(i_1, \cdots, i_r, \ell(v_{r+1}))$.
        Fix a reduced expression of $v_{r+1}$.
         Since $\ell(u)=\ell(us_\gamma s_\gamma)<\ell(us_\gamma)$, by Lemma  \ref{strongexchange} we have
       $u=v_{r+1}\cdots v_{m+1}\bar v_m v_{m-1}\cdots v_1$ for some $1\leq m\leq r+1$, in which
          $\bar v_m$ is the element
         obtained by  with deleting a (unique) simple reflection from $v_m$.
          Since $\ell(u)=\ell(us_\gamma)-1$, the induced expression of $u$ is also reduced.  Hence,  $\ell(\bar v_m)=\ell(v_m)-1$ and
              if we   write $\bar v_m=v'w$ with $v'\in W^{P_{m-1}}_{P_m}$ and  $w\in W_{P_{m-1}}$,  
            then $\ell(\bar v_m)=\ell(v')+\ell(w)$ and $\ell(wv_{m-1}\cdots v_1)=\ell(w)+\ell(v_{m-1}\cdots v_1)$.
         By Proposition  \ref{gracompare000}, there exist non-negative integers $p_k$'s such that
                       $gr(u)=(i_1+p_1, \cdots, i_{m-1}+p_{m-1}, \ell(v'), i_{m+1}, \cdots, i_r, \ell(v_{r+1}))$
              with  $\sum_{k=1}^{m-1}p_k=\ell(w)$.
         On the other hand, by Lemma \ref{weylonelemma} we conclude $\gamma\in R_{P_m}^+\setminus R_{P_{m-1}}$, so that
            $\min\{gr(s_i)~|~ \langle \chi_i, \gamma^\vee\rangle\neq0\}=\mathbf{e}_m$.
         Hence, we have
                $gr(us_\gamma)= (i_1, \cdots, i_r, \ell(v_{r+1}))
                             \leq (i_1+p_1, \cdots, i_{m-1}+p_{m-1}, \ell(v')+1, i_{m+1}, \cdots, i_r, \ell(v_{r+1}))
                       =gr(u)+\mathbf{e}_m$, by noting $\ell(v_{r+1})+\sum_{k=1}^ri_k=\ell(us_\gamma)=\ell(u)+1= \ell(v')+\sum_{k=1}^ri_k+ \sum_{k=1}^{m-1}p_k +1$.
  \end{proof}

The remaining part of this subsection is devoted to a proof of the
following.

\begin{prop}\label{mainpropprod} Part {\upshape b)} of the Key Lemma holds. That is,
   for any  $u\in W$ and $\gamma\in R^+$,   if  {\upshape\textbf{(L1)}: } $\ell(us_\gamma)=\ell(u)+1-\langle 2\rho, \gamma^\vee\rangle$,    then we have
 {\upshape\textbf{(L2)}: }       $gr(q_{\gamma^\vee}us_\gamma) \leq  gr(u)+\min\{gr(s_i)~|~ \langle \chi_i, \gamma^\vee\rangle\neq0\}$.
\end{prop}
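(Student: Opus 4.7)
The proof will be by induction on $\ell(s_\gamma)$. For the base case $\ell(s_\gamma)=1$, $\gamma$ is a simple root $\alpha_j$; then hypothesis (L1) reduces to $\ell(us_j)=\ell(u)-1$, the unique $i$ with $\langle\chi_i,\alpha_j^\vee\rangle\ne 0$ is $i=j$, and the conclusion (L2) is precisely Proposition \ref{simrefinparabolic}.

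For the inductive step, $\gamma\in R^+\setminus\Delta$, and hypothesis (L1) together with Lemma \ref{lengthofpos222} forces $\ell(s_\gamma)=\langle 2\rho,\gamma^\vee\rangle-1$. I would pick any $j$ with $\langle\alpha_j,\gamma^\vee\rangle>0$; Lemma \ref{lengthofposiroot} then gives $\langle\alpha_j,\gamma^\vee\rangle=1$, and setting $\beta:=s_j(\gamma)$ yields $\beta^\vee=\gamma^\vee-\alpha_j^\vee$ together with $\ell(s_\beta)=\ell(s_\gamma)-2=\langle 2\rho,\beta^\vee\rangle-1$. Lemma \ref{lengthofpos222} also supplies the length-decreasing chain $u\to us_j\to us_j s_\beta\to us_j s_\beta s_j=us_\gamma$ with consecutive drops $1,\ell(s_\beta),1$. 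In particular, writing $u':=us_j$, one checks $\ell(u's_\beta)=\ell(u')+1-\langle 2\rho,\beta^\vee\rangle$, so (L1) holds for the pair $(u',\beta)$, and since $\ell(s_\beta)<\ell(s_\gamma)$ the induction hypothesis applies to $(u',\beta)$.

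I would then combine two ingredients. The induction hypothesis yields
\[gr(q_{\beta^\vee}\,u's_\beta)\leq gr(u')+gr(s_k)\]
for every $k$ with $\langle\chi_k,\beta^\vee\rangle\ne 0$. Applying Proposition \ref{simrefinparabolic} to $(u's_\beta,s_j)$ (for which $\ell(u's_\beta\cdot s_j)=\ell(u's_\beta)-1$) yields $gr(q_j\,us_\gamma)\leq gr(u's_\beta)+gr(s_j)$. Adding these two inequalities (addition respects the lexicographical order) and then subtracting $gr(u's_\beta)$ from both sides gives the cumulative bound
\[gr(q_{\gamma^\vee}us_\gamma)=gr(q_j)+gr(q_{\beta^\vee})+gr(us_\gamma)\leq gr(us_j)+gr(s_j)+gr(s_k).\]

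The main obstacle remaining is to verify that $gr(us_j)+gr(s_j)+gr(s_k)\leq gr(u)+gr(s_i)$ for every $i$ with $\langle\chi_i,\gamma^\vee\rangle\ne 0$, under a suitable choice of $k$. The totals of all coordinates on both sides already match by Lemma \ref{coroofsumgrad}(1), and Lemma \ref{coroofsumgrad}(2) reduces the problem to its $gr_r$-truncation. The verification is then a case analysis exploiting the freedom to choose both $j$ (any index with $\langle\alpha_j,\gamma^\vee\rangle>0$) and $k$ (any index in the support of $\beta^\vee$): when the support of $\gamma^\vee$ meets $\Delta\setminus\Delta_P$, choosing $j$ there makes $gr_r(s_j)=0$ and decouples $s_j$ from the $gr_r$-comparison; otherwise $j,k\in\Delta_P$, and the explicit formulas in Proposition \ref{graquanvar}, Proposition \ref{gracompare000}, Lemma \ref{longestcomp000}, and Lemma \ref{gracompare11111}, together with Proposition \ref{maingracompare} to control $gr_r(us_j)-gr_r(u)$, reduce the inequality to straightforward but detailed combinatorial identities. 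I expect this last matching — balancing the $gr(s_j)+gr(s_k)$ overshoot against the $gr(q_j)$-correction inherent to the grading — to be the most technical part of the argument.
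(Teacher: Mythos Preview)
Your inductive setup is the same as the paper's, and the base case is fine. The gap is in the inductive step: the ``cumulative bound'' $gr(q_{\gamma^\vee}us_\gamma)\leq gr(us_j)+gr(s_j)+gr(s_k)$ is correct, but the inequality you then hope to prove,
\[
gr(us_j)+gr(s_j)+gr(s_k)\;\leq\;gr(u)+gr(s_i),
\]
is \emph{false in general}, for every admissible choice of $j$ and $k$. Here is a concrete counterexample with $\Delta_P$ of type $A_4$. Take $u\in W_P$ with $gr(u)=(0,0,2,4,0)$ (that is, $u=u_4^{(4)}u_2^{(3)}=s_1s_2s_3s_4\cdot s_2s_3$) and $\gamma=\alpha_3+\alpha_4$. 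One checks $us_\gamma=s_1s_3s_2$ so $\ell(us_\gamma)=\ell(u)-3$, i.e.\ (L1) holds. The only $j$ with $\langle\alpha_j,\gamma^\vee\rangle>0$ are $j=3,4$. For $j=3$ (forcing $k=4$) one gets $gr(us_3)+\mathbf e_3+\mathbf e_4=(0,1,1,5,0)$; for $j=4$ (forcing $k=3$) one gets $gr(us_4)+\mathbf e_4+\mathbf e_3=(0,0,4,3,0)$. The target is $gr(u)+\mathbf e_3=(0,0,3,4,0)$, and neither bound lies below it in the lexicographic order. Yet (L2) itself does hold here: $gr(q_{\gamma^\vee}us_\gamma)=(0,0,2,5,0)\leq(0,0,3,4,0)$. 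So your intermediate inequality is strictly weaker than what is needed.

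The loss comes precisely from applying Proposition~\ref{simrefinparabolic} at the \emph{last} step, to pass from $us_js_\beta$ to $us_\gamma=us_js_\beta s_j$: this replaces the exact increment $gr(us_\gamma)-gr(us_js_\beta)$ by the bound $gr(s_j)-gr(q_j)$, and that bound is not tight. The paper instead records all three increments exactly, writing $gr(q_j)+gr(us_j)=gr(u)+(a_p)$, $gr(q_{\beta^\vee})+gr(us_js_\beta)=gr(us_j)+\mathbf e_c+(\mu_p)$, and $gr(us_\gamma)=gr(us_js_\beta)+(b_p)$. Using Proposition~\ref{gracompare000}\,c) for the exact form of $(a_p)$ and $(b_p)$ when $2\leq j\leq r$, one finds that the contributions at coordinates $j-1$ and $j$ combine to \emph{swap} $\mu_{j-1}$ and $\mu_j$ (this is the paper's observation (Ob3)): the sum $(a_p+b_p+\mu_p)$ equals $(\mu_1,\dots,\mu_{j-2},\mu_j+M,\mu_{j-1}-M,\mu_{j+1},\dots)$ for an explicit $M$ depending on $gr(q_{\beta^\vee})$. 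This swap is invisible from your bound and is what makes the argument close. To organize the verification that $M=0$ and $(\mu_1,\dots,\mu_{j-2},\mu_j)\leq 0$ for a suitable choice of $j$, the paper develops and uses the \emph{virtual coroot} classification of $\gamma^\vee$ (Proposition~\ref{coeffvircoroot}); this machinery is doing real work and cannot be replaced by a generic case split of the kind you describe.
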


\begin{lemma}\label{inductionforonenode}
       Part {\upshape b)} of the Key Lemma holds when  $\Delta_P=\{\alpha_1\}$.
 \end{lemma}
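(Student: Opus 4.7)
My plan is to exploit the rank-one simplifications to reduce the lexicographic inequality to a one-dimensional arithmetic comparison, and then verify it through an inversion-set analysis. Since $r=1$, Corollary \ref{gradforonenode} gives $gr(q_{\gamma^\vee})=(\langle\alpha_1,\gamma^\vee\rangle,\langle 2\rho-\alpha_1,\gamma^\vee\rangle)$, while the inversion-set formula $gr(w)=\sum_{k}|\mbox{Inv}(w)\cap(R^+_{P_k}\setminus R^+_{P_{k-1}})|\mathbf{e}_k$ from Section \ref{secdefgrading} specializes to $gr(w)=(\epsilon(w),\ell(w)-\epsilon(w))$, where $\epsilon(w):=1$ if $w(\alpha_1)\in -R^+$ and $0$ otherwise; similarly $gr(s_i)=\delta_{i,1}\mathbf{e}_1+(1-\delta_{i,1})\mathbf{e}_2$. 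My first move is Lemma \ref{coroofsumgrad}(1): the length hypothesis makes both $|gr(q_{\gamma^\vee}us_\gamma)|$ and $|gr(u)+gr(s_i)|$ equal $\ell(u)+1$, so the lexicographic inequality collapses to a first-coordinate comparison. Setting $a:=\langle\alpha_1,\gamma^\vee\rangle$, it therefore suffices to prove
\[
a+\epsilon(us_\gamma)\;\leq\;\epsilon(u)+\delta_{i,1}. \qquad(\star)
\]

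Next I would split on whether $\gamma\in\Delta$. If $\gamma=\alpha_j\in\Delta$, the condition $\langle\chi_i,\alpha_j^\vee\rangle\neq 0$ forces $i=j$, and $(\star)$ becomes the rank-one content of Proposition \ref{simrefinparabolic}. Otherwise Lemma \ref{lengthofpos222} yields $\ell(s_\gamma)=\langle 2\rho,\gamma^\vee\rangle-1$, which activates Lemma \ref{lengthofposiroot}: any positive value $\langle\alpha_k,\gamma^\vee\rangle$ must equal $1$, so in particular $a\leq 1$. The routine subcases $a\leq -1$ (left side of $(\star)$ is at most $0$) and $a=0$ ($s_\gamma$ fixes $\alpha_1$, so $\epsilon(us_\gamma)=\epsilon(u)$) then follow by inspection.

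The crux is the subcase $a=1$. Here I would invoke the classical reduced-product decomposition
\[
\mbox{Inv}(u)\;=\;\mbox{Inv}(s_\gamma)\,\sqcup\, s_\gamma\bigl(\mbox{Inv}(us_\gamma)\bigr),
\]
which is available because the length hypothesis presents $u=(us_\gamma)\cdot s_\gamma$ as a reduced product. Since $\gamma\neq\alpha_1$ and $a=1$ force $s_\gamma(\alpha_1)=\alpha_1-\gamma\in -R^+$, one has $\alpha_1\in\mbox{Inv}(s_\gamma)\subset\mbox{Inv}(u)$, whence $\epsilon(u)=1$. Setting $\delta:=\gamma-\alpha_1\in R^+$, the direct check $s_\gamma(\delta)=-\gamma-s_\gamma(\alpha_1)=-\alpha_1\in -R^+$ places $\delta\in\mbox{Inv}(s_\gamma)\subset\mbox{Inv}(u)$, i.e., $u(\delta)\in -R^+$. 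Consequently $us_\gamma(\alpha_1)=u(-\delta)\in R^+$, forcing $\epsilon(us_\gamma)=0$, and $(\star)$ becomes $1\leq 1+\delta_{i,1}$.

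I expect this $a=1$ subcase to be the main obstacle, as it requires a careful dual tracking of $\alpha_1$ and the auxiliary root $\delta=\gamma-\alpha_1$ across the two pieces of the inversion-set decomposition. The indispensable enabling input is the bound $a\leq 1$ from Lemma \ref{lengthofposiroot}; without it, any potential $a\geq 2$ outside the base case would immediately violate $(\star)$ in the first coordinate.
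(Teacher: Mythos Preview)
Your proof is correct and takes a genuinely different route from the paper's. The paper argues by induction on $\ell(s_\gamma)$: after the same easy reductions for $\langle\alpha_1,\gamma^\vee\rangle\leq 0$, in the case $\langle\alpha_1,\gamma^\vee\rangle>0$ it sets $\beta=s_1(\gamma)$ and sums three inequalities coming from Proposition~\ref{simrefinparabolic} (for $us_1$), the induction hypothesis (for $us_1s_\beta$), and Proposition~\ref{gracompare000} (for $us_1s_\beta s_1$). You instead bypass the induction entirely: having reduced to the first-coordinate inequality $(\star)$ via Lemma~\ref{coroofsumgrad}, in the critical case $a=1$ you exploit the reduced-product decomposition $\mbox{Inv}(u)=\mbox{Inv}(s_\gamma)\sqcup s_\gamma(\mbox{Inv}(us_\gamma))$ and track both $\alpha_1$ and $\delta=\gamma-\alpha_1$ through it to pin down $\epsilon(u)=1$ and $\epsilon(us_\gamma)=0$ directly. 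Your argument is shorter and more elementary for this rank-one lemma; the paper's inductive scheme, on the other hand, is the prototype that is reused verbatim in the general-rank proof of Proposition~\ref{mainpropprod}, so it earns its keep as a warm-up for the structure to come.
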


 \begin{proof} We use
                   induction on $\ell(s_\gamma)$.
     If $\ell(s_\gamma)=1$, then $\gamma\in \Delta$ and consequently  (L2) follows from  Proposition \ref{simrefinparabolic}.
           Now we assume $\gamma\in R^+\setminus\Delta$.
             Write $gr(u)=(i_1, i_2)$,  $gr(us_{\gamma})=(j_1, j_2)$ and $gr(q_{\gamma^\vee})=(k_1, k_2)$, in which
           $k_1=\langle \alpha_1, \gamma^\vee\rangle$  by Corollary \ref{gradforonenode}.
     If $k_1<0$, then $j_1+k_1\leq 1+k_1\leq 0$.
          If $k_1=0$, then we have $i_1=j_1$  by
        Lemma  \ref{gracompare11111}. In either of the cases, we conclude $j_1+k_1\leq i_1$. Thus
        (L2) holds by Lemma \ref{coroofsumgrad}.
    Otherwise,
      $\langle \alpha_1, \gamma^\vee\rangle=k_1>0$. Then by Lemma \ref{lengthofposiroot} and Lemma \ref{lengthofpos222},
      we conclude that for  $\beta:=s_1(\gamma)$ the followings hold:
        $\beta^\vee=\gamma^\vee-\alpha_1^\vee$;
         $gr(q_1)+gr(us_1)\leq  gr(u)+ gr(s_1)$ (by Proposition \ref{simrefinparabolic});
        $gr(q_{\beta^\vee})+gr(us_1s_\beta) \leq  gr(us_1)+\mathbf{e}_c$ (by the induction
                hypothesis), where we denote
               $\mathbf{e}_c:= \min\{gr(s_i)~|~ \langle \chi_i, \beta^\vee\rangle\neq0\};$
        $gr(us_1s_\beta s_1)=gr(us_1s_\beta)-gr(s_1)$ (by Proposition  \ref{gracompare000}).
           Hence, we conclude (L2) holds, by noting $\mathbf{e}_c=\min\{\mathbf{e}_c, gr(s_1)\}=
            \min\{gr(s_i)~|~ \langle \chi_i, \gamma^\vee\rangle\neq0\}$.
 \end{proof}

When $\Delta$ is also of $A$-type, it is easy to obtain $\lambda_B$ and $gr(q_{\lambda_B})$ associated to a given $\lambda_P\in Q^\vee/Q^\vee_P$.
  For instance, by direct calculations we conclude the following lemma. (Recall that $\alpha_j=\beta_{o+j}$ for $1\leq j\leq r$ in Table \ref{tabrelativeposi}.)
\begin{lemma}\label{polycompforAAtype000000}
  Let   $\Delta$ be of $A$-type and $m\leq r+1$. Following the notations in case   {\upshape $\mbox{C}1)$}, we set $\lambda=\sum_{k=1}^{m}k\beta_{o+r+1-m+k}^\vee$.
      Then the followings hold  (where $0\cdot \mathbf{e}_0:=\mathbf{0}$).

     \begin{enumerate}
       \item If  $m=r+1$, then  $\langle\alpha, \lambda\rangle=0$ for all $\alpha\in \Delta_P$; if $m<r+1$, then for any $\alpha\in \Delta_P$,
                          $\langle\alpha, \lambda\rangle$ is equal to $-1$ if $\alpha=\alpha_{r+1-m}$, or equal to $0$ otherwise.
                  In particular,   $\lambda$ is the element associated to $m\beta_{o+r+1}^\vee+Q^\vee_P$ via PW-lifing.
        \item   $gr(q_{\lambda})\!=\!m(r+2)\mathbf{e}_{r+1}-(r+1-m) \sum_{k=r+1-m}^{r} \mathbf{e}_{k}$.  In particular if $m=r+1$, then $gr_r(q_\lambda)= \mathbf{0}$.
     \end{enumerate}
   Furthermore, we have $gr_r(\sum_{k=o}^{o+r+1}\beta_k^\vee)=\mathbf{0}$, whenever $o\geq 1$.
\end{lemma}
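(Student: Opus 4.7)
The plan is to prove all three assertions by direct computation, invoking only the definition of the PW-lifting together with the explicit gradings given by Proposition~\ref{graquanvar} and Table~\ref{tabgrading}. The underlying combinatorial input is the $A$-type Cartan pairing
\begin{equation*}
  \langle \beta_i,\beta_p^\vee\rangle = \begin{cases}2,& i=p,\\ -1,& |i-p|=1,\\ 0,& \text{otherwise},\end{cases}
\end{equation*}
after which every claim reduces to a short telescoping calculation.

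For part~(1), I would rewrite $\lambda=\sum_{p=r+2-m}^{r+1}(p-r-1+m)\beta_{o+p}^\vee$ and pair against $\alpha_j=\beta_{o+j}$ for $1\leq j\leq r$. In the interior range where $j-1,j,j+1$ all lie in $[r+2-m,r+1]$, the identity $2c_p-c_{p-1}-c_{p+1}=0$ for the arithmetic sequence $c_p:=p-r-1+m$ forces $\langle\alpha_j,\lambda\rangle=0$. The right endpoint $j=r$ uses $2(m-1)-(m-2)-m=0$; the left endpoint $j=r+1-m$ has only the $j+1$ contribution, which equals $-1$; and $j<r+1-m$ is automatically zero. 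When $m=r+1$ the left endpoint falls at $j=0\notin\Delta_P$, which is why the $-1$ disappears. The obvious identity $\lambda - m\beta_{o+r+1}^\vee = \sum_{k=1}^{m-1}k\,\alpha_{r+1-m+k}^\vee \in Q_P^\vee$ together with the just-computed pairings (summed along the intervals that describe $R_P^+$) shows $\langle\alpha,\lambda\rangle\in\{0,-1\}$ for every $\alpha\in R_P^+$, identifying $\lambda$ as the PW-lift of $m\beta_{o+r+1}^\vee+Q_P^\vee$.

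For part~(2), by $\mathbb{Z}$-linearity of $gr$,
\begin{equation*}
  gr(q_\lambda)=\sum_{k=1}^{m-1}k\cdot gr(q_{\alpha_{r+1-m+k}^\vee})+m\cdot gr(q_{\beta_{o+r+1}^\vee}),
\end{equation*}
where the first $m-1$ terms come from Proposition~\ref{graquanvar} (the convention $0\cdot\mathbf{e}_0=\mathbf{0}$ absorbing the $j=1$ case) and the last term equals $(r+2)\mathbf{e}_{r+1}-r\mathbf{e}_r$ by case~a) of Table~\ref{tabgrading}. Setting $a:=r+1-m$ and collecting the coefficient of each $\mathbf{e}_j$: for $a+1\leq j\leq r-1$ adjacent summands combine as $(j-a)(j+1)-(j-a+1)j=-a$; at $j=a$ only the $\mathbf{e}_{j-1}$-piece of the $k=1$ summand contributes, giving $-a$; at $j=r$ the contribution $(m-1)(r+1)$ combines with $-mr$ to give $m-r-1=-a$; and at $j=r+1$ only $m(r+2)$ remains. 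This yields the claimed formula, and when $m=r+1$ we have $a=0$, so the tail vanishes and $gr_r(q_\lambda)=\mathbf{0}$.

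For the final assertion, with $o\geq 1$ the simple root $\beta_o$ lies outside $\Delta_P$ and is adjacent to $\alpha_1=\beta_{o+1}$, so it falls under case~b) of Table~\ref{tabgrading} with $gr_r(q_{\beta_o^\vee})=-\sum_{j=1}^{r}\mathbf{e}_j$; likewise $\beta_{o+r+1}$ is case~a) with $gr_r(q_{\beta_{o+r+1}^\vee})=-r\mathbf{e}_r$. Using Proposition~\ref{graquanvar}, the middle sum $\sum_{j=1}^{r}gr_r(q_{\alpha_j^\vee})$ telescopes to $\sum_{j=1}^{r-1}\mathbf{e}_j+(r+1)\mathbf{e}_r$. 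Adding the three pieces, the coefficients of $\mathbf{e}_j$ for $j<r$ cancel as $-1+1=0$ and at $j=r$ as $-1+(r+1)-r=0$, giving $\mathbf{0}$. The only genuine obstacle is bookkeeping boundary terms and degenerate cases ($m=1$, $m=r+1$, $j\in\{a,r,r+1\}$); everything else is elementary telescoping.
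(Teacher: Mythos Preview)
Your proof is correct and matches the paper's approach: the paper states only that the lemma follows ``by direct calculations'' and omits the details entirely, while a commented-out fragment in the source carries out exactly the same telescoping using $gr(q_j)=(j+1)\mathbf{e}_j-(j-1)\mathbf{e}_{j-1}$ that you perform in part~(2). Your treatment of the boundary cases ($j=a$, $j=r$, $m=r+1$) and the separate verification of the last assertion via cases a) and b) of Table~\ref{tabgrading} is more explicit than anything the paper records.
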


Since the case of $A$-type is relatively easy to handle, we would like to compare all relevant information  for
       $\Delta$ being  of general type with those when $\Delta$ is of $A$-type.
Due to Lemma \ref{coroofsumgrad}, we only need to care about $gr_r(q_\lambda w)$. 
 For these purposes, we bring in a base $\dot\Delta$ of $A$-type and introduce the notion of ``\textit{virtual coroot}" as below  for $r\geq 2$.

  Let $\dot\Delta=\{\dot \beta_1, \cdots, \dot\beta_n\}$ be a base with
    $Dyn(\dot\Delta)$ given by  \begin{tabular}{l} \raisebox{-0.4ex}[0pt]{$  \circline\!\;\!\!\circ\cdots\, \circline  \!\!\!\;\circ$}\\
                 \raisebox{0.9ex}[0pt]{${\hspace{-0.1cm}\scriptstyle{\dot\beta_1}\hspace{0.25cm}\dot\beta_2\hspace{0.9cm}\dot\beta_{n}} $}
             \end{tabular}.
  Denote  $\dot\alpha_i=\dot \beta_{o+i}$ for each $1\leq i\leq r$ (the notation ``$o$" is the same one as in Table \ref{tabrelativeposi}).
  Set $\dot\Delta_P=\{\dot \alpha_1, \cdots, \dot \alpha_r\}$. Following Definition \ref{defgrading},   we can   obtain a grading map
  with respect to $(\dot \Delta_P, \mbox{Id}_{\dot\Delta_P})$, which we also denote as $gr$ by abuse of notations.
  Clearly, $\dot\beta_j\mapsto \beta_j$ extends to an isometry  $\dot\Delta \setminus \{\dot\beta_\eta\}
           \rightarrow\Delta\setminus\{\beta_\eta\}$ of bases, where $\eta$ is  given in Table \ref{tabvircoroot}. 
    Denote $\dot Q^\vee=\bigoplus_{i=1}^n\mathbb{Z}\dot\beta_i^\vee$.
\begin{defn}

 Let $\lambda\in Q^\vee$.     We call $\dot \lambda\in \dot Q^\vee$  a \textbf{virtual coroot} of $\lambda$  (at level $\eta$),
                 if $\dot \lambda $ satisfies both $gr_r(q_{\dot\lambda})=gr_r(q_{\lambda})$ and $\langle \dot\alpha_i, \dot\lambda\rangle =\langle \alpha_i,  \lambda\rangle \mbox{ for }  1\leq i\leq r.$
 \end{defn}

\begin{lemma}\label{virrootforalltype}
 For each case in Table \ref{tabrelativeposi} (where we have assumed $r\geq 2$), there is a virtual coroot
 $\dot \lambda$    of $\lambda=\sum_{j=1}^nc_j\beta_j^\vee$ (at level $\eta$),  given by  Table \ref{tabvircoroot}.

  \begin{table}[h]
 \caption{ \label{tabvircoroot} Virtual coroot $\dot\lambda\!\!=c_\eta\dot\mu\!+\sum_{j=1}^{\eta-1}c_j\dot\beta_j^\vee$}

\begin{tabular}{|c||c|c|c|c|c|c|c|c|c|c|}
  \hline
    & {\upshape $\mbox{C}1)$} & {\upshape $\mbox{C}9)$}& {\upshape $\mbox{C}10)$} & {\upshape $\mbox{C}2)$} &{\upshape $\mbox{C}3)$} & {\upshape $\mbox{C}5)$} &
              {\upshape $\mbox{C}7)$} & {\upshape $\mbox{C}4)$ } & {\upshape $\mbox{C}6)$ } & {\upshape $\mbox{C}8)$ }\\
       \hline
    $\eta$ & $n$ & $3$ & $3$ & $n$ & $4$ & $5$ & $7$ & $8$ & $8$ & $7$ \\
       \hline
       $\dot\mu$  & \multicolumn{3}{c|}{$-\langle \beta_{\eta-1}, \beta_\eta^\vee\rangle\dot \beta_\eta^\vee$ }   &
        \multicolumn{4}{c|}{ $ \dot\beta_{\eta-1}^\vee+2\dot \beta_\eta^\vee$ }   & $\sum\limits_{j=1}^3j\dot\beta^\vee_{5+j}$ &  $\sum\limits_{j=1}^5j\dot\beta^\vee_{3+j}$  &  $\sum\limits_{j=1}^5j\dot\beta^\vee_{2+j}$   \\
               \hline
\end{tabular}
\end{table}
 \end{lemma}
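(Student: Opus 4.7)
The plan is to verify the two defining conditions of a virtual coroot for the explicit candidate $\dot\lambda = c_\eta \dot\mu + \sum_{j=1}^{\eta-1} c_j \dot\beta_j^\vee$ given in Table \ref{tabvircoroot}, working case by case through the ten configurations of Table \ref{tabrelativeposi}. Since both conditions are linear in $\lambda$, it suffices to check the contribution of each coefficient $c_j$ separately.

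For the pairing condition $\langle \dot\alpha_i, \dot\lambda\rangle = \langle \alpha_i, \lambda\rangle$ with $1\leq i\leq r$, I would first use that the map $\dot\beta_j\mapsto \beta_j$ is an isometry $\dot\Delta\setminus\{\dot\beta_\eta\}\to \Delta\setminus\{\beta_\eta\}$, so the contributions of the $c_j$ with $j<\eta$ match automatically. The contributions from $j>\eta$ vanish on both sides, because in each case of Table \ref{tabrelativeposi} no $\beta_j$ with $j>\eta$ is adjacent to any vertex of $\Delta_P$ in $Dyn(\Delta)$. The condition thus reduces to the single identity $\langle \dot\beta_{o+i},\dot\mu\rangle = \langle \beta_{o+i}, \beta_\eta^\vee\rangle$ for $1\leq i\leq r$, which one reads off directly from the chosen $\dot\mu$ and the local Dynkin structure around $\beta_\eta$.

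For the grading condition $gr_r(q_{\dot\lambda})=gr_r(q_\lambda)$, I would split by $j$ and use additivity of $gr$. Contributions from $c_j$ with $j<\eta$ agree by the isometry argument, since $gr_r(q_{\beta_j^\vee})$ is determined by the sub-Dynkin diagram $\Delta_P\cup\{\beta_j\}$ via Proposition \ref{gradingforconn}, and the same diagram governs $gr_r(q_{\dot\beta_j^\vee})$. The remaining content is the identity $c_\eta\,gr_r(q_{\dot\mu}) = c_\eta\, gr_r(q_{\beta_\eta^\vee})$. The left-hand side is evaluated using Lemma \ref{polycompforAAtype000000} with $m$ equal to $1,2,3$, or $5$ depending on the case (together with its final sentence to discard $\dot\beta_j^\vee$'s that are far from $\Delta_P$); the right-hand side is evaluated by the corresponding row of Table \ref{tabgrading} in Proposition \ref{gradingforconn}.

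The main obstacle will be the bookkeeping for the exceptional cases C4)--C8), where $\Delta_P$ sits inside a $D$- or $E$-type configuration and $\dot\mu$ has several nonzero coefficients. For instance, in case C4) the candidate $\dot\mu = \dot\beta_6^\vee+2\dot\beta_7^\vee+3\dot\beta_8^\vee$ has to be matched against case f) of Proposition \ref{gradingforconn} via Lemma \ref{polycompforAAtype000000} with $m=3$, and similar calculations with $m=5$ are required in C6) and C8). Each requires careful tracking of the indices $o$, $\eta$, $r$ that describe the relative position of $\Delta_P$ inside $\Delta$. Once these tabulations are carried through in each of the ten cases, the lemma follows.
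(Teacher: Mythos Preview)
Your proposal is correct and follows essentially the same approach as the paper, which likewise reduces by linearity to checking that $\dot\beta_j^\vee$ is a virtual coroot of $\beta_j^\vee$ for $j<\eta$ (via the isometry), that the contribution for $j>\eta$ vanishes, and that $\dot\mu$ is a virtual coroot of $\beta_\eta^\vee$ by direct computation from Tables~\ref{tabrelativeposi} and~\ref{tabgrading}. One small omission: for the grading condition you should also note that $gr_r(q_{\beta_j^\vee})=0$ when $j>\eta$ (case i) of Table~\ref{tabgrading}), parallel to the non-adjacency argument you gave for the pairing condition.
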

\begin{proof}
 Note that
      $\dot\Delta \setminus \{\dot\beta_\eta\}$ is canonically isomorphic to $\Delta\setminus\{\beta_\eta\}$ as bases and that
       $\Delta_P\subset \{\beta_1, \cdots, \beta_{\eta-1}\}$.
   It is easy to see $\dot\beta_j^\vee$ is a virtual coroot of $\beta_j^\vee$ (resp. 0) for each $j\leq \eta-1$ (resp. $j\geq \eta+1$).
        Combining Table \ref{tabrelativeposi} and Table \ref{tabgrading}, we  conclude that
            $gr_r(q_{\dot\mu})=gr_r(q_{\beta_\eta^\vee})$   and
                 $\langle \dot\alpha_i, \dot\mu\rangle =\langle \alpha_i,  \beta_\eta^\vee\rangle $ for   $1\leq i\leq r$. That is, $\dot\mu$ is
                 a virtual coroot of $\beta_\eta^\vee$.   Hence, the statement follows.
\end{proof}

 \begin{remark} Lemma \ref{virrootforalltype} tells us about the existence of a virtual coroot. Due to Lemma \ref{polycompforAAtype000000}, we
    note that the uniqueness does not hold:
   if  $\dot \lambda$ is a   virtual coroot   of $\lambda$,
   so is $\dot\lambda+ \sum_{j=1}^\eta j\dot \beta_j^\vee$.
 \end{remark}

Due to Lemma \ref{inductionforonenode}, it remains to care about the case when $r\geq 2$. The next proposition shows that we can describe most of the coroots
  uniformly with the help of the notion of ``virtual coroots".

\begin{prop}\label{coeffvircoroot} Assume $r\geq 2$.
    Let $\gamma\in R^+\setminus \Delta $ satisfy
          $\ell(s_\gamma)=\langle 2\rho, \gamma^\vee\rangle-1$.
 Then  one and only one of the    followings holds.
  \begin{enumerate}
    \item There exists a virtual  coroot $\dot\gamma^\vee=\sum_{p=1}^{r+1}\dot c_p\dot \alpha_p^\vee$ of $\gamma^\vee$, where  
                      $\dot\alpha_{r+1}:=\dot\beta_{o+r+1}$, $\dot c_{r+1}\leq r \mbox{ and } \dot c_{p}-1\leq \dot c_{p-1}\leq \dot c_{p}$ for each $p\in \{1, \cdots, r+1\}$ (where $\dot c_0:=0$).
    \item  $\gamma^\vee=\sum_{p=d}^m  \beta_p^\vee$ where $o\leq m\leq o+r$ and $d<m$.
    \item Case  {\upshape $\mbox{C} 9)$}  occurs and $\gamma^\vee=\beta_3^\vee+\beta_4^\vee$.
  \end{enumerate}
 \end{prop}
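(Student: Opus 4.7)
The plan is a case-by-case analysis along the ten configurations of Table \ref{tabrelativeposi}. Writing $\gamma^\vee = \sum_{j=1}^{n} c_j \beta_j^\vee$ with $c_j \in \mathbb{Z}_{\geq 0}$, I will use the hypothesis $\ell(s_\gamma) = \langle 2\rho, \gamma^\vee \rangle - 1$ together with Lemma \ref{lengthofposiroot} to show that whenever $\langle \beta_j, \gamma^\vee\rangle$ is positive it equals $1$, which forces the restriction of $\gamma^\vee$ to each $A$-type sub-chain of $Dyn(\Delta)$ to be supported on a connected interval with unit coefficients. In each case I then read off the virtual coroot $\dot\gamma^\vee = c_\eta\dot\mu + \sum_{j=1}^{\eta-1} c_j \dot\beta_j^\vee$ from Table \ref{tabvircoroot} and rewrite it as $\sum_{p}\dot c_p \dot\alpha_p^\vee$ modulo $\dot\beta_k^\vee$ with $k\leq o$ (the latter contributions are irrelevant to the bounds in (1), since these involve only $\dot c_0 := 0, \dot c_1, \ldots, \dot c_{r+1}$).

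The three bounds in case (1), namely $\dot c_{r+1}\leq r$, $\dot c_{p-1}\leq \dot c_p$, and $\dot c_p - \dot c_{p-1}\leq 1$, admit a clean interpretation. The first two assert that the support of $\dot\gamma^\vee$ along the chain $\dot\alpha_1^\vee, \ldots, \dot\alpha_{r+1}^\vee$ is a non-empty suffix of length at most $r$; the third is a consequence of the $\{0,1\}$-valued pairings of $\gamma^\vee$ with the simple coroots in the $A$-chain. When the support of $\gamma^\vee$ is contained in $\{\beta_1, \ldots, \beta_{o+r}\}$, no $\dot\mu$-contribution enters, so $\dot\gamma^\vee$ coincides with $\gamma^\vee$ under the identification $\dot\Delta\setminus\{\dot\beta_\eta\}\cong \Delta\setminus\{\beta_\eta\}$ and the sequence $\dot c_p$ ends in a trailing zero, violating the non-decreasing bound and forcing $\gamma^\vee = \sum_{p=d}^{m}\beta_p^\vee$ with $d < m \leq o+r$, i.e.\ case (2). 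The gap bound $\dot c_p - \dot c_{p-1}\leq 1$ fails only in case C9: the double bond $\beta_2\Rightarrow\beta_3$ of $F_4$ produces $\dot\mu = 2\dot\beta_3^\vee$, so the short coroot $\gamma^\vee = \beta_3^\vee + \beta_4^\vee$ gives $\dot c_3 = 2$ and $\dot c_2 = 0$ in violation of the bound, yielding case (3). Mutual exclusivity of (1), (2), (3) then follows from the incompatibility of the support patterns.

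The hard part will be the exceptional-type cases C4--C8, where $\dot\mu = \sum_{j=1}^{k} j\,\dot\beta_{c+j}^\vee$ is a longer expression and the positive coroots of the ambient $E_8$-type root system have intricate coefficient vectors, including entries $2$ and $3$. Here one must enumerate the candidate coroots satisfying $\ell(s_\gamma) = \langle 2\rho, \gamma^\vee\rangle - 1$ in each configuration and check that the induced sequence $\dot c_p$ always meets both the non-decreasing and the gap bounds. I expect this to follow from the simply-laced nature of the $E$-type sub-diagrams, which makes the $\{0,1\}$-pairing constraint cleanly control the increments, but a careful enumeration (likely the one deferred to the appendix of section 6) is needed to rule out any sporadic failure beyond the one occurring in C9.
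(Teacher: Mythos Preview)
Your overall plan matches the paper's own proof: a case analysis along the configurations of Table~\ref{tabrelativeposi}, passing to the virtual coroot of Table~\ref{tabvircoroot} and extracting the window $(\dot c_1,\ldots,\dot c_{r+1})$. Two genuine gaps remain, however.

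First, the assertion that the pairing bound from Lemma~\ref{lengthofposiroot} ``forces the restriction of $\gamma^\vee$ to each $A$-type sub-chain \dots\ to be supported on a connected interval with unit coefficients'' is false. In types $D$ and $E$ the coefficients of $\gamma^\vee$ along the $A$-chain are routinely $2$ or $3$; the constraint $\langle\beta_j,\gamma^\vee\rangle\le 1$ only bounds the second difference $2c_j-c_{j-1}-c_{j+1}$, not the individual $c_j$. More seriously, in types $B_n$ and $C_n$ there are coroots (e.g.\ $\beta_n^\vee+2\sum_{i\le p<n}\beta_p^\vee$ in $B_n$, and $\sum_{i\le p<j}\beta_p^\vee+2\sum_{j\le p\le n}\beta_p^\vee$ in $C_n$) whose virtual-coroot window violates the step bound $\dot c_p-\dot c_{p-1}\le 1$; these must be excluded by showing they \emph{fail} the hypothesis $\ell(s_\gamma)=\langle 2\rho,\gamma^\vee\rangle-1$. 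In the $C_n$ case this exclusion is not a one-line pairing check but a short induction via Lemma~\ref{lengthofposiroot}, which you do not supply. The $E$-type verification is likewise not covered by any appendix in section~6 (that appendix treats the Key Lemma, not this proposition) and must be done by direct inspection of Bourbaki's coroot tables.

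Second, the passage from the raw virtual coroot $c_\eta\dot\mu+\sum_{j<\eta}c_j\dot\beta_j^\vee$ to one of the required form $\sum_{p=1}^{r+1}\dot c_p\dot\alpha_p^\vee$ is not as simple as ``modulo $\dot\beta_k^\vee$ with $k\le o$''. In cases $\mbox{C}4)$--$\mbox{C}8)$ the expression $\dot\mu$ has support strictly beyond $\dot\beta_{o+r+1}$, and $\dot\beta_o^\vee$ itself is \emph{not} a virtual coroot of $0$ (since $\langle\dot\alpha_1,\dot\beta_o^\vee\rangle=-1$), so you cannot simply discard the coefficient at position $o$. The paper handles this in two steps: it first subtracts a multiple of $\sum_{j=1}^{\eta}j\dot\beta_j^\vee$ (a virtual coroot of $0$) to normalize, and then sets $\dot c_i=c'_{o+i}-c'_o$, using that $\varrho=\sum_{k=o}^{o+r+1}\dot\beta_k^\vee$ is a virtual coroot of $0$ to absorb the residual $c'_o$-shift. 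Without this normalization the sequence you write down need not satisfy $\dot c_0=0$ or the bound $\dot c_{r+1}\le r$, and in particular the trichotomy (1)/(2)/(3) does not follow.
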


\begin{proof}
  Let    $\gamma^\vee\!\!=\!\!\sum_{j=1}^nc_j\beta_j^\vee$ , which has a  virtual coroot
    $\sum_{j=1}^\eta\tilde c_j\dot \beta_j^\vee$  by Lemma \ref{virrootforalltype}.

    We first assume $c_{\eta}\neq 0$. 
    Set $(c_1', \cdots, c_\eta'):=(c_1, \cdots, c_\eta)-[{c_\eta\over \eta}](1, \cdots, \eta)$. Then we
      obtain another virtual coroot $\sum_{j=1}^\eta c_j'\dot \beta_j^\vee$ of $\gamma^\vee$, by noting  that $\sum_{j=1}^\eta j\dot \beta_j^\vee$ is a virtual coroot of $0$.
   We claim $c_j'-1\leq c_{j-1}'\leq c_j'$ for each $i$ (where $c_0':=0$) and show this by discussing all possible coroots with respect to the type of $\Delta$.

  When $\Delta$ is of $A$-type,  clearly it is done  (by noting
     $(c_1', \cdots, c_\eta')=(\tilde c_1, \cdots, \tilde c_\eta)=(0, \cdots, 0, 1, \cdots, 1))$.
  When $\Delta$ is of $D$-type, either $\mbox{C}2)$ or $\mbox{C}3)$ will occur. For the former case,  we have $\eta=n$ and $c_{n-2}\in \{1, 2\}$.
    If $c_{n-2}=2$, then we have $c_{n-1}=1$ and $(c_1', \cdots, c_n')=(\tilde c_1, \cdots, \tilde c_n)=(0,\cdots, 0, 1, \cdots, 1, 2, \cdots, 2)$.
   If $c_{n-2}=1$, then $\gamma^\vee=\beta_n^\vee+\sum_{p=a}^b\beta_p^\vee$ for some $a\leq n-2\leq b\leq n-1$.  Hence,
      $(c_1', \cdots, c_n')=(\tilde c_1, \cdots, \tilde c_n)=(0,\cdots, 0, 1, \cdots, 1,1+\delta_{b, n-1}, 2)$.
    Thus our claim holds.
   For the latter case, we have $\eta=4$ and can show our claim with similar arguments.
    When $\Delta$ is of $E$-type, there are only finite coroots which are listed in  Plate
              V, VI and VII of \cite{bour}. In this case, our claim still holds by direct calculations.

 When $\Delta$ is of type $B_n$  (resp. $C_n$),
    then our claim follows immediately from Plate III (resp.   II)   of \cite{bour},
         except for the following  coroots.

\begin{center}
\begin{tabular}{|c||c|c|}
  \hline
  & $\mbox{C}1)$ for type $B_n$ & $\mbox{C}1)$ for type $C_n$  \\ \hline
   &&\\
 \raisebox{1.5ex}[0pt]{  $\gamma^\vee$}& \raisebox{1.7ex}[0pt]{ $\beta_n^\vee+2\sum\limits_{i\leqslant p<n}\beta_p^\vee$ \,\,\,($1\leq i<n$)} &
  \raisebox{1.7ex}[0pt]{ $\sum\limits_{i\leqslant p<j}\beta_p^\vee+2\sum\limits_{j\leqslant p\leqslant n}\beta_p^\vee$ \,\,\,($1\leq i<j\leq n$)}
                 \\
  \hline
\end{tabular}
\end{center}
  However,  none of the above coroots satisfies our condition: $\ell(s_\gamma)=\langle 2\rho, \gamma^\vee\rangle -1.$
Indeed if they satisfied this condition, then
   for the former case  we would have $\langle \beta_i, \gamma^\vee\rangle =2>1$, contrary to Lemma \ref{lengthofposiroot}.
 For the latter case, we     denote     $\gamma_k^\vee=\sum_{i\leq p<k}\beta_p^\vee+2\sum_{k\leq p\leq n}\beta_p^\vee$ for $j\leq k\leq  n$.
           Note that  $\gamma_j^\vee=\gamma^\vee$ and $\langle \beta_k, \gamma_k^\vee\rangle >0$ for all $k$.  By Lemma \ref{lengthofposiroot},
            we have    $\gamma_{j+1}^\vee=\gamma_j^\vee-\beta_j^\vee$ and $\ell(s_{\gamma_{j+1}})=\langle 2\rho, \gamma_{j+1}^\vee\rangle-1$.
             Thus by induction          we conclude      $\ell(s_{\gamma_{n}})=\langle 2\rho, \gamma_{n}^\vee\rangle-1$.
     However,  $\langle \beta_n, \gamma_n^\vee\rangle =\langle \beta_n, \beta_{n-1}^\vee+2\beta_n^\vee\rangle=2>1$, contrary to Lemma \ref{lengthofposiroot} again.
 Hence, our claim holds in this case.
 When $\Delta$ is of type $F_4$, which is the remaining case we need to consider since $r \geq 2$,   case  $\mbox{C} 10)$ or $\mbox{C}9)$ must occur.
                When $\mbox{C}10)$  occurs,  our claim follows immediately from  Plate VIII of \cite{bour} and Table \ref{tabvircoroot}.
       When $\mbox{C} 9)$ occurs, we
        denote           $M:=\max\{c_1,  c_2, c_3, c_4\}$.
      If $M>1$, then there are 14 coroots in total (see Plate VIII  of \cite{bour}), only five coroots among which satisfy our condition
       on the length.  Explicitly, $(c_1, c_2, c_3, c_4)=(1, 2, 1, 0), (1, 2, 1, 1),$ $(1, 2, 2, 1), (1, 3, 2, 1)$ or $(2, 3, 2,1)$.
       If $M=1$,  then $\gamma^\vee=\sum_{p=a}^{b}\beta_p^\vee$ for some $1\leq a<b\leq 4$. Clearly, our claim follows, except for
             the coroot $\gamma^\vee= \beta_3^\vee+\beta_4^\vee$.

     Note that $\Delta_P\subset \{\beta_1,\cdots, \beta_{\eta-1}\}$. We conclude $\dot \beta_j^\vee$ is a virtual coroot of 0 whenever  $j<o$ or $j>o+1$.
     In particular, we set $\dot c_i=c_{o+i}'-c_o'$ for each $0\leq i\leq r+1$.
     Then we obtain a virtual coroot
       $\dot\gamma^\vee=\sum_{p=1}^{r+1}\dot c_p\dot \alpha_p^\vee$ of $\gamma^\vee$ satisfying
                       $\dot c_{p}-1\leq \dot c_{p-1}\leq \dot c_{p}$ for each $p$, whenever $c_\eta\neq 0$ except when case (3) of our statements occurs.
    Furthermore, we note that $\dot c_{r+1}\leq r+1$ and if ``$=$" holds then we must have  $\dot\gamma^\vee=\sum_{p=1}^{r+1}p\dot \alpha_p^\vee$, which
     is still a virtual coroot of 0. In this case, we just replace $\dot \gamma^\vee$ with $0=\sum_{p=1}^{r+1}0\cdot \dot\alpha_{p}^\vee$.

   Now we assume $c_{\eta}=0$. Note that $Dyn(\{\beta_1, \cdots, \beta_{\eta-1}\})$ is of $A$-type and that $o+r+1\leq \eta$. Thus
    if $0$ is not a virtual coroot of $\gamma^\vee$, then
        we must have  $\gamma^\vee=\sum_{p=d}^m\beta_p^\vee$ for some $1\leq d< m\leq \eta-1$. Hence, one of the followings must hold:
         (i) $m<o$; (ii) $m\geq o+r+1$ and $d\leq o$; (iii) $m\geq o+r+1$ and $d>o$; (iv)  $o\leq m\leq o+r$.
         If either (i) or (ii) held, then
               0 would be a virtual coroot of $\gamma^\vee$. If (iii) holds, then
       $\sum_{p=d}^{r+1}\dot\beta^\vee$ is a virtual coroot of $\gamma^\vee$, so that   (1) of our statements holds.
      If (iv) holds, then (2) of our statements holds.
\end{proof}

\bigskip

\begin{proof}[Proof of Proposition \ref{mainpropprod}]
 Due to Lemma \ref{inductionforonenode}, we assume $r\geq 2$ and then use induction on  $\ell(s_\gamma)$.

             If $\ell(s_\gamma)=1$, then $\gamma\in \Delta$ and consequently    (L2)  follows from Proposition \ref{simrefinparabolic}.

                Now we  assume  $\gamma\in R^+\setminus \Delta$.  Take any   $1\leq j\leq n$ with  $\langle\alpha_j, \gamma^\vee\rangle>0$.
   Write $\beta=s_j(\gamma)$,
         $gr(q_{\beta^\vee})=(\lambda_1, \cdots, \lambda_{r+1}),
                         \min\{gr(s_i)~|~ \langle \chi_i, \beta^\vee\rangle\neq0\}=\mathbf{e}_c \mbox{ and }$
       \begin{align*} gr(q_j)+gr(us_j)&= gr(u)+ (a_1, \cdots, a_{r+1}),\\
                   gr(q_{\beta^\vee})+gr(us_js_\beta) &=  gr(us_j)+\mathbf{e}_c+(\mu_1,\cdots, \mu_{r+1}),\\
                   gr(us_js_\beta s_j)&= gr(us_js_\beta)+(b_1, \cdots, b_{r+1}).
                  \end{align*}
   Thus we have $gr(q_{\gamma^\vee} us_\gamma)=gr(u)+\mathbf{e}_c+\sum_{p=1}^{r+1}(a_p+b_p+\mu_p)\mathbf{e}_p$,
   taking the summation of the last three equalities.
     Due to  Lemma \ref{lengthofposiroot} and Lemma \ref{lengthofpos222}, we conclude  $\min\{gr(s_i) |  \langle \chi_i, \gamma^\vee\rangle\neq0\}\!=\min\{\mathbf{e}_c, gr(s_j)\}$
           and $(\mu_1, \cdots, \mu_{r+1})\leq \mathbf{0}$ by the induction hypothesis.
       Furthermore, we have     $(a_1, \cdots, a_{r+1})\leq gr(s_j)$ by Proposition \ref{simrefinparabolic}.   We first make several observations  as follows.
    \begin{enumerate}
      \item[(Ob1)]  Assume $\mathbf{e}_c\leq gr(s_j)$ and $\langle \alpha, \alpha_j^\vee\rangle =0$ for all $\alpha\in\Delta_P$.  Then
          $(b_1, \cdots, b_{r+1})=-\mathbf{e}_{r+1}=-gr(s_j)$ (by Lemma \ref{gracompare11111}). Consequently, (L2) follows.

      \item[(Ob2)]  Assume $j=1$.  Then we have $\mathbf{e}_c\leq gr(s_1)$,
        $(b_1, \cdots, b_{r+1})=-gr(s_1)$ by Proposition  \ref{gracompare000}  and consequently   (L2) follows.

       \item[(Ob3)]    Assume $2\leq j\leq r$ and $\mathbf{e}_c<\mathbf{e}_j$. Then $gr(q_j)=(j+1)\mathbf{e}_{j}-(j-1)\mathbf{e}_{j-1}$. 
                   Write $gr(u)=\sum_{p=1}^{r+1}i_p\mathbf{e}_p$ and $gr(us_js_\beta)=\sum_{p=1}^{r+1}k_p\mathbf{e}_p$. 
                  Note that
              $\ell(us_j)=\ell(u)-1$ and $\ell(us_js_\beta s_j)=\ell(us_js_\beta)-1$.
           By  Proposition  \ref{gracompare000}, we  have
                   \begin{align*}
                        (a_1, \cdots, a_{r+1})&=(i_j-i_{j-1}-j)\mathbf{e}_{j-1}+(j+1+i_{j-1}-i_j)\mathbf{e}_j,\\
                        (b_1, \cdots, b_{r+1})&=(k_j-k_{j-1}-1)\mathbf{e}_{j-1}+(k_{j-1}-k_j)\mathbf{e}_j,\\
                    k_j=i_{j-1}+ \mu_{j}-\lambda_j\;\;&\mbox{and} \;\;  k_{j-1}=i_j-1+\mu_{j-1}-\lambda_{j-1}.
                   \end{align*}
      As a consequence, we have
       $(a_1+b_1+\mu_1, \cdots, a_{r+1}+b_{r+1}+\mu_{r+1})=(\mu_1, \cdots, \mu_{j-2}, \mu_j+M, \mu_{j-1}-M, \mu_{j+1},
                           \cdots, \mu_{r+1})$, where $M:=\lambda_{j-1}-\lambda_j-j$.
       Thus if $M=0$ and   $(\mu_1, \cdots, \mu_{j-2}, \mu_j, \mu_{j-1})\leq (0, \cdots, 0)$, 
        then (L2) follows.
    \end{enumerate}

  Now we begin to discuss all possibilities for $\gamma^\vee$, using Proposition \ref{coeffvircoroot}.

  When case  (1) of Proposition \ref{coeffvircoroot} holds, there exists a virtual
     coroot $\dot\gamma^\vee=\sum_{p=1}^{r+1}\dot c_p\dot \alpha_p^\vee$ of $\gamma^\vee$ such that
                     $\dot c_{r+1}\leq r \mbox{ and } \dot c_{p}-1\leq \dot c_{p-1}\leq \dot c_{p}$ for each $p$ (recall that
                       $\dot\alpha_{p}:=\dot\beta_{o+p}$ and  $\dot c_0=0$).
                    Clearly, if a): $\dot c_{r+1}=0$, then all $\dot c_p$'s are equal to 0. If b):
                    $1\leq \dot c_{r+1}\leq r$ and any two non-zero $\dot c_p$ and $\dot c_{p'}$ are distinct, then
                     we have $\sum_{p=1}^{r+1}\dot c_p\dot \alpha_p=\sum_{p=1}^{m+1} p\dot \alpha_{r-m+p}$ where $0\leq m<r$.
                    Otherwise, we have c): $1\leq \dot c_{r+1}\leq r$ and there exist distinct $p<p'$ such that
                            $\dot c_p=\dot c_{p'}\neq 0$.  This must imply $\dot c_p=\dot c_{p+1}$ since $(\dot c_1, \cdots, \dot c_{r+1})$ is a non-decreasing sequence.
       Corresponding to these three cases, we have the following conclusions.
  \begin{enumerate}
    \item[a)]   $(\dot c_1, \cdots, \dot c_{r+1})=(0, \cdots, 0)$. 
                   Then we  have $gr_r(q_{\gamma^\vee})=\mathbf{0}$ and $ gr_r(us_\gamma)=gr_r(u)$
                    by Lemma \ref{gracompare11111}.
                   Thus   (L2) holds by Lemma \ref{coroofsumgrad}.
    \item[b)]  $\dot \gamma^\vee=\sum_{p=1}^{m+1} p\dot \alpha_{r-m+p}$, where $0\leq m<r$.
                  Hence,
                 we have   $gr_r(q_{\gamma^\vee})=gr_r(q_{\dot \gamma^\vee})=(m-r)\sum_{p=r-m}^r\mathbf{e}_p$
                   (by Lemma \ref{polycompforAAtype000000}) and
                 $\langle \alpha_p, \gamma^\vee\rangle=\langle \dot \alpha_p, \dot \gamma^\vee\rangle=0$ for   $p\in\{1, \cdots, r\}\setminus\{r-m\}$.
               By Proposition \ref{maingracompare}, we have
                  $gr_r(us_\gamma)\leq gr_r(u)+(r-m)\sum_{p=r-m}^r\mathbf{e}_p$.
              Thus  (L2) holds by
               Lemma \ref{coroofsumgrad}.

    \item[c)] In this case, we can take $j:=\min\{p~|~ 1\leq p\leq r, \dot c_p=\dot c_{p+1}\neq 0\}$.
           That is, $\sum_{p=1}^{j+1} \dot c_p\dot \alpha_{p}=m\dot \alpha_{j+1}+\sum_{p=1}^{m} p\dot \alpha_{j-m+p}$,
            where $1\leq m\leq j\leq r$.
              As a consequently, we have $\langle\alpha_j, \gamma^\vee\rangle=
                    \langle \dot \alpha_j, \dot \gamma^\vee\rangle>0$, $\dot\beta^\vee:=\dot\gamma^\vee -\dot \alpha_j^\vee$ is virtual coroot of $\beta^\vee(=\gamma^\vee-\alpha_j^\vee)$
                     and  $\mathbf{e}_c<\mathbf{e}_j$.     If $j=1$, then it is done by  (Ob2).
        If $j\geq 2$, then we use  (Ob3).
       Note that $gr_r(q_{\beta^\vee})=gr_r(q_{\dot\beta^\vee})$. By using Lemma \ref{polycompforAAtype000000},
    we conclude $(\lambda_1, \cdots, \lambda_{j-2})=(m-j)\sum_{p=j-m}^{j-2}\mathbf{e}_p$, $\lambda_{j-1}=(m-j)-(-j+1)=m-1$ and $\lambda_j=m(j+1)-mj-(j+1)=m-1-j$.  Hence, $M=\lambda_{j-1}-\lambda_j-j=0$.

   By the induction hypothesis, we have   $\sum_{p=1}^{j-2}\mu_p\mathbf{e}_p\leq \mathbf{0}$. If
     ``$<$" holds, it is already done. If ``$=$" holds,    we have $\mu_1= \cdots =\mu_{j-2} =0$ and  consequently $\mu_{j-1}\leq 0$.
      Write $gr(us_\gamma)=(\tilde k_1, \cdots, \tilde k_{r+1})$,  $gr(q_{\gamma^\vee})=(\tilde \lambda_1, \cdots, \tilde \lambda_{r+1})$ and $gr(us_j)=(\tilde i_1, \cdots, \tilde i_{r+1})$.
    Then  $\lambda_p+k_p=\tilde i_p +\mu_p$, $\tilde \lambda_p=\lambda_p$, $\tilde i_p=i_p$ and $\tilde k_p=k_p$ for $1\leq p \leq j-2$.
  Hence, $(\tilde k_1, \cdots, \tilde k_{j-2})=(i_1, \cdots, i_{j-2})+(j-m)\sum_{p=j-m}^{j-2}\mathbf{e}_p$.
         Since     $\langle \alpha_p, \gamma^\vee\rangle=\langle \dot \alpha_p, \dot \gamma^\vee\rangle=0$ for all  $p\in\{1, 2, \cdots, j-1\}\setminus\{j-m\}$,
    we conclude $(\tilde k_1, \cdots, \tilde k_{j-1})\leq (i_1, \cdots, i_{j-1})+(j-m)\sum_{p=j-m}^{j-1}\mathbf{e}_p$, by using Proposition \ref{maingracompare} with respect to
      $(\Delta, \Delta_{\bar P})$ with $\Delta_{\bar P}=(\alpha_1, \cdots, \alpha_{j-1})$.
   Thus  we have $\tilde k_{j-1}\leq i_{j-1}+(j-m)$.
   Since $\ell(us_j)=\ell(u)-1$ and $\ell(us_\gamma)=\ell(us_js_\beta s_j)=\ell(us_js_\beta)-1$, we have
     $\tilde i_j=i_{j-1}$ and $\tilde k_{j-1}=k_j-1$.
     Hence, $\mu_j= k_j+\lambda_j-\tilde i_j=\tilde k_{j-1}+1+(m-1-j)-i_{j-1}\leq 0$.

     Therefore, we conclude
      $(\mu_1, \cdots, \mu_{j-2}, \mu_{j}, \mu_{j-1})\leq (0, \cdots, 0)$ and consequently (L2) holds by (Ob3).
   \end{enumerate}

 When case (2) of Proposition \ref{coeffvircoroot} holds, we have  $\gamma^\vee=\sum_{p=d}^m\beta_p^\vee$ where $o\leq    m\leq o+r$ and $d<m$.
   If $m=o$, then $d<o$; consequently, we take $\alpha_j=\beta_d$ and use   (Ob1).
     If $m=o+1$ or $d=o+1$, then we   take 
                     $j=1$    and use  (Ob2).
     Otherwise, we have either $d\leq o< m=o+j$ or $o+j=d<m\leq o+r$, where $2\leq j\leq r$. Then we take such $j$
                 and  use  (Ob3). Note that   $\beta^\vee=\gamma^\vee-\alpha_j^\vee$ and  $\mathbf{e}_c<\mathbf{e}_j$.
                  For the former case, we have
                               $\lambda_{j-1}=j-1$  and $\lambda_j=-1$;  for the latter case, we have   $\lambda_{j-1}=0$  and $\lambda_j=-j$.
            Hence, we always have $M=\lambda_{j-1}-\lambda_j-j=0$.
    By the induction hypothesis again, we have   $\sum_{p=1}^{j-2}\mu_p\mathbf{e}_p\leq \mathbf{0}$. If
     ``$<$" holds, it is already done. If ``$=$" holds,    we have $\mu_1= \cdots =\mu_{j-2} =0$ and  consequently $\mu_{j-1}\leq 0$.
                  For the former case,  we conclude  $\mu_{j-1}=0$ and consequently $\mu_j\leq 0$, by noting $0\geq  \mu_{j-1}=k_{j-1}+(j-1)-(i_j-1)\geq 0$.
               For the latter case, we have $\mu_j=k_j-i_{j-1}+(-j)\leq 0$.
                Hence, we always have  $(\mu_1, \cdots, \mu_{j-2}, \mu_j, \mu_{j-1})\leq (0, \cdots, 0)$.   Thus (L2) holds.

  It remains to consider  the case when (3) of Proposition \ref{coeffvircoroot} holds. That is, $\mbox{C}9)$ occurs and
      $\gamma^\vee=\beta_3^\vee+\beta_4^\vee$.
     Then  we just take $\alpha_j=\beta_4$  and   use (Ob1). Thus (L2) still holds.
   \end{proof}

\subsection{Two consequences}\label{subsectwoconseq}
In this subsection, we derive two propositions with the help of our notion of virtual coroot.

\begin{prop}\label{gracomponemore}
   Let $u\in W^P$ and $1\leq j\leq r$. Then $\sigma^u\star \sigma^{s_j}=\sigma^{us_j}+\sum_{w, \lambda}b_{w, \lambda}q_\lambda \sigma^w$
    with $gr(q_\lambda w)<gr(us_j)$ whenever $b_{w, \lambda}\neq 0$.
\end{prop}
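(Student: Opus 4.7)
The plan is to apply the quantum Chevalley formula (Proposition \ref{quanchevalley}) to expand
$$\sigma^u \star \sigma^{s_j} = \sum_\gamma \langle\chi_j,\gamma^\vee\rangle\sigma^{us_\gamma} + \sum_\gamma \langle\chi_j,\gamma^\vee\rangle q_{\gamma^\vee}\sigma^{us_\gamma},$$
and to isolate $\sigma^{us_j}$ as the leading term. The classical choice $\gamma=\alpha_j$ produces $\sigma^{us_j}$ with coefficient $\langle\chi_j,\alpha_j^\vee\rangle=1$, since $u\in W^P$ combined with $\alpha_j\in\Delta_P$ gives $u(\alpha_j)\in R^+$ by Lemma \ref{strongexchange}(1), hence $\ell(us_j)=\ell(u)+1$. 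Distinct $\gamma$'s yield distinct reflections $s_\gamma$ and hence distinct Schubert classes, so $\sigma^{us_j}$ appears with coefficient exactly $1$. Because $u\in W^P$ forces $gr(u)=\ell(u)\mathbf{e}_{r+1}$ and $gr(us_j)=gr(u)+gr(s_j)=\ell(u)\mathbf{e}_{r+1}+\mathbf{e}_j$, the Key Lemma yields $gr(q_\lambda w)\leq gr(us_j)$ for every summand.

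For the classical terms with $\gamma\neq\alpha_j$, I would argue strictness by contradiction. Suppose $gr(us_\gamma)=\ell(u)\mathbf{e}_{r+1}+\mathbf{e}_j$; writing the decomposition $us_\gamma=v'_{r+1}v'_r\cdots v'_1$ and using that $s_j$ is the unique length-one element of $W_{P_j}^{P_{j-1}}$ in $A$-type, the equality forces $v'_k=1$ for $k\neq j,r+1$, $v'_j=s_j$, and $\ell(v'_{r+1})=\ell(u)$. Lemma \ref{strongexchange}(2), used as in the proof of Proposition \ref{mainsecondprop}, then expresses $u=v'_{r+1}\cdots v'_{m+1}\bar v'_m v'_{m-1}\cdots v'_1$ for a unique $m$, where $\bar v'_m$ is obtained from $v'_m$ by deleting one simple reflection. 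Combining this with $u\in W^P$ (whose decomposition is trivial in positions $\leq r$) and the grading formulas of Proposition \ref{gracompare000}, the only consistent case is $m=j$, $\bar v'_j=1$, $u=v'_{r+1}$; then $us_\gamma=us_j$, forcing $s_\gamma=s_j$ and $\gamma=\alpha_j$, contradicting the assumption.

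For the quantum terms, Lemma \ref{coroofsumgrad}(2) reduces the desired strict inequality to $gr_r(q_{\gamma^\vee}us_\gamma)<\mathbf{e}_j$. The quantum length condition combined with $u\in W^P$ rules out $\gamma\in R_P^+$ (else $\ell(s_\gamma)\leq -1$), so $\gamma^\vee$ involves at least one $\alpha_i^\vee$ with $\alpha_i\in\Delta\setminus\Delta_P$. Proposition \ref{gradingforconn} then contributes negative entries to $gr_r(q_{\gamma^\vee})$, while $gr_r(us_\gamma)=gr_r(w')$ (with $us_\gamma=v'w'$ in the $W^P\cdot W_P$ decomposition) is componentwise non-negative. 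The main obstacle is to preclude the exact cancellation $gr_r(q_{\gamma^\vee})+gr_r(w')=\mathbf{e}_j$: my strategy is to refine the case analysis from the proof of the Key Lemma by tracking the virtual coroot $\dot\gamma^\vee$ of Proposition \ref{coeffvircoroot}, whose non-decreasing coefficient structure, together with the bound $\ell(v')\leq\ell(u)-2$ forced by the quantum regime, rules out the finely-tuned cancellation at position $j$ and yields $gr(q_{\gamma^\vee}us_\gamma)<gr(us_j)$.
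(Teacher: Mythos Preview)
Your treatment of the classical terms is essentially the paper's argument: once $gr(us_\gamma)=\ell(u)\mathbf{e}_{r+1}+\mathbf{e}_j$, one has $us_\gamma=v's_j$ with $v'\in W^P$ and $\ell(v')=\ell(u)$, and the deletion argument forces $u=v'$, hence $\gamma=\alpha_j$. The paper phrases the last step slightly more directly (if the deleted reflection came from $v'$ one would get $u=\bar v's_j$, whence $\ell(us_j)=\ell(\bar v')<\ell(v')=\ell(u)$, contradicting $\ell(us_j)=\ell(u)+1$), but your version is fine.

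The quantum part, however, has a real gap. You correctly observe that $\gamma\notin R_P^+$, but the subsequent plan is too vague and, more importantly, misses the decisive constraint. The paper's key observation is that, by Lemma~\ref{lengthofpos222}, the hypothesis $\ell(us_\gamma)=\ell(u)+1-\langle 2\rho,\gamma^\vee\rangle$ forces $\ell(us_\alpha)=\ell(u)-1$ for every simple root $\alpha$ with $\langle\alpha,\gamma^\vee\rangle>0$; since $u\in W^P$ gives $\ell(us_p)=\ell(u)+1$ for all $\alpha_p\in\Delta_P$, one concludes $\langle\alpha_p,\gamma^\vee\rangle\le 0$ for every $\alpha_p\in\Delta_P$. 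This single fact immediately dispatches cases (1c) and (2) of Proposition~\ref{coeffvircoroot} (in each of those cases one exhibits an $\alpha_p\in\Delta_P$ with $\langle\alpha_p,\gamma^\vee\rangle>0$). It is also what drives the subtle case (1b): there, for $j>r-m$ one has $\langle\alpha_j,\gamma^\vee\rangle=0$, and equality in the grading forces $\tilde k_j=\tilde k_{j-1}+1$, hence $us_\gamma(\alpha_j)\in -R^+$ by Proposition~\ref{gracompare000}; but $s_\gamma$ fixes $\alpha_j$, so $u(\alpha_j)\in -R^+$, contradicting $u\in W^P$. Cases (1a), (3), and $r=1$ are handled by short direct computations of $\tilde\lambda_p$.

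Your proposed lever, the bound ``$\ell(v')\le \ell(u)-2$'', is neither established (it can fail when $w'=1$) nor the tool that closes any of the cases; and the ``non-decreasing coefficient structure'' of $\dot\gamma^\vee$ alone does not preclude the cancellation $gr_r(q_{\gamma^\vee})+gr_r(w')=\mathbf{e}_j$ without the constraint $\langle\alpha_p,\gamma^\vee\rangle\le 0$ on $\Delta_P$. Once you add that observation from Lemma~\ref{lengthofpos222}, the case analysis of Proposition~\ref{coeffvircoroot} goes through as in the paper.
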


\begin{proof}
  Clearly, $\ell(us_j)=\ell(u)+1$. Thus $N_{u, s_j}^{us_j, 0}=\langle \chi_j, \alpha_j^\vee\rangle =1$ by quantum Chevalley formula (Proposition \ref{quanchevalley}).
   We need to analyze the remaining non-zero terms.

  If   $\ell(us_\gamma)=\ell(u)+1$ and  $\langle \chi_j, \gamma^\vee\rangle \neq 0$, then we have $gr(us_\gamma)\leq gr(u)+gr(s_j)$ by
   part a) of the Key Lemma. Note that $gr(us_j)=\mathbf{e}_j+\ell(u)\mathbf{e}_{r+1}$.
   If the  equality holds, then we have  $us_\gamma=vs_j$ where $\ell(v)=\ell(u)$ and $v\in W^P$.
 By Lemma \ref{strongexchange},   an expression of $u\in W^P$ is obtained
         by deleting a simple reflection from a  (fixed)  reduced expression of $vs_j$. Note that this simply reflection cannot
         come from $v$. (Otherwise, we denote by $\bar v$ the element obtained by  deleting such  simple reflection from $v$. Then
             $u=\bar v s_j$ and we would deduce a contradiction, saying
            $1+\ell(u)=\ell(us_j)=\ell(\bar v)<\ell(v)=\ell(u)$.)
         Thus   $u=v$.

  If $\ell(us_\gamma)=\ell(u)+1-\langle 2\rho, \gamma^\vee\rangle$ and $\langle \chi_j, \gamma^\vee\rangle \neq 0$, then
    we have $gr(q_{\gamma^\vee}us_\gamma)\leq gr(u)+gr(s_j)$ by part b) of the Key Lemma. Furthermore, we have
       $\ell(us_\alpha)=\ell(u)-1$ whenever $\langle \alpha, \gamma^\vee\rangle>0$, by Lemma \ref{lengthofposiroot} and Lemma \ref{lengthofpos222}.
    Since $u\in W^P$, $\ell(us_p)=\ell(u)+1$ for any $\alpha_p\in \Delta_P$.
        If  the equality held, then we would deduce a contradiction as follows (mainly by finding $\alpha\in \Delta_P$ satisfying
                 $\langle \alpha, \gamma^\vee\rangle>0$).

   Note that $\ell(s_\gamma)>1$ (otherwise, we would conclude $\gamma=\alpha_j\in \Delta_P$).

   We first assume $r\geq 2$ and write
         $gr(us_\gamma)=(\tilde k_1, \cdots, \tilde k_{r+1})$  and $gr(q_{\gamma^\vee})$ $=(\tilde \lambda_1, \cdots,
          \tilde \lambda_{r+1})$. Since the grading equality holds,  we have $\tilde k_p+\tilde \lambda_p=\delta_{p, j}$ for each $1\leq p\leq r$.
  As before, we discuss all possible coroots by using Proposition \ref{coeffvircoroot}.

   When (1) of Proposition \ref{coeffvircoroot} holds, $\gamma^\vee$ has  a virtual
                  coroot $\dot\gamma^\vee=\sum_{p=1}^{r+1}\dot c_p\dot \alpha_p^\vee$ satisfying one and only one of the followings (from the proof of Proposition
     \ref{mainpropprod}).
     \begin{enumerate}
    \item[a)]   $(\dot c_1, \cdots, \dot c_{r+1})=(0, \cdots, 0)$.
                   In this case, we  have $gr_r(q_{\gamma^\vee})=\mathbf{0}$ and $gr_r(us_\gamma)$ $=gr_r(u)$. In particular,
                     we have  $\tilde k_j=\tilde \lambda_j=0$, deducing a contradiction: $1=\delta_{j, j}=\tilde k_j+\tilde \lambda_j=0+0$.
    \item[b)]  $\dot \gamma^\vee=\sum_{p=1}^{m+1} p\dot \alpha_{r-m+p}$, where $0\leq m<r$.
                 Then
                 we have   $gr_r(q_{\gamma^\vee})=gr_r(q_{\dot \gamma^\vee})=(m-r)\sum_{p=r-m}^r\mathbf{e}_p$ and
                 $\langle \alpha_p, \gamma^\vee\rangle=0$ for   $p\in\{1, \cdots, r\}\setminus\{r-m\}$.
            Note that $gr_{r-m-1}(q_{\gamma^\vee})=\mathbf{0}$ and $gr_{r-m-1}(us_\gamma)=gr_{r-m-1}(u)=\mathbf{0}$.
                       If $j\leq r-m-1$, then we would deduce the  contradiction  $1=\delta_{j, j}=\tilde k_j+\tilde \lambda_j=0+0$ again.
            If $j=r-m$, then we still deduce a contradiction: $1=\delta_{j, j}=\tilde k_j+\tilde \lambda_j=\tilde k_{r-m}+m-r\leq 0$.
                  Hence, we  conclude $j>r-m$. Then we have $r\geq j>j-1\geq r-m>0$ and $\langle \alpha_j, \gamma^\vee\rangle= 0$. Thus we have
                    $\tilde k_j=\delta_{j, j}-\tilde \lambda_j=1+r-m$, $\tilde k_{j-1}=\delta_{j, j-1}-\tilde \lambda_{j-1}=r-m$
                     and consequently $\tilde k_j=\tilde k_{j-1}+1$.  Hence, we have
                      $\ell(us_\gamma s_{j})=\ell(us_\gamma)-1$  by Proposition \ref{gracompare000}.
                      Then by Lemma \ref{strongexchange}, we conclude
                          $us_\gamma (\alpha_{j})\in -R^+$, contrary to $us_\gamma(\alpha_j)=u(\alpha_j)\in R^+$.
    \item[c)]
                   $\sum_{p=1}^{i+1} \dot c_p\dot \alpha_{p}=m\dot \alpha_{i+1}+\sum_{p=1}^{m} p\dot \alpha_{i-m+p}$,
            where $1\leq m\leq i\leq r$.
             Then  we have $\langle\alpha_i, \gamma^\vee\rangle=
                    \langle \dot \alpha_i, \dot \gamma^\vee\rangle>0$ and therefore   deduce a contradiction. 
      \end{enumerate}

  When (2) of Proposition \ref{coeffvircoroot} holds, we have  $\gamma^\vee=\sum_{p=d}^m\beta_p^\vee$ where $o\leq  m\leq o+r$ and $d<m$. Since
    $\langle \chi_j, \gamma^\vee\rangle \neq 0$, we conclude $m\geq o+r$. Thus we find $\alpha=\beta_m\in \Delta_P$ that satisfies $\langle\alpha,\gamma^\vee\rangle>0$.
     Hence, we deduce a contradiction in this case.

  It remains to consider the case when (3) of  Proposition \ref{coeffvircoroot} holds. That is,  $\mbox{C}9)$ occurs and $\gamma^\vee= \beta^\vee_3+\beta_4^\vee$.
  In this case, we note $r=2$ and  deduce a contradiction, saying
             $-4=\tilde \lambda_2=\delta_{2, j}-\tilde k_2\geq 0-2=-2$.

 Hence, our assumption that the grading equality holds is not true, when  $r\geq 2$.

 Now we assume $r=1$. Then $\alpha_j=\alpha_1$ and we have $\tilde \lambda_1=\langle \alpha_1, \gamma^\vee\rangle$ by Corollary \ref{gradforonenode}.
 If $\tilde \lambda_1>0$, then we   find a contradiction by taking $\alpha=\alpha_1\in\Delta_P$.
 If $\tilde \lambda_1<0$, then $gr_1(q_{\gamma^\vee}us_\gamma)<0+\tilde k_1\leq 1=gr_1(us_1)$ and consequently the grading equality does not hold.
 If $\tilde \lambda_1=0$, then $\tilde k_1=gr_1(us_\gamma)=gr_1(u)=0$ and consequently we deduce the contradiction
    $1=\delta_{j, 1}=\tilde k_1+\tilde \lambda_1=0+0$.

   Due to quantum Chevalley formula, we have discussed all the non-zero terms for the quantum product $\sigma^u\star \sigma^{s_j}$. Hence,
   the statement follows.
\end{proof}

 By Theorem \ref{mainthm}, we obtain a filtered-algebra structure on $QH^*(G/B)$, which induces an associated graded subalgebra along the $\mathbb{Z}\mathbf{e}_{r+1}$
  direction. Thanks to the Peterson-Woodward comparison formula and
       our definition of $gr(q_j)$'s (with the help of PW-lifting), we wish to obtain
       an algebra isomorphism between $QH^*(G/P)$ and (at least a subalgebra of) this graded subalgebra.
       For this it is necessary that
       the gradings of  $\psi_{\Delta, \Delta_P}(1,q_{\lambda_P})$'s, which are canonical candidates
        in $QH^*(G/B)$ playing the role of the polynomials $q_{\lambda_P}$'s in $QH^*(G/P)$,  are in $\mathbb{Z}\mathbf{e}_{r+1}$.
  Indeed,  the Peterson-Woodward comparison formula, together with
       our definition of $gr(q_j)$'s, has shown that
     $gr_r(\psi_{\Delta, \Delta_P}(1,q_{\lambda_P}))=\mathbf{0}$ whenever $q_{\lambda_P}\in QH^*(G/P)$ occurs in the quantum
      product $\sigma^{u}\star_P\sigma^v$ for $\sigma^{u}, \sigma^v\in QH^*(G/P)$.
 However, apparently  it does not tell us about the behavior when the degree of $q_{\lambda_P}$ is large. Therefore we need
    the following proposition for later use.
 \begin{prop}\label{allgradcomp}
   $gr_r(\psi_{\Delta, \Delta_P}(1,q_{\lambda_P}))=\mathbf{0}$ whenever $q_{\lambda_P}\in QH^*(G/P)$.
\end{prop}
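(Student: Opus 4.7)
The plan is to unpack the definitions and reduce the claim to an identity that can be checked inductively. Write $\lambda_P = \sum_{\alpha \in \Delta \setminus \Delta_P} a_\alpha(\alpha^\vee + Q^\vee_P)$ with $a_\alpha \in \mathbb{Z}_{\geq 0}$ (such an expansion exists since $q_{\lambda_P}\in QH^*(G/P)$ is a polynomial in the $q_{\alpha^\vee+Q_P^\vee}$), and let the PW-lift be $(\omega_P\omega_{P'}, \lambda_B)$ with $\lambda_B = \sum a_\alpha \alpha^\vee + \xi$, $\xi \in Q^\vee_P$. For each $\alpha\in\Delta\setminus\Delta_P$, let $(\omega_P\omega_{P'_\alpha}, \alpha^\vee + \xi_\alpha)$ denote the PW-lift of the single class $\alpha^\vee + Q^\vee_P$; by the very setup of Definition \ref{defgrading} (and as made explicit by Proposition \ref{gradingforconn}), the single-generator lift $q_{\alpha^\vee + \xi_\alpha}\sigma^{\omega_P\omega_{P'_\alpha}}$ already has $gr_r = \mathbf{0}$.

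By $\mathbb{Z}$-linearity of $gr$ on $q$-factors,
$$gr\bigl(\psi_{\Delta,\Delta_P}(1,\lambda_P)\bigr) = gr(\omega_P\omega_{P'}) + \sum_\alpha a_\alpha\,gr(q_{\alpha^\vee}) + gr(q_\xi).$$
Substituting $gr(q_{\alpha^\vee}) = N_\alpha\mathbf{e}_{r+1} - gr(\omega_P\omega_{P'_\alpha}) - gr(q_{\xi_\alpha})$ from Definition \ref{defgrading}, the $\mathbf{e}_{r+1}$-component of $gr(\psi_{\Delta,\Delta_P}(1,\lambda_P))$ is automatically $\sum_\alpha a_\alpha N_\alpha$. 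The proposition therefore becomes equivalent to the identity
$$gr(\omega_P\omega_{P'}) + gr(q_\xi) \;=\; \sum_\alpha a_\alpha\,\bigl(gr(\omega_P\omega_{P'_\alpha}) + gr(q_{\xi_\alpha})\bigr) \qquad (\star)$$
in $\bigoplus_{k=1}^r \mathbb{Z}\mathbf{e}_k$ (both sides lie in this subspace since $\omega_{P'}, \omega_{P'_\alpha}\in W_P$ and $\xi,\xi_\alpha\in Q^\vee_P$).

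Next I would establish $(\star)$ by induction on $N := \sum_\alpha a_\alpha$, with the base case $N\le 1$ being immediate. For the inductive step the key is to track how $\xi$ and the ``zero locus'' $\Delta_{P'} := \{\alpha_i\in\Delta_P : \langle\alpha_i,\lambda_B\rangle=0\}$ evolve as a single $\alpha^\vee$ is added to $\lambda_P$. Because $\Delta_P$ is of type $A_r$, the PW condition reduces to a tridiagonal system $\{2\xi_i - \xi_{i-1} - \xi_{i+1} + b_i \in \{0,-1\}\}_{i=1}^r$ with $b_i = \langle\alpha_i, \sum a_\alpha\alpha^\vee\rangle$, and typically only the endpoint values $b_1,b_r$ are nonzero (the rare case when an interior node of $\Delta_P$ is a branch node of $Dyn(\Delta)$, for instance in cases \mbox{C}2) or \mbox{C}3) of Table \ref{tabrelativeposi}, introduces at most one additional nonzero $b_i$). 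The admissible configurations of $\Delta_{P'}$ are then a limited family of ``block partitions'' of $\Delta_P$, and $gr(\omega_P\omega_{P'})$ is readily computable via Lemma \ref{longestcomp000} applied to each block.

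The main obstacle will be that $\xi$ and $\Delta_{P'}$ depend piecewise-linearly, not linearly, on $\lambda_P$: as $\lambda_P$ grows, the tridiagonal solution jumps and $\Delta_{P'}$ changes membership. The content of $(\star)$ is precisely that these discontinuous jumps in $gr(\omega_P\omega_{P'})$ and in $gr(q_\xi)$ cancel each other exactly, so that their sum becomes a genuinely $\mathbb{Z}_{\geq 0}$-linear function of $\lambda_P$. Proving this cancellation amounts to a finite case analysis on which nodes of $\Delta_P$ lie in $\Delta_{P'}$, paired with the explicit single-generator formulas of Proposition \ref{gradingforconn}; low-rank sanity checks confirm the pattern, but isolating a uniform combinatorial argument that handles all block configurations simultaneously is where the bulk of the work lies.
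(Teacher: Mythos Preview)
Your reformulation $(\star)$ is correct and indeed equivalent to the proposition by unwinding Definition~\ref{defgrading}. However, the proposal stops precisely where the actual content begins: you acknowledge that the inductive step hinges on a cancellation between the jumps in $gr(\omega_P\omega_{P'})$ and $gr(q_\xi)$ as $\lambda_P$ increments, and you describe this as ``a finite case analysis'' without carrying it out. That case analysis is the entire proof; without it, this is an outline rather than an argument. In particular, the claim that the admissible $\Delta_{P'}$ form ``a limited family of block partitions'' and that Lemma~\ref{longestcomp000} then handles $gr(\omega_P\omega_{P'})$ glosses over the fact that $\Delta_{P'}$ is always $\Delta_P$ minus a \emph{single} simple root (since $\Delta_P$ is of $A$-type and the PW condition forces at most one $\langle\alpha_i,\lambda_B\rangle=-1$), so the combinatorics is simpler than you suggest, but the interaction between contributions from both ends of the $A_r$ chain (and from interior branch nodes in cases~$\mbox{C}2)$--$\mbox{C}8)$) still requires genuine bookkeeping that you have not supplied.

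The paper takes a different route that avoids induction entirely. It transfers the problem to a reference root system $\dot\Delta$ of type $A_n$ via the virtual coroot construction (Lemma~\ref{virrootforalltype}), where the PW-lift can be written down in closed form using the explicit elements $\mu_m=\sum_{k=1}^m k\,\dot\beta_{o+r+1-m+k}^\vee$, $\nu_m=\sum_{k=0}^{m-1}(m-k)\dot\beta_{o+k}^\vee$, and $\varrho=\sum_{k=o}^{o+r+1}\dot\beta_k^\vee$, each of which is checked directly to have $gr_r=\mathbf{0}$. The virtual coroot $\dot\lambda'$ of the $Q_P^\vee$-free part $\lambda'$ is then matched against an integer combination of these building blocks, yielding $\lambda_B$ explicitly and giving $gr_r(q_{\lambda_B}\omega_P\omega')=\mathbf{0}$ at once. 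This constructive approach also yields, as a byproduct, the non-negativity of the coefficients of $\lambda_B$ (i.e., $q_{\lambda_B}\in QH^*(G/B)$), which is needed downstream in the proof of Theorem~\ref{isomfordirection} and which your inductive scheme does not address at all.
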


 The idea of the proof is as follows. We write $\psi_{\Delta, \Delta_P}(1,q_{\lambda_P})=q_{\lambda_B}\omega_P\omega'$ as before.
 The case when $r=1$ is easy to handle. When $r\geq 2$, we can use our notion of virtual coroot to obtain
  $\lambda_B$ and consequently $\omega_P\omega'$ and  $gr_r(q_{\lambda_B})$. More precisely,
   we write $\lambda_P=\lambda'+Q^\vee_P$ with $\lambda'=\sum_{\alpha\not\in \Delta_P} a_\alpha \alpha$.
     Consider a virtual coroot $\dot \lambda'$ of $\lambda'$, then we can easily write down the element
      $\dot \lambda'+\sum_{i=1}^ra_i\dot \alpha_i^\vee$ associated to
       $\dot\lambda'+\dot Q_P^\vee\in \dot Q^\vee/\dot Q^\vee_P$, where $\dot Q^\vee_P:=\bigoplus_{i=1}^r\mathbb{Z}\dot \alpha_i^\vee$.
        For instance the case of $m\dot \beta_{o+r+1}^\vee+\dot Q^\vee_P$ has been
        studied in Lemma \ref{polycompforAAtype000000}.  By our definition of virtual coroot, we conclude
         $\lambda'+\sum_{i=1}^ra_i\alpha_i^\vee$ is the element that we expect. In addition, we also show  $a_i$'s are indeed non-negative so that
          $q_{\lambda'+\sum_{i=1}^ra_i  \alpha_i^\vee}\in QH^*(G/B)$.

\begin{proof}[Proof of Proposition \ref{allgradcomp}]
Write $\psi_{\Delta, \Delta_P}(1,q_{\lambda_P})=q_{\lambda_B}\omega_P\omega' $. When  $r=1$, we have   $gr_1(q_{\lambda_B})=\langle \alpha_1,  \lambda_B\rangle=:k_1$
    by Corollary \ref{gradforonenode}. Thus $k_1\in\{0, -1\}$ following from the definition of $\lambda_B$.
  If $k_1=0$, then $\Delta_{P'}=\{\alpha_1\}$, implying  $\omega'=s_1=\omega_P$ and $\omega_P\omega'=1$. Thus
    $gr_1(q_{\lambda_B})+ gr_1(\omega_P\omega')=0+0=0$.
  If, $k_1=-1$. Then we have $\Delta_{P'}=\emptyset$, implying   $\omega_P\omega'=s_1\cdot 1=s_1$. Thus  $gr_1(q_{\lambda_B})+ gr_1(\omega_P\omega')=-1+1=0$.. Hence, the statement holds when $r=1$.

  Now we assume $r\geq 2$. We consider the virtual coroots
    and introduce  some special elements in $\dot Q^\vee$ first.
     Denote $\mu_m=\sum_{k=1}^{m}k\dot\beta_{o+r+1-m+k}^\vee$.  Whenever $o>0$   we denote
      $\nu_m=\sum_{k=0}^{m-1}(m-k)\dot \beta_{o+k}^\vee$ and  $\varrho= \sum_{k=o}^{o+r+1}\dot\beta_{k}^\vee$ where $1\leq m\leq r+1$.
   By direct calculations, we conclude $gr_r(q_{x})=\mathbf{0}$ and $\langle \dot \alpha_i, x\rangle=0$ for all $1\leq i\leq r$ whenever  $x=\mu_{r+1}$,
     $\nu_{r+1}$ or $\varrho$.  That is, $\mu_{r+1}$,  $\nu_{r+1}$ and  $\varrho$ are all virtual coroots of $0\in Q^\vee$.
    Furthermore for $1\leq m\leq r$, we have
 $$ gr_r(q_{\nu_m})=-m\sum\nolimits_{j=1}^{r}\mathbf{e}_{j}
                                  +   \sum\nolimits_{k=1}^{m-1}(m-k)\big((k+1)\mathbf{e}_{k}-(k-1)\mathbf{e}_{k-1}\big)
                                =-m\sum\nolimits_{k=m}^{r} \mathbf{e}_{k},$$
  and $gr_r(q_{\mu_m})=-(r+1-m) \sum_{k=r+1-m}^{r} \mathbf{e}_{k}$ (by Lemma \ref{polycompforAAtype000000}).

Write $\lambda_P=\lambda'+Q^\vee_P$ with $\lambda'=\sum_{j=1}^o b_j\beta_j^\vee+\sum_{j=o+r+1}^nb_j\beta^\vee_j$.
From Table \ref{tabvircoroot},   we obtain a virtual coroot
      $\dot \lambda'= \sum_{j=1}^{\eta} \tilde b_j\dot \beta_j^\vee$ of $\lambda'$  in which we note $\tilde b_o=b_o$ and $o+r+1\leq \eta$.
 If     $\tilde b_o\leq \tilde b_{o+r+1}$, we set $y=\tilde b_o \varrho +a\mu_{r+1}+\mu_m$ where
   $\mu_{0}:=0$ and  $\tilde b_{o+r+1}-\tilde b_{o}=a(r+1)+m$ with    $0\leq m\leq r$ and $a\geq 0$.
    Similarly if  $\tilde b_o> \tilde b_{o+r+1}$, we set $y=\tilde b_{o+r+1} \varrho +a\nu_{r+1}+\nu_m$
      where $\nu_{0}:=0$ and
     $\tilde b_{o}-\tilde b_{o+r+1}=a(r+1)+m$ with   $0\leq m\leq r$ and $a\geq 0$. Clearly, we can write
       $y=\tilde b_o\dot\beta_o+\tilde b_{o+r+1}\dot\beta_{o+r+1}+\sum_{i=1}^rd_i\dot \alpha_i$.
 Note that $\dot \beta_p^\vee$ is a  virtual coroots of $0\in Q^\vee$ whenever  $p< o \mbox{ or } p>o+r+1$.
  Thus we conclude $y$ is virtual coroot of $\lambda_B:=\lambda'+\sum_{i=1}^r(d_i-\tilde b_{o+i})\alpha_i$.
 Furthermore, we note that for  all $\alpha_i\in \Delta_P$ we have
       $\langle \alpha_i, \lambda_B\rangle=\langle \dot\alpha_i, y\rangle=\langle \dot \alpha_i, x\rangle=  \Big\{\begin{array}
         {rl}-1,& \mbox{if } \dot \alpha_i=\dot \alpha_m \mbox{ (resp. } \dot \alpha_{r+1-m})\\0, & \mbox{otherwise}
       \end{array}$  where $x=\nu_m$ (resp. $\mu_m$), if $\tilde b_o>\tilde b_{o+r+1}$ (resp. $\tilde b_o\leq \tilde b_{o+r+1}$).
  Hence, $\lambda_B$ is the very one associated to $\lambda_P$ that we are expecting.
  Correspondingly, we can directly write down $\Delta_{P'}$ as well as $gr(\omega_P\omega')$ $=-gr_r(q_x)$  by Lemma \ref{longestcomp000}. Note that
    $gr_r(q_{\lambda_B})=gr_r(q_x)$. Hence, $gr_r(q_{\lambda_B}\omega_P\omega')=\mathbf{0}$.

  Since $q_{\lambda_P}\in QH^*(G/P)$,  $b_j\geq 0$ for each $j$. It remains to show $q_{\lambda_B}\in QH^*(G/B)$. That is, we need to show
     $d_i-\tilde b_{o+i}$ is  non-negative for each $1\leq i\leq r$.
  Clearly, only the part $\sum_{j=o+r+1}^nb_j\beta^\vee_j$ of $\lambda'$ make contributions for the part
    $\sum_{i=1}^r \tilde b_i\dot \beta_{o+r}^\vee$ of the coroot $\dot \lambda'$ of $\lambda'$. From Table \ref{tabvircoroot}
       we see     that $\tilde b_{o+1}\leq\tilde b_{o+2}\leq \cdots \leq \tilde b_{o+r+1}$. Thus if $\tilde b_o\geq \tilde b_{o+r+1}$,
        then  we have $d_i-\tilde b_{o+i}\geq \tilde b_o-\tilde b_{o+r+1}\geq 0$ for each $1\leq i\leq r$.
       Now we consider the case when $(0\leq b_o=)\tilde b_o< \tilde b_{o+r+1}$ and then note that all $d_i$'s are non-negative from the way we obtain them.
   From Table \ref{tabvircoroot} and Table \ref{tabrelativeposi}, we can make the following observations.
     (i) If case $\mbox{C}1), \mbox{C}9)$ or $\mbox{C}10)$ occurs, then the virtual coroot $\dot \lambda'$ does not
      make contributions on these $\tilde b_{o+i}$'s. That is,  we have  $d_i-\tilde b_{o+r}=d_i\geq 0$ for each $1\leq i\leq r$.
      (ii) For the remaining cases, we have $\tilde b_{o+1}=\cdots=\tilde b_{o+r-1}=0\leq 2\tilde b_{o+r}\leq \tilde b_{o+r+1}$, except for the case
       when $\mbox{C}4)$ occurs with $r\geq 5$ and $o+r=7$.  (iii) For the only exceptional case, we have
            $\tilde b_{o+1}=\tilde b_{o+2}=\cdots=\tilde b_{5}=0$,  $\tilde b_{6}=b_8$, $\tilde b_{7}=2 b_{8}$ and $\tilde b_{8}= 3b_{8}>0$.  Recall that
    $\sum_{i=1}^rd_i\dot \beta_{o+i}=\tilde b_o \varrho +a\mu_{r+1}+\mu_m-(\tilde b_o\dot\beta_o+\tilde b_{o+r+1}\dot\beta_{o+r+1})$ in which
      $\tilde b_{o+r+1}=\tilde b_o+a(r+1)+m$.
  When (ii) holds, we have $d_r-\tilde b_{o+r}\geq\tilde b_o+ar+(m-1)-\tilde b_{o+r} \geq \tilde b_o+ar+m-1-[{\tilde b_{o+r+1}\over 2}]
     \geq ar+m-1-[{\tilde b_{o+r+1}-\tilde b_o\over 2}]
        =ar+m-1-[{a(r+1)+m\over 2}]\geq 0$, and note $d_i-\tilde b_{o+i}=d_i\geq 0$ for $1\leq i\leq r-1$.
  When (iii) holds,
  we have  $\tilde b_8\geq 0+a(5+1)+0$ so that $a\leq {b_8\over 2}$.  Since $a$ is an integer, $a\leq [{b_8\over 2}]$. Thus we have
           $d_r-\tilde b_{o+r}\geq \tilde b_o+ar+m-1-2b_8=\tilde b_8-a-1-2b_8=b_8-a-1\geq b_8-[{b_8\over 2}]-1\geq 0$
        and     $d_{r-1}-\tilde b_{o+r-1}\geq \tilde b_o+a(r-1)+m-2-b_8=\tilde b_8-2a-2-b_8=2(b_8-a-1)\geq 0$.
  For $1\leq i\leq r-2$, we have $d_i-\tilde b_{o+i}=d_i\geq 0$.
Hence, we do show $  d_i-\tilde b_{o+i}\geq 0$ for  $1\leq i\leq r$ for all cases.
\end{proof}

 \begin{remark} In \cite{lamshi},
     Lam and Shimozono have given a combinatorial description of $\lambda_B$. In our case when $\Delta_P$ is of $A$-type, we obtain another way
      to describe $\lambda_B$  and to show the property  $q_{\lambda_B}\in QH^*(G/B)$  in the above proof.
\end{remark}

\subsection{Proof of the Key Lemma  for general $\Delta_P$}\label{sectiongenconn}
 In this subsection, we assume $\Delta_P$ is not of $A$-type. We  give the proof of the Key Lemma,  after describing the formulas for the gradings of all  $q_j$'s.
Recall that   $\varsigma=r-1$ and in this case we have  replaced $r$ with $\varsigma$ in Table \ref{tabrelativeposi}, in order to
fix the order $(\Delta_P, \Upsilon)$. In particular, we have $\kappa=o+\varsigma$ in this subsection.

Using
Definition \ref{defgrading} with respect to the
  ordered subset  $\Delta_\varsigma=(\alpha_1, \cdots, \alpha_\varsigma)$ of $(\Delta_P, \Upsilon)$,   we obtain a grading map
   $$\tilde {gr}=gr_{\Delta_\varsigma}: W\times Q^\vee\longrightarrow \mathbb{Z}^{\varsigma+1}.$$
That is, we define $\tilde{gr}(w)=\sum_{j=1}^{\varsigma+1}\ell(v_j)\mathbf{e}_j$ using the decomposition $w=v_{\varsigma+1}v_{\varsigma}\cdots v_1$ of $w\in W$ associated to
 ordered subset $\Delta_\varsigma=(\alpha_1, \cdots, \alpha_\varsigma)$,
   define $\tilde {gr}(q_1)=2\mathbf{e}_1$ and define the remaining $\tilde{gr}(q_j)$'s recursively with the help of   PW-lifting
    $\{\psi_{\Delta_2, \Delta_1},$  $\psi_{\Delta_3, \Delta_2}, \cdots, \psi_{\Delta_\varsigma, \Delta_{\varsigma-1}}, \psi_{\Delta, \Delta_{\varsigma}}\}$.
 Let $\iota:\mathbb{Z}^{\varsigma+1}=\mathbb{Z}^r\hookrightarrow \mathbb{Z}^{r+1}$ be the natural inclusion.
 Thus we obtain a map  $\iota\circ\tilde {gr}: W\times Q^\vee\longrightarrow \mathbb{Z}^{r+1}$,   which we simply denote as $\tilde{gr} $
whenever there is no confusion.

 As a direct consequence of the definition of $\tilde{gr}$, we can apply Proposition \ref{graquanvar} and Proposition \ref{gradingforconn}
 with respect to   the ordered  subset  $\Delta_\varsigma$, so that  we have
 \begin{lemma}\label{gengralemm111}
   $gr(q_j)=\tilde {gr}(q_j)$ for each $1\leq j\leq \varsigma+1$. Precisely,
  $gr(q_1)=2\mathbf{e}_1;$  $gr(q_j)=(1-j)\mathbf{e}_{j-1}+(1+j)\mathbf{e}_j$ for $2\leq j\leq \varsigma$;
     $gr(q_{\varsigma+1})$ is obtained  by   directly replacing $r$ with $\varsigma$. (Only case  {\upshape c), d), e)} or {\upshape f)} in Table \ref{tabgrading}
      can occur.)
 \end{lemma}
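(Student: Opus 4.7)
The plan is to reduce everything to the case already handled in Section 3, by exploiting that the canonical order fixed in Section 2.4 restricts to the canonical $A_\varsigma$-type order on $\Delta_\varsigma = (\alpha_1, \ldots, \alpha_\varsigma)$, so that Propositions \ref{graquanvar} and \ref{gradingforconn} apply verbatim to this smaller subset. First I would handle the range $1 \leq j \leq \varsigma$: by Definition \ref{defgrading}, $gr(q_1) = 2\mathbf{e}_1 = \tilde{gr}(q_1)$, and for $j \geq 2$ the value $gr(q_j)$ is determined recursively from $\psi_{\Delta_j, \Delta_{j-1}}$ together with the gradings $gr(q_i)$ and $gr(\omega_{P_{j-1}}\omega_{P_{j-1}'})$ for $i < j$. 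Since $\Delta_j \subset \Delta_\varsigma$ is of $A$-type, these are precisely the same inputs used to build $\tilde{gr}(q_j)$. An induction on $j$, combined with Proposition \ref{graquanvar} applied to the $A_\varsigma$-type subset $\Delta_\varsigma$, gives simultaneously $gr(q_j) = \tilde{gr}(q_j)$ and the closed form $gr(q_j) = (1-j)\mathbf{e}_{j-1} + (1+j)\mathbf{e}_j$.

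Next I would treat the boundary case $j = \varsigma + 1 = r$. The relevant PW-lifting is $\psi_{\Delta_r, \Delta_\varsigma}(1, \alpha_r^\vee + Q_\varsigma^\vee)$, and the key observation is that $\mathrm{Dyn}(\Delta_\varsigma \cup \{\alpha_r\}) = \mathrm{Dyn}(\Delta_P)$. Since $\Delta_\varsigma$ is of $A_\varsigma$-type, Proposition \ref{gradingforconn} is directly applicable with the pair $(\Delta_r, \Delta_\varsigma)$ playing the role of $(\Delta, \Delta_P)$ there (and $r$ replaced by $\varsigma$ throughout). Because $\Delta_P$ is not of $A$-type by the standing assumption of this subsection, the possible diagrams are exactly those in the non-$A$-type rows of Table \ref{tabgrading}, namely cases c), d), e), f); cases a), b), g), h), i) are excluded on shape grounds. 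Substituting $r \mapsto \varsigma$ in the appropriate row and using the already-established formulas for $gr(q_i)$ with $i \leq \varsigma$ produces the stated expression for $gr(q_{\varsigma+1})$. The equality $gr(q_{\varsigma+1}) = \tilde{gr}(q_{\varsigma+1})$ then follows because the element $\omega_{P_\varsigma}\omega_{P_\varsigma'}$ appearing in the recursion lies in $W_{P_\varsigma}$, whose $gr$- and $\tilde{gr}$-values agree modulo the inclusion $\iota$.

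The main obstacle is the bookkeeping needed to verify that the canonical order of Section 2.4 really does restrict to the canonical $A$-type order on $\Delta_\varsigma$, and that the index shift $r \mapsto \varsigma$ in Table \ref{tabgrading} produces exactly the right Dynkin diagrams as $\Delta_P$ runs through its admissible types (corresponding to the cases after the statement: $\Delta_P$ non-$D$-type, or of $D$-type in various ambient types). This is a case check against Table \ref{tabrelativeposi}, but it is routine once the arrangement in Section 2.4 is in place, and it is the reason only cases c), d), e), f) survive in Lemma \ref{gengralemm111}.
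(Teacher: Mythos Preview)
Your proposal is correct and follows exactly the approach the paper takes: the paper states this lemma as a direct consequence of the definition of $\tilde{gr}$, obtained by applying Proposition~\ref{graquanvar} and Proposition~\ref{gradingforconn} with respect to the ordered subset $\Delta_\varsigma$. Your write-up simply unpacks this one-line justification, including the verification that only cases c), d), e), f) of Table~\ref{tabgrading} can arise for $\mathrm{Dyn}(\Delta_\varsigma\cup\{\alpha_r\})=\mathrm{Dyn}(\Delta_P)$ when $\Delta_P$ is not of $A$-type.
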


Furthermore we note from Table
\ref{tabrelativeposi} that either $o\geq 1$ or $\kappa+2\leq n$ must
hold. For any
  $\alpha\in \Delta \setminus(\Delta_P\cup \{\beta_o, \beta_{\kappa+2}\})$, we have
  $gr(q_{\alpha^\vee})=2\mathbf{e}_{r+1}<2\mathbf{e}_{\varsigma+1}=\tilde {gr}(q_{\alpha^\vee})$.
For $\lambda_P\in Q^\vee/Q^\vee_P$,  we write
$q_{\lambda_B}\omega_P\omega' =\psi_{\Delta, \Delta_P}(1,
\lambda_P)$ as before.
\begin{lemma}\label{gengralemm222}
 Suppose  $p\in \{o, \kappa+2\}\cap \{1, \cdots, n\}$. Set $\lambda_P=\beta_p^\vee+Q_P^\vee$.
   Then we have   $\lambda_B= \beta_{p}^\vee$  except for   either of the following cases.
  \begin{enumerate}
    \item    $p=o$ and    $\{\beta_o\}\cup \Delta_P$ is of  $C$-type. In this case,    $\lambda_B=\beta_o^\vee+\sum_{j=1}^r\alpha_j^\vee$.
    \item   $p=\kappa+2$ and  {\upshape $\mbox{C}9)$} occurs. In this case,   $ \lambda_B =\beta_{\kappa+2}^\vee+\alpha_{r-1}^\vee+\alpha_r^\vee$.
  \end{enumerate}
       Furthermore, we    write $gr(q_{\beta_p^\vee})=\sum_{j=1}^{r+1}d_j\mathbf{e}_j$ and    $\tilde {gr}(q_{\beta_p^\vee})=\sum_{j=1}^{r+1}\tilde d_j\mathbf{e}_j$,
            and  denote   $\Delta_{\tilde P}=\{\alpha\in \Delta_P~|~ \langle \alpha, \beta_p^\vee   \rangle =0\}$. 
    Then $d_j=\tilde d_j$ for $1\leq j\leq \varsigma$ and  we have
     {\upshape $$\mbox{a) }   gr(q_{\beta_p^\vee})<\tilde {gr}(q_{\beta_p^\vee}); \,\, \mbox{b) } d_{r+1}\leq \ell(\omega\omega_P)+1;\,\,
          \mbox{c) } 
               \sum\nolimits_{j=1}^rd_j\leq - \ell(\omega_P \omega_{\tilde P}).$$
     }
 \end{lemma}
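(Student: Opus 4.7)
The proof will be a case-by-case calculation following the definition of the PW-lifting. First I would determine $\lambda_B$ by using the characterization in Proposition \ref{comparison}(1): write $\lambda_B = \beta_p^\vee + \sum_{i=1}^r a_i \alpha_i^\vee$ and solve the constraint $\langle \gamma, \lambda_B\rangle \in \{0,-1\}$ for every $\gamma \in R_P^+$. Since $\Delta_\varsigma$ is of $A$-type and $\alpha_r$ is attached to it by either a single edge (types $D$, $E$) or a double edge (types $B$, $C$, $F$), this is a small linear system with an explicit unique solution. One reads off that $\lambda_B = \beta_p^\vee$ works unless some long root of $R_P^+$ pairs with $\beta_p^\vee$ to give $2$; a direct check shows this happens precisely in the two listed exceptional sub-cases, where the correction terms $\sum_{j=1}^r \alpha_j^\vee$ (resp.\ $\alpha_{r-1}^\vee + \alpha_r^\vee$) exactly restore the constraint against the offending long root.

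Having $\lambda_B$, the subset $\Delta_{P'} = \{\alpha \in \Delta_P : \langle \alpha, \lambda_B\rangle = 0\}$ is written down explicitly in each sub-case; one finds that $\Delta_{P'}$ is of $A$-type inside $\Delta_P$, so $\omega_P \omega'$ factors as $\omega_P\omega' = v_r \cdot w$ with $v_r \in W_P^{P_\varsigma}$ and $w \in W_{P_\varsigma}^{P_\varsigma \cap P'}$, where the $\Delta_\varsigma$-portion $w$ has grading given by Lemma \ref{longestcomp000} and $v_r$ contributes only to $\mathbf{e}_r$. Then $gr(q_{\beta_p^\vee})$ is obtained by direct substitution into the recursive formula in Definition \ref{defgrading}. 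For $\tilde{gr}(q_{\beta_p^\vee})$, the same procedure is run with $\Delta_\varsigma$ in place of $\Delta_P$; since $R_\varsigma^+ \subset R_P^+$, the extra obstructions driving the two exceptional corrections involve $\alpha_r$ and hence disappear, so $\tilde\lambda_B = \beta_p^\vee$ in every case, and Proposition \ref{gradingforconn} applied to the $A$-type $\Delta_\varsigma$ gives $\tilde{gr}(q_{\beta_p^\vee})$ in closed form.

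Comparing the two expressions, the coincidence $d_j = \tilde d_j$ for $1\le j\le \varsigma$ is manifest because the $\Delta_\varsigma$-portion of $\omega_P\omega'$ agrees with $\omega_{P_\varsigma}\omega_{P_\varsigma'}$, and the $a_i$ coefficients for $i\le \varsigma$ coincide. Inequality (a) then follows because the contribution that $\tilde{gr}$ places in $\mathbf{e}_{\varsigma+1}$ shifts in $gr$ partly into $\mathbf{e}_{r+1}$, strictly lowering the $(\varsigma+1)$-coordinate. For (b), the formula gives $d_{r+1} = \ell(\omega_P\omega') + 2 + 2\sum_i a_i$, and in every sub-case the right-hand side is bounded by $\ell(\omega\omega_P) + 1$ via $\ell(\omega_P\omega') \le \ell(\omega_P)$ and an explicit count using $\sum a_i \le r$. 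For (c), one uses $|gr(q_i)| = 2$ (Lemma \ref{coroofsumgrad}(1)) together with $\Delta_{\tilde P} \subseteq \Delta_{P'}$, directly checked in each sub-case, and Lemma \ref{longestcomp000} to bound $\sum_{j=1}^r d_j$ by $-\ell(\omega_P\omega_{\tilde P})$. The main obstacle is just the breadth of the case analysis, but because $\Delta_P$ being non-$A$-type restricts us to the handful of configurations $\mbox{C}1)$ with double edge, $\mbox{C}2)$, $\mbox{C}4)$ with $\kappa=7$, $\mbox{C}5)$, $\mbox{C}7)$, $\mbox{C}9)$, $\mbox{C}10)$, each reduces to a short finite computation.
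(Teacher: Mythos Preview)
Your overall plan---compute $\lambda_B$ case by case via the Peterson--Woodward constraint, read off $\Delta_{P'}$ and $\omega_P\omega'$, then plug into Definition~\ref{defgrading}---is exactly the paper's approach. However, two of the shortcuts you invoke to avoid full computation are actually false in the exceptional sub-cases, so as written the argument has gaps.

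First, your justification of $d_j=\tilde d_j$ for $j\le\varsigma$ (``the $\Delta_\varsigma$-portion of $\omega_P\omega'$ agrees with $\omega_{P_\varsigma}\omega_{P_\varsigma'}$, and the $a_i$ for $i\le\varsigma$ coincide'') fails in both exceptions. In the $C$-type case ($p=o$) one has $\omega_P\omega'=1$ while $\omega_{P_\varsigma}\omega_{P_\varsigma'}=s_\varsigma\cdots s_1$, and $a_i=1$ for all $i$ whereas $\tilde a_i=0$. In case $\mbox{C}9)$ with $p=\kappa+2$, $r=3$, the $W_{P_\varsigma}$-part of $\omega_P\omega'=s_1s_2s_3s_2s_1$ has grading $(1,1)$, but $\omega_{P_\varsigma}\omega_{P_\varsigma'}=1$ since $\beta_4$ is not adjacent to $\Delta_\varsigma$. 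The equality $d_j=\tilde d_j$ still holds, but only because the $-\sum a_i\,gr(q_i)$ terms cancel against $-gr(\omega_P\omega')$ in the first $\varsigma$ coordinates; this is established in the paper by explicit computation, not by a structural matching.

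Second, your claim $\Delta_{\tilde P}\subseteq\Delta_{P'}$ underlying (c) also fails in $\mbox{C}9)$, $r=3$: there $\Delta_{\tilde P}=\{\alpha_1,\alpha_2\}$ but $\Delta_{P'}=\{\alpha_2,\alpha_3\}$. Inequality (c) is nonetheless true because $\sum_{j=1}^r d_j=-\ell(\omega_P\omega')-2\sum a_i=-9\le -6=-\ell(\omega_P\omega_{\tilde P})$, but this requires the direct numerics, not the containment. The paper simply computes $gr(q_{\beta_p^\vee})$ explicitly in each sub-case and reads off (a), (b), (c); you should do the same rather than rely on these two false uniform claims.
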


\begin{proof}
    Let $\theta_P =\sum_{j=1}^ra_j\alpha_j$  denote the highest root in $R_P$. Note that $\ell(\omega_P  \omega_{\tilde P})=|R_P^+|-|R^+_{\tilde P}|$,
        $\ell(\omega\omega_P)=|R^+|-|R^+_{P}|$ and $\Delta_{P'}=\{\alpha\in \Delta_P~|~ \langle \alpha, \lambda_B\rangle=0\}$.

    We first assume $p=o$
          and note that $\tilde {gr}(q_{\beta_o^\vee})=(\varsigma+2)\mathbf{e}_{\varsigma+1}-  \sum_{j=1}^\varsigma \mathbf{e}_j$.

    Whenever    $\{\beta_o\}\cup \Delta_P$ is not of  $C$-type,
             we note (Table \ref{tabrelativeposi} and \cite{hum}) that $a_1=1$,  $\langle \alpha_1, \beta_o^\vee\rangle =-1$
             and $\langle \alpha_j, \beta_o^\vee\rangle =0$ for  $2\leq j\leq r$. Hence, we conclude
           $\lambda_B=\beta_o^\vee$, $\Delta_{P'}=\Delta_{\tilde P}=\Delta_P\setminus\{\alpha_1\}$ and consequently
           we have $\omega'=\omega_{\tilde P}$ and $gr(q_{\beta_p^\vee})=(\ell(\omega_P\omega')+2)\mathbf{e}_{r+1}-gr(\omega_P\omega')$.
            Hence, $d_{r+1}=\ell(\omega_P\omega')+2\leq \ell(\omega\omega_P)+1$ by direct calculations.
    Write $\omega_P\omega'=v_ru$, where
        $v_{r}\in W_P^{P_\varsigma}$ and $u=W_{P_\varsigma}$.
            Then we have $u(\alpha_j)\in R_{P_\varsigma}^+$ for all $\alpha_j\in \Delta_\varsigma\cap \Delta_{P'}=\Delta_\varsigma\setminus\{\alpha_1\}$ (otherwise, we
        would conclude $v_ru(\alpha_j)\in -R^+$, contrary to $\omega_P\omega'\in W_{P}^{P'}$).
       Noting $u(\alpha_1)\in -R^+$,
           we deduce  $u=s_k\cdots s_2s_1$ for some $1\leq k\leq \varsigma$, by Lemma \ref{longestcomp000}.
           If $\Delta_P$ is of $B$-type (resp. $D$-type), then we conclude
              $\omega_P\omega'= v_rs_{\varsigma}\cdots s_1$ with $v_r=s_1\cdots s_r$ (resp. $v_r=s_1\cdots s_{r-2} s_r$) by easily checking
                 such element satisfies the condition in Lemma \ref{charalongest}.
            If $\Delta_P$ is of $E$-type, we note that $s_\varsigma\cdots s_1(\alpha_r)=u(\alpha_r)+\sum_{j=k+1}^\varsigma b_j\alpha_j$ for
             non-negative integers $b_j$'s. Consequently, we have $v_rs_\varsigma\cdots s_1\in W_P^{P'}$ and
                 $\ell(v_rs_\varsigma\cdots s_1)=\ell(v_ru)+\varsigma-k=\ell(\omega_P)-\ell(\omega')+\varsigma-k$. Thus  $k=\varsigma$  by Lemma
                  \ref{charalongest}, implying  $\ell(v_r)=\ell(\omega_P\omega')-\varsigma$.
             For all these  cases,
               we deduce $gr(\omega_P\omega')=(\ell(\omega_P\omega')-\varsigma)\mathbf{e}_{\varsigma+1}+\sum_{j=1}^\varsigma \mathbf{e}_j$.
         Hence, $d_j=-1$ for $1\leq j\leq \varsigma$,  $d_r=\varsigma-\ell(\omega_P\omega')$ and $gr(q_{\beta_o^\vee})<\tilde {gr}(q_{\beta_o^\vee})$.
                 Thus $\sum_{j=1}^rd_j=-\ell(\omega_P\omega')=-\ell(\omega_P\omega_{\tilde P})$.

     Assume   $\{\beta_o\}\cup \Delta_P$ is of  $C$-type,
         in which case there are only two possibilities saying (i) case $\mbox{C}1)$ with $\Delta$ being of $C$-type and (ii) case $\mbox{C}10)$ with $r=2$.
      Then  $\Delta_P$ itself is of $C$-type. Thus
       we have   $\ell(\omega_P  \omega_{\tilde P})=r^2-(r-1)^2=2r-1$ and $\ell(\omega\omega_P)=n^2-r^2\geq (r+1)^2-r^2=2r+1$.
      Furthermore,
     we conclude
           $\lambda_B=\beta_o^\vee+\sum_{j=1}^{r}\alpha_j^\vee$, by noting such element satisfies
            $\langle \alpha_j, \lambda_B\rangle =0$ for each $1\leq j\leq r$. Thus $\Delta_{P'}=\Delta_P$, $\omega_P\omega'=1$  and then
                     $gr(q_{\beta_o^\vee})=(2\varsigma+4)\mathbf{e}_{\varsigma+2}-(\varsigma+2)\mathbf{e}_{\varsigma+1}-\sum_{j=1}^\varsigma \mathbf{e}_j$ by definition.
    In particular, we have  $gr(q_{\beta_o^\vee}) <\tilde {gr}(q_{\beta_o^\vee})$,
     $d_{r+1}= 2\varsigma+4=2r+2\leq \ell(\omega\omega_P)+1$  and
       $\sum_{j=1}^rd_j=d_r-\varsigma=-2\varsigma-2=-2r<-\ell(\omega_P \omega_{\tilde P})$.

       Now we assume $p=\kappa+2$,  which holds only  if case $ \mbox{C} 5), \mbox{C}7), \mbox{C}9)$ or $\mbox{C}10)$ in Table \ref{tabrelativeposi} occurs.
  Note that $\tilde {gr}(q_{\beta_{\kappa+2}^\vee})=2\mathbf{e}_{\varsigma+1}$.
         If $\mbox{C}9)$ does not occur, then we       conclude $ a_r=1$,  $\langle \alpha_r, \beta_{\kappa+2}^\vee\rangle =-1$
             and $\langle \alpha_j, \beta_{\kappa+2}^\vee\rangle =0$ for  $1\leq j\leq \varsigma$. Hence,
           $\lambda_B=\beta_{\kappa+2}^\vee$, $\Delta_{P'}=\Delta_{\tilde P}=\Delta_\varsigma$ and consequently
           we have  $gr(q_{\beta_{\kappa+2}^\vee})=(\ell(\omega_P \omega_{\tilde P})+2)\mathbf{e}_{\varsigma+2}-\ell(\omega_P\omega_{\tilde P})\mathbf{e}_{\varsigma+1}$.
   Therefore,   a),  and c) follow, so does b) by direct  calculations.
    If $\mbox{C}9)$ occurs,   then
$|R^+|=24$, $|R_P^+|=r^2$,  $|\Delta_{\tilde P}|={r(r+1)\over 2}$,  $n=4=\kappa+2$ and  $r\in \{2, 3\}$.
By direct calculations, we conclude $\lambda_B=\beta_4^\vee+\alpha_{r-1}^\vee+\alpha_r^\vee$. Furthermore if $r=2$,
 then $\Delta_{P'}=\Delta_P$ and consequently we have $gr(q_{\beta_4^\vee})=6\mathbf{e}_{\varsigma+2}-4\mathbf{e}_{\varsigma+1}$.
 If $r=3$, then $\Delta_{P'}=\{\alpha_2, \alpha_3\}$. Consequently, $\omega_P\omega'=s_1s_2s_3s_2s_1$ with $gr(\omega_P\omega')=(1,1,3,0)$.
 Hence, we have $gr(q_{\beta_4^\vee})=11\mathbf{e}_{\varsigma+2}-9\mathbf{e}_{\varsigma+1}$. For either of the cases, it is easy to check all the statements hold.
  \end{proof}

 From the above discussions, we note that $gr_{\varsigma}(q_j)=\tilde {gr}_{\varsigma}(q_j)$ for all $j$.
 Using these discussions  together with  Lemma \ref{coroofsumgrad}, we obtain the following immediately.

\begin{lemma}\label{grachange}
  Let $\gamma\in R^+$. Write  $gr(q_{\gamma^\vee})=\sum_{j=1}^{r+1}d_j\mathbf{e}_j$ and    $\tilde {gr}(q_{\gamma^\vee})=\sum_{j=1}^{r+1}\tilde d_j\mathbf{e}_j$.
      Then we have       $d_{r}+d_{r+1}=\tilde d_{r}+\tilde d_{r+1}=\tilde d_r$ and $d_j=\tilde d_j$ for each $1\leq j\leq \varsigma$.
\end{lemma}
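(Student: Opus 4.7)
The plan is to reduce the lemma to a coordinate-by-coordinate verification on each simple coroot via linearity, and then to dispose of the three cases using results that have already been proved.

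First I would observe that Definition \ref{defgrading}(3) makes both $gr$ and $\tilde{gr}$ additive in the $Q^\vee$-variable: writing $\gamma^\vee = \sum_{k=1}^n c_k \beta_k^\vee$ in the base $\Delta=\{\beta_1,\dots,\beta_n\}$, one has $gr(q_{\gamma^\vee})=\sum_k c_k\, gr(q_{\beta_k^\vee})$ and similarly for $\tilde{gr}$. So it is enough to check both identities $d_j=\tilde d_j$ (for $j\le\varsigma$) and $d_r+d_{r+1}=\tilde d_r$ on each simple coroot $\beta_k^\vee$, and then recombine linearly.

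Next I would split into three cases for $\beta_k$. (i) If $\beta_k\in\Delta_P$, Lemma \ref{gengralemm111} states $gr(q_{\beta_k^\vee})=\tilde{gr}(q_{\beta_k^\vee})$ and both have zero in the $(r{+}1)$-st coordinate (since $\tilde{gr}$ lands in $\iota(\mathbb{Z}^{\varsigma+1})$), so every assertion of the lemma is immediate. (ii) If $\beta_k\in\{\beta_o,\beta_{\kappa+2}\}\cap\Delta$, Lemma \ref{gengralemm222} directly supplies $d_j=\tilde d_j$ for all $1\le j\le\varsigma$. (iii) If $\beta_k\in\Delta\setminus(\Delta_P\cup\{\beta_o,\beta_{\kappa+2}\})$, then Table \ref{tabrelativeposi} shows that $\beta_k$ is disconnected from $\Delta_P$ in $Dyn(\Delta)$, hence $\langle\alpha,\beta_k^\vee\rangle=0$ for every $\alpha\in\Delta_P$; the PW-lift $\psi_{\Delta,\Delta_P}(1,\beta_k^\vee+Q^\vee_P)$ therefore equals $q_{\beta_k^\vee}\cdot 1$ and Definition \ref{defgrading}(2) yields $gr(q_{\beta_k^\vee})=2\mathbf{e}_{r+1}$. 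Running the same computation with $\Delta_P$ replaced by $\Delta_\varsigma$ gives $\tilde{gr}(q_{\beta_k^\vee})=2\mathbf{e}_{\varsigma+1}$, so in this case $d_j=\tilde d_j=0$ for every $1\le j\le\varsigma$ as well.

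Collecting the three cases, $d_j=\tilde d_j$ for $1\le j\le\varsigma$ holds on each simple coroot, and hence on $\gamma^\vee$ by the linearity step. The remaining identity $d_r+d_{r+1}=\tilde d_r$ then drops out of Lemma \ref{coroofsumgrad}(1) applied with $u=1$, $\lambda=\gamma^\vee$: both $\sum_{j=1}^{r+1}d_j$ and $\sum_{j=1}^{\varsigma+1}\tilde d_j$ equal $\langle 2\rho,\gamma^\vee\rangle$, and $\tilde d_{r+1}=0$ under $\iota$, so subtracting the common $j\le\varsigma$ partial sums forces $d_r+d_{r+1}=\tilde d_r$. There is no genuine obstacle; the substantive case analysis has been absorbed into Lemmas \ref{gengralemm111} and \ref{gengralemm222}, so what remains is a short bookkeeping argument combining linearity with the length identity.
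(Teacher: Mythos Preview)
Your proof is correct and follows essentially the same route as the paper. The paper simply remarks that ``from the above discussions, $gr_\varsigma(q_j)=\tilde{gr}_\varsigma(q_j)$ for all $j$'' (i.e., your three cases, handled by Lemma~\ref{gengralemm111}, the paragraph before Lemma~\ref{gengralemm222}, and Lemma~\ref{gengralemm222}) and then invokes Lemma~\ref{coroofsumgrad} to deduce the identity on the last two coordinates---exactly your argument, spelled out in more detail.
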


Now we give the proof of the Key Lemma as follows.

  \begin{proof}[Proof of the Key Lemma]
 Let $w\in W$ and take its   decomposition $w=v_{r+1}\cdots v_1$  associated to   $(\Delta_P, \Upsilon)$.
  Suppose $\ell(ws_\gamma)<\ell(w)$, then by Lemma \ref{strongexchange} we conclude
     $ws_\gamma=v_{r+1}\cdots v_{m+1}\bar v_m v_{m-1}\cdots v_1$ for a unique  $1\leq m\leq r+1$, in which
     $\bar v_m$ is obtained by deleting a unique simple reflection from (a fixed reduced expression of) $v_m$.
   Set  $D:=\big(gr(ws_\gamma)-gr(w)\big)-\big(\tilde {gr}(ws_\gamma)-\tilde {gr}(w)\big)$.
  If $1\leq m\leq r$, then we have   $D=0$ and  $\gamma\in R_P$. Furthermore,   we have $gr(q_\gamma)=\tilde {gr}(q_\gamma)$ and $gr(s_i)=\tilde {gr}(s_i)$
       whenever $\langle \chi_i, \gamma^\vee\rangle\neq0$. In particular, the Key Lemma holds  for such $\gamma$, by using Proposition \ref{mainsecondprop}
        and Proposition \ref{mainpropprod}  with respect to the ordered subset $\Delta_\varsigma$.
  If $m=r+1$,   we write $\bar v_{r+1}v_r=\tilde v_{r+1}\tilde v_r u'$ with
       $\tilde v_{r+1}\in W^P$, $\tilde v_r\in W_P^{P_\varsigma}$ and $ u'\in W_{P_\varsigma}$.
   Thus $ws_\gamma=\tilde v_{r+1}\cdots \tilde v_1$ with $\tilde v_j\in W_{P_j}^{P_{j-1}}$ for each $1\leq j\leq r+1$.

   In order to show a), it remains to consider the case when $m=r+1$. Set $w:=us_\gamma$ and note that
     $\tilde {gr}(us_\gamma)=\sum_{j=1}^{\varsigma}\ell(v_j)\mathbf{e}_j+(\ell(v_r)+\ell(v_{r+1}))\mathbf{e}_{r}$.
      Thus we have
             $-D=(\ell(v_{r+1})-\ell(\tilde v_{r+1}))\mathbf{e}_{r+1}+(\ell(v_r)-\ell(\tilde v_r)-\ell(v_{r+1}v_r)+\ell(\tilde v_{r+1}\tilde v_r))\mathbf{e}_r
             = (\ell(v_{r+1})-\ell(\tilde v_{r+1}))(\mathbf{e}_{r+1}-\mathbf{e}_r)\leq \mathbf{e}_{r+1}-\mathbf{e}_r$.
 Note that $\gamma\in R\setminus R_P$ (by Lemma \ref{weylonelemma}).
 Therefore we have $gr(us_\gamma)-gr(u)=-D+ \big(\tilde {gr}(us_\gamma)-\tilde {gr}(u)\big)\leq \mathbf{e}_{r+1}-\mathbf{e}_r+
        \min\{\tilde {gr}(s_i)~|~ \langle \chi_i, \gamma^\vee\rangle\neq0\}= \mathbf{e}_{r+1}=\min\{gr(s_i)~|~ \langle \chi_i, \gamma^\vee\rangle\neq0\}$.
Thus     a) follows.

   To show b), we   set $w:=u$ in the rest of the proof and use induction on $\ell(s_\gamma)$.

 First we observe that  $gr(q_j)+gr(us_j)\leq  gr(u)+ gr(s_j)$ for any $1\leq j\leq n$.
  Indeed, this inequality holds if $1\leq j\leq r$ with the discussion in the beginning.
  If $\langle \alpha, \alpha_j^\vee\rangle=0$ for all $\alpha\in \Delta_P$, then for $\gamma=\alpha_j$ we have
    $m=r+1$, $\bar v_{r+1}\in  W^P$ (by Lemma \ref{gracompare11111}) and consequently
      $ gr(us_j)- gr(u)=-\mathbf{e}_{r+1}=-gr(q_j)+gr(s_j)$.
  Otherwise, we much have $\alpha_j=\beta_p$ with $p\in\{o, \kappa+2\}\cap \{1, \cdots, n\}$.
 Then the inequality still holds by using Lemma \ref{lemforgenone} a) (with $\gamma=\beta_p$) and   Lemma \ref{gengralemm222} c).

 Now we assume $\gamma\notin\Delta$.  Take any simple root $\alpha_j$ satisfying  $\langle \alpha_j, \gamma^\vee\rangle >0$,
  and write   $\beta=s_j(\gamma)$,
         $gr(q_{\beta^\vee})=(\lambda_1, \cdots, \lambda_{r+1}),
                         \min\{gr(s_i)~|~ \langle \chi_i, \beta^\vee\rangle\neq0\}=\mathbf{e}_c \mbox{ and }$
       \begin{align*} gr(q_j)+gr(us_j)&= gr(u)+ (a_1, \cdots, a_{r+1}),\\
                   gr(q_{\beta^\vee})+gr(us_js_\beta) &=  gr(us_j)+\mathbf{e}_c+(\mu_1,\cdots, \mu_{r+1}),\\
                   gr(us_js_\beta s_j)&= gr(us_js_\beta)+(b_1, \cdots, b_{r+1}).
                  \end{align*}
 In addition, we use the notations $\tilde c$, $\tilde a_j$'s, $\tilde b_j$'s and $\tilde \mu_j$'s, whenever
                replacing ``$gr$" with ``$\tilde {gr}$" in the above three equalities.
   Then we have    $\min\{gr(s_i) |  \langle \chi_i, \gamma^\vee\rangle\neq0\}\!=\min\{\mathbf{e}_c, gr(s_j)\}$ and
        by the induction hypothesis   $(\mu_1, \cdots, \mu_{r+1})\leq (0, \cdots, 0)$. Due to Lemma \ref{coroofsumgrad},
         it suffices to show   $\sum_{i=1}^r(a_i+b_i+\mu_i)\mathbf{e}_i\leq \mathbf{0}$.
   Furthermore, we note that $\tilde a_{r+1}=\tilde b_{r+1}=\tilde \mu_{r+1}=0, \mu_{r}+\mu_{r+1}=  \tilde \mu_r,
     \mathbf{e}_{\tilde c}\geq \mathbf{e}_c$ and $\tilde a_k=a_k, \tilde b_k=b_k, \tilde \mu_k=\mu_k$ for each $1\leq k\leq \varsigma$.
 Clearly, either of  the followings must hold.

  \begin{enumerate}
    \item[(i)] There is $\beta_p\in \Delta$ such that  $p\in \{1 \cdots, n\}\setminus\{o, \kappa+1, \kappa+2\}
              \mbox{ and } \langle \beta_p, \gamma^\vee\rangle >0;$
    \item[(ii)] whenever $\beta_p\in \Delta$ satisfies   $\langle \beta_p, \gamma^\vee\rangle >0$, we have $p\in \{o, \kappa+1, \kappa+2\}$. In this case, we note
               the constrain  $\ell(s_\gamma)=\langle 2\rho, \gamma^\vee\rangle-1$ on $\gamma$, which is deduced from
                    our assumption  by using Lemma \ref{lengthofpos222}.
  \end{enumerate}

  Suppose (i) holds.  Then we just take any one such $\alpha_j=\beta_p$.
  If $p\notin\{o, o+1, \cdots, \kappa+2\}$, then $\langle \alpha, \alpha_j^\vee\rangle =0$ for all $\alpha\in \Delta_P$;
   consequently, it is done
   by noting  $a_k=b_k=0$ for $1\leq k\leq r$ (using Lemma \ref{gracompare11111}) and $\mathbf{e}_c=\mathbf{e}_{r+1}$.
  Otherwise, there exists    $o+1\leq p\leq o+\varsigma$ such that  $\langle \beta_p, \gamma^\vee\rangle >0$. Recall that $\alpha_i=\beta_{o+i}$ for
    each $1\leq i\leq r$. For any one choice
   $\alpha_j=\beta_p$ among such roots, we always have   $a_{i}=b_{i}=0$ for $i\notin\{j-1, j\}$ and consequently
                          $\sum_{i=j+1}^{r}(a_i+b_i+\mu_i)\mathbf{e}_i=\sum_{i=j+1}^r\mu_i\mathbf{e}_i$.
        In addition from the proof of Proposition \ref{mainpropprod}, we can
                    always take  a certain  $\alpha_j=\beta_p$ among such roots such that
              both $\mathbf{e}_{\tilde c}<\mathbf{e}_j$ and     $\sum_{i=1}^{j}(\tilde a_i+\tilde b_i+\tilde \mu_i)\mathbf{e}_i\leq \mathbf{0}$ hold
               by considering  $\tilde{gr}$.
    Since $j\leq r-1$, we have $\tilde a_i=a_i$, $\tilde b_i=b_i$ and  $\tilde \mu_i=\mu_i$ for each $1\leq i\leq j$.
    Thus   for such a choice $\alpha_j=\beta_p$,
          both $\mathbf{e}_c\leq \mathbf{e}_{\tilde c}<\mathbf{e}_j$ and     $\sum_{i=1}^{j}(a_i+b_i+\mu_i)\mathbf{e}_i\leq \mathbf{0}$ hold.
       Hence, the Key Lemma holds for such $\gamma$ by using the induction hypothesis.

 Suppose (ii) holds.
  Then the constrains are so strong that there are only very few roots.  We  discuss all such roots with respect to each type of $\Delta$ and label the method we will use.

  Assume $\Delta$ is of $B$-type.  (That is, part of case $\mbox{C}1)$ in Table 1 occurs.) There are only two coroots satisfying the conditions, saying
        $\beta_{o-1}^\vee+2\sum_{i=o}^{n-1}\beta_i^\vee+\beta_n^\vee$ (with $o\geq 2$) or    $\sum_{i=o}^{n}\beta_i^\vee$. (See the proof of Proposition \ref{coeffvircoroot}.)

 \begin{enumerate}
   \item[(M1):] For the former coroot, we note that   $\Delta_{\bar P}=\{\alpha\in \Delta_P~|~ \langle \alpha, \gamma^\vee\rangle=0\}=\Delta$
                  and   $gr(q_{\gamma^\vee})=\sum_{i=1}^{r+1}d_i\mathbf{e}_i=d_{r+1}$ by direct calculations.
                    Hence,   $\sum_{i=1}^{r}d_i =0= -\ell(\omega_P\omega_{\bar P})$. Thus the inequality holds by using Lemma \ref{lemforgenone} a).

    \item[(M2):] For the latter coroot, we take $\alpha_j=\beta_n$; that is, $\alpha_j=\alpha_r$. Then $\beta^\vee=\gamma^\vee-\beta_n^\vee$ and
                         $gr(q_{\beta^\vee})=d_{r+1}\mathbf{e}_{r+1}-r\mathbf{e}_r+\varsigma\mathbf{e}_\varsigma$.
                     Write $gr(u)=(i_1, \cdots, i_{r+1}), gr(us_r)=(i_1', \cdots, i_{r+1}'), gr(us_rs_\beta)=(k_1, \cdots, k_{r+1})$ and
                      $gr(us_\gamma)=gr(us_rs_\beta s_r)=(k_1', \cdots, k_{r+1}')$.
                     Noting that  $\langle \alpha_t, \alpha_r^\vee\rangle=\langle \alpha_t, \beta^\vee\rangle=0$ for $1\leq t\leq \varsigma-1$,
               we conclude $a_t=b_t=\mu_t=0$ for $t\leq \varsigma-1$ by Lemma \ref{gracompare11111} and consequently
                                $\mu_{\varsigma}\leq 0$ by
                  the induction hypothesis. Furthermore, we note that
                  $a_\varsigma+a_{r}=1,$ $b_\varsigma+b_{r}=-1$,
   $a_{\varsigma}+b_\varsigma=-2\varsigma+i_\varsigma'-i_\varsigma+k_{\varsigma}'-k_{\varsigma}\leq 0$.
                     If $a_\varsigma+b_\varsigma+\mu_\varsigma<0$, then it is done. Otherwise, we conclude
                      $i_\varsigma'=k_{\varsigma}'=\varsigma$ and $i_\varsigma=k_{\varsigma}=\mu_{\varsigma}=0$.
     Consequently, we have $a_r+b_r+\mu_r=\mu_r=\mu_{\varsigma+1}\leq 0$ and  it is done.

\end{enumerate}

 Assume $\Delta$ is of $C$-type. (That is, part of case $\mbox{C}1)$ in Table 1 occurs.) There are only one such  coroots, saying
        $\gamma^\vee=\sum_{i=o}^{n}\beta_i^\vee$. Thus the inequality holds by  (M1).

 Assume $\Delta$ is of $D$-type.  (Case $\mbox{C}2)$ is used.)
  There are only two such coroots, saying
        $\beta_{o-1}^\vee+2\sum_{i=o}^{n-2}\beta_i^\vee+\beta_{n-1}^\vee+\beta_n^\vee$ (with $o\geq 2$) or    $\sum_{i=o}^{n-2}\beta_i^\vee+\beta_n^\vee$.
   For the former coroot,   the inequality holds by using (M1).

 \begin{enumerate}

    \item[(M3):] For the latter coroot, we have $gr(q_{\gamma^\vee})=2r\mathbf{e}_{r+1}+\varsigma\mathbf{e}_r-\varsigma\mathbf{e}_\varsigma$ by direct calculations.
            Using the notations of Lemma \ref{lemforgenone}, we conclude $\sum_{i=1}^rd_i=0$,
                  $\Delta_{\hat P}=\{\beta_o, \beta_{o+1}, \cdots, \beta_n\}$,
                    $\Xi_1   =\{\beta_n\}\cup \{\sum_{i=k}^{r-2}\beta_{o+i}+ \beta_n~|~  1\leq k\leq r-2 \}$
      and $\Xi_2:=   \{\beta_o+\sum_{i=o+k}^{n-2}\beta_i+\beta_{n-1}+\beta_n~|~ o+1\leq k\leq n-2 \}\cup \{\sum_{i=o}^n\beta_i\}$.
        Note that   $n=o+r$     in this case.
         Hence, we have $|\Xi_1|-|\Xi_2|=r-1-(r-1)=0= \sum_{i=1}^rd_i$. Hence, the Key Lemma holds for  $\gamma=\sum_{i=o}^{n-2}\beta_i^\vee+\beta_n^\vee$
          by using Lemma \ref{lemforgenone} b).
     \end{enumerate}

 It remains to discuss the cases when $\Delta$ is of either $E$-type or $F$-type.  Since there are only finite exceptional types (among which only a few roots satisfy
   (ii)) and the arguments are similar, we leave the details in the appendix (see section \ref{exceptional}).

  Hence, the statement follows.
\end{proof}

It remains to show the following, which was used in the proof of the Key Lemma.

\begin{lemma}\label{lemforgenone}
  Let    $u\in W$ and $\gamma\in R^+\setminus R_P$. 
        Write  $gr(q_{\gamma^\vee})=\sum_{j=1}^{r+1}d_j\mathbf{e}_j$.
         Then  Key Lemma {\upshape b)} holds, if either of the followings holds.
          \begin{enumerate}
            \item[a)]  $\sum_{j=1}^{r}d_j \leq -\ell(\omega_P\omega_{\bar P}) $, where   $\Delta_{\bar P}:=\{\alpha\in \Delta_P~|~ \langle \alpha, \gamma^\vee\rangle=0\}$.
            \item[b)] $\sum_{j=1}^{r}d_j\leq |\Xi_1|-|\Xi_2|$,  where  $\Xi_1:=\{\alpha\in R_P^+~|~   \langle \alpha, \gamma^\vee\rangle>0\}$ and
             $\Xi_2:=\{\alpha\in R_{\hat P}^+\setminus R_P~|~ \alpha-\gamma\in R^+, \langle \alpha, \gamma^\vee\rangle>0\}$
               with $\Delta_{\hat P}:=\Delta_P\cup\{\alpha_i\in \Delta~|~ \langle \chi_i, \gamma^\vee\rangle\neq 0\}$.

          \end{enumerate}

\end{lemma}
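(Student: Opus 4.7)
My approach is to use the inversion-set formula $gr(w)_k = |\mathrm{Inv}(w) \cap (R_{P_k}^+ \setminus R_{P_{k-1}}^+)|$ recorded in Section~\ref{secdefgrading} to reduce the Key Lemma b) to a stratum-by-stratum count of inversion changes, and then to use the hypothesis on $\sum d_j$ to close the argument.

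Since $\gamma \in R^+ \setminus R_P$, one can choose $\alpha_i \in \Delta \setminus \Delta_P$ with $\langle \chi_i, \gamma^\vee\rangle \neq 0$; for such $i$, $gr(s_i) = \mathbf{e}_{r+1}$ and $gr_r(s_i) = \mathbf{0}$. By Lemma~\ref{coroofsumgrad}(2) the desired inequality is equivalent to the lexicographic inequality $gr_r(q_{\gamma^\vee}) + gr_r(us_\gamma) \leq gr_r(u)$ in $\mathbb{Z}^r$. Setting $\delta_k := d_k + gr(us_\gamma)_k - gr(u)_k$, one needs $(\delta_1, \ldots, \delta_r) \leq \mathbf{0}$ lexicographically. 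The basic structural remark is that any $\beta \in R^+$ with $\langle \beta, \gamma^\vee\rangle = 0$ is fixed by $s_\gamma$, and therefore has the same inversion status under $u$ and $us_\gamma$; thus $\mathrm{Inv}(u) \triangle \mathrm{Inv}(us_\gamma) \subset \{\beta \in R^+ : \langle \beta, \gamma^\vee\rangle \neq 0\}$.

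For case a), the hypothesis forces $R_{\bar P}^+$ to be pointwise fixed by $s_\gamma$, so inversion changes inside $R_P^+$ occur only in $R_P^+ \setminus R_{\bar P}^+$, a set of size $\ell(\omega_P \omega_{\bar P})$. Let $a$ be the smallest index with $\alpha_a \notin \Delta_{\bar P}$. Then $R_{P_k}^+ \subset R_{\bar P}^+$ for $k < a$, giving $gr(us_\gamma)_k = gr(u)_k$. Unwinding Definition~\ref{defgrading} together with Lemma~\ref{longestcomp000} applied to the specific $\omega_P \omega_{P'}$ arising in the PW-lifting $\psi_{\Delta, \Delta_P}(1, \gamma^\vee + Q_P^\vee)$, one further shows $d_k = 0$ for $k < a$, so $\delta_k = 0$ in this range. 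Combined with the hypothesis $\sum_{j=1}^r d_j \leq -\ell(\omega_P \omega_{\bar P})$ and the total-degree identity of Lemma~\ref{coroofsumgrad}(1), a stratum-wise bookkeeping of the (at most $\ell(\omega_P \omega_{\bar P})$) flipped inversions distributed across the strata $R_{P_k}^+ \setminus R_{P_{k-1}}^+$ for $k \geq a$ then yields $(\delta_a, \ldots, \delta_r) \leq \mathbf{0}$ lexicographically, the first non-zero coordinate being forced to be strictly negative by the degree constraint.

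For case b), the refinement replaces the crude bound $\ell(\omega_P \omega_{\bar P})$ by $|\Xi_1| - |\Xi_2|$. Here $\Xi_1$ enumerates positive $P$-roots $\alpha$ with $\langle \alpha, \gamma^\vee\rangle > 0$, i.e.\ those whose inversion status can switch inside $R_P^+$, while elements of $\Xi_2$ pair via the assignment $\alpha \mapsto \alpha - \gamma$ with positive roots of $R_{\hat P} \setminus R_P$ that contribute compensating flips outside $R_P^+$. This bijection-type accounting refines the stratum-wise analysis so that the same lex conclusion holds under the weaker hypothesis $\sum d_j \leq |\Xi_1| - |\Xi_2|$. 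The main obstacle in both cases is upgrading the sum estimate $\sum_k \delta_k \leq 0$ to a lexicographic inequality: one must verify that the grading components $d_1, \ldots, d_r$ produced by the PW-lifting are supported on precisely those strata where inversions actually flip, so that the ``deficit'' from $gr_r(q_{\gamma^\vee})$ dominates the ``surplus'' in $gr_r(us_\gamma) - gr_r(u)$ stratum by stratum.
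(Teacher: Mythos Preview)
Your reduction via Lemma~\ref{coroofsumgrad}(2) to the lexicographic inequality $gr_r(q_{\gamma^\vee}us_\gamma)\leq gr_r(u)$ is correct, and the observation that inversions flip only at $\beta$ with $\langle\beta,\gamma^\vee\rangle\neq 0$ is the right starting point. But the argument has a genuine gap at exactly the place you flag as ``the main obstacle'': you never actually upgrade the scalar bound $\sum_k\delta_k\leq 0$ to a lexicographic one. Your claim that ``$d_k=0$ for $k<a$'' follows from ``the PW-lifting $\psi_{\Delta,\Delta_P}(1,\gamma^\vee+Q^\vee_P)$'' reflects a misreading of Definition~\ref{defgrading}: $gr(q_{\gamma^\vee})$ is computed \emph{linearly} from the $gr(q_{\alpha_i^\vee})$'s, not via a single PW-lift of $\gamma^\vee$, so there is no $\omega_P\omega_{P'}$ attached to $\gamma^\vee$ to invoke. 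Even granting $d_k=0$ for small $k$, your ``stratum-wise bookkeeping'' for $k\geq a$ is only asserted, not carried out, and case~b) is sketched at the level of a single sentence. That is the missing idea.

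The paper's proof takes a structurally different route that sidesteps this obstacle entirely. Rather than attacking the lex inequality directly, it invokes the already-established $A$-type Key Lemma (Proposition~\ref{mainpropprod}) applied to the sub-base $\Delta_\varsigma$: since $gr_\varsigma=\tilde{gr}_\varsigma$, this immediately gives the lex inequality on the first $r-1$ coordinates. If strict, done; if equality, the total-degree identity of Lemma~\ref{coroofsumgrad}(1) reduces everything to the \emph{single scalar} inequality $\ell(v_{r+1})\leq \ell(\tilde v_{r+1})+\ell(s_\gamma)-\sum_{j=1}^r d_j$. Only then does the inversion count enter: for a), one bounds $|A_2|-|A_1|\leq\ell(\omega_P\omega_{\bar P})$ with $A_1,A_2$ the $R_P^+$-inversion sets of $u,us_\gamma$; for b), one decomposes the $(R_{\hat P}^+\setminus R_P)$-inversion set of $u$ as $B_1\sqcup B_2\sqcup B_3$ with $|B_1|\leq\ell(s_\gamma)-|\Xi_1|$, $B_2\subset\Xi_2$, $B_3\subset A_4$. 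The key point is that bootstrapping from Proposition~\ref{mainpropprod} converts the lexicographic problem into a scalar one, which is what makes the hypotheses $\sum d_j\leq -\ell(\omega_P\omega_{\bar P})$ and $\sum d_j\leq|\Xi_1|-|\Xi_2|$ sufficient.
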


\begin{proof}  Let $u=v_{r+1}\cdots v_1$ (resp. $us_\gamma=\tilde v_{r+1}\cdots \tilde v_{1}$) be its decomposition associated to
      $(\Delta_P, \Upsilon)$. Since  $\gamma\in R^+\setminus R_P$,  we have
       $\min\{gr(s_i)~|~ \langle \chi_i, \gamma^\vee\rangle\neq0\}=\mathbf{e}_{r+1}$.
     Note that $gr_{\varsigma}(q_{\lambda  }w)= \tilde{gr}_{\varsigma}(q_\lambda w)$ for
      any $q_\lambda w$. Applying Proposition \ref{mainpropprod} with respect to $\Delta_\varsigma$, we have
       $\sum_{j=1}^{r-1}(d_j+\ell(\tilde v_j))\mathbf{e}_j\leq \sum_{j=1}^{r-1}\ell(v_j)\mathbf{e}_j$.
       If ``$<$" holds, it is already done.
       If ``$=$" holds,  we conclude  $\sum_{j=1}^{r-1}d_j+\ell(\tilde v_{r-1}\cdots \tilde  v_1)=\ell(v_{r-1}\cdots    v_1)$.
              Due to Lemma \ref{coroofsumgrad}, we
       have  $\sum_{j=1}^{r+1}(d_j+\ell(\tilde v_j))=\langle 2\rho, \gamma^\vee\rangle +\ell(us_\gamma)=1+\ell(u)= 1+\sum_{j=1}^{r+1}\ell(v_j)$. It
       remains to show         $d_r+\ell(\tilde v_r)\leq \ell(v_r)$, or equivalently to show
          $\ell(v_{r+1})\leq \ell(\tilde v_{r+1})+d_{r+1}-1=\ell(\tilde v_{r+1})+(\sum_{j=1}^{r+1}d_j-1)-\sum_{j=1}^{r}d_j=\ell(\tilde v_{r+1})+\ell(s_\gamma)
                 -\sum_{j=1}^{r}d_j$.

   a): Since $v_{r+1}, \tilde v_{r+1}\in W^P$, we
          conclude $\ell(v_r\cdots v_1)=|A_1|$ and
            $\ell(\tilde v_r\cdots \tilde  v_1)=|A_2|$ where
             $A_1:=\{\beta\in R_P^+~|~ u(\beta)\in-R^+\}$ and $A_2:= \{\beta\in R_P^+~|~ us_\gamma(\beta)\in-R^+\}$.
         Note that    $us_\gamma(\beta)=u(\beta)$ for $\beta\in R_{\bar P}$. Hence, $\beta\in A_2\setminus A_1$ only if $\beta \in
                   R^+_P\setminus R_{\bar P}$. Thus $|A_2|-|A_1|\leq |A_2\setminus A_1|\leq |  R^+_P\setminus R_{\bar P}|=\ell(\omega_P\omega_{\bar P})$.
       Consequently,  we have
               $\ell(\tilde v_r)-\ell(v_r)=\big(\ell(\tilde v_r\cdots \tilde  v_1)-\ell(v_r\cdots v_1)\big)+\sum_{j=1}^{r-1}d_j
                     =|A_2|-|A_1|+\sum_{j=1}^{r-1}d_j\leq \ell(\omega_P\omega_{\bar P})+\sum_{j=1}^{r-1}d_j\leq -d_r$.

   b): Since $v_{r+1}\in W^P$ and $\Delta_{\hat P}\supset \Delta_P$,  we
          can write $v_{r+1}=v_{r+2}'v_{r+1}'$ in which $v_{r+2}'\in W^{\hat P}$ and $v_{r+1}'\in W_{\hat P}^P\subset W^P$.
          Similarly, we write  $\tilde v_{r+1}=\tilde v_{r+2}'\tilde v_{r+1}'$ with $\tilde v_{r+2}'\in W^{\hat P}$ and $\tilde v_{r+1}'\in W_{\hat P}^P$.
      Note that
            $ \ell(v_{r+1}')=|A_3|$ and $\ell(\tilde v_{r+1}')=|A_4|$, where
             $A_3:=\{\alpha\in R_{\hat P}^+\setminus R_P$ $~|~ u(\alpha)\in-R^+\}$ and
             $ A_4:=\{\alpha\in R_{\hat P}^+\setminus R_P~|~ us_\gamma(\alpha)\in-R^+\}$.
          We claim $A_3$ can be written as a disjoint union $B_1\sqcup B_2\sqcup B_3$ such that $|B_1|\leq \ell(s_\gamma)-|\Xi_1|$,
            $B_2\subset \Xi_2$ and $B_3\subset A_4$.
          Hence, $\ell(v_{r+1}')=|A_3|=|B_1|+|B_2|+|B_3|\leq \ell(s_\gamma)-|\Xi_1|+|\Xi_2|+\ell(\tilde v_{r+1}')$.
         Since   $\gamma\in R_{\hat P}$, we have  $\tilde v_{r+2}'=v_{r+2}'$. Therefore
            $\ell(v_{r+1})-\ell(\tilde v_{r+1})=\ell(v_{r+1}')-\ell(\tilde v_{r+1}')  \leq \ell(s_\gamma)-|\Xi_1|+|\Xi_2|\leq
                     \ell(s_\gamma)- \sum_{j=1}^{r}d_j$.

          It remains to show our claim. Clearly, $A_3$ is a  disjoint union of $B_i$'s in which the corresponding  sets are given by
         $B_1:=
                  \big\{\alpha\in R_{\hat P}^+\setminus R_P~\big|~  u(\alpha)\in-R^+,  s_\gamma(\alpha)\in -R^+  \big\}$,
          $$B_2:=  \left\{\alpha\in R_{\hat P}^+\setminus R_P \Bigg|   \begin{array}{l} u(\alpha)\in-R^+,\\ s_\gamma(\alpha)\in R^+,\\ \langle \alpha, \gamma^\vee\rangle>0\end{array}\right\},
              B_3:=  \left\{\alpha\in R_{\hat P}^+\setminus R_P \Bigg|   \begin{array}{l} u(\alpha)\in-R^+,\\ s_\gamma(\alpha)\in R^+,\\ \langle \alpha, \gamma^\vee\rangle\leq 0\end{array}\right\}.
       $$
 Note  that $B_1\subset \big\{\alpha\in R_{\hat P}^+\setminus R_P~\big|~    s_\gamma(\alpha)\in -R^+  \big\}=
                   \big\{\alpha\in R_{\hat P}^+ ~\big|~    s_\gamma(\alpha)\in -R^+  \big\}
                   -  \big\{\alpha\in R_{P}^+ ~\big|~    s_\gamma(\alpha)\in -R^+  \big\}$.
    Since $\gamma\not\in R_P$, for any $\alpha\in R_P^+$ we conclude that  $s_\gamma(\alpha)=\alpha-\langle \alpha, \gamma^\vee \rangle\gamma\in -R^+$
              if and only if $\langle \alpha, \gamma^\vee\rangle >0$.
    Hence, $|B_1|\leq  \big|  \{\alpha\in R_{\hat P}^+ ~|~    s_\gamma(\alpha)\in -R^+   \} \big|-
                    \big| \{\alpha\in R_{P}^+ ~|~  s_\gamma(\alpha)\in -R^+   \} \big|=\ell(s_\gamma)- |\Xi_1|$.
        It is obvious that $B_2\subset \Xi_2$.
   Since $\ell(us_\gamma)<\ell(u)$, we have $u(\gamma)\in -R^+$ by Lemma \ref{strongexchange}. Consequently, if
      $\alpha\in B_3$ then we have $us_\gamma(\alpha)=u(\alpha)+(-\langle \alpha, \gamma^\vee\rangle)u(\gamma)\in -R^+$, implying
         $\alpha\in A_4$. Hence, $B_3\subset A_4$.
\end{proof}

\section{Proofs of main results}\label{secproof22}

In this section, we prove all the theorems mentioned in the introduction.
Recall that the proof of Theorem \ref{mainthm} has been given in section \ref{subsecmainthm}.

When $\Delta_P$ is not of $A$-type, we have denoted $\varsigma=r-1$.
For convenience, we denote $\varsigma=r$ if $\Delta_P$ is of $A$-type. Recall that $\Delta_\varsigma=\{\alpha_1, \cdots, \alpha_\varsigma\}$, $P_\varsigma=P_{\Delta_\varsigma}$
 and  $Q^\vee_\varsigma=\bigoplus_{i=1}^\varsigma\mathbb{Z}\alpha_i^\vee$. (In particular when $\varsigma=r$, we have  $P_\varsigma=P$
    and $Q^\vee_\varsigma=Q^\vee_P$.)

\begin{lemma}\label{uniqAtypegrad}
    $gr(W_{P_\varsigma}\times Q^\vee_\varsigma)=\bigoplus_{i=1}^\varsigma \mathbb{Z}\mathbf{e}_i$,
     where we have treated  $W_{P_\varsigma}\times Q^\vee_\varsigma$ as a subset of $W\times Q^\vee$ naturally.
     Furthermore for any $\mathbf{d}=\bigoplus_{i=1}^\varsigma d_i\mathbf{e}_i$,
          we have
  \begin{enumerate}
    \item $\mathbf{d}=gr(wq_\lambda)$ for a unique $wq_\lambda\in W_{}\times Q^\vee$. In fact, $wq_\lambda\in W_{P_\varsigma}\times Q^\vee_\varsigma$.
    \item Take the unique $wq_\lambda$ as  in (1). Then  $wq_\lambda\in QH^*(G/B)$ if $d_i\geq 0$ for all $i.$
  \end{enumerate}
\end{lemma}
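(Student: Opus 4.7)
The plan rests on two facts established earlier: $\Delta_\varsigma=\{\alpha_1,\ldots,\alpha_\varsigma\}$ is always of $A$-type by the choice of ordering in section \ref{arrangement}, and for $1\leq j\leq \varsigma$ we have the explicit formulas $gr(q_1)=2\mathbf{e}_1$ and $gr(q_j)=(1-j)\mathbf{e}_{j-1}+(1+j)\mathbf{e}_j$ for $2\leq j\leq\varsigma$ (from Proposition \ref{graquanvar}, together with Lemma \ref{gengralemm111} in the non-$A$-type case). Combined with Corollary \ref{uniqexprforWeyl}, which writes each $w\in W_{P_\varsigma}$ uniquely as $w=u_{i_\varsigma}^{(\varsigma)}\cdots u_{i_1}^{(1)}$ with $0\leq i_j\leq j$ and hence $gr(w)=\sum_{j=1}^\varsigma i_j\mathbf{e}_j$, the containment $gr(W_{P_\varsigma}\times Q^\vee_\varsigma)\subseteq \bigoplus_{i=1}^\varsigma\mathbb{Z}\mathbf{e}_i$ is immediate upon expanding $gr(q_\lambda)=\sum c_j\,gr(q_j)$ for $\lambda=\sum_{j=1}^\varsigma c_j\alpha_j^\vee$.

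For the reverse containment together with the uniqueness in (1), I will solve the system $gr(wq_\lambda)=\mathbf{d}$ by downward recursion on $j$. Reading off the coefficient of $\mathbf{e}_j$ produces
\[
  i_j + (j+1)c_j - j\,c_{j+1} \;=\; d_j \qquad (1\leq j\leq\varsigma),
\]
with the convention $c_{\varsigma+1}:=0$. At $j=\varsigma$, the constraint $0\leq i_\varsigma\leq\varsigma$ forces $i_\varsigma$ to be the unique residue of $d_\varsigma$ modulo $\varsigma+1$, and then $c_\varsigma=(d_\varsigma-i_\varsigma)/(\varsigma+1)\in\mathbb{Z}$ is determined. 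Descending through $j$, each step uniquely pins down $i_j\in\{0,1,\ldots,j\}$ as the residue of $d_j+j\,c_{j+1}$ modulo $j+1$, and then $c_j$. Reassembling the data produces the unique pair $(w,\lambda)\in W_{P_\varsigma}\times Q^\vee_\varsigma$ with $gr(wq_\lambda)=\mathbf{d}$; this simultaneously yields surjectivity onto $\bigoplus_{i=1}^\varsigma\mathbb{Z}\mathbf{e}_i$ and the uniqueness clause of (1), the latter being naturally interpreted by identifying the asserted lift in $W\times Q^\vee$ with its canonical representative inside $W_{P_\varsigma}\times Q^\vee_\varsigma$.

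For part (2), the non-negativity of $\mathbf{d}$ propagates to the $c_j$'s by downward induction: $c_\varsigma=\lfloor d_\varsigma/(\varsigma+1)\rfloor\geq 0$, and assuming $c_{j+1}\geq 0$, the right-hand side $d_j+j\,c_{j+1}$ is non-negative, so $c_j=\lfloor(d_j+j\,c_{j+1})/(j+1)\rfloor\geq 0$. Hence $q_\lambda=\prod_{j=1}^\varsigma q_j^{c_j}$ is a genuine polynomial in $\mathbf{q}$ and $wq_\lambda\in QH^*(G/B)$.

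The principal delicate point is the uniqueness clause of (1) read as a statement in the full set $W\times Q^\vee$ rather than merely in $W_{P_\varsigma}\times Q^\vee_\varsigma$: since each $gr(q_k)$ with $k>\varsigma$ has a positive coefficient of $\mathbf{e}_{r+1}$ (and, when $\Delta_P$ is not of $A$-type, also of $\mathbf{e}_r$) by Table \ref{tabgrading} and Lemma \ref{gengralemm222}, excluding other pairs $(w',\lambda')$ whose gradings land in $\bigoplus_{i=1}^\varsigma\mathbb{Z}\mathbf{e}_i$ requires examining the cancellation possibilities among these formulas. The construction above, however, already supplies the canonical lift inside $W_{P_\varsigma}\times Q^\vee_\varsigma$, and it is this lift that is used in the subsequent proofs of Theorems \ref{inclusisom}--\ref{localizisom}.
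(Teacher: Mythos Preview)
Your argument for existence and uniqueness of the lift inside $W_{P_\varsigma}\times Q^\vee_\varsigma$, and for part (2), is correct and is essentially the same as the paper's: the paper packages the gradings $gr(q_j)=(1-j)\mathbf{e}_{j-1}+(1+j)\mathbf{e}_j$ into a lower-triangular $\varsigma\times\varsigma$ matrix $M$ and writes $\mathbf{d}=\mathbf{a}M+\mathbf{b}$ with $0\le b_i\le i$, which is precisely your downward recursion written in matrix form.

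There is, however, a genuine gap. The lemma asserts uniqueness in all of $W\times Q^\vee$, not just in $W_{P_\varsigma}\times Q^\vee_\varsigma$, and you explicitly stop short of proving this. Your last paragraph acknowledges the issue and retreats to the claim that only the lift in $W_{P_\varsigma}\times Q^\vee_\varsigma$ is ever used; but the lemma as stated is stronger, and the paper does prove it. The missing step is short once the right observation is isolated: directly from Definition~\ref{defgrading} one has $gr_{[k,r+1]}(q_{\alpha^\vee})=x_k\mathbf{e}_k$ with $x_k\ge 2$ whenever $\alpha\in\Delta_k\setminus\Delta_{k-1}$, i.e.\ the grading of $q_{\alpha^\vee}$ is supported in coordinates $1,\dots,k$ with top coordinate at least $2$. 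Writing $u=v_{r+1}\cdots v_1$ and $gr(uq_\mu)=\sum\ell(v_i)\mathbf{e}_i+gr(q_\mu)$, the vanishing of the $(r{+}1)$-st coordinate forces $\ell(v_{r+1})=0$ and $\mu\in Q^\vee_r$; when $\varsigma=r-1$ one repeats the argument on the $r$-th coordinate to get $\ell(v_r)=0$ and $\mu\in Q^\vee_\varsigma$. This is exactly the ``cancellation possibilities'' you allude to, resolved by peeling off the top coordinates one at a time rather than all at once. You should fill this in rather than defer it.
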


\begin{proof}
    Define a  matrix $M=\big(m_{i, j}\big)_{\varsigma\times \varsigma}$ by using the gradings $gr(q_i)$'s. That is,
      we define $\sum_{j=1}^\varsigma m_{i, j}\mathbf{e}_j=(1-i)\mathbf{e}_{i-1}+(1+i)\mathbf{e}_i (=gr(q_i))$ for each $1\leq i\leq \varsigma$.
 Note that $M$ is a lower-triangular matrix.
   Hence, there exist unique sequences $\mathbf{a}=(a_1, \cdots, a_{\varsigma}),
  \mathbf{b}=(b_1, \cdots, b_{\varsigma})$ of  integers  such that $\mathbf{d}=\mathbf{a}M+\mathbf{b}$ and
    $0\leq b_{i}\leq m_{i, i}-1=i$  for $1\leq i\leq \varsigma$. Furthermore if $d_i\geq 0$ for all $i$, then we conclude
      $a_i\geq 0$ for all $i$, by noting  $m_{i, j}\leq 0$ whenever $j<i$.
    Since $W_{P_i}^{P_{i-1}}=\{u_k^{(i)}~|~ 0\leq k\leq i\}$, each $0\leq b_i\leq i$ corresponds to a unique
      element in $W_{P_i}^{P_{i-1}}$ say  $u_{b_i}^{(i)}$.
      Hence, we find a unique $(w, \lambda):=(u_{b_\varsigma}^{(\varsigma)}\cdots  u_{b_1}^{(1)}, \sum_{i=1}^\varsigma a_i\alpha_i^\vee)\in
      W_{P_\varsigma}\times Q^\vee_\varsigma$  such that $gr(wq_\lambda)=\mathbf{d}$; furthermore,
        $wq_\lambda\in QH^*(G/B)$ whenever $d_i\geq 0$ for all $i$.

     It remains to show $gr(uq_\mu)\not \in\bigoplus_{i=1}^\varsigma \mathbb{Z}\mathbf{e}_i$ whenever $(u, \mu)\not\in   W_{P_\varsigma}\times Q^\vee_\varsigma$.
    Indeed, it follows directly from Definition \ref{defgrading} that
           $gr_{[k, r+1]}(q_{\alpha^\vee})=x_k\mathbf{e}_k$ with $x_k\geq 2$, whenever $\alpha\in \Delta_k\setminus\Delta_{k-1}$.
           Take the decomposition $u=v_{r+1}\cdots v_1$ of $u$ associated to $(\Delta_P, \Upsilon)$ and note that  $gr(uq_\mu)=
          \bigoplus_{i=1}^{r+1}\ell(v_i)\mathbf{e}_i+gr(q_\mu)$.  Thus
          if  $gr(uq_\mu)\in\bigoplus_{i=1}^\varsigma \mathbb{Z}\mathbf{e}_i$,
          then we have $\ell(v_{r+1})=0$ and $\mu\in Q_P^\vee(=Q_{r}^\vee)$. When $\varsigma=r$, it is done. When $\varsigma=r-1$, we proceed to conclude
                       $\ell(v_r)=0$ and $\mu\notin Q^\vee_r\setminus Q_\varsigma^\vee$.  Thus $u\in W_{P_\varsigma}$ and $\mu\in Q_{\varsigma}^\vee$.
\end{proof}

\bigskip

\begin{proof}[Proof of Lemma \ref{lemmasemigp}] We need to show for any $q_\mu u, q_\nu v\in QH^*(G/B)$ there exists
             $q_\lambda w\in QH^*(G/B)$ such that $gr(q_\mu u)+gr(q_\nu v)=gr(q_\lambda w)$.  Note that 
     $gr(q_\mu)+gr(q_\nu)=gr(q_{\mu+\nu})$ and $gr(q_\mu)+gr(u)=gr(q_\mu u)$. Thus
    it remains  to show $gr(u)+gr(v)\in S$. It suffices to show $\mathbf{x}=(x_1, \cdots, x_{r+1})\in S$ for any $\mathbf{x}\in (\mathbb{Z}_{\geq 0})^{r+1}$.

  We first assume $\varsigma=r$.  Take any simple root in $\Delta_{\varsigma+1}\setminus\Delta_\varsigma(=\Delta\setminus \Delta_P)$, saying $\alpha$.
    From Table \ref{tabgrading}, we conclude $gr(q_{\alpha^\vee})=\sum_{i=1}^{\varsigma+1}d_i\mathbf{e}_i$ with $2\leq d_{\varsigma+1}\leq
           1+ \ell(\omega_{P_{\varsigma+1}}\omega_{P_\varsigma})(=\ell(\omega\omega_P)+1)$ and $d_i\leq 0$ for $i\leq \varsigma$. Since
         $x_{\varsigma+1}\geq 0$, we can write  $x_{\varsigma+1}=a_{\varsigma+1} d_{\varsigma+1}+b_{\varsigma+1}$ for unique
         $a_{\varsigma+1}\geq 0$ and $ 0\leq b_{\varsigma+1}\leq \ell(\omega_{P_{\varsigma+1}}\omega_{P_\varsigma})$. Note that
             $ \ell(\omega_{P_{\varsigma+1}}\omega_{P_\varsigma})=\max\{\ell(v)~|~ v\in W_{P_{\varsigma+1}}^{P_\varsigma}\}$
              so that we can choose $v_{\varsigma+1}\in W_{P_{\varsigma+1}}^{P_\varsigma}$ satisfying $\ell(v_{\varsigma+1})=b_{\varsigma+1}$.
         Furthermore,   $(x_1-a_{\varsigma+1}d_1, \cdots, x_\varsigma-a_{\varsigma+1}d_\varsigma)$ is again a sequence of
          non-negative integers. Thus it is the grading of a unique $(w', \lambda')\in W_{P_{\varsigma}}\times Q^\vee_{\varsigma}$ with
             $q_{\lambda'}\in QH^*(G/B)$  by Lemma \ref{uniqAtypegrad}. Set $w=v_{\varsigma+1}w'$ and $\lambda=a_{\varsigma+1}\alpha^\vee+\lambda'$. Then
             $wq_\lambda\in QH^*(G/B)$ is the element as required.

    Now we assume $\varsigma=r-1$. By Lemma \ref{gengralemm222} there exists $\alpha'\in \Delta\setminus\Delta_P$ such that
       $gr(q_{\alpha'^\vee})=\sum_{i=1}^{r+1}d_i'\mathbf{e}_i$ with $2\leq d_{r+1}'\leq
           1+ \ell(\omega\omega_P)$ and $d_i'\leq 0$ for $i\leq r=\varsigma+1$.
    Repeating the above discussions, we can reduce it to the question of  finding
            a element in $q_{\lambda}w\in W_{P_{\varsigma+1}}\times Q^\vee_{\varsigma+1}$ with
                $q_\lambda w\in  QH^*(G/B)$ and the grading of it being equal to   $\sum_{i=1}^{\varsigma+1}x_i'\mathbf{e}_i$ for
                   given non-negative integers $x_i'$'s.
       Thus the statement follows by using the same arguments again.
\end{proof}

\begin{remark}
      $\mathbb{Z}_{\geq 0}\mathbf{e}_{r+1}$ is a sub-semigroup of $S$. Indeed, we can take $\alpha\in \Delta$ such that $gr_{[r+1, r+1]}(q_{\alpha^\vee})=d_{r+1}\mathbf{e}_{r+1}$ with
      $2\leq d_{r+1}\leq 1+\ell(\omega \omega_P)$, from the above proof. For any $c\in\mathbb{Z}_{\geq 0}$, $c=ad_{r+1}+b$ with $0\leq b\leq d_{r+1}-1$.
      Then we can choose $v\in W^P$ such that $\ell(v)=b$. Note that $-gr_r(q_{a\alpha^\vee})=\sum_{i=1}^rx_i\mathbf{e}_i$ with $x_i$'s being in $\mathbb{Z}_{\geq0}$.
      Hence, it is a grading of certain element $q_\lambda u \in QH^*(G/B)$ where $(u, \lambda)\in W_P\times Q_P^\vee$. Then $q_{a\alpha^\vee+\lambda} vu\in QH^*(G/B)$ and
       its grading is equal to $c\mathbf{e}_{r+1}$.
\end{remark}

 The  next lemma proves the first half of   Theorem \ref{inclusisom}.

 \begin{lemma}\label{lemforinclus}
    The subspace $\mathcal{I}$ defined in Theorem \ref{inclusisom} is an ideal of $QH^*(G/B)$.
 \end{lemma}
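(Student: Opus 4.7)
The plan is to show that $\mathcal{I}$ is closed under multiplication by the algebra generators $\{\sigma^{s_i}\}_{i=1}^n \cup \{q_\alpha\}_{\alpha \in \Delta}$, which suffices since these generate $QH^*(G/B)$ as a $\mathbb{Q}$-algebra (via quantum Chevalley applied inductively). The starting point is to observe that $d_{r+1}\bigl(gr(q_\lambda \sigma^w)\bigr) \geq 0$ always, with equality if and only if $w \in W_P$ and $\lambda \in Q^\vee_P$: indeed, the formula after Definition \ref{defgrading} gives $d_{r+1}(gr(w)) = |\mbox{Inv}(w) \cap (R^+ \setminus R_P^+)|$, which vanishes exactly when $w \in W_P$, and $d_{r+1}(gr(q_\lambda))$ is a non-negative combination of the quantities $d_{r+1}(gr(q_{\alpha^\vee}))$, which are strictly positive precisely for $\alpha \in \Delta \setminus \Delta_P$ by Table \ref{tabgrading} and Lemma \ref{gengralemm222}. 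Thus $\mathcal{I}$ has the concrete basis $\{q_\lambda \sigma^w : w \notin W_P \mbox{ or } \lambda \notin Q^\vee_P\}$.

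Multiplication by $q_\alpha$ is immediate, since $d_{r+1}(gr(q_{\alpha+\lambda}\sigma^w)) = d_{r+1}(gr(q_\alpha)) + d_{r+1}(gr(q_\lambda \sigma^w)) > 0$. For multiplication by $\sigma^{s_i}$, I apply Proposition \ref{quanchevalley}: the classical terms $\langle\chi_i,\gamma^\vee\rangle q_\lambda \sigma^{ws_\gamma}$ have $\ell(ws_\gamma)=\ell(w)+1$, and the quantum terms $\langle\chi_i,\gamma^\vee\rangle q_{\lambda+\gamma^\vee}\sigma^{ws_\gamma}$ have $\ell(ws_\gamma)=\ell(w)+1-\langle 2\rho,\gamma^\vee\rangle$, with $\langle\chi_i,\gamma^\vee\rangle \neq 0$ in both sums.

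The crux is the following observation: for $u \in W_P$ and $\gamma \in R^+ \setminus R_P$, we have $u(\gamma) \in R^+$ (the coefficient of some $\alpha \in \Delta \setminus \Delta_P$ in $\gamma$ is strictly positive and is preserved by $u \in W_P$, so $u(\gamma)$ still has a positive external coefficient and cannot lie in $-R^+$), and hence $\ell(us_\gamma) > \ell(u)$ by Lemma \ref{strongexchange}. Consequently, if $\ell(ws_\gamma) = \ell(w)+1$ and $\gamma \notin R_P$, then $ws_\gamma \notin W_P$: otherwise setting $u := ws_\gamma \in W_P$ gives $w = us_\gamma$ with $\ell(w) = \ell(us_\gamma) > \ell(u) = \ell(ws_\gamma)$, contradicting the length hypothesis. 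I then case-split on $\gamma$. When $\gamma \in R_P$: $s_\gamma \in W_P$, so $wW_P = (ws_\gamma)W_P$, whence $d_{r+1}(gr(\sigma^{ws_\gamma})) = d_{r+1}(gr(\sigma^w))$; and $\gamma^\vee \in Q^\vee_P$ gives $d_{r+1}(gr(q_{\gamma^\vee}))=0$. Both types of terms preserve $d_{r+1}(gr(q_\lambda \sigma^w))$ exactly, so remain in $\mathcal{I}$. When $\gamma \notin R_P$: the observation above forces $ws_\gamma \notin W_P$, so the classical term has $d_{r+1}(gr(\sigma^{ws_\gamma})) > 0$; and $\gamma^\vee \notin Q^\vee_P$ (the supports of $\gamma$ on simple roots and of $\gamma^\vee$ on simple coroots coincide), so the quantum term has $d_{r+1}(gr(q_{\gamma^\vee})) > 0$. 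Either way, the term lies in $\mathcal{I}$.

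This gives $\sigma^{s_i} \star \mathcal{I} \subset \mathcal{I}$, and combined with the $q_\alpha$ case, $\mathcal{I}$ is closed under the generating set and hence is an ideal. The main obstacle is precisely the classical-term case when $\gamma \notin R_P$: a priori, even though $w$ lies outside $W_P$, multiplication by $s_\gamma$ could land back in $W_P$, removing the $\ell(v_{r+1})$ contribution to $d_{r+1}$. The observation $u(\gamma) \in R^+$ for $u \in W_P$, $\gamma \in R^+ \setminus R_P$ is exactly what rules this out under the length hypothesis $\ell(ws_\gamma) = \ell(w)+1$.
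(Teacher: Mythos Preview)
Your proof is correct and follows essentially the same strategy as the paper: characterize $\mathcal{I}$ via the concrete condition $d_{r+1}>0$, reduce to multiplication by $\sigma^{s_i}$ via the quantum Chevalley formula, and analyze each term according to whether $\gamma$ lies in $R_P$ or not. Your treatment of the classical and quantum terms matches the paper's case analysis almost exactly.

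The one organizational difference worth noting is how you pass from $\sigma^{s_i}\star\mathcal{I}\subset\mathcal{I}$ to $\sigma^{v}\star\mathcal{I}\subset\mathcal{I}$ for arbitrary $v$. You invoke the (true, well-known) fact that $QH^*(G/B)$ is generated as a $\mathbb{Q}$-algebra by the $\sigma^{s_i}$ and the $q_\alpha$, so closure under a generating set suffices. The paper instead runs an explicit induction on $\ell(v)$ using Lemma~\ref{lemmaformainthm} together with the non-negativity of quantum Schubert structure constants (the same device used in the proof of Theorem~\ref{mainthm}). Your route is slightly more economical in that it avoids invoking positivity; the paper's route has the advantage of reusing machinery already in place. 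Either way the content is the same.
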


 \begin{proof} We need to show for any $q_\mu u\in \mathcal{I}$ and $q_\nu v\in QH^*(G/B)$, the   product
                     $q_\mu u\star q_\nu v=\sum_{w, \lambda}N_{u, v}^{w, \lambda}q_{\lambda+\mu+\nu}w$ also lies in $\mathcal{I}$.
                     That is, we need to show $d_{r+1}\geq 1$ where
              $gr_{[r+1, r+1]}( q_{\lambda+\mu+\nu}w)=d_{r+1} \mathbf{e}_{r+1}$, whenever $N_{u, v}^{w, \lambda}\neq 0$.
              Clearly, this is true if either $\mu$ or $\nu$ lies in $Q^\vee\setminus Q^\vee_P$, which follows directly from Definition \ref{defgrading}.
             When $\mu, \nu\in Q^\vee_P$, we must have $u\in W\setminus W_P$. Then we shall $gr_{[r+1, r+1]}( q_{\lambda}w)\geq  \mathbf{e}_{r+1}$ whenever
              $N_{u, v}^{w, \lambda}\neq 0$, by using induction on $\ell(v)$.

        If $\ell(v)=0$, then $v=\mbox{id}$ and it is done. If $\ell(v)=1$, then $v$ is a simple reflection and therefore
            we can use quantum Chevalley formula (Proposition \ref{quanchevalley}).
         When $\ell(us_\gamma)=\ell(u)+1$, we take the decomposition $us_\gamma=v_{r+1}\cdots v_1$ associated to $(\Delta_P, \Upsilon)$,
          and note $u$ is obtained by deleting  a reflection in some $v_m$. Since $u\notin W_P$, we conclude $v_{r+1}\neq 1$. In particular,
          we have  $gr_{[r+1, r+1]}(us_\gamma)\geq  \mathbf{e}_{r+1}$. When  $\ell(us_\gamma)=\ell(u)+1-\langle 2\rho, \gamma^\vee\rangle$, then we also conclude
            $gr_{[r+1, r+1]}(q_{\gamma^\vee} us_\gamma) \geq  \mathbf{e}_{r+1}$, by noting
            $gr_{[r+1, r+1]}(q_{\gamma^\vee}us_\gamma)\geq gr_{[r+1, r+1]}(us_\gamma)=gr_{[r+1, r+1]}(u)  \geq  \mathbf{e}_{r+1}$ if $\gamma\in Q^\vee_P$, and
            $gr_{[r+1, r+1]}(q_{\gamma^\vee}us_\gamma)\geq gr_{[r+1, r+1]}(q_{\gamma^\vee}) \geq  2\mathbf{e}_{r+1}$ if $\gamma\notin Q^\vee_P$.
     Thus  if $\ell(v)=1$,  then $\sigma^v\star \mathcal{I}\subset \mathcal{I}$. Now we assume $\ell(v)>1$.   By Lemma \ref{lemmaformainthm}, there exist $v'\in W$ and  $1\leq j\leq n$ such that
                     $gr(v)=gr(v')+gr(s_j)$ and $\sigma^{v'}\star \sigma^{s_j}=c \sigma^v+\sum_{ {w, \lambda}}c_{w, \lambda} q_{\lambda}w$,
         where $c>0$ and the summation is  only over  finitely many non-zero terms  for which  $c_{w, \lambda}>0$.
       In particular, we have $\ell(v')=\ell(v)-1$. Using the induction hypothesis, we have $\sigma^{v'}\star \mathcal{I}\subset \mathcal{I}$.
        Thus
   $(c \sigma^v+\sum_{ {w, \lambda}}c_{w, \lambda} q_{\lambda}w)  \star \mathcal{I} =\sigma^{s_j}\star(\sigma^{v'}
   \star \mathcal{I})\subset \mathcal{I}$.
 Since   all the  structure constants are non-negative, there is no cancellation in the summation on the left hand side of the equality.
       Hence, we conclude $\sigma^v   \star \mathcal{I}\subset \mathcal{I}$. %
 \end{proof}

It remains to show the second half of Theorem \ref{inclusisom}. There are combinatorial characterizations of
   $QH^*(G/B)$ (see e.g. \cite{mare00}), or more generally on its torus-equivariant extension \cite{mih}.  In particular, intuitively, $QH^*(G/B)$ should
     also have a non-equivariant version of Mihalcea's criterion
  \cite{mih} for torus-equivariant quantum cohomology of $G/B$. That is, an algebra $(\bigoplus_{w\in W}\mathbb{Q}[\mathbf{q}]\sigma^w, *)$
         should  be isomorphic to $QH^*(G/B)$ as algebras, if it satisfies quantum Chevalley formula together with some natural properties (e.g. commutativity and
            associativity). However, we did not find any explicit reference for this. In our case, we obtain a natural  algebra of this form which
              has one more (strong) property saying that  $(\bigoplus_{w\in W}\mathbb{Q}[\mathbf{q}]\sigma^w, *)|_{\mathbf{q}=\mathbf{0}}$ is
                canonically isomorphic to $H^*(G/B)$. Thus it becomes easy to show the algebra isomorphism (by using induction).
                We would like to thank
                  A.-L. Mare and  L. C. Mihalcea for their comments for such a criterion and the proof.

\begin{proof}[Proof of Theorem \ref{inclusisom}]
  Due to Lemma \ref{lemforinclus}, it remains to show $QH^*(P/B)$ is canonically isomorphic to $QH^*(G/B)/\mathcal{I}$.
  Note that $P/B$ is isomorphic to the complete flag variety determined by the pair $(\Delta_P, \emptyset)$.
  Hence,  $QH^*(P/B)$ has a natural basis of Schubert classes $\{\sigma^w~|~ w\in W_P\}$ over $\mathbb{Q}[q_1, \cdots, q_r]$,
   and the formula of  $\sigma^{u}\star_f \sigma^{s_i}$ (where $u\in W_P$ and $\alpha_i\in \Delta_P$) is directly obtained from
     Proposition \ref{quanchevalley} by restriction of $\gamma\in \Delta$ to $\gamma\in \Delta_P$ in the summation.
     Here we denote the quantum product of  $QH^*(P/B)$ by $\star_f$, in order to distinguish it with the quantum product $\star$ of
         $QH^*(G/B)$.
      On the other hand,  $QH^*(G/B)/\mathcal{I}$ has a natural algebra structure induced from  $QH^*(G/B)$. Thus it is also
       commutative and associative, and we denote the product of it by the same  $\star$  by abuse of notations.

     It is clear that for any $wq_\lambda \in W\times Q^\vee$, $gr_{[r+1, r+1]}(wq_\gamma)=\mathbf{0}$, i.e. $wq_\lambda\not\in \mathcal{I}$,
       if and only if $wq_\lambda \in W_P\times Q^\vee_P$.
      We define a map  $\varphi: QH^*(G/B)\longrightarrow QH^*(P/B)$,  given  by    $\varphi(q_\lambda)=q_\lambda$ if $wq_\lambda\not\in \mathcal{I}$, or
                     $0$ if $wq_\lambda\in \mathcal{I}$.
            Clearly,  $\varphi$ induces
                    a natural isomorphism $\bar\varphi$ of vector spaces,
          $\bar\varphi:  QH^*(G/B)/\mathcal{I}\longrightarrow QH^*(P/B)$, given by $\bar\varphi(\overline{wq_\lambda}):=\varphi(wq_\lambda).$
       In particular, it is easy to check
          $\bar \varphi(\overline{\sigma^{s_i}}\star\overline{\sigma^{s_j}})= \sigma^{s_i}\star_f \sigma^{s_j}$
                for any $\alpha_i, \alpha_j\in \Delta_P$.
                It is a
         well-known fact that $QH^*(P/B)$ is generated by $\{\sigma^{s_\alpha}~|~\alpha\in \Delta_P\}$  over $\mathbb{Q}[q_1, \cdots, q_r]$.
         Thus it is
          sufficient  to show  $QH^*(G/B)/\mathcal{I}$ is generated by  $\{\overline{\sigma^{s_\alpha}}~|~\alpha\in \Delta_P\}$.
         Since our filtration on $QH^*(G/B)$ generalizes the classical filtration on $H^*(G/B)$ (by Proposition  \ref{qredclas}) naturally,
          $QH^*(G/B)/\mathcal{I}\big|_{\overline{q_\lambda}=0}$ is canonically isomorphic to $H^*(P/B)$ as  algebras. In particular,
           it is generated by $\{\overline{\sigma^{s_1}}, \cdots, \overline{\sigma^{s_r}}\}$ with respect to the induced cup product.
            Hence, the statement follows by using quantum Chevalley formula and induction (for instance
               one can follow the proof of Lemma 2.1 of \cite{sietian} exactly).
   \end{proof}

  \begin{remark}
     For the  classical case, the induced  map $i^*: H^*(G/B)\longrightarrow H^*(P/B)$ is given by
           $i^*(\sigma^w)=\sigma^w$ if $w\in W_P$, or $0$ otherwise. And the ideal $I$ is given by
             $I=\mathbb{Q}\{\sigma^w~|~ w=vu \mbox{ with } u\in W_P, v\in W^P, v\neq 1\}$. Note that  for any $w\in W$, $w\notin I$ if and only if
               $gr_{[r+1, r+1]}(\sigma^w)=\mathbf{0}$. Clearly,
              $\mathcal{I}$ is a $\mathbf{q}$-deformation of $I$   and $\varphi$ is a natural generalization of $i^*$.

  \end{remark}

\begin{lemma}\label{achieve}
  Let $w=v_{r+1}\cdots v_1$ be the decomposition of $w\in W$ associated to $(\Delta_P, \Upsilon)$. For any $1\leq m\leq \varsigma$, the followings hold.
  \begin{enumerate}
    \item If  $\ell(v_m)<m$, then there exists $\gamma\in R^+$ such that
                 $\langle \chi_m, \gamma^\vee\rangle=1$,
               $\ell(w s_\gamma)=\ell(w)+1$ and  $gr(w s_\gamma)=gr(w)+\mathbf{e}_m$.
    \item If  $\ell(v_m)=m$, then there exists $\gamma\in R^+$ such that
                 $\langle \chi_m, \gamma^\vee\rangle=1$,
               $\ell(w s_\gamma)=\ell(w)+1-\langle 2\rho, \gamma^\vee\rangle$ and  $gr(q_{\gamma^\vee}w s_\gamma)=gr(w)+\mathbf{e}_m$.

  \end{enumerate}
\end{lemma}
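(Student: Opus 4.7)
The plan is to construct the witness $\gamma \in R^+$ explicitly in each part. For part (1), write $v_m = u_{i_m}^{(m)}$ with $0 \leq i_m < m$, and take $\beta := \alpha_{m-i_m} + \alpha_{m-i_m+1} + \cdots + \alpha_m \in R^+_{P_m}$. Setting $\gamma := (v_{m-1}\cdots v_1)^{-1}(\beta)$, one checks that $\gamma \in R^+$ and $\langle \chi_m, \gamma^\vee \rangle = \langle \chi_m, \beta^\vee\rangle = 1$, since $W_{P_{m-1}}$ fixes the fundamental weight $\chi_m$ and preserves positivity of roots with non-zero $\alpha_m$-coefficient. A direct application of Lemma \ref{prodAtype} yields $v_m s_\beta = u_{i_m+1}^{(m)}$, and conjugating gives $ws_\gamma = v_{r+1}\cdots v_{m+1}\, u_{i_m+1}^{(m)}\, v_{m-1}\cdots v_1$, which is already the canonical decomposition. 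This forces $\ell(ws_\gamma) = \ell(w) + 1$ and $gr(ws_\gamma) = gr(w) + \mathbf{e}_m$, finishing part (1).

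For part (2), $v_m = u_m^{(m)}$ is forced to be the longest element of $W_{P_m}^{P_{m-1}}$, and I would take the quantum-type candidate $\gamma := (v_{m-1}\cdots v_1)^{-1}(\alpha_m)$, which is again positive with $\langle \chi_m, \gamma^\vee\rangle = 1$. Using $u_m^{(m)} s_m = u_{m-1}^{(m-1)}$, conjugation yields
\[
ws_\gamma \;=\; v_{r+1}\cdots v_{m+1}\, u_{m-1}^{(m-1)}\, v_{m-1}\cdots v_1.
\]
Since $\gamma$ lies in the type-$A_m$ sub-root-system generated by $\alpha_1,\ldots,\alpha_m$, the standard length-height formula for type $A$ (which survives to the ambient $W$ since reduced expressions in $W_{P_m}$ are reduced in $W$) gives $\ell(s_\gamma) = 2\,\mathrm{ht}(\gamma) - 1 = \langle 2\rho, \gamma^\vee\rangle - 1$, and the required length identity $\ell(ws_\gamma) = \ell(w) + 1 - \langle 2\rho, \gamma^\vee\rangle$ reduces to a length identity in $W_{P_{m-1}}$ which follows by iterated application of the four-case product formula of Lemma \ref{prodAtype}.

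The main obstacle is verifying the grading identity $gr(q_{\gamma^\vee}\, ws_\gamma) = gr(w) + \mathbf{e}_m$ in part (2). The $\mathbf{e}_j$-coordinates for $j \geq m$ match by inspection (the $j > m$ parts are unchanged, and the $\mathbf{e}_m$-entry is $m+1$ on both sides, arising respectively from $gr(q_m)$ on the left and $\ell(v_m)+1$ on the right). Writing $(v_{m-1}\cdots v_1)^{-1}(\alpha_m^\vee) = \alpha_m^\vee + \sum_{j=1}^{m-1} c_j\, \alpha_j^\vee$, the remaining identity in $\bigoplus_{j=1}^{m-1}\mathbb{Z}\mathbf{e}_j$ becomes
\[
-(m-1)\mathbf{e}_{m-1} + \sum_{j=1}^{m-1} c_j\, gr(q_j) + gr\bigl(u_{m-1}^{(m-1)}\, v_{m-1}\cdots v_1\bigr) \;=\; gr(v_{m-1}\cdots v_1).
\]
I would prove this by induction on $\ell(v_{m-1}\cdots v_1)$. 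The base case $v_{m-1}\cdots v_1 = 1$ is direct: $\gamma = \alpha_m$, all $c_j$ vanish, and $u_{m-1}^{(m-1)}$ is itself the longest element of $W_{P_{m-1}}^{P_{m-2}}$, contributing exactly the $(m-1)\mathbf{e}_{m-1}$ needed to cancel the term coming from $gr(q_m)$. The inductive step is the delicate point: removing a single simple reflection from some $v_k$ simultaneously modifies the coefficients $c_j$ (via the $W_{P_{m-1}}$-action on $\alpha_m^\vee$) and the canonical decomposition of $u_{m-1}^{(m-1)}\, v_{m-1}\cdots v_1$ (via the four-case reduction of Lemma \ref{prodAtype}), and these two changes must be shown to cancel in a coordinated fashion. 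An attractive shortcut is to combine the Key Lemma's upper bound $gr(q_{\gamma^\vee}ws_\gamma) \leq gr(w) + \mathbf{e}_m$ with the coordinate-sum identity $|gr(q_{\gamma^\vee}ws_\gamma)| = \ell(w)+1 = |gr(w) + \mathbf{e}_m|$ from Lemma \ref{coroofsumgrad}(1), and then perform a lexicographic case analysis on the leading coordinate of any possible discrepancy to force the equality.
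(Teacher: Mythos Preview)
Your part (1) is correct and is exactly the paper's argument.

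For part (2), your candidate $\gamma := (v_{m-1}\cdots v_1)^{-1}(\alpha_m)$ is in fact the \emph{same} root the paper uses, but the paper identifies it explicitly: setting $k := 1 + \max\{\,j \mid i_j = 0,\ 0\le j\le m-1\,\}$ (with $i_j := \ell(v_j)$, $i_0:=0$), one has $\gamma = \alpha_k + \alpha_{k+1} + \cdots + \alpha_m$. You can check this by observing that $v_j^{-1}$ sends $\alpha_{j+1}+\cdots+\alpha_m$ to $\alpha_j+\cdots+\alpha_m$ when $i_j\ge 1$ and fixes it when $i_j=0$, and that $v_j^{-1}$ fixes any $\alpha_a+\cdots+\alpha_m$ with $a>j+1$. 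This explicit description is precisely the missing ingredient in your argument: once $\gamma^\vee = \sum_{j=k}^m \alpha_j^\vee$ is known, Proposition~\ref{graquanvar} gives $gr(q_{\gamma^\vee}) = (1-k)\mathbf{e}_{k-1} + (m+1)\mathbf{e}_m + \sum_{j=k}^{m-1}\mathbf{e}_j$ at once, and a short direct manipulation (using $u_{i_j}^{(j)} s_{\gamma_j} = s_{\gamma_{j+1}} u_{i_j}^{(j)}$ for $\gamma_j:=\alpha_j+\cdots+\alpha_m$, together with Lemma~\ref{prodAtype}) produces the canonical decomposition
\[
u\,s_\gamma \;=\; u_{i_{m-1}-1}^{(m-1)}\cdots u_{i_k-1}^{(k)}\; u_{k-1}^{(k-1)}\; u_{i_{k-2}}^{(k-2)}\cdots u_{i_1}^{(1)}
\qquad (u:=v_m\cdots v_1),
\]
from which both the length identity and the grading identity are read off by inspection.

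Your proposed ``shortcut'' --- Key Lemma inequality $gr(q_{\gamma^\vee}ws_\gamma)\le gr(w)+\mathbf{e}_m$ plus the coordinate-sum constraint from Lemma~\ref{coroofsumgrad}(1) --- has a genuine gap: lexicographic $\le$ together with equal coordinate sums does \emph{not} force equality in $\mathbb{Z}^{r+1}$ (for instance $(0,2)<(1,1)$ with equal sums), and there is no ``leading-coordinate case analysis'' that will close this without further structural information about $gr(q_{\gamma^\vee})$ --- which is exactly what the explicit form of $\gamma$ supplies. Your alternative induction on $\ell(v_{m-1}\cdots v_1)$ could in principle be made to work, but you have not carried it out, and as you anticipated it requires tracking a coupled change in the $c_j$'s and in the canonical decomposition of $u_{m-1}^{(m-1)}v_{m-1}\cdots v_1$; the paper sidesteps this entirely via the explicit $\gamma$.
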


 \begin{proof}
   Note that $Dyn(\{\alpha_1, \cdots, \alpha_\varsigma\})$ is of $A$-type. We have
     $v_k=u_{i_k}^{(k)}$ with $i_k=\ell(v_k)$ whenever $1\leq k\leq \varsigma$.

   (1) If $i_m<m$, we set $\gamma:=(v_{m-1}\cdots v_1)^{-1}(\alpha_{m-{i_m}}+\alpha_{m-i_m+1}+\cdots+\alpha_m)$.
        Then $\gamma$ is of the form $\alpha_m+\sum_{j=1}^{m-1}a_j\alpha_j\in R$. Thus $\gamma\in R^+$ and   $\langle \chi_m, \gamma^\vee\rangle=1$.
         Furthermore, we conclude
    $ws_{\gamma}=v_{r+1}\cdots v_{m+1}u_{i_m+1}^{(m)}v_{m-1}\cdots v_1$. Thus $(1)$ follows.

   (2) Denote    $k=1+\max\{j~|~ i_j=0, 0\leq j\leq m-1\}$, where $i_0:=0$, and set $\gamma=\alpha_k+\alpha_{k+1}+\cdots+\alpha_m\in R^+$.
       Clearly,  $\langle \chi_m, \gamma^\vee\rangle=1$.    For each $j\leq m$, we denote $\gamma_j=\alpha_j+\alpha_{j+1}+\cdots+\alpha_{m}$.
        Then   $\gamma_{j}=\gamma_{j+1}+\alpha_j=s_j(\gamma_{j+1})$. Thus for any $i_j\geq 1$
        we have  $u_{i_j}^{(j)}s_{\gamma_j}=u_{i_j}^{(j)}s_js_{\gamma_{j+1}}s_j=u_{i_j-1}^{(j-1)}s_{\gamma_{j+1}}s_j=s_{\gamma_{j+1}}u_{i_j-1}^{(j-1)}s_j=
            s_{\gamma_{j+1}}u_{i_j}^{(j)}$.
         Furthermore, we have
           $s_1s_2\cdots s_j u_{i_j}^{(j)}=u_{i_j-1}^{(j)}s_1s_2\cdots s_{j-1}$  by Lemma \ref{prodAtype}.
         Note that $\gamma=\gamma_k, \gamma_m=\alpha_m$ and denote $u=v_m\cdots v_1$. Hence,
         \begin{align*} u s_\gamma =u_{i_m}^{(m)}\cdots u_{i_k}^{(k)}s_{\gamma_k} u_{i_{k-2}}^{(k-2)}\cdots u_{i_1}^{(1)}
                              &= u_{i_m}^{(m)}\cdots u_{i_{k+1}}^{(k+1)} s_{\gamma_{k+1}} u_{i_k}^{(k)}s_\gamma u_{i_{k-2}}^{(k-2)}\cdots u_{i_1}^{(1)}\\
                                  &=u_{i_m}^{(m)}s_{\gamma_m}u_{i_{m-1}}^{(m-1)}\cdots u_{i_k}^{(k)} u_{i_{k-2}}^{(k-2)}\cdots u_{i_1}^{(1)}\\
                                  &=s_1s_2\cdots s_{m-1}u_{i_{m-1}}^{(m-1)}\cdots u_{i_k}^{(k)} u_{i_{k-2}}^{(k-2)}\cdots u_{i_1}^{(1)}\\
                                 &=u_{i_{m-1}-1}^{(m-1)}\cdots u_{i_k-1}^{(k)}(s_1\cdots s_{k-1}) u_{i_{k-2}}^{(k-2)}\cdots u_{i_1}^{(1)}.
         \end{align*}
        Note that $i_m=m$ and $i_j=\ell(v_j)$ for $j\leq \varsigma$. Thus
            \begin{align*} \ell(w s_\gamma)& =\sum\nolimits_{p=m+1}^{r+1}\ell(v_p)+\big(\sum\nolimits_{j=k}^{m-1}(i_j-1)\big)+k-1+ \sum\nolimits_{j=1}^{k-2}i_j \\
                                           &= \sum\nolimits_{p=1}^{r+1}\ell(v_p) -(m-k)+k-1-m\\
                                           &=\ell(w)+1-2\langle \rho, (\alpha_k+\cdots+\alpha_m)^\vee\rangle
                                            =\ell(w)+1-\langle 2\rho, \gamma^\vee\rangle.
            \end{align*}
     Furthermore, we conclude  $gr(q_{\gamma^\vee}w s_\gamma)=gr(w)+\mathbf{e}_m$, by noting that $ i_m=m$, $i_{k-1}=0$ and
        $gr(q_{\gamma^\vee})=(1-k)\mathbf{e}_{k-1}+(m+1)\mathbf{e}_m+\sum_{j=k}^{m-1}\mathbf{e}_j$.
     \end{proof}

\bigskip

 Since $QH^*(G/B)$ has  an $S$-filtration  $\mathcal{F}$, we obtain an associated  $S$-graded algebra
      $Gr^{\mathcal{F}}(QH^*(G/B))=\bigoplus_{\mathbf{a}\in S} Gr_\mathbf{a}^{\mathcal{F}},\mbox{ where }
                Gr_{\mathbf{a}}^{\mathcal{F}}:=F_{\mathbf{a}}\big/\cup_{\mathbf{b}<\mathbf{a}}F_{\mathbf{b}}.$
For each $j\leq r+1$, we denote
    $Gr_{(j)}^{\mathcal{F}}(QH^*(G/B)):=\bigoplus_{i\geq 0}Gr_{i\mathbf{e}_j}^{\mathcal{F}}.$ Note that
for the iterated fibration $\{P_{j-1}/P_0\rightarrow P_j/P_0\longrightarrow P_j/P_{j-1}\}_{j=2}^{r+1}$ associated to
     $(\Delta_P, \Upsilon)$, we have $P_{r+1}/P_{r}=G/P$ and $P_j/P_{j-1}\cong \mathbb{P}^j$ whenever $j\leq \varsigma$.
 Take
    the canonical isomorphism  $QH^*(\mathbb{P}^k)\cong {\mathbb{Q}[x_k, t_k]\over \langle x_k^{k+1}-t_k\rangle }$  for each $k\leq \varsigma$. Then we can state Theorem
     \ref{isomfordirection} more concretely  as follows (in which we denote $u^{(0)}_0:= 1$).

\vspace{0.25cm}

\noindent\textbf{Theorem \ref{isomfordirection}.} {\itshape
   There exist   canonical
    isomorphisms $\Psi_k$'s of algebras as follows.
        \begin{enumerate}
    \item[] \hspace{-1.0cm}{\upshape}  For $k\leq \varsigma$, \,\,$\,\Psi_k: QH^*(\mathbb{P}^k) \longrightarrow  Gr_{(k)}^{\mathcal{F}}(QH^*(G/B));
              \quad x_k\mapsto            \overline{u^{(k)}_1},\,\,  t_k\mapsto \overline{q_k u_{k-1}^{(k-1)}}  $.
     \item[]\hspace{-1.0cm} 
          $\Psi_{\varsigma+1}: QH^*(P_{\varsigma+1}/P_\varsigma) \longrightarrow  Gr_{(\varsigma+1)}^{\mathcal{F}}(QH^*(G/B));
                  \,\,     q_{\lambda_{P_\varsigma}}\sigma^v\mapsto           \overline{\psi_{\Delta_{\varsigma+1}, \Delta_{\varsigma}}(q_{\lambda_{P_\varsigma}}\sigma^v)}.$
   \end{enumerate}
 }
 In particular if $\Delta_P$ is of $A$-type (i.e. if $\varsigma=r$), then we have $P_{\varsigma+1}/P_\varsigma=G/P$, $\Delta_{r+1}=\Delta$ and $\Delta_\varsigma=\Delta_P$.
 Thus in this case, Theorem \ref{isomfordirection} gives  an   isomorphism
                $QH^*(G/P) \overset{\simeq}{\longrightarrow}  Gr_{(r+1)}^{\mathcal{F}}(QH^*(G/B))$.
  \begin{proof}[Proof of Theorem \ref{isomfordirection}] By Lemma \ref{uniqAtypegrad}, for any $\mathbf{a}\in \bigoplus_{i=1}^\varsigma \mathbb{Z}\mathbf{e}_i$
             there exists a unique  $q_\lambda u\in QH^*(G/B)$ such that $gr(q_\lambda u)=\mathbf{a}$.  
             Thus
                $\dim_\mathbb{Q} Gr^{\mathcal{F}}_{\mathbf{a}}=1$ and $Gr^{\mathcal{F}}_\mathbf{a}=\mathbb{Q}\overline{q_\lambda u}$. Then
                      we   simply denote   $A_\mathbf{a}:=\overline{q_\lambda u}$.
              In particular, we conclude
                 $A_{\mathbf{a}}\star A_{\mathbf{e}_j}=c_jA_{\mathbf{a}+\mathbf{e}_j}$ whenever  $j\leq  \varsigma$. Furthermore, we have
                  $c_j=1$ by Lemma \ref{achieve}.
      When $\ell(v)>1$, there  exists    $v'\in W_{P_\varsigma}$ satisfying $gr(v')+gr(s_p)=gr(v)$ with $p\leq \varsigma$ by using Lemma \ref{achieve} again.
 Thus by induction on $\ell(v)$, we conclude  $A_{\mathbf{a}}\star A_{gr(v)}=A_{\mathbf{a}+gr(v)}$ for any $v\in W_{P_\varsigma}$.  Hence,
    $A_{\mathbf{a}}\star A_{\mathbf{b}}=A_{\mathbf{a}+\mathbf{b}}$ for any ${\mathbf{a}}, {\mathbf{b}}\in \bigoplus_{i=1}^\varsigma \mathbb{Z}\mathbf{e}_i$.
    As a consequence, we obtain a  canonical isomorphism   $QH^*(\mathbb{P}^k)\cong Gr^{\mathcal{F}}_{(k)}(QH^*(G/B))$ for each $1\leq k\leq \varsigma$,
     given by
             $x_k\mapsto\overline{u^{(k)}_1} $ and $t_k\mapsto\overline{q_k u_{k-1}^{(k-1)}}$. 

  To analyze $\Psi_{\varsigma+1}$, we need to compare the algebra structure of $QH^*(P_{\varsigma+1}/P_{\varsigma})$ with
               the   filtered-algebra structure of $QH^*(G/B)$.
         Note that if $\varsigma=r-1$ then $P_{\varsigma+1}/B=P/B$. Due to Theorem \ref{inclusisom},
           essentially we need to compare  $QH^*(P_{\varsigma+1}/P_{\varsigma})$ with  $QH^*(P/B)$ by using Peterson-Woodward comparison formula in this case.
            Thus without loss of generality, we can assume   $\varsigma=r$ in the rest, which  is of main
             interest to us and can bring convenience on the notations.

       Denote the quantum product of $QH^*(G/P)$ by $\star_P$. Write  $\psi_{\Delta, \Delta_P}(q_{\lambda_P}\sigma^v)=q_{\lambda_B}\sigma^{v\omega_P\omega'}$, where
               $q_{\lambda_P}\sigma^v\in QH^*(G/P)$. Then we have
         $gr_r(q_{\lambda_B}\sigma^{v\omega_P\omega'})=\mathbf{0}$  by Proposition \ref{allgradcomp}.
        On the other hand, if $gr_r(q_\lambda \sigma^{vu})=\mathbf{0}$ with $\lambda_P=\lambda+Q^\vee_P$ and $u\in W_P$,
        then we conclude $gr(q_{\lambda_B-\lambda}\omega_P\omega')=gr(u)$ where $\lambda_B-\lambda\in Q_P^\vee$.
            By the uniqueness (from Lemma \ref{uniqAtypegrad}), we conclude $\lambda_B=\lambda$ and $\omega_P\omega'=u$.
            Hence, $\Psi_{r+1}$ is an isomorphism of vector spaces.

    By  Proposition \ref{comparison}, we have
                  $\Psi_{r+1}(\sigma^u \star_P \sigma^v)=
                   \Psi_{r+1}(\sigma^u)\star \Psi_{r+1}(\sigma^v)$ for  $u, v\in W^P$.
      To show $\Psi_{\varsigma+1}$ is an algebra isomorphism, it remains to show (i)
       $\Psi_{r+1}(q_{\lambda_P}\star_Pq_{\mu_P})=\Psi_{r+1}(q_{\lambda_P})\star  \Psi_{r+1}(q_{\mu_P})$ and (ii)
        $\Psi_{r+1}(q_{\lambda_P} \star_P \sigma^v)
                  = \Psi_{r+1}(q_{\lambda_P})\star \Psi_{r+1}(\sigma^v)$.
     For (i) we write $\lambda_P=\lambda'+Q^\vee_P$ and $\mu_P=\mu'+Q^\vee_P$ where $\lambda', \mu'$ are
      elements in $\bigoplus_{\alpha\in \Delta\setminus\Delta_P}\mathbb{Z} \alpha^\vee$.
          Note that $gr_{[r+1, r+1]}(q_{\lambda'})-gr_{[r+1, r+1]}(q_{\lambda_B}\omega_P\omega')=\mathbf{0}$.
            Hence, $q_{\lambda_B}\omega_P\omega'=q_{\lambda'}x$ with $x$ being the unique element in $W_P\times Q^\vee_P$ determined
             by the grading $-gr_r(\lambda')=:\mathbf{a}$.
               Similarly, we   $\psi_{\Delta, \Delta_P}(q_{\mu_P})=q_{\mu'}y$ and
                $ \psi_{\Delta, \Delta_P}(q_{\lambda_P+\mu_P})=q_{\lambda'+\mu'}z$  where $gr(y)=-gr_r(q_{\mu'})=:\mathbf{b}$ and $gr(z)=-gr_r(q_{\lambda'+\mu'})$.
            Hence, $\Psi_{r+1}(q_{\lambda_P})\star  \Psi_{r+1}(q_{\mu_P})=\overline{ q_{\lambda'}x}\star \overline{ q_{\mu'}y}
                       =\overline{ q_{\lambda'+\mu'}}\star A_\mathbf{a}\star A_{\mathbf{b}}=\overline{ q_{\lambda'+\mu'}}\star A_{\mathbf{a}+\mathbf{b}}=
                       \Psi_{r+1}(q_{\lambda_P+\mu_P})$.
        For (ii), we first           conclude $\overline{\sigma^{s_j}}\star \overline{\sigma^{v}}=
            \overline{\sigma^{vs_j}}$ where $1\leq j\leq r$ and $v\in W^P$,
                        by Proposition \ref{gracomponemore}.
          Thus by induction on $\ell(u)$ where $u\in W_P$, we conclude
           $\overline{\sigma^{u}}\star \overline{\sigma^{v}}=
            \overline{\sigma^{vu}}$.
            As a consequence,  (ii) follows.
                        Hence, $\Psi_{r+1}$ is an algebra isomorphism.
\end{proof}

As a consequence, we obtain the following.

 \vspace{0.25cm}

\noindent\textbf{Theorem \ref{baseisom}.}
{\itshape
Denote $\Gamma:=\{gr(q_{\lambda} {w})~|~   gr(q_{\lambda} {w})<\mathbf{0},   q_{\lambda} {w} \in QH^*(G/B)\}$.
  Let  $\mathcal{A}=\bigoplus_{gr(q_\lambda \sigma^w)\in \mathbb{Z}\mathbf{e}_{r+1}\cup \Gamma} \mathbb{Q}q_\lambda\sigma^w$ and
  $\mathcal{J}=\bigoplus_{gr(q_\lambda \sigma^w)\in   \Gamma} \mathbb{Q}q_\lambda\sigma^w$.
 Then $\mathcal{A}$ is a subalgebra of  $QH^*(G/B)$ and $\mathcal{J}$ is an ideal of $\mathcal{A}$.
 Furthermore if $\Delta_P$ is of $A$-type, then  there is a canonical algebra isomorphism:
    \begin{align*}
          &QH^*(G/P) \overset{\simeq}{\longrightarrow} \mathcal{A}/\mathcal{J};\\
           &\,\quad\quad q_{\lambda_P}\sigma^v\mapsto \psi_{\Delta, \Delta_P}(q_{\lambda_P}\sigma^v)+\mathcal{J}.
    \end{align*}
}

\begin{proof}
    Note that for any $q_\lambda \sigma^w\in QH^*(G/B)$,  $gr(q_\lambda \sigma^w)\in \mathbb{Z}\mathbf{e}_{r+1}$
    if and only if $gr(q_\lambda \sigma^w)\in \mathbb{Z}_{\geq 0}\mathbf{e}_{r+1}$.
  Clearly, $\mathbb{Z}_{\geq 0}\mathbf{e}_{r+1}\cup \Gamma$ is a sub-semigroup of $S$. Hence, $\mathcal{A}$ is a subalgebra of $QH^*(G/B)$, due to Theorem \ref{mainthm}.
  From Definition \ref{defgrading}, we note $gr_{[r+1, r+1]}(q_\lambda w)\geq \mathbf{0}$ whenever $q_\lambda w\in QH^*(G/B)$. Thus
    for such element,  $gr(q_\lambda w)<\mathbf{0}$ if and only if  $gr_{r}(q_\lambda w)< \mathbf{0}$. In particular, we conclude
     $\mathcal{J}$ is an ideal of $\mathcal{A}$, by use Theorem \ref{mainthm} again.
      As a consequence, we obtain a natural isomorphism $\mathcal{A}/\mathcal{I}\overset{\simeq}{\longrightarrow} Gr_{(r+1)}^{\mathcal{F}}(QH^*(G/B))$.
      Hence, the statement follows from Theorem \ref{isomfordirection}.
\end{proof}

\begin{remark}
   In fact,   $\mathcal{A}=\bigcup_{i\geq 0} F_{i\mathbf{e}_{r+1}}$. If we use the    $\mathbb{Z}^{r+1}$-filtration on $QH^*(G/B)$ that is naturally extended
    from the $S$-filtration, then we have $\mathcal{J}=F_{-\mathbf{e}_{r+1}}$.
   Furthermore it is obvious that
       $\mathcal{A}$, $\mathcal{J}$ are $\mathbf{q}$-deformations of $A=\pi^*(H^*(G/P))$ and $J=0$  respectively.
       Note that $\pi^*(\sigma^v)=\sigma^v$ for any $v\in W^P$.   $\psi_{\Delta, \Delta_P}$  is a natural generalization of $\pi^*$.

\end{remark}
Recall that in Definition \ref{defgrading}, we have given the gradings for all $q_\lambda w$'s.
  Clearly, $S$ is  contained in $\mathbb{Z}^r\times \mathbb{Z}_{\geq 0}$  as a sub-semigroup.
Combining Lemma \ref{uniqAtypegrad} and (part of) the proof of Lemma \ref{lemmasemigp}, we conclude
        that $S$ is naturally
  extended to the whole   $\mathbb{Z}^r\times \mathbb{Z}_{\geq 0}$ with negative powers of $\{q_1, \cdots, q_r\}$ allowed.  That is,
     $\mathbb{Z}^r\times \mathbb{Z}_{\geq 0}=\{gr(q_\lambda \sigma^w)~|~ q_\lambda\sigma^w\in QH^*(G/B)[q_1^{-1}, \cdots, q_{r}^{-1}]\}$.
       Therefore we obtain a natural $\mathbb{Z}^r\times \mathbb{Z}_{\geq 0}$-filtration
  on $QH^*(G/B)[q_1^{-1}, \cdots, q_{r}^{-1}]$, making it a $\mathbb{Z}^r\times \mathbb{Z}_{\geq 0}$-filtered algebra,  due to Theorem \ref{mainthm}.
  By abuse of notations, we also denote this filtration as $\mathcal{F}$.
  Consequently, we obtain a natural embedding  of graded algebras $Gr^{\mathcal{F}}(QH^*(G/B))\hookrightarrow
             Gr^{\mathcal{F}}(QH^*(G/B)[q_1^{-1}, \cdots, q_{r}^{-1}])$.
 For simplicity, we assume $\Delta_P$ is of $A$-type. Then
       for each $1\leq k\leq r$, we  note $\Psi_k(t_k^{k})=\overline{\prod_{i=1}^kq_i^{i}}$.
 By defining $t_k^{-1}\mapsto  {\Psi_k(t_k^{k-1})\star\overline{\prod_{i=1}^kq_i^{-i}}}$,
  we can extend the algebra isomorphism $\Psi_k$ to a larger algebra isomorphism
    $$QH^*(\mathbb{P}^k)[t_k^{-1}] \overset{\simeq}{\longrightarrow}  \bigoplus_{j\in \mathbb{Z}}Gr_{j\mathbf{e}_k}^{\mathcal{F}}(QH^*(G/B)[q_1^{-1}, \cdots, q_{r}^{-1}]),$$
        which we also  simply  denote   as $\Psi_k$.
Thus the next theorem follows as a direct consequence of   Theorem \ref{mainthm} and Theorem \ref{isomfordirection}.

 \vspace{0.25cm}

 \noindent\textbf{Theorem \ref{localizisom}.}
 {\itshape
   $QH^*(G/B)[q_1^{-1}, \cdots, q_{r}^{-1}]$ has a $\mathbb{Z}^{r} \times\mathbb{Z}_{\geq 0}$-filtration $\mathcal{F}$.
 If  $\Delta_P$ is of $A$-type, then combining $\Psi_k$'s gives an  isomorphism   of  $\mathbb{Z}^r\times \mathbb{Z}_{\geq 0}$-graded algebras,
$$ \Psi: \,\,\big(\bigotimes_{k=1}^r QH^*(\mathbb{P}^k)[t_k^{-1}]\big)\bigotimes
                  QH^*(G/P)\longrightarrow  Gr^{\mathcal{F}}(QH^*(G/B)[q_1^{-1}, \cdots, q_{r}^{-1}]).$$
 That is, $\Psi=\Psi_1\star \cdots\star \Psi_{r+1}: \big(\otimes_{k=1}^rf_k\big)\otimes q_{\lambda_P}\sigma^v             \longmapsto
                             \big(\prod_{k=1}^r{\Psi_k(f_k)}\big)\star \Psi_{r+1}(q_{\lambda_P}\sigma^v).$

\noindent (Note  we have an isomorphism   $H^*(\mathbb{P}^1)\otimes \cdots \otimes H^*(\mathbb{P}^r)\otimes H^*(G/P)\cong Gr^{\mathcal{F}}(H^*(G/B))$ of
    graded algebras, coming from the Leray spectral sequence.)}

\section{Conclusions}\label{secconclusion}

 All the theorems in the induction can be easily   generalized to all cases by dropping our assumption that $\Delta_P$ is connected. We give a brief description as follows.

  Write
          $\Delta_P=\bigsqcup_{k=1}^m\Delta_{(k)}$ such that
                $Dyn(\Delta_{(k)})$ is a connected component of   $Dyn(\Delta_P)$  for each $k$.
  Then the Weyl subgroup $W_P$ also splits into direct product of $W_k$'s which are the corresponding Weyl subgroups of $\Delta_{(k)}$'s.
   That is,  $W_P=W_1\times \cdots \times W_m$.
       Among these $\Delta_{(k)}$'s, there is at most one which is not of $A$-type. If such a subbase exists, then we just assume it to be the last one,
         saying $\Delta_{(m)}$. For each $k$,
  we denote $r_k=|\Delta_{(k)}|$. Set $M=\sum_{k=1}^{m}r_k$ and then
 take   the standard basis   $\{\mathbf{e}_{1, 1},\cdots, \mathbf{e}_{1, r_1}, \cdots, \mathbf{e}_{m, 1}, \cdots, \mathbf{e}_{m, r_m}, \mathbf{e}_{m+1, 1}\}$
    of $\mathbb{Z}^{M+1}$.

For each $k$, we fix the canonical order $(\Delta_{(k)},
\Upsilon_k)$ as described in section \ref{arrangement}. Then we
obtain a grading map
   $gr_{\Delta_{(k)}}: W\times Q^\vee \longrightarrow \bigoplus_{i=1}^{r_k+1}\mathbb{Z}\mathbf{e}_{k, i}$, using Definition \ref{defgrading} with respect to $(\Delta_{(k)}, \Upsilon_k)$.
 In particular for any $x\in W_k$ or   $x=q_{\alpha^\vee}$ with $\alpha\in \Delta_{(k)}$, we have
          $gr_{\Delta_{(k)}}(x)\in \bigoplus_{i=1}^{r_k}\mathbb{Z}\mathbf{e}_{k, i}\hookrightarrow \mathbb{Z}^{M+1}$ which we treat as an element of $\mathbb{Z}^{M+1}$ naturally.

 \begin{defn}\label{defgradingforgeneral}

          We define a  {grading map}     $gr: W\times Q^\vee  \longrightarrow \mathbb{Z}^{M+1}$ associated to
            $(\Delta_P, \Upsilon)$ as follows, where $\Upsilon=\prod_{k=1}^m\Upsilon_k$.
  \begin{enumerate}
     \item Write
          $w=v_{m+1}v_m\cdots    v_1$ (uniquely), in which $(v_1, \cdots, v_m, v_{m+1})\in W_1\times \cdots \times W_m\times W^P$.
          Then $gr(w)\triangleq \ell(v_{m+1})\mathbf{e}_{m+1, 1}+\sum_{k=1}^m  gr_{\Delta_{(k)}}(v_k)$.  
     \item For each $\alpha_{k, i}\in \Delta_{(k)}$, $gr(q_{\alpha_{k, i}^\vee})\triangleq  gr_{\Delta_{(k)}}(q_{\alpha_{k, i}^\vee})$.
             For   $\alpha\in \Delta\setminus\Delta_P$,
              we write    $\psi_{\Delta, \Delta_P}(q_{\alpha^\vee+Q^\vee_P})=\omega_P\omega'q_{\alpha^\vee}\prod\limits_{k=1}^m\prod\limits_{i=1}^{r_k}q_{{\alpha_{k, i}^\vee}}^{a_{k,i}}$ and then define
                $$gr(q_{\alpha^\vee})\triangleq \big( \ell(\omega_P\omega')+2+\sum\limits_{k=1}^m\sum\limits_{i=1}^{r_k}2a_{k,i}\big)\mathbf{e}_{m+1, 1}-
                       gr(\omega_P\omega')-\sum\limits_{k=1}^m\sum\limits_{i=1}^{r_k}a_{k,i}gr(q_{\alpha_{k,i}^\vee}).$$
     \item In general, $x=w\prod_{\alpha\in \Delta}q_{\alpha^\vee}^{b_\alpha}$, then   $gr(x)\triangleq gr(w)+\sum_{\alpha\in \Delta}b_{\alpha}gr(q_{\alpha^\vee})$.
  \end{enumerate}
\end{defn}

 As in section \ref{subsecmainthm}, we can define a subset, consisting of the gradings of $q_\lambda w$'s in $QH^*(G/B)$. This subset also turns out to
  be a (totally-ordered) sub-semigroup of $\mathbb{Z}^{M+1}$ and we also simply denote it as $S$ by abuse of notations. In addition, we obtain a family of subspaces of
    $QH^*(G/B)$ in the same way, which we also simply denote as $\mathcal{F}=\{\mathcal{F}_\mathbf{a}\}_{\mathbf{a}\in S}$ by abuse of notations.
 Then all the theorems in the introduction  can be easily generalized. For instance, we state part of them in   summary as follows.

\begin{thm}\label{genthmsum} $\mbox{}$
\begin{enumerate}
  \item  $QH^*(G/B)$ has an $S$-filtered algebra structure  with filtration $\mathcal{F}$, which naturally extends to a $\mathbb{Z}^{M+1}$-filtered algebra structure on
         $QH^*(G/B)$.
   \item  There is a canonical algebra isomorphism
      $$ QH^*(G/B)/\mathcal{I}  \overset{\simeq}{\longrightarrow}QH^*(P/B)$$
      for an ideal  $\mathcal{I}$ (which is explicitly defined) of $QH^*(G/B)$.
  \item Assume $P/B$ is isomorphic to product of $F\ell_{1+r_k}$'s (i.e.   $\!\Delta_{(k)}\!$'s are of $A\mbox{-type).}$
         \begin{enumerate}
           \item There exists a subalgebra $\mathcal{A}$ of $QH^*(G/B)$ together with an ideal $\mathcal{J}$ of $\mathcal{A}$, such that
               $QH^*(G/P)$ is canonically isomorphic to $\mathcal{A}/\mathcal{J}$  as algebras.
            \item  As graded algebras, (after localization)  $Gr^{\mathcal{F}}(QH^*(G/B))$ is isomorphic to
                        $\big(\bigotimes_{k=1}^m\bigotimes_{i_k=1}^{r_k}QH^*(\mathbb{P}^{i_k})\big)\bigotimes QH^*(G/P)$ .
         \end{enumerate}
\end{enumerate}
\end{thm}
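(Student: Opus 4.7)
The plan is to reduce everything to the connected-subbase results already established. Note that in Definition \ref{defgradingforgeneral} the grading map is assembled blockwise: for $v_k \in W_k$ or $\alpha \in \Delta_{(k)}$, the contribution to $gr$ lies in the block $\bigoplus_{i=1}^{r_k}\mathbb{Z}\mathbf{e}_{k,i}$ and coincides with $gr_{\Delta_{(k)}}$ applied to that piece, while the coordinate $\mathbf{e}_{m+1,1}$ plays exactly the role of $\mathbf{e}_{r+1}$ from the connected setting. Since $W_P = W_1 \times \cdots \times W_m$ and simple roots from different components commute, any $\gamma \in R_{P_k}$ acts trivially on the other components, so the blockwise proof of the Key Lemma goes through: if $\gamma \in R_{(k)}$, the inequality reduces on the $k$-th block to the Key Lemma for the connected subbase $\Delta_{(k)}$ (proved in Section \ref{sectiongenconn}) and is trivial elsewhere; if $\gamma \in R^+ \setminus R_P$, only the $\mathbf{e}_{m+1,1}$-coordinate needs control, and Lemma \ref{lemforgenone} together with the analogue of Proposition \ref{mainsecondprop} applied to $\tilde{gr} := \bigoplus_k gr_{\Delta_{(k)}}$ gives it uniformly.

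With the Key Lemma secured, part (1) of Theorem \ref{genthmsum} follows exactly as in Section \ref{subsecmainthm}: the inductive argument in the proof of Theorem \ref{mainthm}, together with Lemma \ref{lemmaformainthm} generalized by taking $\gamma$ in the appropriate component, yields $F_{\mathbf{a}}F_{\mathbf{b}} \subset F_{\mathbf{a}+\mathbf{b}}$. For part (2), define $\mathcal{I} = \bigoplus_{d_{m+1,1}>0} \mathbb{Q}\, q_\lambda\sigma^w$ where $d_{m+1,1}$ denotes the $\mathbf{e}_{m+1,1}$-coordinate of $gr(q_\lambda\sigma^w)$. The proof that $\mathcal{I}$ is an ideal (Lemma \ref{lemforinclus}) adapts verbatim, and $q_\lambda\sigma^w \notin \mathcal{I}$ iff $(w,\lambda) \in W_P \times Q^\vee_P$. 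Since quantum cohomology of a product of homogeneous varieties factors as a tensor product under the Künneth isomorphism, $QH^*(P/B) = \bigotimes_{k=1}^m QH^*((P/B)_{(k)})$, and the quotient $QH^*(G/B)/\mathcal{I}$ is generated by $\{\overline{\sigma^{s_\alpha}} : \alpha \in \Delta_P\}$ by the same reduction-to-classical argument as in the proof of Theorem \ref{inclusisom}; the induced map sending $\overline{q_\lambda\sigma^w}$ to its representative in $QH^*(P/B)$ is then the required algebra isomorphism.

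For part (3), assume each $\Delta_{(k)}$ is of $A$-type. The analogue of Lemma \ref{uniqAtypegrad} holds blockwise: each $\mathbf{a} \in \bigoplus_{k,i}\mathbb{Z}\mathbf{e}_{k,i}$ is the grading of a unique element in $W_P \times Q^\vee_P$ (using the lower-triangular structure of the matrix of $gr(q_{\alpha_{k,i}^\vee})$'s within each block separately). Combining the isomorphisms $\Psi_{k,i_k}: QH^*(\mathbb{P}^{i_k}) \to \bigoplus_{j\in\mathbb{Z}}Gr^{\mathcal{F}}_{j\mathbf{e}_{k,i_k}}$ (one per block component, constructed as in Theorem \ref{isomfordirection}) with $\Psi_{m+1}: QH^*(G/P) \to Gr^{\mathcal{F}}_{(m+1)}$ sending $q_{\lambda_P}\sigma^v$ to the class of $\psi_{\Delta,\Delta_P}(q_{\lambda_P}\sigma^v)$, the associated graded multiplication map $\bigstar_k \Psi_{k,i_k} \star \Psi_{m+1}$ delivers the tensor-product isomorphism of (3b) after inverting $\{q_{\alpha^\vee} : \alpha \in \Delta_P\}$. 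Part (3a) then follows by extracting the $(m+1,1)$-component: set $\mathcal{A} = \bigcup_{j\geq 0}F_{j\mathbf{e}_{m+1,1}}$ and $\mathcal{J} = F_{-\mathbf{e}_{m+1,1}} \cap \mathcal{A}$, so that $\mathcal{A}/\mathcal{J} \cong Gr^{\mathcal{F}}_{(m+1)} \cong QH^*(G/P)$ via $\Psi_{m+1}$.

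The main obstacle I anticipate is the blockwise generalization of Proposition \ref{allgradcomp}, namely that $gr_{[\text{away from }(m+1,1)]}(\psi_{\Delta,\Delta_P}(1,q_{\lambda_P})) = \mathbf{0}$ for all $\lambda_P \in Q^\vee/Q^\vee_P$. Since the PW-constraint $\langle\alpha,\lambda_B\rangle \in \{0,-1\}$ for $\alpha \in R^+_P = \bigsqcup_k R^+_{(k)}$ decouples across components and each $\Delta_{(k)}$ is of $A$-type, the virtual-coroot construction of Section \ref{subseckeylemma} can be run independently on each block: for each $k$ one obtains the lift $\lambda_{B,(k)}$ with $gr_{\Delta_{(k)}}$-coordinate zero in the $\bigoplus_{i=1}^{r_k}\mathbb{Z}\mathbf{e}_{k,i}$ block, together with $q_{\lambda_{B,(k)}} \in QH^*(G/B)$. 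Combining these block-lifts gives the required $\lambda_B$, and the non-negativity check on the $\alpha_{k,i}^\vee$-coefficients proceeds exactly as in the proof of Proposition \ref{allgradcomp} since no cross-component interaction occurs in the PW-lifting.
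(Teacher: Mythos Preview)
Your proposal is correct and follows essentially the same route as the paper, which in Section~\ref{secconclusion} does not write out a proof but simply asserts that all earlier arguments generalize once the grading is assembled blockwise via Definition~\ref{defgradingforgeneral}. You have supplied the details the paper leaves implicit: the blockwise reduction of the Key Lemma (using that distinct components of $\Delta_P$ are orthogonal, so for $\gamma\in R_{(k)}^+$ only the $k$-th block is affected, while for $\gamma\in R^+\setminus R_P$ one applies the connected-case Key Lemma for each $\Delta_{(k)}$ to control the first $r_k$ coordinates and invokes the analogue of Lemma~\ref{coroofsumgrad} for the last), the decoupling of the Peterson--Woodward constraint across components to generalize Proposition~\ref{allgradcomp}, and the block-by-block version of Lemma~\ref{uniqAtypegrad} for part~(3).
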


We would like to point out again that our assumption `` all
$\Delta_{(k)}\!$'s are of $A$-type" is already general enough. This
situation has covered all
 $G/P$'s for $G$ being   of $A$-type or $G_2$-type, and more than half of $G/P$'s for  each  remaining type.
  Unfortunately, Theorem \ref{genthmsum} (3.b) is not true in a more general case when  $\Delta_{(m)}$ is not of $A$-type. In fact in this case,
   $QH^*(G/P)$  is only
    canonically  isomorphic to a proper subspace of $Gr^{\mathcal{F}}_{({M+1})}(QH^*(G/B))=\bigoplus_{i\geq 0}F_{i\mathbf{e}_{M+1}}/\cup_{\mathbf{b}<i\mathbf{e}_M}F_{\mathbf{b}}$
     as vector spaces. However, we could still expect

\begin{conjecture}\label{conj11}
  There exists a canonical algebra isomorphism between   $QH^*(G/P)$ and a subalgebra of  $Gr^{\mathcal{F}}_{({M+1})}(QH^*(G/B))$.
\end{conjecture}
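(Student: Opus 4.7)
\bigskip

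\noindent\emph{Proof proposal for Conjecture \ref{conj11}.}
The natural candidate subalgebra is the image of the Peterson--Woodward lifting, realized inside the associated graded. Explicitly, I plan to define
\[
\Psi_{M+1}: QH^*(G/P) \longrightarrow Gr^{\mathcal{F}}_{(M+1)}(QH^*(G/B)), \qquad
q_{\lambda_P}\sigma^v \longmapsto \overline{\psi_{\Delta,\Delta_P}(q_{\lambda_P}\sigma^v)},
\]
and let $\mathcal{B}$ denote the $\mathbb{Q}$-linear span of its image. The goal is then to show that (i) $\Psi_{M+1}$ is well-defined (its values land in $Gr^{\mathcal{F}}_{(M+1)}$ rather than in lower pieces of the filtration), (ii) $\mathcal{B}$ is closed under the induced product on $Gr^{\mathcal{F}}(QH^*(G/B))$, hence is a subalgebra, and (iii) $\Psi_{M+1}$ is an injective algebra homomorphism onto $\mathcal{B}$.

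For (i), I would extend Proposition \ref{allgradcomp} to the case when $\Delta_{(m)}$ is not of $A$-type. Concretely, for $\lambda_P=\lambda_B+Q^\vee_P$ with $\lambda_B$ the unique PW-representative, I would split $\lambda_B$ according to the decomposition $\Delta_P=\bigsqcup_k \Delta_{(k)}$ and apply the $A$-type analysis (virtual coroots plus Lemma \ref{polycompforAAtype000000}) to each $\Delta_{(k)}$ with $k<m$, while handling $\Delta_{(m)}$ via Lemma \ref{gengralemm222} (combined with Lemma \ref{grachange}). The point is that the grading of $q_{\lambda_B}\omega_P\omega'$ in the $(k,\cdot)$-coordinates is forced to be $\mathbf{0}$ by the very definition of $gr$ on $q_\alpha$ for $\alpha\in\Delta\setminus\Delta_P$ and by PW-compatibility of the $\omega_P\omega'$ factor. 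Injectivity of $\Psi_{M+1}$ then follows because the PW-representative is unique in $W\times Q^\vee$.

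For (iii), the homomorphism property on the Schubert-class part $\sigma^u\star_P\sigma^v$ with $u,v\in W^P$ follows directly from Peterson--Woodward (Proposition \ref{comparison}): the $(u,v,w,\lambda_P)$ structure constant on the $G/P$ side matches the $(u,v,w\omega_P\omega',\lambda_B)$ structure constant on the $G/B$ side, and by the well-definedness in (i) the extra Schubert terms occurring in $\sigma^u\star\sigma^v$ in $QH^*(G/B)$ that are not of PW form all lie strictly below $Gr^{\mathcal{F}}_{(M+1)}$, so they vanish in the associated graded. Compatibility with quantum parameters reduces to the same PW identity applied to $q_{\lambda_P}\cdot q_{\mu_P}$, mirroring the argument given for $A$-type in the proof of Theorem \ref{isomfordirection}.

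The main obstacle, and where the $A$-type proof does not transplant, is step (ii): showing $\mathcal{B}$ is closed under multiplication. When $\Delta_{(m)}$ is not of $A$-type, the uniqueness portion of Lemma \ref{uniqAtypegrad} fails, so a single grading vector $i\mathbf{e}_{M+1,1}$ can be realized by several inequivalent $q_\lambda w$'s, only one of which comes from PW-lifting. One must show that the quantum product of two PW-lifted elements, when reduced modulo strictly lower pieces of $\mathcal{F}$, gives a class again represented by a PW-lifted element. I expect this to require a finer version of Proposition \ref{gracomponemore} for the non-$A$-type setting, together with a careful accounting, inside the associated graded, of the ``non-PW'' companion monomials with the same top grading; the PW comparison formula forces them, after all cancellations, to collapse to the PW image. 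Carrying this out rigorously, likely by induction on $\ell(v)$ using the quantum Chevalley formula and the extended Key Lemma in conjunction with Lemma \ref{grachange}, is the essential content of the conjecture.
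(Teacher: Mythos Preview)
This statement is labeled a \emph{conjecture} in the paper and is not proved there. The paper explicitly leaves it open: immediately after stating it, the authors explain that part of what would be needed is to establish properties (i) and (ii) from the proof of Theorem \ref{isomfordirection} in the non-$A$-type setting (that the $\overline{\psi_{\Delta,\Delta_P}(q_{\lambda_P})}$'s behave like polynomials), remark that this becomes trivial when $\Delta$ is of $C$-type so that the conjecture can be verified in that case ``together with some other arguments,'' and otherwise cite the equivariant/localized result of Lam--Shimozono as supporting evidence. There is no general proof to compare against.

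Your outline is consistent with the approach the paper hints at: define $\Psi_{M+1}$ via the PW-lifting, reduce well-definedness to an extension of Proposition \ref{allgradcomp}, and use the comparison formula for the multiplicative part on Schubert classes. You also correctly isolate the genuine obstruction: when $\Delta_{(m)}$ is not of $A$-type, the uniqueness in Lemma \ref{uniqAtypegrad} fails, so $Gr^{\mathcal{F}}_{(M+1)}$ is strictly larger than the PW image, and one must show closure of the PW image under the induced product. Your proposed attack on this (a non-$A$-type analogue of Proposition \ref{gracomponemore} plus an induction via quantum Chevalley and the extended Key Lemma) is plausible in spirit but is precisely the step the authors could not complete in general; nothing in the paper supplies the missing ``finer version'' you invoke. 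In short, your proposal is a reasonable program, not a proof, and the paper itself offers no more than the same program together with partial evidence.
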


 As a direct  consequence of Conjecture \ref{conj11}, we can conclude Theorem \ref{genthmsum} (3.a) always holds for any $G/P$.
 Part of the points of the proof for this
  is to show (i) and (ii) in the proof of Theorem \ref{isomfordirection}. That is,
   we need to show  the behavior of $\overline{\psi_{\Delta, \Delta_P} (q_{\lambda_P})}$'s do be like polynomials.
 Indeed, when    $\Delta$ is of $C$-type,   (i) and (ii) become  trivial.
     (Precisely, we use the notations in case $\mbox{C}1)$ in Table $\ref{tabrelativeposi}$ and assume $\Delta_P$ to be of $C$-type.
                           Then for any $\lambda_P=\sum_{j=1}^ob_j\beta_j^\vee+Q^\vee_P\in Q^\vee/Q^\vee_P$,
                            we conclude $\overline{\psi_{\Delta, \Delta_P} (q_{\lambda_P})}=\overline{q_{\lambda_B}\cdot 1}$ with $
                             \lambda_B=\sum_{j=1}^ob_j\beta_j^\vee+  b_o\sum_{p=o+1}^n \beta_{p}^\vee$
                               by direct calculations.) In this case, we could still prove Conjecture \ref{conj11}  together with some other arguments.  On the other hand, it is shown in \cite{lamshi} that
    after taking  torus-equivariant extension and localization, Theorem \ref{genthmsum} (3.a) is true
     in terms of the localization of equivariant homology of a based loop group.
  Hence, we   believe that Theorem \ref{genthmsum} (3.a) also holds  without taking  equivariant extension and localization. Both of these provide
    evidence for  our conjecture.

  In addition, we would like to ask the following.

\begin{qus}\label{ques11}
  What is the difference between  $QH^*(G/P)$ and   $Gr^{\mathcal{F}}_{({M+1})}(QH^*(G/B))$?
\end{qus}

 The   ring structure  of  $Gr^{\mathcal{F}}_{({M+1})}(QH^*(G/B))$, or equivalently $\mathcal{A}/\mathcal{J}$ which is defined in the same
     form as in Theorem \ref{baseisom},  seems close   to  the  ring structure of $QH^*(G/B)$. Especially, there might be one way to
  obtain a nice presentation   of $Gr^{\mathcal{F}}_{({M+1})}(QH^*(G/B))$  from the   presentation   of $QH^*(G/B)$ \cite{kim}. Suppose there were such a way and
   we knew the answer to Question \ref{ques11}, then we would have a better understanding on $QH^*(G/P)$.

\section{Appendix} \label{exceptional}

 In this section, we  show the Key Lemma also holds for  all the roots that  satisfy    condition (ii)  in the proof (of the Key Lemma in section \ref{sectiongenconn}) whenever
                 $\Delta$ is of  $F_4$-type  or $E$-type.
           Since all the arguments are similar, we just list all such roots as well as the corresponding methods for them.
           One can see \cite{leungliappend} for more details.

When $\Delta$ is of $F_4$-type, case $\mbox{C}9)$ or $\mbox{C}10)$ will occur. For instance for $\mbox{C}9)$,
   $\gamma^\vee$  must be either of the form $\sum_{i\leq t\leq k}\beta_t^\vee$ or equal to one of the following five coroots:
         $\beta_1^\vee+2\beta_2^\vee+\beta_3^\vee, \beta_1^\vee+2\beta_2^\vee+\beta_3^\vee+\beta_4^\vee,
       \beta_1^\vee+2\beta_2^\vee+2\beta_3^\vee+\beta_4^\vee,$ $ \beta_1^\vee+3\beta_2^\vee+2\beta_3^\vee+\beta_4^\vee, 2\beta_1^\vee+3\beta_2^\vee+2\beta_3^\vee+\beta_4^\vee$,
        by noting   $\ell(s_\gamma)=\langle 2\rho, \gamma^\vee\rangle -1$. Then we have

  \begin{center}
 $\mbox{}$ \hspace{0.3cm} Table for  case $\mbox{C} 9)$  \hspace{3.25cm} Table for case $\mbox{C}10)$

 \vspace{0.15cm}

 \hspace{-0.4cm}\begin{tabular}{|c|c|c|}
       \hline
       Coroots &  $r=2$&  $r=3$ \\ \hline\hline
         $\beta_2^\vee+\beta_3^\vee$ &\multicolumn{2}{c|}{done ${(\gamma\in R_P)}$} \\       \hline
         $\beta_3^\vee+\beta_4^\vee$ & \multicolumn{2}{c|}{(M3)}   \\       \hline
         $\beta_1^\vee+\beta_2^\vee+\beta_3^\vee$ & (M3) & done \\       \hline
         $\beta_2^\vee+\beta_3^\vee+\beta_4^\vee$ & \multicolumn{2}{c|}{(M1)}   \\       \hline
         $\beta_1^\vee+\beta_2^\vee+\beta_3^\vee+\beta_4^\vee$ & (M1) & done \\       \hline
         $\beta_1^\vee+2\beta_2^\vee+2\beta_3^\vee+\beta_4^\vee$ & \multicolumn{2}{c|}{(M2)}   \\       \hline
        $2\beta_1^\vee+3\beta_2^\vee+2\beta_3^\vee+\beta_4^\vee$ & (M1) & done \\       \hline
     \end{tabular}     \begin{tabular}{|c|c|c|}
       \hline
       Coroots &  $r=2$&  $r=3$ \\ \hline\hline
         $\beta_3^\vee+\beta_4^\vee$ & (M3) & (M2)    \\       \hline
         $\beta_1^\vee+\beta_2^\vee+\beta_3^\vee$ & (M1) & done \\       \hline
         $\beta_1^\vee+\beta_2^\vee+\beta_3^\vee+\beta_4^\vee$ & (M1) & done  \\       \hline
         $\beta_2^\vee+2\beta_3^\vee+\beta_4^\vee$ & (M3) & (M2)\\       \hline
        $\beta_1^\vee+\beta_2^\vee+2\beta_3^\vee+\beta_4^\vee$ & (M2) &  done \\       \hline
         $\beta_1^\vee+2\beta_2^\vee+3\beta_3^\vee+\beta_4^\vee$ &  \multicolumn{2}{c|}{(M2)}  \\       \hline
          $\beta_1^\vee+2\beta_2^\vee+3\beta_3^\vee+2\beta_4^\vee$ &  \multicolumn{2}{c|}{(M1)}   \\       \hline
     \end{tabular}
  \end{center}

\bigskip

We would like to make some comments for the   tables in this section.

\begin{enumerate}
  \item By ``done",  we mean that there exists $\alpha_j\in \{\alpha_1, \cdots, \alpha_{r-1}\}$ such that $\langle \alpha_j, \gamma^\vee\rangle >0$.
           Thus it is   done by the arguments for condition (i) in the proof of the Key Lemma.
              By ``done $(\gamma\!\in\! R_P)$", we mean   $\gamma\!\in\! R_P$ and thus it is done by the arguments at the beginning of the proof of the Key Lemma.
  \item By  ``(M1)" (resp. ``(M3)"), we mean the corresponding method, especially the use of part a) (resp. b)) of Lemma \ref{lemforgenone}.
  \item By ``(M2)", we mean the induction hypothesis is used. 
                    In fact, whenever referring to   (M2)   in the tables, we
         can use the same  arguments as follows. For instance, we consider the case when $\mbox{C}10)$ occurs,
                   $\gamma^\vee=\beta_2^\vee+2\beta_3^\vee+\beta_4^\vee$ and
                $r=3$.    In this case, we can take $\alpha_j=\beta_3(=\alpha_3)$. Then
                  $\beta^\vee=s_j(\gamma^\vee)=\beta_2^\vee+\beta_3^\vee+\beta_4^\vee$ and consequently $(\lambda_1, \lambda_2, \lambda_3, \lambda_4)=gr(q_{\beta^\vee})
                     =(-1, 1, -2, 8)$.  Furthermore, we have $(a_1, a_2, a_3, a_4)=(0, a_2, a_3, 0)$
                    and   $(b_1, b_2, b_3, b_4)=(0, b_2, b_3, 0)$ with $a_2+a_3=1$ and $b_2+b_3=-1$ by noting $\langle \alpha_1, \alpha_j^\vee\rangle=0$.
             If $\mu_1<0$, then it is done. If $\mu_1=0$, then by the induction hypothesis we have $\mu_2\leq 0$. We claim $\mu_2=0$.
            Thus $\mu_3\leq 0$ and $a_2+b_2=a_2+\mu_2+b_2\leq 0$ (by considering $\tilde{gr}$). Since $a_2+a_3+b_2+b_3=0$, if
               $a_2+b_2<0$ then it is done; otherwise, we have $a_3+b_3=-(a_2+b_2)=0$ so that $a_3+b_3+\mu_3=\mu_3\leq 0$. Thus it is done. It remains to show our claim.
              Indeed, we note that $\mu_2+i_2'=k_2+\lambda_2=k_2+1$. Since
              $\ell(us_js_\beta)<\ell(us_j)$, $us_j(\beta)\in -R^+$. Then if $\langle \alpha, \beta^\vee\rangle \leq 0$,
                 $us_j(\alpha)\in -R^+$ implies $us_js_\beta(\alpha)\in-R^+$.
              Hence $i_2'=\sharp \{\alpha\in  R_{P_2}^+\setminus R_{P_{1}}~|~ us_j(\alpha)\in -R^+\}
                          =\sharp \{\alpha\in  R_{P_2}^+\setminus R_{P_{1}}~|~ us_j(\alpha)\in -R^+, \langle \alpha, \beta^\vee\rangle \leq 0\}+
                                   \sharp \{\alpha\in  R_{P_2}^+\setminus R_{P_{1}}~|~ us_j(\alpha)\in -R^+, \langle \alpha, \beta^\vee\rangle > 0\}
                        \leq    \sharp \{\alpha\in  R_{P_2}^+\setminus R_{P_{1}}~|~ us_js_\beta(\alpha)\in -R^+\}+   \sharp \{\alpha\in  R_{P_2}^+\setminus R_{P_{1}}~|~  \langle \alpha, \beta^\vee\rangle > 0\}
                         =k_2+\sharp\{\beta_2\}$.   Thus $\mu_2=k_2+ 1-i_2'\geq 0$ and consequently we have  $\mu_2=0$.
\end{enumerate}

  Now we assume  $\Delta$ is of $E$-type. Denote $\Xi :=\{\beta_i~|~ \langle \beta_i, \gamma^\vee\rangle>0\}$. Recall that we should replace $\kappa=o+r$ with $\kappa=o+\varsigma (=o+r-1)$ in Table \ref{tabrelativeposi}
    when 
     $\Delta_P$ is not of $A$-type.
 Note that any $\gamma\in R^+$ is of length $\langle 2\rho, \gamma^\vee\rangle-1$.
It suffices to assume $n=8$. It remains to discuss at most the roots in
 the tables as below.

  \begin{center}
      Table for case $\mbox{C}4)$ with $r=6$ or $r=7$

      \begin{tabular}{|c|c|c|}
       \hline
       Roots with $\Xi \subset\{\beta_1, \beta_2, \beta_8\}$    &   $r=6$&   $r=7$ \\ \hline \hline
          $\beta_1+\beta_2$ & done & done  \\       \hline
       $\beta_2+\beta_3+\beta_4+\beta_5+\beta_8$ & (M3) & done  \\       \hline
        $\beta_1+\beta_2+\beta_3+\beta_4+\beta_5 +\beta_8$ & done & (M3)  \\       \hline
       $\beta_1+2\beta_2+2\beta_3+2\beta_4+2\beta_5+\beta_6+ \beta_8$ & (M1) & done  \\       \hline
       $\beta_3+2\beta_4+3\beta_5+2\beta_6+\beta_7+2\beta_8$ &\multicolumn{2}{c|}{done ${ (\gamma\in R_P)}$} \\       \hline
        $\beta_2+\beta_3+2\beta_4+3\beta_5+2\beta_6+\beta_7+2\beta_8$ & (M3) & done  \\       \hline
         $\beta_1+\beta_2+\beta_3+2\beta_4+3\beta_5+2\beta_6+\beta_7+2\beta_8$ & done &  (M3) \\       \hline
       $\beta_1+2\beta_2+2\beta_3+2\beta_4+3\beta_5+2\beta_6+\beta_7+2\beta_8$ & (M2) &  done \\       \hline
         $\beta_1+2\beta_2+3\beta_3+4\beta_4+5\beta_5+3\beta_6+\beta_7+3\beta_8$ & (M2) & (M3)  \\       \hline
           $\beta_1+3\beta_2+4\beta_3+5\beta_4+6\beta_5+4\beta_6+2\beta_7+3\beta_8$ & (M1)  & done  \\       \hline
          $2\beta_1+3\beta_2+4\beta_3+5\beta_4+6\beta_5+4\beta_6+2\beta_7+3\beta_8$ & done & (M1)  \\       \hline
    \end{tabular}
  \end{center}

\bigskip

 \begin{center}
     Table for case $\mbox{C}5)$ with $r=5$

    \begin{longtable}{|c|c|}
       \hline
       Roots with $\Xi \subset\{\beta_5, \beta_6\}$   &   Method \\ \hline \hline
       $\beta_5+\beta_6$ &  (M3) \\       \hline
         $\beta_2+2\beta_3+\beta_4+2\beta_5+\beta_6$ & (M3) \\       \hline
         $\beta_2+2\beta_3+\beta_4+2\beta_5+2\beta_6+\beta_7$ & (M1)\\       \hline
     $\beta_1+2\beta_2+3\beta_3+\beta_4+3\beta_5+2\beta_6+\beta_7$ & (M3) \\       \hline
  $\beta_1+2\beta_2+3\beta_3+\beta_4+3\beta_5+3\beta_6+2\beta_7+\beta_8$ & (M1) \\       \hline
   $\beta_1+2\beta_2+4\beta_3+2\beta_4+4\beta_5+3\beta_6+2\beta_7+\beta_8$ & (M2) \\       \hline
   $2\beta_1+4\beta_2+6\beta_3+3\beta_4+5\beta_5+3\beta_6+2\beta_7+\beta_8$ & (M2) \\       \hline
    $2\beta_1+4\beta_2+6\beta_3+3\beta_4+5\beta_5+4\beta_6+2\beta_7+\beta_8$ & (M1) \\       \hline
      \end{longtable}
  \end{center}

\bigskip

  \begin{center}
       Table for case $\mbox{C}7)$ with $0\leq o\leq 3$
           \begin{tabular}{|c|c|c|c|}
         \hline
       & Roots with $\Big\{\begin{array}{l}\Xi \subset  \{\beta_1, \beta_2, \beta_3, \beta_7, \beta_8\}\\
                          |\Xi\cap \{\beta_1, \beta_2, \beta_3\}|\leq 1\end{array}$   &  Constraint &   Method \\ \hline \hline
       1) &        $\beta_3+\beta_4+\beta_5+ \beta_7$ & $o=3$ &    \\ \cline{1-3}
       2) &        $\beta_2+\beta_3+\beta_4+\beta_5+\beta_7$ &  $o=2$  & (M3)  \\ \cline{1-3}
       3) &        $\beta_1+\beta_2+\beta_3+\beta_4+\beta_5+\beta_7$ &  $o=1$  &     \\ \hline
       4) &        $\beta_2+2\beta_3+2\beta_4+2\beta_5+\beta_6+\beta_7$ &  $o=3$  &     \\      \cline{1-3}
       5) &        $\beta_1+2\beta_2+2\beta_3+2\beta_4+2\beta_5+\beta_6+\beta_7$ &  $o=2$  & \raisebox{1.5ex}[0pt]{(M1)}   \\       \hline
       6) &        $\beta_7+\beta_8$ &  $o\geq 0$  & (M2,2,3,3)   \\       \hline
       7) &        $\beta_3+\beta_4+\beta_5+\beta_7+\beta_8$ &  $o=3$  &     \\       \cline{1-3}
       8) &        $\beta_2+\beta_3+\beta_4+\beta_5+\beta_7+\beta_8$ &  $o=2$  &    \\      \cline{1-3}
       9) &        $\beta_1+\beta_2+\beta_3+\beta_4+\beta_5+\beta_7+\beta_8$ &  $o=1$  &  (M1)  \\      \cline{1-3}
      10) &        $\beta_2+2\beta_3+2\beta_4+2\beta_5+\beta_6+\beta_7+\beta_8$ &  $o=3$  &     \\       \cline{1-3}
      11) &        $\beta_1+2\beta_2+2\beta_3+2\beta_4+2\beta_5+\beta_6+\beta_7+\beta_8$ &  $o=2$  &     \\       \hline
      12) &        $\beta_4+2\beta_5+\beta_6+2\beta_7+\beta_8$ &  $o\geq 0$  & (M2,2,3,3)   \\       \hline
      13) &        $\beta_3+\beta_4+2\beta_5+\beta_6+2\beta_7+\beta_8$ &  $o=3$  & (M3)   \\       \hline
      14) &        $\beta_2+\beta_3+\beta_4+2\beta_5+\beta_6+2\beta_7+\beta_8$ &  $o=2$  &    \\       \cline{1-3}
      15) &        $\beta_1+\beta_2+\beta_3+\beta_4+2\beta_5+\beta_6+2\beta_7+\beta_8$ &  $o=1$  &    \\      \cline{1-3}
      16) &        $\beta_2+2\beta_3+2\beta_4+2\beta_5+\beta_6+2\beta_7+\beta_8$ &  $o=3$  &  \raisebox{1.5ex}[0pt]{(M2)}  \\       \cline{1-3}
      17) &        $\beta_1+2\beta_2+2\beta_3+2\beta_4+2\beta_5+\beta_6+2\beta_7+\beta_8$ &  $o=2$  &    \\       \hline
      18) &        $\beta_1+2\beta_2+3\beta_3+3\beta_4+3\beta_5+\beta_6+2\beta_7+\beta_8$ &  $o=3$  & (M1)   \\       \hline
      19) &        $\beta_2+2\beta_3+3\beta_4+4\beta_5+2\beta_6+3\beta_7+\beta_8$ &  $o\geq 0$  &     \\      \cline{1-3}
      20) &        $\beta_1+\beta_2+2\beta_3+3\beta_4+4\beta_5+2\beta_6+3\beta_7+\beta_8$ &  $o=1$  &    \\       \cline{1-3}
      21) &        $\beta_1+2\beta_2+2\beta_3+3\beta_4+4\beta_5+2\beta_6+3\beta_7+\beta_8$ &  $o=2$  & \raisebox{1.5ex}[0pt]{(M2)}     \\    \cline{1-3}
      22) &        $\beta_1+2\beta_2+3\beta_3+3\beta_4+4\beta_5+2\beta_6+3\beta_7+\beta_8$ &  $o=3$  &     \\       \hline
      23) &        $\beta_2+2\beta_3+3\beta_4+4\beta_5+2\beta_6+3\beta_7+2\beta_8$ &  $o\geq 0$  &    \\       \cline{1-3}
      24) &        $\beta_1+\beta_2+2\beta_3+3\beta_4+4\beta_5+2\beta_6+3\beta_7+2\beta_8$ &  $o=1$  &    \\      \cline{1-3}
      25) &        $\beta_1+2\beta_2+2\beta_3+3\beta_4+4\beta_5+2\beta_6+3\beta_7+2\beta_8$ &  $o=2$  &  \raisebox{1.5ex}[0pt]{(M1)}    \\      \cline{1-3}
      26) &        $\beta_1+2\beta_2+3\beta_3+3\beta_4+4\beta_5+2\beta_6+3\beta_7+2\beta_8$ &  $o=3$  &     \\       \hline
      27) &        $\beta_1+2\beta_2+3\beta_3+4\beta_4+5\beta_5+2\beta_6+4\beta_7+2\beta_8$ &  $o\geq 0$  & (M3,2,2,2)   \\       \hline
      28) &        $\beta_1+2\beta_2+4\beta_3+5\beta_4+6\beta_5+3\beta_6+4\beta_7+2\beta_8$ &  $o=3$  &    \\       \cline{1-3}
      29) &        $\beta_1+3\beta_2+4\beta_3+5\beta_4+6\beta_5+3\beta_6+4\beta_7+2\beta_8$ &  $o=2$  &  (M1)  \\        \cline{1-3}
      30) &        $2\beta_1+3\beta_2+4\beta_3+5\beta_4+6\beta_5+3\beta_6+4\beta_7+2\beta_8$ &  $o=1$  &     \\       \hline
      \end{tabular}
  \end{center}

In the above table, by ``(M2,2,3,3)" for the root $\beta_7+\beta_8$, we mean (M2) (resp. (M2), (M3) and (M3)) is used when $o=0$ (resp. 1, 2 and 3).
  Similar notations are used for the case no. 12) and no. 27).

\end{document}